\long\def\red#1{\textcolor {black}{#1}} \long\def\blue#1{\textcolor
{blue}{#1}} 
\def\z{{\mathfrak z}}
\def\h{{\mathfrak h}}
\def\MM{\mathcal {M}}
\def\Q{\mathbb{Q}}
\def\d{{\mathrm{dist}}}
\def\DG{{\mathrm{DG}}}
\def\cc{{\mathcal C}}
\def\ZZ{{\mathbb Z}}
\def\QQ{{\mathbb Q}}
\def\RR{{\mathbb R}}
\def\HH{{\mathbb H}}
\def\CC{{\mathbb C}}
\def\k{{\bf k}}
\def\O{{\mathcal O}}
\def\calS{{\mathcal S}}
\def\OS{{\O_\calS}}
\def\OST{{\O^{2}_\calS}}
\def\OSS{{\O^{*}_\calS}}
\def\kS{{\k_{\calS}}}
\def\A{{\mathcal A}}
\def\F{{\mathcal F}}
\def\p{{\mathfrak{p}}}
\newcommand{\base}{\operatorname{base}}
\newcommand{\uas}{{$\omega$-almost surely}}
\newcommand{\uass}{$\omega$-almost surely }
\def\MCG{\mathcal{MCG}}
\newcommand{\SL}{\operatorname{SL}}
\newcommand{\tr}{\operatorname{tr}}
\newcommand{\st}{\;\: : \;\:}         %Such that
\newcommand{\Cay}{\mathrm{Cay}}
\newcommand{\length}{\mathrm{length}}
\theoremstyle{plain}
\newcommand{\mt}[9]{\left(\begin{array}{ccc}
#1 &  #2 & #3 \\
#4  & #5 & #6 \\
#7  &  #8  & #9
\end{array}\right)}
\newcommand{\mat}[9]{\left(\begin{array}{ccc}
#1 &  #2 & #3 \\
#4  & #5 & #6 \\
#7  &  #8  & #9
\end{array}\right)}
\newcommand{\Umat}[1]{\left(\begin{array}{cc} 1 & \begin{array}{cc} 0 & 0 \end{array}\\
#1 & \begin{array}{cc}1 & 0 \\
     0 & 1
     \end{array}
  \end{array}
  \right)}
\newcommand{\LRMat}[1]{\left(\begin{array}{cc} 1 & \begin{array}{cc} 0 & 0 \end{array}\\
\begin{array}{c} 0 \\ 0\end{array} & #1\end{array}\right)}
\newcommand{\rank}{\mathrm{rank}}
\newcommand{\la}{\langle}
\newcommand{\ra}{\rangle}
\newtheorem{theorem}{Theorem}[section]
\newtheorem{lemma}[theorem]{Lemma}
\newtheorem{cor}[theorem]{Corollary}
\newtheorem{proposition}[theorem]{Proposition}
\theoremstyle{definition}
\newtheorem{definition}[theorem]{Definition}
\theoremstyle{remark}
\newtheorem{remark}[theorem]{Remark}
\newtheorem{remarks}[theorem]{Remarks}
\newtheorem{convention}[theorem]{Convention}
\newcommand{\mathcald}{{\mathcal D}}
\newcommand{\calgg}{{\mathcal G}}
\newcommand{\bbi}{\partial_\infty}
\newcommand{\Con}{{\mathrm{Con}}}
\newcommand{\lm}{{\lim}}
\newcommand{\dist}{{\mathrm{dist}}}
\newcommand{\Hbo}{\mathrm{Hbo}}
\newcommand{\dv}{{\mathrm{div}}}
\newcommand{\Dv}{{\mathrm{Div}}}
\newcommand{\Ball}{{\mathrm{B}}}
\newcommand{\cgot}{\mathfrak{c}}
\newcommand {\ft}{\mathfrak t}
\newcommand{\pp}{{\mathcal P}}
\newcommand{\nn}{{\mathcal N}}
\newcommand{\onn}{\overline{{\mathcal N}}}
\newcommand{\ww}{{\mathcal W}}
\newcommand{\Hb}{{\mathrm{Hb}}}
\def\calr{\mathcal{R}}   %% R caligrafic
\def\calc{\mathcal{C}}   %% C caligrafic
\long\def\blue#1{\textcolor {blue}{#1}}
\long\def\red#1{\textcolor {black}{#1}}
\newcommand {\N}{\mathbb{N}} %% positive integers
\newcommand{\Out}{\mathrm{Out}}
\newcommand {\Z}{\mathbb{Z}}            %% integers
\newcommand {\free}{\mathbb{F}} %% free
\newcommand {\q}{\mathfrak q} %% quasi-geodesic
\newcommand {\g}{{\mathfrak g}} %% geodesic
\newcommand {\pgot}{{\mathfrak p}}
\newcommand {\qgot} {{\mathfrak q}}
\newcommand {\me}{\medskip}
\newcommand {\iv}{^{-1}}
\newcommand{\lio}[1]{\lm^\omega\left(#1\right)}
\newcommand{\co}[1]{\Con^\omega\left( #1 \right)}
\newcommand{\arct}[1]{\ft_{#1}}
\newcommand{\proj}{{\hbox{proj}}}
\newcommand{\ttt}{{\mathcal T}}
\newcommand {\fn}{\footnote}
\newcommand {\Notat}{\noindent {\it{Notation}}:} %% notatie
\begin{document}

\title{Divergence in lattices in semisimple Lie groups and graphs of groups}
\author{Cornelia Dru\c{t}u, Shahar Mozes and Mark Sapir}\thanks{{The work of the first author was supported in part by
the ANR project ``Groupe de recherche de G\'eom\'etrie et Probabilit\'es dans
les Groupes''. The work of
the second author was supported by an ISF grant. The work of
the second and the third authors was supported by a BSF (US-Israeli)
grant. The work of the third author
was  supported in part by the NSF grant DMS 0700811.}}
\date{\today}

\begin{abstract} Divergence functions of a metric space estimate the length of a path connecting two points $A$, $B$ at distance $\le n$ avoiding a large enough ball around a third point $C$. We characterize groups with non-linear divergence functions as groups having cut-points in their asymptotic cones. That property is weaker than the property of having Morse (rank 1) quasi-geodesics. Using our characterization of Morse
quasi-geodesics, we give a new proof of the theorem of Farb-Kaimanovich-Masur {which} states that mapping class groups cannot contain copies of irreducible lattices in semi-simple Lie groups of higher ranks. {We also deduce} a generalization of the result of  Birman-Lubotzky-McCarthy about solvable subgroups of mapping class groups not covered by the Tits alternative of Ivanov and McCarthy.

We show that any group acting acylindrically on a simplicial tree or a locally compact hyperbolic graph always has ``many" periodic Morse quasi-geodesics (i.e. Morse elements), so its divergence functions are never linear. We also show that the same result holds in many cases when the hyperbolic graph  satisfies Bowditch's properties that are weaker than local compactness. This gives a new proof of Behrstock's result that every pseudo-Anosov element in a mapping class group is Morse.

On the other hand, we conjecture that lattices in semi-simple Lie groups of higher rank always have linear divergence. We prove it in the case when the $\Q$-rank is 1 and when the lattice is  $\SL_n(\OS)$ where $n\ge 3$, $S$ is a finite
set of valuations of a number field $K$ including all infinite valuations, and $\OS$ is the corresponding ring of $S$-integers.
\end{abstract}

\maketitle \tableofcontents

\section{Introduction}

\subsection{The divergence}

Roughly speaking, the divergence of a pair of points $(a,b)$ in a metric space $X$ relative to a point
$c\not\in \{a,b\}$ is the length of the shortest path from $a$ to $b$ avoiding a ball around $c$ of
radius $\delta$ times the distance $\dist(c,\{a,b\})$ {minus $\gamma $} for some $\delta\in (0,1)$ {and $\gamma \ge 0$} fixed in advance. If
no such path exists, then we define the divergence to be infinity. The divergence of a pair $(a, b)$ is
the supremum of the divergences of $a,b$ relative to all $c\in X$. The divergence function
$\Dv_{{\gamma }}(n;\delta)$ is the maximum of all divergencies of pairs $(a,b)$ with $\dist(a,b)\le n$. As usual
with asymptotic invariants of metric spaces and groups, we consider divergence functions up to the
natural equivalence: $$f\equiv g \quad \hbox{   if    }\quad \frac1C\, g\left( \frac nC \right)-Cn-C
\le f(n) \le Cg(Cn)+Cn+C$$ for some $C>1$ and all $n$ (a similar equivalence is used for functions in
more than one variable). Then the divergence function is {a quasi-isometry invariant of a
metric space, under some mild conditions on the metric space (Lemma \ref{qi} and Corollary \ref{cdiv})}.

For example, the divergence function of the plane $\RR^2$ or the Cayley graph of $\ZZ^2$ is linear (for
every $\delta$), the divergence function of a tree or of a group with infinitely many ends is infinity
for all $n>0$, the divergence function of any hyperbolic group is at least exponential
\cite{Gromov:hyperbolic, Alonso}, the divergence function of any mapping class group is at least
quadratic \cite{Behrstock:asymptotic}. There are in fact several other definitions of divergence in the
literature: one can restrict the choice of $c$ in a different way. For example, one can only consider
the case when $c$ is on a geodesic $[a,b]$. The choice of $c$ can be further restricted by assuming
that it is the midpoint of a geodesic $[a,b]$. The divergence of pairs of rays can also be defined
\cite{Alonso}, and so on. S. Gersten (\cite{Gersten:divergence}, \cite{Gersten:divergence3}) used a
version of the divergence function to study Haken manifolds. It is proved in\cite{Gersten:divergence},
\cite{Gersten:divergence3} and \cite{KapovichLeeb} that for every fundamental group of a Haken
3-manifold the divergence is linear, quadratic or exponential, the quadratic divergence occurring
precisely for graph manifolds and exponential divergence for manifolds with at least one hyperbolic
geometric component. It is also proved in \cite{Gersten:divergence} that the divergence function of a
semidirect product $H \rtimes \Z$ is at most the distorsion function of $H$ in $H \rtimes \Z$. The
equality can be strict as for instance in the Heisenberg group $\Z^2 \rtimes \Z$ which has linear
divergence while $\Z^2$ is quadratically distorted.

In Section \ref{sec3}, we show that different definitions of divergence give equivalent functions, so
we can speak of the divergence of a metric space.

\subsection{Super-linear divergence and cut-points in asymptotic cones}

For applications (see below), it is important to distinguish cases when the divergence is/is not
linear. Since the (equivalence class of) divergence function is a quasi-isometry invariant, we can say
that, for example,  a group has linear or superlinear divergence without specifying a generating set.
Note that there are finitely generated groups whose divergence is not linear but is arbitrarily close
to being linear (and in fact is bounded by a linear function on arbitrary long intervals) \cite{OOS}.

In Section 3, we characterize groups with superlinear divergence as groups whose asymptotic cones have
global cut-points. In particular we prove

\begin{proposition}[See Lemma \ref{lm02} {and Corollary \ref{cdiv}}] All asymptotic cones of a finitely generated group $G$ have no cut-points if and only if the divergence
function $\Dv_{{2}}\left( n,{\frac{1}{2} }\right)$ is linear.
\end{proposition}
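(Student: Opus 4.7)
The plan is to establish both implications by the standard dictionary between combinatorial paths in $G$ and continuous paths in asymptotic cones, using ultralimits. In each direction the divergence hypothesis on $G$ is transferred to the cone (or vice versa) by an appropriate rescaling.

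\medskip

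\noindent\textbf{Linear divergence $\Rightarrow$ no cut-points.} Suppose $\Dv_2(n,1/2)\le Kn+K$ for all large $n$. Fix an asymptotic cone $\co{G;(e_n),(d_n)}$ and three distinct points $a,b,c$ in it, represented by sequences $a_n,b_n,c_n$ in $G$. Since $\dist(a_n,b_n)\le Ld_n$ $\omega$-a.s.\ for some $L$, the hypothesis produces combinatorial paths $p_n$ from $a_n$ to $b_n$ of length at most $KLd_n+K$ that avoid the ball around $c_n$ of radius $\tfrac12\dist(c_n,\{a_n,b_n\})-2$. The ultralimit of $p_n$, reparametrised on $[0,1]$ and rescaled by $1/d_n$, is a continuous path in the cone from $a$ to $b$ whose image lies outside an open ball of positive radius (comparable to $\min(\dist(a,c),\dist(b,c))$) around $c$. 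Hence $c$ is not a cut-point.

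\medskip

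\noindent\textbf{Superlinear divergence $\Rightarrow$ a cut-point in some asymptotic cone.} Assume $\Dv_2(n,1/2)/n$ is unbounded. For each $k$ extract $n_k\to\infty$ and triples $(a_k,b_k,c_k)$ with $\dist(a_k,b_k)\le n_k$ such that every path from $a_k$ to $b_k$ avoiding the ball of radius $\tfrac12\dist(c_k,\{a_k,b_k\})-2$ around $c_k$ has length at least $k n_k$. One may further assume $\dist(c_k,\{a_k,b_k\})\le Cn_k$, since otherwise a short geodesic from $a_k$ to $b_k$ would already miss the ball. Let $\omega$ be a non-principal ultrafilter on which $\{n_k\}$ is positive, set $d_k:=n_k$, and work in $\co{G;(a_k),(d_k)}$. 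The points $a=\lim^\omega a_k$, $b=\lim^\omega b_k$, $c=\lim^\omega c_k$ lie at uniformly bounded pairwise distance. The claim is that $c$ separates $a$ from $b$: any continuous path $\gamma$ from $a$ to $b$ in the cone avoiding an $\varepsilon$-neighbourhood of $c$ can be approximated by a piecewise geodesic path in the cone, which is itself an ultralimit of concatenations of geodesic segments in $G$ of total length $O(d_k)$, whose $\omega$-limit stays at distance $\ge \varepsilon/2$ from $c$. These discrete paths have length $O(n_k)$ and avoid a ball of radius $\Omega(d_k)$ about $c_k$, contradicting the choice of $(a_k,b_k,c_k)$.

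\medskip

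\noindent\textbf{Main obstacle.} The delicate step is the converse direction, where one must convert an arbitrary continuous path in the cone avoiding $c$ into a sequence of combinatorial paths in $G$ of length bounded linearly in $d_k$ and avoiding a ball of radius proportional to $d_k$ around $c_k$. Since a continuous path in the cone need not be rectifiable, the argument proceeds by first restricting to a subdivision whose vertices are ultralimits of points in $G$, then replacing each segment by a geodesic (obtained as an ultralimit of discrete geodesics in $G$), and finally running a diagonal extraction to simultaneously control the length and the distance from $c$. This diagonal/ultralimit bookkeeping is precisely what Lemma \ref{lm02} and Corollary \ref{cdiv} encapsulate and is what allows the implication to run quantitatively with the constants $\delta=1/2$ and $\gamma=2$.
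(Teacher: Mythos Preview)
Your forward direction (linear divergence $\Rightarrow$ wide) is essentially the paper's argument and is correct.

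The converse direction has a genuine gap. From the divergence hypothesis you extract triples $(a_k,b_k,c_k)$ with $\dist(a_k,b_k)\le n_k$ and the property that every path from $a_k$ to $b_k$ avoiding $B\bigl(c_k,\tfrac12 r_k-2\bigr)$, where $r_k=\dist(c_k,\{a_k,b_k\})$, has length $\ge kn_k$. In the cone with scaling $d_k=n_k$ you then claim that $c=(c_k)^\omega$ is a cut-point. But your contradiction argument only works if the $\varepsilon$-ball around $c$ that the hypothetical path $\gamma$ avoids contains, after pulling back, the ball $B(c_k,\tfrac12 r_k-2)$; this requires $\varepsilon\gtrsim r:=\lim_\omega r_k/d_k$. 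Since $\varepsilon$ is determined by the path $\gamma$ and can be arbitrarily small, while $r$ can be as large as $1$, this fails. What your argument actually proves is that the \emph{closed ball} $\overline{B}(c,r/2)$ separates $a$ from $b$ in the cone --- exactly the content of Lemma~\ref{lm00} --- not that $c$ is a cut-point.

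The paper closes this gap in two steps that your sketch omits. First, via Lemmas~\ref{shtrih} and~\ref{shtrih2} (and Corollary~\ref{cdiv} to move from $\delta=\tfrac12$ to small $\delta$), one may take $c_k$ on a geodesic $[a_k,b_k]$ with $r_k\ge \dist(a_k,b_k)/4$; this forces $\dist(C,\{A,B\})\ge \tfrac14$ in the cone while the separating ball has radius $<\tfrac1{12}$. Second --- and this is the missing idea --- Lemma~\ref{lm01} upgrades a separating ball satisfying $\dist(C,\{A,B\})>3\cdot(\text{radius})$ to an honest cut-point: using homogeneity one finds bi-Lipschitz rays from $A$ and $B$ that escape the ball, picks points $A_n,B_n$ along them at distance $n$, and passes to a \emph{further} asymptotic cone $\Con^\omega(\calc,(C),(n))$ in which $(C)^\omega$ becomes a cut-point. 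Your ``Main obstacle'' paragraph misidentifies the difficulty: converting a continuous path avoiding $c$ into discrete paths is routine (Lemma~\ref{piecewiseg} and Lemma~\ref{lemlim1}); the real obstacle is the ball-to-point promotion, which requires this cone-of-a-cone trick.
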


We also characterize groups one of whose asymptotic cones has cut-points. Note that there are groups
with some asymptotic cones having cut-points and some asymptotic cones having no cut-points {\cite{OOS}}.

The importance of having cut-points in asymptotic cones has been shown in \cite{DrutuSapir:TreeGraded}
and \cite{DrutuSapir:splitting}. In particular it is proved in \cite{DrutuSapir:TreeGraded} that if a
non-virtually cyclic finitely generated group $G$ has cut-points in one of its asymptotic cones, then a
direct power of $G$ contains a free non-Abelian subgroup. Hence $G$ does not satisfy a non-trivial law
(cannot be bounded torsion or solvable, etc.). Also if one asymptotic cone of $G$ has cut-points, then
$G$ cannot have an infinite cyclic subgroup in its center, unless $G$ is virtually cyclic
\cite{DrutuSapir:TreeGraded}. Note that $G$ can still have infinite locally finite center \cite{OOS}.

Subgroups of groups with cut-points in all their asymptotic cones display some form of rigidity: if a
subgroup $H$ has infinitely many homomorphisms into $G$ that are pairwise non-conjugate in $G$, then
$H$ acts non-trivially on an asymptotic cone of $G$ which in many cases implies that $H$ acts
non-trivially on a simplicial tree \cite{DrutuSapir:splitting}, and so $H$ splits non-trivially into an
amalgamated product or an HNN extension.

\subsection{Morse quasi-geodesics}

A formally stronger property than superlinear divergence is the existence of the so called {\em Morse
quasi-geodesics} (also called {\em stable quasi-geodesics} or {\em rank 1 quasi-geodesic}). A bi-infinite
quasi-geodesic $\q$ in $X$ is called {\em Morse} if every $(L,C)$-quasi-geodesic with endpoints on $\q$
is at bounded distance from $\q$ (the bound depends only on $L,C$). Proposition \ref{prop3} below
provides several other equivalent definitions of Morse quasi-geodesics (\blue{see the corrected version in Section \ref{s:err}}). It turns out that a
quasi-geodesic $\q$ is Morse if and only if every point $x$ on the limit of $\q$ in every asymptotic
cone $\calc$ separates the two halves (before the point and after the point) of the limit, i.e. the two
halves are in different connected components of $\calc\setminus \{x\}$. This implies in particular that
every point on the limit of $\q$ is a cut-point of $\calc$. The converse implication (i.e. existence of
cut-points in every asymptotic cone implies existence of a Morse quasi-geodesic) is not known in
general (but it does hold in particular cases, for instance for non-positively curved manifolds, see
following paragraph). We show (Proposition \ref{lm3}) that if an asymptotic cone $\calc$ of $G$ has
cut-points then some asymptotic cone $\calc'$ of $G$ has a non-trivial geodesic with the above property
(we call it a {\em transverse geodesic}). By Remark \ref{Osin}, if the Continuum Hypothesis is true,
one can take $\calc'=\calc$.

In a Gromov hyperbolic space, every bi-infinite quasi-geodesic is Morse, a property which is crucial in
the proof of Mostow rigidity in the rank 1 case. A similar property is true for relatively hyperbolic
spaces \cite{DrutuSapir:TreeGraded}.

Suppose that a finitely generated group $G$ acts on a space $X$ by isometries. Recall that a
quasi-geodesic $\q$ in $X$ is called {\em periodic} if $h\cdot \q=\q$ for some $h\in G$, and $\q/\la
h\ra$ is bounded. The most common source of such quasi-geodesics are just orbits $\{h^n\cdot x, n\in
\ZZ\}$ of $h\in G$ in $X$. Note that if an orbit of $h$ in $X$ is quasi-geodesic, then the sequence
$\{h^n, n\in \ZZ\}$ is also a periodic bi-infinite quasi-geodesic in (any Cayley graph of) $G$. W.
Ballmann \cite{Ball} (see also Kapovich-Kleiner-Leeb \cite[Proposition 4.5]{KKL:QI}) proved that in a
locally compact complete CAT(0)-space $X$ with a co-compact group action, a periodic quasi-geodesic is
Morse if an only if it does not bound a half-plane. Moreover \cite{KKL:QI} if $X$ is a non-flat de Rham
irreducible manifold on which a discrete group acts cocompactly by isometries, then $X$ is either a
symmetric space of non-compact type and rank at least two or $X$ contains a periodic Morse geodesic.
Consequently in this particular case existence of cut-points in asymptotic cones implies existence of
Morse quasi-geodesics.

Existence of Morse geodesics in finite dimensional locally compact CAT(0)-spaces with co-compact group
action implies existence of free non-cyclic subgroups in the group \cite{Ball}. One cannot drop the
CAT(0) assumption in this statement because by Olshanskii-Osin-Sapir \cite{OOS}, there exist Tarski
monsters (non-virtually cyclic groups where all proper subgroups are cyclic), where every non-trivial
cyclic subgroup is a Morse quasi-geodesic.

\subsection{Morse elements}

We shall call elements $g\in G$ such that $\{g^n, n\in \ZZ\}$ is a Morse quasi-geodesic {\em Morse
elements} (these elements are also sometimes called {\em elements of rank 1}). For example every
element of infinite order in a hyperbolic group is Morse \cite{Gromov:hyperbolic}. In relatively
hyperbolic groups, every element of infinite order that is not in a parabolic subgroup is Morse
\cite{DrutuSapir:TreeGraded}. In the mapping class group $\hbox{MCG}(S_g)$, every pseudo-Anosov element
is Morse \cite{Behrstock:asymptotic}. The fundamental group of a compact irreducible manifold of
non-positive sectional curvature either is a lattice in a higher rank semi-simple Lie group or it
contains a Morse element (\cite{Ball}, \cite{KKL:QI}). This dichotomy has been extended to fundamental
groups of locally CAT(0) spaces (complexes), with lattices of isometries of Euclidean buildings added
to the list of possibilities (\cite{BallBrin1},\cite{BallBrin2},\cite{BallBu}).

Existence of Morse elements immediately implies some algebraic properties of the group $G$. In
particular, suppose that a finitely generated subgroup $H\le G$ contains a Morse element $g$. Then, by
Lemma \ref{lmeasy},  in the word metric of $H$, $\{g^n, n\in \ZZ\}$ is also a Morse quasi-geodesic. In
particular, all the asymptotic cones of the group $H$ (considered as a metric space with its own word
metric) must have cut-points. Thus we have the following statement

\begin{proposition}\label{prop001} [See Proposition \ref{propsub} below] If $H<G$ does not have its own Morse elements (say, $H$ is torsion or satisfies a non-trivial law or, more generally, does not have cut-points in its asymptotic cones), then $H$ cannot contain any Morse elements of $G$.
\end{proposition}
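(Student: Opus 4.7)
The plan is to argue by contrapositive: assume that $H\le G$ contains an element $g$ which is a Morse element when viewed in $G$, and show that $H$ must then contain its own Morse element, which in turn forces all asymptotic cones of $H$ to have cut-points. The contrapositive form then rules out every class of $H$ listed in the proposition (torsion, law-satisfying, or more generally: no cut-points in asymptotic cones). Everything reduces to the single assertion that the sequence $\{g^n : n \in \ZZ\}$, already Morse in $G$, is also Morse when $H$ is equipped with its own word metric; this is exactly what Lemma \ref{lmeasy} provides.

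To argue Step 1 in a self-contained way, one first notes that if $S_H$ is a finite generating set for $H$ and each element of $S_H$ is expressed as a word of length $\le C$ in the chosen generating set $S_G$ of $G$, then the inclusion $(H,d_H)\hookrightarrow (G,d_G)$ is $C$-Lipschitz. Hence any $(L,C_0)$-quasi-geodesic $p$ in $H$ pushes forward to an $(L',C_0')$-quasi-geodesic in $G$, and since the $G$-quasi-geodesic $\{g^n\}$ pulls back under the same map to a sequence in $H$ whose consecutive points are at bounded $H$-distance, the same estimate $d_H(g^n,g^m)\ge \tfrac{1}{C}d_G(g^n,g^m)$ shows $\{g^n\}$ is still quasi-geodesic in $H$. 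Now given any $H$-quasi-geodesic $p$ with endpoints on $\{g^n\}$, the Morse property in $G$ forces $p$ to track $\{g^n\}$ within bounded $G$-distance.

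The main obstacle lies in promoting this $d_G$-fellow-travelling to $d_H$-fellow-travelling, since a priori $d_H$ can be much larger than $d_G$ (distortion). To overcome this, I would exploit the fact that $p$ itself lies entirely in $H$: partition $p$ into subarcs by successive nearest-point projections onto $\{g^n\}$ in $G$, so that consecutive projection values $g^{n_i}$ are joined by a subarc of $p$ lying in $H$; the projection steps $g^{n_i}\to g^{n_{i+1}}$ have bounded $G$-distance and bounded variation in the exponent $n_i$, so there is a uniform bound on $d_H(g^{n_i},g^{n_{i+1}})$ coming from the periodicity of the sequence. Combining this with the $H$-quasi-geodesic estimate on $p$ itself, one deduces the desired uniform $d_H$-bound. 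This is the only genuinely non-formal step; everything else (characterization of Morse quasi-geodesics via cut-points in asymptotic cones in Proposition \ref{prop3}, and the consequences for torsion groups or groups satisfying a law from \cite{DrutuSapir:TreeGraded}) has already been established.

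Once Step 1 is in place, Proposition \ref{prop3} applied inside $H$ yields that every asymptotic cone of $H$ contains a transverse geodesic and in particular has cut-points. The results of \cite{DrutuSapir:TreeGraded} recalled in the introduction then give the final contradiction: a torsion group, a group satisfying a non-trivial law, or more generally any group all of whose asymptotic cones lack cut-points, cannot realize such a configuration. Hence no element of $H$ can be Morse in $G$, as claimed.
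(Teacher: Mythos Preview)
Your overall strategy is right---the whole proposition reduces to Lemma~\ref{lmeasy}, and you correctly locate the only nontrivial step as the upgrade from $d_G$-closeness to $d_H$-closeness. But there are two genuine problems with the way you propose to carry it out.

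First, the claim that an $(L,C_0)$-quasi-geodesic in $H$ pushes forward to an $(L',C_0')$-quasi-geodesic in $G$ is false in general. The inclusion $(H,d_H)\hookrightarrow (G,d_G)$ is Lipschitz, which gives the \emph{upper} bound $d_G\le C\,d_H$, but the lower bound $d_G\ge c\,d_H$ is exactly undistortedness of $H$ in $G$, which is not assumed. So an $H$-quasi-geodesic becomes only a $G$-path of comparable length, not a $G$-quasi-geodesic, and you cannot invoke the Morse definition (2) in $G$. The paper avoids this by working instead with characterization (3) of Proposition~\ref{prop3}: a path in the Cayley graph of $H$ of length $\le C_1 m$ is literally a path in the Cayley graph of $G$ of the same length (taking $S_H\subset S_G$), so property~(3) for $\q$ in $G$ applies directly and produces a point $x$ on the path with $d_G(x,g^{n+i})\le D$ for some $i$ in the middle third.

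Second, your projection-and-periodicity sketch for the upgrade is both more complicated than needed and does not obviously close the gap: bounding $d_H(g^{n_i},g^{n_{i+1}})$ says nothing about the $H$-distance from the points of $p$ to the $g^{n_i}$. The paper's trick is much simpler and uses only that $G$ is a finitely generated group (so Cayley balls are finite): the set $\Ball_G(g^{n+i},D)\cap H$ is finite, hence contained in some $\Ball_H(g^{n+i},D_1)$ with $D_1$ depending only on $D$. Since $x\in H$ and $d_G(x,g^{n+i})\le D$, this immediately gives $d_H(x,g^{n+i})\le D_1$, and property~(3) for $\q$ in $H$ follows.
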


This proposition has applications to subgroups of relatively hyperbolic groups and mapping class groups
(see below).

\subsection{The main results of the paper}

Proposition \ref{prop001} shows that it is useful to study both the class of groups with Morse elements
and the class of groups without Morse elements. In Section \ref{sec4}, we study the former and in
Sections \ref{sec5}, \ref{sec6}, we study the latter.

Existence of Morse elements in non-trivial free products of groups follows (in particular) from the
fact that free products are hyperbolic relative to their free factors and from
\cite{DrutuSapir:TreeGraded}. We generalize this fact by proving the following theorem (the terms used
in the theorem are explained after the formulation).

\begin{theorem} [See Theorems \ref{th31}, \ref{th32}]\label{th002}
Let $X$ be a simplicial tree or a {uniformly} locally finite hyperbolic graph, and let $G$ be any finitely generated
group acting on $X$ acylindrically. Then any loxodromic element of $G$ is Morse in $G$.
\end{theorem}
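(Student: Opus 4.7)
The plan is to prove that $\{g^{n}\}_{n\in\ZZ}$ is a Morse bi-infinite quasi-geodesic in $G$ by combining the characterization of Morse quasi-geodesics via transverse limits in asymptotic cones (Proposition~\ref{prop3}) with the $\RR$-tree structure of the ultralimits of $X$, using acylindricity to transfer the separation property of $\RR$-trees to the asymptotic cone of $G$.

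First I verify that $n\mapsto g^{n}$ is a bi-infinite quasi-geodesic in $G$. Fix a finite generating set $S$ of $G$ and a basepoint $x_{0}\in X$. The orbit map $\phi\colon h\mapsto h\cdot x_{0}$ is $K$-Lipschitz with $K=\max_{s\in S}d_{X}(x_{0},sx_{0})$. Since $g$ is loxodromic on $X$, its stable translation length $\tau_{X}(g)$ is positive, so $d_{X}(x_{0},g^{n}x_{0})\ge |n|\tau_{X}(g)-O(1)$; combining this with the Lipschitz bound gives $d_{G}(e,g^{n})\ge |n|\tau_{X}(g)/K-O(1)$, so $n\mapsto g^{n}$ is a $(K/\tau_{X}(g),O(1))$-quasi-geodesic in $G$. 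Now fix an asymptotic cone $\calc=\Con^{\omega}(G,(e_{n}),(d_{n}))$ and let $X^{\omega}$ be the corresponding ultralimit of $(X,x_{0})$. The orbit map descends to a Lipschitz map $\phi^{\omega}\colon\calc\to X^{\omega}$, and $X^{\omega}$ is an $\RR$-tree because $X$ is hyperbolic or a tree. The ultralimit $\ell\subseteq\calc$ of $\{g^{n}\}$ is a bi-infinite geodesic which $\phi^{\omega}$ sends affinely onto the ultralimit axis $\ell_{X}\subseteq X^{\omega}$.

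Assume for contradiction that $\{g^{n}\}$ is not Morse. By Proposition~\ref{prop3}, in some such $\calc$ some point $p\in\ell$ fails to separate $\calc$, so there is a continuous path $\gamma\subseteq\calc\setminus\{p\}$ joining points $a,b\in\ell$ on opposite sides of $p$. The image $\phi^{\omega}(\gamma)$ is a continuous path in the $\RR$-tree $X^{\omega}$ from $\phi^{\omega}(a)$ to $\phi^{\omega}(b)$; since $\RR$-trees are uniquely arc-connected, $\phi^{\omega}(\gamma)$ must traverse $\phi^{\omega}(p)$, so some $x\in\gamma$ satisfies $\phi^{\omega}(x)=\phi^{\omega}(p)$ with $x\neq p$. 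Writing $x=[x_{n}]^{\omega}$, $p=[g^{k_{n}}]^{\omega}$ and setting $f_{n}:=g^{-k_{n}}x_{n}$, the hypothesis $\phi^{\omega}(x)=\phi^{\omega}(p)$ translates to $d_{X}(x_{0},f_{n}x_{0})=o(d_{n})$ while $x\neq p$ gives $|f_{n}|_{G}=\Theta(d_{n})$ $\omega$-almost surely.

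The main obstacle is to derive a contradiction from such $(f_{n})$; this is where acylindricity is used decisively, together with the uniform local finiteness of $X$ (or its tree structure). The point is that under these hypotheses the orbit map is, asymptotically, a quasi-isometric embedding in a sense strong enough to force $|f_{n}|_{G}=o(d_{n})$. Concretely, acylindricity combined with uniform local finiteness forces $G$-point stabilizers in $X$ to be finite of uniformly bounded size (an infinite $\Stab_{G}(y)$ would act on the finite sphere of radius $L$ about $y$, with $L$ the acylindricity parameter, producing some infinite pointwise-stabilized pair and violating acylindricity), and a parallel argument applied to arcs along the $g$-axis produces a bound $|h|_{G}=O(R)$ for any $h\in G$ with $d_{X}(x_{0},hx_{0})\le R$. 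Applied to $f_{n}$ this gives $|f_{n}|_{G}=o(d_{n})$, the required contradiction. Consequently every point of $\ell$ separates $\calc$, so $\ell$ is a transverse bi-infinite geodesic in every asymptotic cone, and by Proposition~\ref{prop3} the sequence $\{g^{n}\}$ is Morse in $G$. The pure tree case is notably cleaner because $X^{\omega}$ is then literally a tree on which the ultralimit $G^{\omega}$-action inherits acylindricity directly, so that the fiber of $\phi^{\omega}$ over any point of $\ell_{X}$ lying in the image of $\ell$ is forced to be a singleton.
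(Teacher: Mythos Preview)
Your approach via asymptotic cones and the $\RR$-tree structure of $X^{\omega}$ is conceptually different from the paper's direct combinatorial argument, but the central step has a genuine gap.

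The crucial claim is that from $d_{X}(x_{0},f_{n}x_{0})=o(d_{n})$ one can deduce $|f_{n}|_{G}=o(d_{n})$, which you justify by asserting that acylindricity and local finiteness yield a bound $|h|_{G}=O(R)$ whenever $d_{X}(x_{0},hx_{0})\le R$; i.e.\ that the orbit map is a quasi-isometric embedding. This is false in the simplicial tree case. Take $G=A*_{C}B$ with $C$ finite and $A$ infinite, acting on its Bass--Serre tree. The action is acylindrical, $g\in G$ loxodromic elements abound, yet every nontrivial $h\in A$ fixes the basepoint, so $d_{X}(x_{0},hx_{0})=0$ while $|h|_{G}$ is unbounded. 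Thus $\phi^{\omega}$ collapses the ultralimit of $A$ to a single point of $X^{\omega}$, and the fibre of $\phi^{\omega}$ over $\phi^{\omega}(p)$ can be huge. Your remark that in the tree case the ultralimit action ``inherits acylindricity directly'' and hence the fibre over $\phi^{\omega}(p)$ is a singleton is not correct: acylindricity is about stabilisers of pairs of distant points, not about point-stabilisers, and tells you nothing about the fibre over a single point of $\ell_{X}$. In the locally finite hyperbolic case your observation that point-stabilisers are finite is right, so preimages of bounded sets are finite, but you have not produced the \emph{linear} control $|h|_{G}=O(R)$ that your argument needs.

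The paper's proof avoids this issue entirely. It works in the Cayley graph rather than in cones, and uses Property~(3) (or~(4)) of Proposition~\ref{prop3}. Given a path $\q$ in $G$ of length $\le Cn$ from $g^{-3n}$ to $g^{3n}$, its $\pi$-image covers the relevant portion of the axis; a pigeonhole argument then locates an index $i\in[-n,n]$ for which the piece of $\q$ running from $\pi^{-1}(\pi(g^{i}))$ to $\pi^{-1}(\pi(g^{i+k}))$ (with $k$ chosen so that $d_{X}(\pi(g^{i}),\pi(g^{i+k}))\ge l$) has length $\le R=R(C,k)$ independent of $n$. The key lemma (Lemma~\ref{lem9}) then bounds, uniformly in $R$, the diameter of the set of elements lying in one fibre $\pi^{-1}(a)$ and $R$-close to another fibre $\pi^{-1}(b)$ with $d_{X}(a,b)\ge l$. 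This uses acylindricity applied to the stabiliser of the \emph{pair} $(a,b)$, not to a single point, and this is exactly what makes the argument go through even when vertex stabilisers are infinite. If you want to push an asymptotic-cone strategy, you would need a replacement for ``$\phi^{\omega}$ is injective'' that captures this two-point control; as written, your argument does not.
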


Recall that an action of $G$ on $X$ is {\em acylindrical} if for some $l>0$ the stabilizers in $G$ of
pairs of points  in $X$ at distance $\ge l$ are finite of uniformly bounded sizes (in this case we say
that the action is $l$-acylindrical).

It is known \cite{Bowditch:tightgeod} that if $X$ is a tree or a locally finite hyperbolic graph
%and
%$G$ is its group of isometries
%or  {if $X$ is a Bowditch graph and $G$ is a group of isometries
%acting acylindrically on $X$,}
then {for every isometry $\alpha$ of $X$} some power $\alpha^k$ of $\alpha$ either fixes a point in $X$ (i.e. $\alpha$ is {\em elliptic}) or stabilizes a bi-infinite
geodesic $\pgot$ and $\la \alpha^k \ra$ acts on $\pgot$ co-compactly (i.e. $\alpha$ is {\em loxodromic}).
If a group $G$ {acts} on $X$ {by isometries and every element of $G$ is elliptic, then $G$ has a bounded orbit}.

In Section \ref{sec43}, we generalize Theorem \ref{th002} to groups acting on the so called {\em
Bowditch graphs}; our result implies the theorem of Behrstock stating that every pseudo-Anosov element
in a mapping class group of a surface is Morse. The notion of Bowditch graph (see a formal definition
in Section \ref{sec43}) is in a sense an abstract version of the curve complex of a surface. We
postulate existence of a set of {\em tight geodesics} which is invariant under $G$. The set should be
large enough so that every two vertices in $X$ are connected by a tight geodesic. On the other hand the
set of tight geodesics should be small enough so that, for example, for every pair of points $a, b\in
X$ and every point $c$ on a tight geodesic $[a,b]$, far enough from $a,b$, every ball $\Ball(c,r)$,
contains only finitely many points from tight geodesics connecting $a$ and $b$. Note that every
simplicial tree is a Bowditch graph where every finite geodesic is considered tight.

%Theorem \ref{th31} proves a conjecture of the third author : a group acting %acylindrically on a
%simplicial tree has cut-points in all asymptotic cones.

In Sections \ref{sec5} and \ref{sec6}, we study lattices in higher rank semi-simple Lie groups. We
conjecture that every such lattice has linear divergence. The conjecture is true for uniform lattices
because an asymptotic cone of a uniform lattice $\Gamma$ in a Lie group $L$ is bi-Lipschitz equivalent
to an asymptotic cone of $L$. If $L$ is semi-simple of higher rank and non-compact type, then every
asymptotic cone of $L$ is a Euclidean building of rank $\ge 2$ \cite{KleinerLeeb:buildings} and every
two points in the asymptotic cone belong to a 2-dimensional flat. Hence there are no cut-points in the
asymptotic cones of $L$. For non-uniform lattices the question is still open. We prove

\begin{theorem}[See Corollary \ref{c1} and Theorem \ref{thm:SLd:NoCutPt}]\label{th003}
Let $\Gamma$ be an irreducible lattice in a semi-simple Lie group of $\RR$-rank  $\ge 2$. Suppose that
$\Gamma$ is either of $\QQ$-rank 1 or {is} of the form $\SL_n(\OS)$ where $n\ge 3$, $S$ is a finite
set of valuations of a number field $K$ including all infinite valuations, and $\OS$ is the
corresponding ring of $S$-integers. Then $\Gamma$ has linear divergence.
\end{theorem}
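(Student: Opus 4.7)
\smallskip

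\noindent\textbf{Plan.} By the characterization established earlier in the paper (a finitely generated group has linear divergence if and only if none of its asymptotic cones has a cut-point, equivalently, any two points in the cone can be joined by a path avoiding any specified third point), it suffices in both cases to show that every asymptotic cone of $\Gamma$ is ``thick'' in this sense. I would treat the two cases separately.

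\smallskip

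\noindent\textbf{Case of $\Q$-rank $1$.} Here I would invoke the quasi-isometric model of such lattices (due to the first author): $\Gamma$ is quasi-isometric to a \emph{neutered symmetric space} $Y$ obtained from $X=G/K$ by removing a $\Gamma$-equivariant family of disjoint open horoballs, one for each $\Gamma$-conjugacy class of $\Q$-parabolic subgroups. By Kleiner--Leeb, the asymptotic cone of $X$ is a Euclidean building of rank at least two, so any two points lie in a common apartment; in particular, any two points in a $2$-flat can be connected around any third point by a path of linear length. The asymptotic cone of $Y$ is the cone of $X$ with an $\omega$-family of ``ultralimit horoballs'' removed. Given $a,b,c$ in the cone of $Y$, I would (i) choose a $2$-flat $F$ through $a,b$ in the cone of $X$ using the apartment property, perturbing $F$ so that it meets only the boundary horospheres of the removed regions transversally and $c\notin F$ (or $c$ lies on $F$ but admits a local bypass in a neighboring flat), and (ii) check that $F\cap \calc(Y)$ remains path-connected and non-separated at any single point, which reduces to a planar statement since the removed regions in $F$ form a countable collection of disjoint closed disks (by the exponential separation of horoballs in $X$, which becomes genuine disjointness in the cone).

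\smallskip

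\noindent\textbf{Case of $\SL_n(\OS)$, $n\ge 3$.} The principal inputs are: (a) bounded generation of $\Gamma$ by the elementary matrices $e_{ij}(x)$ (Carter--Keller and its generalizations to the $S$-arithmetic setting), giving $N=N(n,K,S)$ such that every element is a product of at most $N$ such elementaries; (b) the Lubotzky--Mozes--Raghunathan estimates, so that the word length of $e_{ij}(x)$ in $\Gamma$ is comparable to the ``Euclidean'' length of $x$ in the natural embedding of $\OS$ into $\prod_{v\in \calS}K_v$; and (c) the Steinberg commutator relation $e_{ij}(x)=[e_{ik}(x),e_{kj}(1)]$ for any $k\notin\{i,j\}$, which exists precisely because $n\ge 3$. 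Using (c) each elementary factor can be replaced by a path lying in a $2$-dimensional flat spanned by two commuting unipotent one-parameter subgroups (or a rank-$\ge 2$ analogue, in the $S$-arithmetic case). Given $A,B,C$, I would write $A^{-1}B$ as a bounded product of elementary factors and connect consecutive partial products by paths inside such $2$-flats; since each such flat has linear divergence, the path can be routed around $C$ with linear-length overhead. The total length is $O(d(A,B))$ because $N$ is bounded.

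\smallskip

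\noindent\textbf{Main obstacle.} The central difficulty in both cases is to simultaneously control path length and ensure the path avoids $B(C,r)$ for $r$ of the correct order. In Case 1 this amounts to verifying that the combinatorial structure of the removed horoball-limits in a $2$-flat in the cone never separates two given points from a third, a careful (but essentially planar) verification using the geometry of horoballs in higher-rank symmetric spaces. In Case 2, the combinatorial freedom in the bounded-generation decomposition of $A^{-1}B$ -- in particular, the choice of the auxiliary index $k$ in Steinberg's relation, which is available only for $n\ge 3$ -- must be exploited to guarantee that at least one decomposition, among the uniformly bounded family of choices, yields a path entirely outside $B(C,r)$; the verification interlaces the LMR distance estimates with a finite combinatorial case analysis.
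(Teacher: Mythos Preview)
Your $\Q$-rank $1$ sketch is in the right neighborhood but the ``planar'' reduction is not how the paper proceeds, and as stated it has a gap. In the cone of $Y$ an apartment $F$ can meet infinitely many limit horoballs, and there is no reason the complement $F\cap\calc(Y)$ should look like a plane minus disjoint closed disks, nor that a perturbation of $F$ makes all intersections ``transversal'' in a useful sense. The paper instead proves directly that \emph{horospheres} in the limit building have no cut-points (this is the technical heart, Proposition~\ref{qprop0}, proved via rank~$2$ spherical building geometry and the polytope description of apartment-horoball intersections), deduces a quantitative version in $X$ (Proposition~\ref{qprop1}), and then assembles the exterior path by taking a linear-divergence path in $X$ and replacing each horoball-crossing subarc by a path along the corresponding horosphere, using that horospheres are undistorted. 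Your ``local bypass in a neighboring flat'' intuition is close to Lemma~\ref{lrk2}, but the actual argument lives on horospheres, not on a single perturbed flat.

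For $\SL_n(\OS)$ there is a genuine gap. Bounded generation tells you $A^{-1}B=e_1\cdots e_N$ with $N$ bounded, but it gives you no control over where the partial products $A e_1\cdots e_k$ sit; nothing prevents them from passing through (a neighborhood of) $C$. Your appeal to the freedom in the Steinberg index $k$ only lets you reroute the path \emph{within} each elementary factor, not relocate the endpoints of those factors. The paper does not use bounded generation at all. It fixes a rich generating set (\S\ref{sub:sec:generators}), defines an ``exterior trajectory'' (a path of length $\approx\dist$ that stays outside a ball of radius $\kappa\cdot\dist(e,\{\gamma_1,\gamma_2\})$), and proves two lemmas: any $\gamma$ is exteriorly connected to an element of the unipotent group $M$ of the same size (Lemma~\ref{lem:gamma:EC:u}, an explicit ten-step column reduction that at every step preserves a ``large'' entry in the first column), and any two elements of $M$ are exteriorly connected (Lemma~\ref{lem:u:EC:u}, by shifting an LMR-type short word along a hyperbolic eigendirection to push it away from the identity). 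The running invariant ``some entry is large'' is exactly what guarantees the partial products stay far from $e$; your proposal has no analogue of this invariant.
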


In the proof of Theorem \ref{th003}, we heavily use the theorem of Lubotzky-Mozes-Raghunathan
\cite{LMR:Cyclic, LMR:metrics} which says that the word metric on an irreducible lattice in a
semi-simple Lie group of higher rank is quasi-isometric to the restriction of any left-invariant
Riemannian metric of the Lie group itself. In the case of $\QQ$-rank 1 we use the structure of
asymptotic cones. In that case every asymptotic cone is a product of Euclidean buildings
\cite{KleinerLeeb:buildings}. We use results from Dru\c tu \cite{Drutu:Nondistorsion,
Drutu:Remplissage, Drutu:Filling} and results about buildings from Kleiner-Leeb
\cite{KleinerLeeb:buildings}. In the case of $\SL_n(\OS)$ we explicitly construct a path connecting two
given matrices and avoiding a ball centered at a third matrix.

\subsection{Applications to subgroups of mapping class groups}

One of the most important results about subgroups of the mapping class groups is the Tits alternative
proved by McCarthy \cite{McCarthy:Tits} and Ivanov \cite{Ivanov:algebraic}: {\em every subgroup of a
mapping class group $\MCG (S)$ either contains a free non-Abelian subgroup or it contains a free
Abelian subgroup of rank at most $\xi (S)$ and of index at most $N=N(S)$.} Thus every subgroup of a
mapping class group not containing a free non-Abelian subgroup is virtually Abelian. This improved a
previous result of Birman-Lubotzky-McCarthy \cite{BLM} that a solvable subgroup of a mapping class
group must be virtually Abelian.

Proposition \ref{prop001} and Theorem \ref{th002} immediately imply the following new version of Tits
alternative.

\begin{theorem}\label{versionTits}
If a group $H$ does not have Morse elements and is a subgroup of the mapping class group $\MCG(S)$
(where $S$ is a surface with possible punctures), then $H$ stabilizes a (finite) collection of pairwise
disjoint simple closed curves on $S$.
\end{theorem}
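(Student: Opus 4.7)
The plan is to combine three ingredients: the Nielsen-Thurston classification of elements of $\MCG(S)$, the fact that every pseudo-Anosov element acts as a Morse isometry on $\MCG(S)$, and the classical structure theorem of Birman-Lubotzky-McCarthy for subgroups of $\MCG(S)$ containing no pseudo-Anosov element. Proposition~\ref{prop001} is precisely the bridge between the hypothesis on $H$ and information about the individual elements $H$ contains inside the ambient group.

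First I would invoke the refinement of Theorem~\ref{th002} proved in Section~\ref{sec43} for Bowditch graphs. The curve complex of $S$ fits into that framework: $\MCG(S)$ acts on it acylindrically (in the Bowditch sense), tight geodesics are available, and the loxodromic isometries are exactly the pseudo-Anosov mapping classes. The Bowditch-graph version of Theorem~\ref{th002} therefore yields (a new proof of) Behrstock's theorem that every pseudo-Anosov element of $\MCG(S)$ is a Morse element.

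Next I would rule out pseudo-Anosov elements inside $H$ using Proposition~\ref{prop001}. If $g \in H$ were pseudo-Anosov, then $g$ would be a Morse element of $\MCG(S)$ contained in $H$, and Proposition~\ref{prop001} would force $H$ to possess a Morse element with respect to its own word metric, contrary to the hypothesis. Hence every element of $H$ is, by the Nielsen-Thurston trichotomy, either periodic or infinite-order reducible. The theorem of Birman-Lubotzky-McCarthy then produces a common reduction system for $H$, i.e.\ a non-empty finite collection of pairwise disjoint isotopy classes of essential simple closed curves preserved by the whole subgroup, which is exactly the conclusion.

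The main --- and really the only non-formal --- obstacle is checking that the curve complex meets the abstract axioms of a Bowditch graph from Section~\ref{sec43} (existence of an $\MCG(S)$-invariant family of tight geodesics joining every pair of vertices, local finiteness of tight geodesics through any point far from the endpoints, and acylindricity of the action). Once that verification is in place, Theorem~\ref{versionTits} drops out as an essentially direct corollary of the material of Sections \ref{sec3}--\ref{sec43} combined with the Birman-Lubotzky-McCarthy reduction, with no further calculation.
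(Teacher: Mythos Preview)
Your proposal is correct and follows essentially the same route as the paper's proof: verify that the curve complex is a Bowditch graph (the paper cites \cite{Bowditch:tightgeod} for this), use the Bowditch-graph version of Theorem~\ref{th002} together with Proposition~\ref{prop001} to exclude loxodromic (= pseudo-Anosov) elements from $H$, and then invoke a structure theorem for subgroups without pseudo-Anosovs. The only discrepancy is the attribution of the last step: the paper cites Ivanov--McCarthy \cite{Ivanov:subgroups} rather than Birman--Lubotzky--McCarthy, since the statement that a subgroup all of whose elements have a power fixing a curve must globally stabilize a multicurve is Ivanov's extension of the BLM canonical-reduction machinery, not the BLM solvable-subgroup theorem itself.
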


\proof Indeed, the fact that the curve complex of a surface $S$ is a Bowditch graph is proved in
\cite{Bowditch:tightgeod}. By Proposition \ref{prop001} and Theorem \ref{th002}, $H$ cannot contain any
loxodromic elements for the action of $\MCG(S)$ on the curve complex of $S$. Thus some power of every
element in $H$ must fix a curve on $S$. By Ivanov and McCarthy \cite{Ivanov:subgroups} then $H$ stabilizes a
collection of pairwise disjoint simple closed curves on $S$.
\endproof

Note that this theorem can be used to give a proof of the Birman-Lubotzky-McCarthy theorem \cite{BLM}
cited above. Indeed, non-virtually cyclic solvable groups do not have cut-points in their asymptotic
cones \cite{DrutuSapir:TreeGraded}, so if $H$ is a solvable non-virtually cyclic subgroup of $\MCG(S)$,
then it must stabilize a collection of pairwise disjoint simple closed curves on $S$. Hence up to a
finite index, it must fix a curve $\gamma$ on $S$. Then up to finite index, $H$ is a subgroup of the
direct product of the cyclic subgroup generated by the Dehn twist about $\gamma$ and the restriction of
$H$ onto $S\setminus \gamma$, we get a solvable subgroup of the mapping class group of a surface of
smaller complexity (the complexity of $S$ is, by definition, $3g+p-3$ where $g$ is the genus, $p$ is
the number of punctures). Using an induction on the complexity, we deduce that up to a finite index,
$H$ is in the subgroup generated by Dehn twists about a collection of pairwise disjoint simple closed
curves, so $H$ is virtually Abelian (in fact the rank of the free Abelian subgroup of $H$ does not
exceed $3g+p-3$, and the index does not exceed the maximal size of a finite subgroup of $\MCG(S)$).

Theorem \ref{versionTits} implies, in particular, the theorem of
Farb-Kaimanovich-Masur \cite{FM,KaimanovichMasur}: {\em the mapping class group
of a surface does not contain lattices of semi-simple Lie groups of higher ranks.}
Indeed for
irreducible non-uniform lattices this {is so} because such lattices
contain distorted cyclic subgroups \cite{LMR:Cyclic}, while the
mapping class groups do not, according to Farb, Lubotzky
and Minsky \cite{FLM}. On the other hand, uniform higher rank lattices do not
have cut-points in their asymptotic cones (see above).
By Theorem \ref{versionTits}, if such a lattice is a subgroup of a mapping class group, then it must stabilize a multi-curve.\fn{Note that initially in \cite{KaimanovichMasur} a similar conclusion was obtained as a corollary of a description of the Poisson boundaries of mapping class groups. Another proof, using quasi-morphisms, was found by Bestvina and Fujiwara in \cite{BestvinaFujiwara}.}

The rest of the proof follows \cite{FM}.  Suppose that
such a lattice $\Gamma$ stabilizes a curve on $S$.
Then a finite index subgroup of $\Gamma$
would have a homomorphism with infinite image into the mapping class group of a surface
of smaller complexity (the surface cut along the multi-curve). By Selberg's theorem, we can assume that
$\Gamma$ is torsion-free. By Margulis' normal subgroup theorem, the
homomorphism must be injective, and we can proceed by induction on
the complexity of $S$. {When the complexity is zero, i.e. when the surface is the sphere with three holes, the mapping class group is finite.}

\subsection{Applications to subgroups of relatively hyperbolic groups}

Similar results are true for relatively hyperbolic groups. {In this paper, when speaking about relatively hyperbolic groups we always mean strongly relatively hyperbolic groups, in the sense of \cite{Gromov:hyperbolic}.}

Recall that in such groups as well a Tits
alternative holds: a subgroup in a relatively hyperbolic group is either parabolic or it contains a
free non-Abelian subgroup \cite{Koubi:croissance}. Using the fact that every non-parabolic element in a
relatively hyperbolic group is Morse (\cite{DrutuSapir:TreeGraded}, \cite{Osin:RelHyp}), we immediately
deduce

\begin{theorem} \label{relhyp} If $H$ is an infinite finitely generated group without Morse elements, then every isomorphic copy of $H$ in a finitely generated (strongly) relatively hyperbolic group is inside a parabolic subgroup.
\end{theorem}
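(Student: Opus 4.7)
The plan is to leverage Proposition \ref{prop001} together with the dichotomy for elements of a relatively hyperbolic group: in a (strongly) relatively hyperbolic group, every element of infinite order is either parabolic or Morse. This dichotomy is exactly the fact already recalled just before the statement (\cite{DrutuSapir:TreeGraded}, \cite{Osin:RelHyp}), so no new work is needed at this point.

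First, fix an embedding $H\hookrightarrow G$, where $G$ is a finitely generated group that is (strongly) relatively hyperbolic with respect to a family $\{P_\lambda\}$ of proper subgroups. Pick a finite generating set $S_H$ of $H$ and a finite generating set $S_G$ of $G$; we view $H$ both as a metric space in its own word metric $\dist_{S_H}$ and as a subset of $\Cay(G,S_G)$. Since by hypothesis $H$ has no Morse element for its intrinsic word metric, Proposition \ref{prop001} (or more precisely the subgroup version Proposition \ref{propsub} cited there) implies that $H$ contains no Morse element of $G$ either. In particular, no infinite order element $h\in H$ can act loxodromically in the sense which gives a Morse quasi-geodesic orbit in $G$.

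Next, invoke the characterization of Morse elements in relatively hyperbolic groups: every element of $G$ which is not conjugate into some parabolic subgroup $P_\lambda$ is Morse (\cite{DrutuSapir:TreeGraded}, \cite{Osin:RelHyp}). Combining with the previous step, every element of $H$ must be parabolic, i.e.\ contained in some conjugate $gP_\lambda g^{-1}$.

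The main obstacle is the final step: upgrading the pointwise statement ``every element of $H$ is parabolic'' to the global statement ``$H$ is contained in a single conjugate of some $P_\lambda$''. For this I would appeal to the classification of subgroups of relatively hyperbolic groups, which asserts that a finitely generated subgroup of $G$ either contains a loxodromic (equivalently Morse) element or is entirely contained in a single parabolic subgroup (see \cite{Osin:RelHyp}, and compare with the Tits alternative of \cite{Koubi:croissance} cited above). The key ingredient in that classification, which rules out a scenario where $H$ is covered by several distinct conjugates of parabolics, is that distinct maximal parabolic subgroups intersect along finite subgroups, so an infinite finitely generated subgroup whose every element is parabolic cannot split its elements among two distinct conjugates without producing a non-parabolic, hence Morse, product. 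Since $H$ is infinite and contains no Morse element of $G$, this classification forces $H$ to lie inside a single conjugate of some $P_\lambda$, which is the desired conclusion.
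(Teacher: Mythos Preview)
Your proof is correct and follows essentially the same approach as the paper: apply Proposition~\ref{prop001} to conclude that $H$ contains no Morse elements of $G$, use that non-parabolic (loxodromic) elements of a relatively hyperbolic group are Morse, and then invoke the subgroup classification from \cite{Osin:RelHyp} to pass from ``every element of $H$ is parabolic'' to ``$H$ lies in a single parabolic''. The paper's proof is the single sentence ``we immediately deduce'' after citing exactly these ingredients; you have simply unpacked that sentence, and in particular made explicit the final upgrading step, which the paper leaves implicit in its citation of \cite{Koubi:croissance} and \cite{Osin:RelHyp}.
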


Note that this does not follow from the quasi-isometry rigidity result of \cite{DrutuSapir:TreeGraded}
(stating that any quasi-isometric embedding of a subgroup without cut-points in some asymptotic cones
stays in a tubular neighborhood of a parabolic subgroup) since we do not assume that the subgroup is
undistorted, and since non-existence of Morse elements is weaker than non-existence of cut-points in
some asymptotic cones \cite{OOS}.

\subsection{Potential applications to $\Out(F_n)$}

{Recently Yael Algom-Kfir \cite{A-K} proved that any fully irreducible element of the outer automorphism group $\Out(F_n)$ is Morse. Since the role of fully irreducible automorphisms in $\Out(F_n)$ is similar to the role of pseudo-Anosov elements in mapping class groups,  this can potentially imply that $\Out(F_n)$ satisfies the same restrictions on subgroups as mapping class groups above. Unfortunately, a sufficient analog of the Ivanov-McCarthy theorem for subgroups of the mapping class group without pseudo-Anosov elements is not known yet for subgroups of $\Out(F_n)$ without fully irreducible elements.}

{\bf Acknowledgement.} {We thank the MPIM in Bonn for its
hospitality during the summer of 2006, when part of this work was
done.}

We are grateful to Denis Osin for helpful discussions. {We are also grateful to Curt Kent for pointing out some inaccuracies in the previous
version of Section 3.1. }

\section{General preliminaries}

\subsection{Definitions and general results}\label{defgenr}

{Throughout the paper we work with various discrete versions of paths and arcs. Let $(X, \dist )$ be a metric space. A \emph{finite $C$-path} is a sequence of points $a_1,a_2,...,a_n$ in $X$ with $\dist(a_i,a_{i+1})\le C$. \emph{Infinite} and \emph{bi-infinite $C$-paths} are defined similarly, with sets of indices $\pm \N$, respectively $\Z$.}

{Let $L\geq 1$ and $C\geq 0$ be two constants. An $(L,C)$-{\em quasi-geodesic} is a map $\pgot\colon I\to X$, where $I$ is an interval of the real line, such
that
$$
\frac{1}{L}|s-t|-C\leq \dist (\pgot (s),\pgot (t))\leq
L|s-t|+C, \hbox{ for all } s,t\in I.
$$ For a quasi-geodesic $\pgot : [0,d ]\to X$ we call $d$ its \emph{quasi-length}. For a concatenation
of quasi-geodesics, its \emph{quasi-length} is the sum of the quasi-lengths of its components. If $I=[a,\infty)$ then $\pgot$ (or its image $\pgot (I)$) is called either $(L,C)$-\emph{quasi-geodesic ray} or \emph{infinite $(L,C)$-quasi-geodesic}. If $I=\mathbb{R}$ then  $\pgot $ (or its image) is called \emph{bi-infinite $(L,C)$-quasi-geodesic}. When the constants $L,C$ are irrelevant they are not mentioned.}

 {Quasi-geodesics may not be continuous, for instance $C$-paths may be (images of) quasi-geodesics. Since we tacitly identify finitely generated groups with sets of vertices in their Cayley graphs, we shall sometimes refer to sequences of elements in a group as ``quasi-geodesics'', meaning that they compose a $C$-path which is image of a quasi-geodesic.}

 {We call $(L,0)$-quasi-isometries (quasi-geodesics) $L$-\emph{bi-Lipschitz maps
(paths)}, or simply \emph{bi-Lipschitz maps (paths)}.}

 {Let $\omega$ be an ultrafilter. All ultrafilters we use are assumed to be non-principal. A statement $P(n)$ depending on $n\in \N$ \emph{holds $\omega$-almost surely} ($\omega$-a.s.)  if the set of $n$'s where $P(n)$ holds belongs to the ultrafilter. The $\omega$-\emph{limit}  $\lim_\omega x_n$ of a sequence of numbers $(x_n)$ is the number $x$ (possibly $\pm \infty$) such that for every neighborhood $U$ of $x$, $x_n$ is in $U$ $\omega$-almost surely.}

 {An \emph{asymptotic cone} $\Con^\omega(X,(o_n),(d_n))$ of a metric space $X$, corresponding to a non-principal ultrafilter $\omega$, a sequence of observation points $(o_n)$ in $X$, and a sequence of positive scaling constants $d_n$ such that $\lim_\omega d_n=\infty$, is the quotient space of the set of sequences $\Pi_b X=\{(x_n)\mid \lim_\omega\left( \dist(x_n,o_n)/d_n \right) \mbox{ finite }\}$ by the equivalence relation $(x_n)\equiv(y_n)$ if $\lim_\omega \left( \dist(x_n,y_n)/d_n \right)=0$. The equivalence classes composing the cone are denoted by $(x_n)^\omega$, and the distance function on the cone is defined
by $\dist\left((x_n)^\omega,(y_n)^\omega\right)=\lim_\omega\left( \dist(x_n,y_n)/d_n \right)$.} For details on asymptotic cones we refer the reader to \cite{DriesWilkie}, \cite{Gromov:Asymptotic},
\cite{Drutu:survey}.

For every sequence of subsets $A_n$ in a metric space $X$, and every asymptotic
cone $\calc=\Con^\omega(X,(o_n),(d_n))$, the $\omega$\emph{-limit} $\lio{A_n}$ is defined as the set of all
elements $(a_n)^\omega\in\calc$ where $a_n\in A_n$. It is not difficult to check that the
$\omega$-limit of any sequence of geodesics $[a_n,b_n]$ of $X$ is always either

\begin{itemize}
\item empty (if no point $(c_n)^\omega$, $c_n\in [a_n,b_n]$, is in $\calc$), or
\item a finite geodesic $[(a_n)^\omega, (b_n)^\omega]$ (if both $(a_n)^\omega$, $(b_n)^\omega$ are in $\calc$), or
\item a bi-infinite geodesic (if neither $(a_n)^\omega$ nor $(b_n)^\omega$ is in $\calc$, but some point $(c_n)^\omega$, $c_n\in [a_n,b_n]$, is in $\calc$), or
\item a geodesic ray (if exactly one of the points $(a_n)^\omega, (b_n)^\omega$ is in $\calc$).
\end{itemize}

A similar statement is true for quasi-geodesics. Only the $\omega$-limit of a sequence of (finite)
quasi-geodesics is either empty or a bi-Lipschitz embedded interval, ray or line.

In an asymptotic cone $\calc$, a geodesic which is equal to the limit of a sequence of geodesics in $X$
is called a \emph{limit geodesic}.

\begin{remark} \label{remlim1} Note that there exist examples of groups (see \cite{Drutu:RelHyp}) such that ``most"
geodesics in their asymptotic cones are not limit geodesics.
\end{remark}

For every subset $A$ in a metric space, denote by $\nn_\delta (A)$ the open $\delta$-tubular
neighborhood of $A$, i.e. the set of points $x$ satisfying $\dist(x,A) < \delta$; denote by
$\onn_\delta (A)$ the closed $\delta$-tubular neighborhood of $A$, i.e. the set of points $x$
satisfying $\dist(x,A)\leq \delta$.

\begin{convention}\label{cvttt}
In what follows, the setting is that of a geodesic metric space, and it is assumed that for some
$\lambda \geq 1$ and $\kappa \geq 0$ a collection $\ttt$ of $(\lambda , \kappa )$-quasi-geodesics is
chosen, so that:
\begin{enumerate}
    \item  any two points in the metric space are joined by at least
one quasi-geodesic in $\ttt$;
    \item any sub-quasi-geodesic of a quasi-geodesic in $\ttt$ is also in $\ttt$.
\end{enumerate}
\end{convention}

A concatenation of $k$ quasi-geodesics in $\ttt$ is called a $k$-\emph{piecewise $\ttt$ quasi-path.} If
a quasi-path $\pgot$ is obtained as the concatenation of finitely many quasi-geodesics
$\pgot_1,...\pgot_k$ then we write $\pgot = \pgot_1\sqcup \cdots \sqcup \pgot_k$. It is easily seen
that when $\kappa=0$ such a quasi-path is $\lambda$-Lipschitz. Therefore in this case we call $\pgot$ a
piecewise $\ttt$ \emph{path.}

\begin{lemma}\label{piecewiseg}
Suppose that $\kappa=0$, i.e. $\ttt$ consists of $\lambda$-bi-Lipschitz paths. Let $Y$ be a geodesic
metric space, let $B$ be a closed set in $Y$ and let $x, y$ be in the same connected component of $Y
\setminus B$. Then there exists a piecewise $\ttt$ path $\pgot$ in $Y$ connecting $x$ and $y$ and not
intersecting $B$.
\end{lemma}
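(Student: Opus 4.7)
The plan is to first connect $x$ and $y$ by an ordinary continuous path inside $Y\setminus B$, and then replace that path by a finite concatenation of elements of $\ttt$, exploiting that $\lambda$-bi-Lipschitz paths between nearby points stay close to their endpoints.

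First I would establish that $x$ and $y$ can be joined by a continuous arc $\gamma\colon[0,1]\to Y\setminus B$. Because $Y$ is geodesic, every open ball in $Y$ is path-connected (via radial geodesics from the centre), so $Y$ is locally path-connected; since $B$ is closed, $Y\setminus B$ is open, and open subsets of a locally path-connected space are locally path-connected. The connected component of $Y\setminus B$ containing $x$ and $y$ is therefore path-connected, yielding $\gamma$.

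Next, using compactness of $\gamma([0,1])$ together with the assumption that $B$ is closed and disjoint from $\gamma([0,1])$, set $\epsilon:=\dist(\gamma([0,1]),B)>0$. By uniform continuity of $\gamma$, I would pick a partition $0=t_0<t_1<\cdots<t_n=1$ with $\dist(\gamma(t_{i-1}),\gamma(t_i))<\epsilon/\lambda^2$ for every $i$. By Convention~\ref{cvttt}, for each $i$ choose $\pgot_i\in\ttt$ joining $\gamma(t_{i-1})$ to $\gamma(t_i)$; since $\kappa=0$ the map $\pgot_i\colon[0,D_i]\to Y$ is $\lambda$-bi-Lipschitz. Applying the lower bound at the endpoints gives $D_i\le\lambda\,\dist(\gamma(t_{i-1}),\gamma(t_i))$, and then for every $s\in[0,D_i]$,
\[
\dist(\pgot_i(s),\gamma(t_{i-1}))\le \lambda s\le \lambda D_i\le \lambda^{2}\,\dist(\gamma(t_{i-1}),\gamma(t_i))<\epsilon.
\]
Since $\dist(\gamma(t_{i-1}),B)\ge\epsilon$, each $\pgot_i$ is disjoint from $B$, and the concatenation $\pgot:=\pgot_1\sqcup\cdots\sqcup\pgot_n$ is the required piecewise $\ttt$-path from $x$ to $y$ avoiding $B$.

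I do not expect any serious obstacle. The only mildly delicate point is the first step, which is precisely where the geodesic-metric hypothesis is used: without local path-connectedness of $Y$ (inherited by $Y\setminus B$) the statement would fail, since a merely connected component of $Y\setminus B$ need not contain any continuous arc to approximate. Once the mesh $\epsilon/\lambda^{2}$ is identified, the remainder is a mechanical application of the bi-Lipschitz inequality.
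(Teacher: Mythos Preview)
Your proof is correct, but it takes a genuinely different route from the paper's. You first upgrade connectedness to path-connectedness of $Y\setminus B$ (using that geodesic spaces are locally path-connected), then discretize a continuous path $\gamma$ and replace short segments by $\ttt$-paths, using the bi-Lipschitz estimate to keep each piece inside a ball of radius $\epsilon$ disjoint from $B$. The paper instead runs a direct clopen argument: it defines $Z_x$ to be the set of points of $Y\setminus B$ reachable from $x$ by piecewise $\ttt$ paths, shows $Z_x$ is both open and closed in $Y\setminus B$ (using the same local observation that any two points in a ball of radius $\epsilon/\lambda^2$ are joined by a $\ttt$-path staying in the ambient $\epsilon$-ball), and concludes $Z_x$ is the whole connected component. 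Both arguments hinge on the same local estimate; your version is more constructive and gives an explicit bound on the number of pieces in terms of the modulus of continuity of $\gamma$, while the paper's version is shorter and sidesteps the intermediate appeal to path-connectedness.
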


\proof The set $Y \setminus B$ is open. Hence for every point $a$ is in $Y\setminus B$ there exists an
open ball $\Ball (a,\epsilon)$ around $a$ which is disjoint from $B$. Any two points inside
$\Ball (a,\epsilon/\lambda^2)$ are connected by a piecewise $\ttt$ path (with at most two pieces) inside
$\Ball (a,\epsilon)$. Therefore the set $Z_x$ of all points in $Y\setminus B$ reachable from $x$ via
piecewise $\ttt$ paths is open. The set $Z_x$ is also closed since for every point $z\in Y\setminus B$
outside $Z_x$, there exists an open ball $\Ball (z,\delta)$ around $z$ that does not intersect $B$; then the
ball $\Ball (z,\delta /\lambda^2)$ cannot intersect $Z_x$ for otherwise there would be a piecewise $\ttt$
path connecting $x$ and $z$. Since $Z_x$ is obviously connected, it coincides with the connected
component of $x$ in $Y\setminus B$. Hence $y\in Z_x$.
\endproof

%\begin{remark} \label{remlim}
%If the space $Y$ in Lemma \ref{piecewiseg} is an ultralimit of metric spaces (in particular an
%asymptotic cone) then $\pgot$ may be chosen to be a piecewise limit geodesic, because every pair of
%points in $Y$ can be joined by a limit geodesic.
%\end{remark}

\begin{lemma} \label{lemlimprelim}
Let $(\calc ,\dist )$ be a geodesic metric space, assume that $\ttt$ is a collection of geodesics (i.e.
$\lambda =1$ and $\kappa =0$) and let $\pgot=[a,b]\cup [b,c]$ be a piecewise $\ttt$ simple path in
$\calc$ joining points $a$ and $c$. For every $x,y\in \pgot$ denote by $\ell (x,y)$ the length of the
shortest sub-arc of $\pgot $ of endpoints $x,y$. Let $b'\in [a,b)$, $b''\in (b,c]$ and $C>0$ be such
that
$$
\dist (x,y)\geq \frac{1}{C} \ell (x,y)\mbox{ for every }x\in [a,b']\mbox{ and } y\in [b'',c]\, .
$$

If $d' \in [a,b']$ and $d''\in [b'',c]$ minimize the distance then $\pgot'=[a,d']\cup [d',d''] \cup
[d'', c]$ is a $C'$-bi-Lipschitz path, where $C'=\max (C,3)$ and $[d',d'']$ is a geodesic in $\ttt$
joining $d'$ and $d''$.
\end{lemma}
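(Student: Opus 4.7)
The plan is to parametrize $\pgot'=[a,d']\cup[d',d'']\cup[d'',c]$ by arclength and to verify that the resulting parametrization is $C'$-bi-Lipschitz with $C'=\max(C,3)$. The direct Lipschitz bound with constant $1$ is automatic, since each of the three pieces is a geodesic in $\ttt$ and a concatenation of $1$-bi-Lipschitz paths is $1$-Lipschitz. The content of the lemma is therefore the reverse inequality: for all $p,q$ on $\pgot'$ one must show $\dist(p,q)\geq \ell'(p,q)/C'$, where $\ell'$ denotes arclength along $\pgot'$. I would proceed by case analysis on which of the three pieces contain $p$ and $q$.

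If $p$ and $q$ lie in a common piece, then $\dist(p,q)=\ell'(p,q)$ since each piece is a geodesic, and nothing has to be checked. If $p\in[a,d']$ and $q\in[d'',c]$ (the extreme-pieces case), the triangle inequality $\dist(d',d'')\leq \dist(d',b)+\dist(b,d'')$ implies $\ell'(p,q)\leq \ell_\pgot(p,q)$, where $\ell_\pgot$ denotes arclength along the original path $\pgot$; indeed the two quantities share the common summands $\dist(p,d')$ and $\dist(d'',q)$, and the middle contribution along $\pgot'$ is shorter than the corresponding contribution along $\pgot$. Since $p\in[a,b']$ and $q\in[b'',c]$, the standing hypothesis then gives $\dist(p,q)\geq \ell_\pgot(p,q)/C\geq \ell'(p,q)/C$.

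The remaining, and main, case is when $p$ and $q$ lie in adjacent pieces; by symmetry I may take $p\in[a,d']$ and $q\in[d',d'']$. Set $\alpha=\dist(p,d')$ and $\beta=\dist(d',q)$, so $\ell'(p,q)=\alpha+\beta$. Since $p\in[a,b']$ and $d''\in[b'',c]$, the minimality of the pair $(d',d'')$ forces $\dist(p,d'')\geq \dist(d',d'')$; combining this with the triangle inequality $\dist(p,d'')\leq \dist(p,q)+\dist(q,d'')$ and the identity $\dist(q,d'')=\dist(d',d'')-\beta$ (which uses that $[d',d'']$ is a geodesic), I obtain $\dist(p,q)\geq \beta$. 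The standard triangle inequalities inside the concatenation $[a,d']\cup[d',q]$ also yield $\dist(p,q)\geq |\alpha-\beta|$. A short arithmetic check, splitting according to whether $\alpha\leq 2\beta$ or $\alpha\geq 2\beta$, shows $\max(\beta,|\alpha-\beta|)\geq(\alpha+\beta)/3$, hence $\dist(p,q)\geq \ell'(p,q)/3$. Putting all cases together gives $C'=\max(C,3)$.

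The main obstacle is precisely this adjacent-pieces case: one has to use the minimality of $(d',d'')$ (to bound $\dist(p,d'')$ from below) and the geodesic nature of the middle piece (to compute $\dist(q,d'')$ exactly) simultaneously, and then combine the two resulting triangle-inequality estimates in a way that yields the numerical factor $3$. The other two cases reduce immediately either to the defining property of a geodesic or to the standing hypothesis on $[a,b']\times[b'',c]$.
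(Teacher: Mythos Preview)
Your proof is correct and follows essentially the same approach as the paper: the same three-case analysis, the same use of the hypothesis for the extreme-pieces case, and the same key use of the minimality of $(d',d'')$ to obtain $\dist(p,q)\geq\beta$ in the adjacent-pieces case. The paper finishes that case a touch more directly---from $\dist(p,q)\geq\beta$ it gets $\alpha\leq\dist(p,q)+\beta\leq 2\dist(p,q)$, hence $\alpha+\beta\leq 3\dist(p,q)$---avoiding your $|\alpha-\beta|$ split, but this is a cosmetic difference.
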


\proof Given $x,y\in \pgot'$ denote by $\ell' (x,y)$ the length of the shortest sub-arc of $\pgot' $ of
endpoints $x,y$. If $x\in [a,d']$ and $y\in [d'',c]$ then by hypothesis
$$
\dist (x,y)\geq \frac{1}{C} \ell (x,y) \geq \frac{1}{C} \ell' (x,y)\, .
$$

Hence it remains to study the case when one of the two points $x$ and $y$ is on $[d',d'']$. Assume it
is $y$. Likewise assume that $x\in [a,d']$ (the other case is similar).

If $\dist (x,y) < \dist (d',y)$ then $\dist (x,d'')< \dist (d',d'')$, contradicting the choice of
$d',d''$. Thus, $\dist (d',y)\leq \dist (x,y)$, hence $\dist (x,d')\leq 2\dist (x,y)$ and
$\ell'(x,y)=\dist (x,d')+\dist (d',y)\leq 3\dist (x,y)\leq C' \dist (x,y)$.
\endproof

\begin{lemma} \label{lemlim0}
Let $(\calc ,\dist )$ be a geodesic metric space, assume that $\ttt$ is a collection of geodesics, and
let $\pgot$ be a piecewise $\ttt$ simple path in $\calc$ joining points $A$ and $B$. Then for every
$\delta$ small enough there exists a constant $C$ and a piecewise $\ttt$ simple path $\pgot'$ at
Hausdorff distance at most $\delta$ from $\pgot$ and such that $\pgot'$ is a $C$-bi-Lipschitz path.
\end{lemma}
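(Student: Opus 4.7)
The plan is to round off each corner of $\pgot$ by invoking Lemma~\ref{lemlimprelim} locally, and then to assemble a global bi-Lipschitz bound for the modified path via a compactness argument. Decompose $\pgot=\pgot_1\sqcup\cdots\sqcup\pgot_k$ with corners $b_1,\ldots,b_{k-1}$, parametrized by arc length so that $\pgot(t_i)=b_i$. Since $\pgot$ is a simple compact path, its arc-length parametrization is a topological embedding of $[0,L]$ into $\calc$; hence there exists $\delta_0>0$ such that, whenever $\delta<\delta_0$, the balls $\Ball(b_i,2\delta)$ are pairwise disjoint, miss the endpoints $A$ and $B$, and each meets $\pgot$ only in the sub-arc $\pgot([t_i-\delta_0,t_i+\delta_0])$. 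Fix any $\delta<\delta_0$.

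At each corner $b_i$ set $a_i=\pgot(t_i-\delta)$, $c_i=\pgot(t_i+\delta)$, $b_i'=\pgot(t_i-\delta/2)$, $b_i''=\pgot(t_i+\delta/2)$. Simplicity of $\pgot$ forces $\pgot_i\cap\pgot_{i+1}=\{b_i\}$, so the compact sub-arcs $[a_i,b_i']$ and $[b_i'',c_i]$ of $\calc$ are disjoint, whence $\rho_i:=\dist([a_i,b_i'],[b_i'',c_i])>0$. Since $\ell(x,y)\le 2\delta$ for $x\in[a_i,b_i']$ and $y\in[b_i'',c_i]$, the hypothesis of Lemma~\ref{lemlimprelim} holds with constant $C=2\delta/\rho_i$; the lemma produces minimizers $d_i'\in[a_i,b_i']$, $d_i''\in[b_i'',c_i]$ such that the three-piece arc $[a_i,d_i']\cup[d_i',d_i'']\cup[d_i'',c_i]$ is $C_i$-bi-Lipschitz with $C_i=\max(3,2\delta/\rho_i)$. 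Form $\pgot'$ by replacing, at each corner $b_i$, the two-piece sub-arc $[a_i,b_i]\cup[b_i,c_i]$ by the above three-piece arc. Every chord $[d_i',d_i'']$ has length at most $\delta$ (by the triangle inequality through $b_i$) and lies in $\Ball(b_i,\delta)$; by the choice of $\delta_0$ the modifications at distinct corners do not overlap, introduce no new self-intersections with the remainder of $\pgot$, and $\pgot'$ is a simple piecewise $\ttt$ path of Hausdorff distance at most $\delta$ from $\pgot$.

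The main obstacle is upgrading the local estimates of Lemma~\ref{lemlimprelim} to a \emph{global} bi-Lipschitz bound on $\pgot'$. Parametrize $\pgot'$ by arc length as $\phi\colon[0,L']\to\calc$; this map is $1$-Lipschitz and, by simplicity of $\pgot'$, injective, hence a homeomorphism onto its compact image. The continuous function $(s,t)\mapsto\dist(\phi(s),\phi(t))/|s-t|$ is strictly positive off the diagonal and, for any fixed $\eta>0$, has a positive lower bound $\alpha_\eta$ on the compact set $\{(s,t):|s-t|\ge\eta\}$. For $\eta$ chosen sufficiently small (smaller than the shortest piece of $\pgot'$ and than the $\pgot'$-arc-distance between any two consecutive new corners), any pair $(s,t)$ with $|s-t|<\eta$ has $\phi(s),\phi(t)$ lying either inside a single geodesic piece of $\pgot'$ (where $\dist(\phi(s),\phi(t))=|s-t|$) or inside a single modified corner region $[a_i,d_i']\cup[d_i',d_i'']\cup[d_i'',c_i]$, where Lemma~\ref{lemlimprelim} yields $\dist(\phi(s),\phi(t))\ge|s-t|/C_i$. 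The uniform constant $C:=\max(\max_iC_i,1/\alpha_\eta)$ then provides the required global bi-Lipschitz bound.
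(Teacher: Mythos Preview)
Your overall strategy matches the paper's: round off each corner via Lemma~\ref{lemlimprelim}, then upgrade to a global bi-Lipschitz bound by compactness away from the diagonal and the local lemma bound near the diagonal. There is, however, a genuine gap.

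You minimize $\dist$ over $[a_i,b_i']\times[b_i'',c_i]$, and nothing prevents the minimizer $d_i'$ from landing on the boundary point $a_i$ (or $d_i''$ on $c_i$). When $d_i'=a_i$, two things break. First, your simplicity argument for $\pgot'$ fails: the intended contradiction ``$\dist(p,d_i'')<\dist(d_i',d_i'')$ violates minimality'' needs $p\in[a_i,b_i']$, but a point $p\ne a_i$ on the retained sub-arc $[d_{i-1}'',a_i]\subset\pgot_i$ lies \emph{outside} that interval, so the new chord $[d_i',d_i'']$ could meet $\pgot_i$ beyond $a_i$. Second, even granting simplicity, your near-diagonal dichotomy collapses: the buffer $[a_i,d_i']$ is a single point, so a pair with $\phi(s)$ just before $a_i$ on $\pgot_i$ and $\phi(t)$ on $[a_i,d_i'']$ sits neither on a single piece nor inside the corner region from Lemma~\ref{lemlimprelim}, for any $\eta>0$. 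You then have no bi-Lipschitz control at this new corner, and in a general geodesic space there is no reason the two pieces cannot meet at ``angle zero'' there.

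The paper sidesteps exactly this by applying Lemma~\ref{lemlimprelim} to the \emph{full} adjacent edges $[v_{i-1},v_i]\cup[v_i,v_{i+1}]$, choosing cut-offs $x_0,y_0$ with $\dist(x_0,y_0)=\tau/2$, where $\tau$ is the distance from $[v_{i-1},v_i]\setminus\Ball(v_i,\delta)$ to $[v_i,v_{i+1}]$ (and symmetrically). This forces the minimizers $x_1,y_1$ into $\Ball(v_i,\delta)$, hence strictly into the \emph{interior} of the edges; the resulting bi-Lipschitz three-piece path then reaches all the way to $v_{i-1}$ and $v_{i+1}$, so consecutive corner regions overlap and cover $\pgot'$. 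Your proof is repaired by the same device: minimize over larger intervals and arrange the cut-offs so that the minimizer is guaranteed to lie in the interior.

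(Minor: the chord $[d_i',d_i'']$ lies in $\Ball(b_i,2\delta)$, not $\Ball(b_i,\delta)$ as you write, but your $\delta_0$ already handles $2\delta$-balls, so this is harmless.)
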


\proof

%The piecewise-geodesic path $\pgot$ being simple, it can be parameterized by the arc length; let $\cgot : [0,\ell ]\to \pgot$ be this parametrization, $\cgot (0)=A$. For every $x\in \pgot$ we denote by $t_x$
%its pre-image by $\cgot$.

We denote the vertices of $\pgot$ in consecutive order by $v_0=A, v_1,...., v_k=B$, and by $[v_i,
v_{i+1}]$ the consecutive edges of $\pgot$. For any two points $x,y$ on $\pgot$ we denote by $\ell
(x,y)$ the length of the shortest sub-arc of $\pgot$ of endpoints $x,y$.

Let $\phi :\pgot \times \pgot \setminus \Delta \to \mathbb{R}_+$ be the map defined by $\phi (x,y)=
\frac{\dist (x,y)}{\ell (x,y)}$, where $\Delta = \{(x,x)\mid x\in \pgot \}$. Note that the maximal
value of $\phi $ is $1$. If the infimum of $\phi$ is $1/K>0$ then $\pgot $ is a $K$-bi-Lipschitz path.
We therefore assume that the minimal value of $\phi$ is zero.

As $\phi$ is a continuous function, if $x\in [v_i, v_{i+1}]$ and $y\in [v_j, v_{j+1}]$ with $\{v_i,
v_{i+1}\}\cap \{v_j, v_{j+1}\}=\emptyset$ then $\phi (x,y)\geq C_{ij}$ for some constant $C_{ij}>0$.
Hence there must exist some $i\in \{1,2,...,k-1\}$ such that the infimum of $\phi$ on $[v_{i-1},
v_{i}]\times [v_i, v_{i+1}]$ is zero. We now show how $\pgot$ can be slightly modified between
$v_{i-1}$ and $v_{i+1}$ so that between these two vertices the function $\phi$ has a positive infimum.
 In what follows we always assume that $x\in [v_{i-1},
v_{i}]$ and $y\in [v_i, v_{i+1}]$.

Let $\delta >0$ be such that the distance from every vertex $v_i$ to $\pgot\setminus \{ [v_{i-1},
v_{i}]\cup [v_i, v_{i+1}]\}$ is at least $2\delta $. Consider the distance from $[v_{i-1}, v_{i}]
\setminus \Ball (v_i,\delta )$ to $[v_i, v_{i+1}]$, the distance from $[v_{i-1}, v_{i}]$ to $[v_i,
v_{i+1}]\setminus \Ball (v_i,\delta )$, and let $\tau >0$ be the minimum between the two distances.

There exist $x_0\in [v_{i-1}, v_{i})$ and $y_0\in (v_i, v_{i+1}]$ such that $\dist
(x_0,y_0)=\frac{\tau}{2}$. Clearly both $x_0$ and $y_0$ are in $\Ball (v_i, \delta )$. Now let $x_1\in
[v_{i-1},x_0]$ and $y_1\in [y_0, v_{i+1}]$ be a pair of points minimizing the distance. As $\dist
(x_1,y_1)\leq \dist (x_0,y_0)=\frac{\tau}{2}$ it follows that both points are again in $\Ball (v_i, \delta
)$. Consider $\pgot'$ the piecewise $\ttt$ path obtained by replacing in $\pgot$ the sub-arc $[v_{i-1},
v_{i}]\cup [v_i, v_{i+1}]$ with $[v_{i-1}, x_1]\cup [x_1,y_1] \cup [y_1, v_{i+1}]$, where $[x_1,y_1]$
is in $\ttt$. The fact that $x_1,y_1\in \Ball (v_i, \delta )$ implies that $\pgot$ and $\pgot'$ are at
Hausdorff distance at most $\delta$. The fact that $\pgot'$ is also simple follows from the choice of
$\delta$.

Let $\phi':\pgot' \times \pgot' \setminus \Delta' \to \RR_+$ be the function similar to $\phi$ defined
with respect to $\pgot'$ (it is distinct from $\phi$ even on common points, as the arc-lengths
changed). Lemma \ref{lemlimprelim} implies that the infimum of $\phi'$ is positive between $v_{i-1}$
and $v_{i+1}$.

By eventually repeating the same modification in all vertices near which $\phi$ approaches the zero
value, we obtain in the end a $C$-bi-Lipschitz path piecewise $\ttt$, and at Hausdorff distance
$\delta$ from $\pgot$.

\endproof

%A $(\lambda,\kappa)$-quasi-geodesic $\q$ that is $\lambda$-Lipschitz will be called a {\em
%$(\lambda,\kappa)$-almost geodesic}. Recall that every $(\lambda,\kappa)$-quasi-geodesic in a geodesic
%metric space is at bounded (in terms of $(\lambda,\kappa)$) distance from a $(\lambda + \kappa,\kappa)$-almost geodesic with the same endpoints \cite[Proposition 8.3.4]{Buragos}.

\medskip

\begin{lemma} \label{lemlim1}
Let $X$ be a geodesic metric space, let $\lambda\geq 1,\, \kappa \ge 0$, and let $L$ be a collection of
$(\lambda , \kappa )$-quasi-geodesics in $X$ satisfying the conditions in Convention \ref{cvttt}.

Let $\calc=\Con^\omega(X,(o_n),(d_n))$ be an asymptotic cone of $X$, and let $L_\omega$ be the
collection of limits of sequences of quasi-geodesics in $L$.
\begin{enumerate}
    \item\label{quasi} Let $\pgot$ be a piecewise $L_\omega$ path in $\calc$ joining distinct points $A=(a_n)^\omega$ and
$B=(b_n)^\omega$. Then there exists $k$ and $D$ such that $\pgot = \lio{\pgot_n}$, where each $\pgot_n$
is a $k$-piecewise $L$ quasi-path joining $a_n$ and $b_n$, moreover each $\pgot_n$ is of quasi-length
$\leq D\dist(a_n,b_n)$.
    \item\label{geo} Assume that $L$ is a collection of geodesics, and let $\pgot$ be as in (\ref{quasi}),
moreover $\pgot$ a $C$-bi-Lipschitz path. Then there exists $C'=C'(C)$ and a natural number $k\ge 1$
such that $\pgot = \lio{\pgot_n}$, where each $\pgot_n$ is a $C'$-bi-Lipschitz path joining $a_n$ and
$b_n$ in $X$, moreover  $\pgot_n$ is a  $k$-piecewise $L$ path.
\end{enumerate}
\end{lemma}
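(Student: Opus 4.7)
Write $\pgot = \pgot^1 \sqcup \cdots \sqcup \pgot^m$ with breakpoints $A = P_0,\ldots,P_m = B$; each $\pgot^i = \lio{\pgot^i_n}$ for some $(\lambda,\kappa)$-quasi-geodesic $\pgot^i_n\in L$ whose endpoints $q^{i-1}_n,r^i_n$ satisfy $(q^{i-1}_n)^\omega = P_{i-1}$ and $(r^i_n)^\omega = P_i$. These sequences do not in general glue together nor begin/end at $a_n,b_n$, so I insert \emph{connector} quasi-geodesics from $L$ (using Convention~\ref{cvttt}(1)): $\pgot^0_n$ from $a_n$ to $q^0_n$, $\pgot'^i_n$ from $r^i_n$ to $q^i_n$ for each interior breakpoint, and $\pgot^{m+1}_n$ from $r^m_n$ to $b_n$. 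Set $\pgot_n := \pgot^0_n \sqcup \pgot^1_n \sqcup \pgot'^1_n \sqcup \cdots \sqcup \pgot^m_n \sqcup \pgot^{m+1}_n$, a $(2m+1)$-piecewise $L$ quasi-path from $a_n$ to $b_n$. Because both endpoints of every connector share a single cone-limit, the connector contributes only a single point to $\lio{\pgot_n}$, giving $\lio{\pgot_n}=\pgot$. The quasi-length is bounded by $\sum_i (\lambda\dist(q^{i-1}_n,r^i_n)+\lambda\kappa) + O_m(\lambda\kappa) + o(d_n)$, hence by some $D_0 \, d_n$ $\omega$-a.s.; dividing by $\dist(a_n,b_n)\sim d_n\dist_\calc(A,B)>0$ yields the constant $D$.

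\textbf{Plan for Part (2).} Now $L$ is geodesics ($\lambda=1,\kappa=0$). Construct $\pgot_n$ as in Part~(1); the possible bi-Lipschitz failure is localized at the $m-1$ limit-breakpoints $P_i$, where the $o(d_n)$-long connector $\pgot'^i_n$ combined with adjacent ends of the geodesics $\pgot^i_n,\pgot^{i+1}_n$ may double back. I smooth each such corner by Lemma~\ref{lemlimprelim}. Pick $\eta_n\to 0$ slowly enough that all estimates below hold $\omega$-a.s., and let $b'_i(n)\in\pgot^i_n$, $b''_i(n)\in\pgot^{i+1}_n$ be at arc-distance $\eta_n d_n$ from $r^i_n,q^i_n$ respectively, both of cone-limit $P_i$. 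The $C$-bi-Lipschitz property of $\pgot$ in $\calc$, combined with $\pgot^i\cap\pgot^{i+1}=\{P_i\}$ (from simplicity), transfers via a compactness/ultrafilter argument to $X$: $\omega$-a.s., $\dist(x_n,y_n)\ge\ell_{\pgot_n}(x_n,y_n)/(C+1)$ for $x_n\in[q^{i-1}_n,b'_i(n)]$ and $y_n\in[b''_i(n),r^{i+1}_n]$. Apply the three-piece analog of Lemma~\ref{lemlimprelim} --- whose proof is identical, using $\dist(d',d'')\le\dist(d',v_1)+\dist(v_1,v_2)+\dist(v_2,d'')$ to establish $\ell'\le\ell$ --- to the block $\pgot^i_n\sqcup\pgot'^i_n\sqcup\pgot^{i+1}_n$. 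Obtain $d'_i(n)\in[q^{i-1}_n,b'_i(n)]$, $d''_i(n)\in[b''_i(n),r^{i+1}_n]$ minimizing $\dist(d'_i(n),d''_i(n))$, and replace the corner-arc by the single geodesic $[d'_i(n),d''_i(n)]\in L$. The minimum satisfies $\dist(d'_i(n),d''_i(n))\le\dist(b'_i(n),b''_i(n))=o(d_n)$, and together with $\pgot^i\cap\pgot^{i+1}=\{P_i\}$ this forces $(d'_i(n))^\omega=(d''_i(n))^\omega=P_i$, so the new geodesic collapses to $P_i$ in the cone.

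Performing this smoothing at every interior breakpoint (and analogously at $A, B$) yields $\tilde\pgot_n$, a $k$-piecewise $L$ path with $k\le 2(m+1)$. Because the modifications occur in cone-neighborhoods of the $P_i$ that shrink to points, $\lio{\tilde\pgot_n}=\pgot$. For global bi-Lipschitzness with constant $C'=\max(C+1,3)$: given $u_n,v_n\in\tilde\pgot_n$, if $(u_n)^\omega=(v_n)^\omega$ is in the interior of some $\pgot^j$, then $\omega$-a.s.\ $u_n,v_n$ lie on the same geodesic piece of $\tilde\pgot_n$ (ratio $1$); if it equals some $P_i$, both sit in the $i$th smoothed block and the local $\max(C+1,3)$-bound from Lemma~\ref{lemlimprelim} applies; if $(u_n)^\omega\ne(v_n)^\omega$, the ratio $\ell_{\tilde\pgot_n}/\dist$ converges to $\ell_\pgot/\dist_\calc\le C$, hence is $\le C+1$ $\omega$-a.s.

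\textbf{Main obstacle.} The delicate step is verifying the hypothesis of Lemma~\ref{lemlimprelim} \emph{uniformly} on the sub-arc product $[q^{i-1}_n,b'_i(n)]\times[b''_i(n),r^{i+1}_n]$, in particular for ``degenerate'' pairs $(x_n,y_n)$ whose cone-limits both equal $P_i$. The non-degenerate case is handled by the cone-bi-Lipschitz of $\pgot$ and a standard ultrafilter/compactness argument, but the degenerate case is not automatic from $C$-bi-Lipschitz at the single scale $d_n$; it requires a careful rate $\eta_n\to 0$ (slow enough to dominate the connector length $o(d_n)$) together with the underlying geodesic structure of the sub-arcs. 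A diagonal argument over the indexing of breakpoints and over suitable ultrafilter witnesses produces the required $\eta_n$.
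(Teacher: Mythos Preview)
Your Part~(1) is correct and essentially what the paper does (it says only that ``a sequence $\pgot_n$ with at most $2m$ edges in $L$ can be easily constructed'').

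For Part~(2) your overall architecture---smooth each corner via Lemma~\ref{lemlimprelim} and then globalize by a case split on whether $(u_n)^\omega=(v_n)^\omega$---is sound, and your diagonal argument for producing $\eta_n$ can in fact be made rigorous (monotonicity of the truncated infimum in $\eta$ plus the standard ``for each fixed $\eta$ it holds $\omega$-a.s., hence for some $\eta_n\to 0$'' lemma). But you have correctly flagged this as the delicate point, and your write-up leaves it as a gesture.

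The paper sidesteps this obstacle with a cleaner device: rather than fixing a truncation scale $\eta_n$ and then \emph{verifying} the hypothesis of Lemma~\ref{lemlimprelim}, it \emph{defines} the truncation by maximality. Take $[v^{i-1}_n,\bar v^i_n]\subset\pgot^i_n$ and $[\tilde v^i_n,v^{i+1}_n]\subset\pgot^{i+1}_n$ maximal subject to $\dist(x,y)\ge\frac{1}{2C}\,\ell(x,y)$ for all $x,y$ in the respective sub-segments. Maximality forces the equality $\dist(\bar v^i_n,\tilde v^i_n)=\frac{1}{2C}\,\ell(\bar v^i_n,\tilde v^i_n)$, which has two immediate consequences: (a) the pair $(\bar v^i_n,\tilde v^i_n)$ is already the distance-minimizing pair between the truncated segments (any other pair has strictly larger $\ell$, hence strictly larger $\dist$), so one feeds it directly into Lemma~\ref{lemlimprelim} as $(d',d'')$; and (b) since $\frac{1}{2C}<\frac{1}{C}$ and the limit $\pgot$ is $C$-bi-Lipschitz, the equality cannot survive in the cone, forcing $(\bar v^i_n)^\omega=(\tilde v^i_n)^\omega=P_i$. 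Thus both the Lemma~\ref{lemlimprelim} hypothesis and the $o(d_n)$-collapse come for free from maximality, with no diagonalization and no separate minimization step. The $o(d_n)$-edge case (your connectors) is handled by the same trick applied across three consecutive pieces.
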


\proof (\ref{quasi}) If $\pgot$ has $m$ edges in $L_\omega$ then a sequence $\pgot_n$ with at most $2m$
edges in $L$ can be easily constructed. The other properties of $\pgot_n$ follow immediately.

\medskip

(\ref{geo})\quad The path $\pgot$ can be written as a limit $\lio{\pgot_n}$ of piecewise $L$ paths
$\pgot_n$ with the same number $k$ of edges. We now modify $\pgot_n$ so that they become
$C'$-bi-Lipschitz paths joining $a_n$ and $b_n$ in $X$.

As before $\ell (x,y)$ denotes the length distance on $\pgot$ between two points $x,y$. Denote the
consecutive vertices of $\pgot$ by $v_0=A, v_1,..., v_k=B$, where $v_i=\left( v^{i}_n\right)^\omega $,
and denote by $[v^{i-1}_n, v^i_n]$ the geodesics in $L$ whose limits compose $\pgot$.

Assume that for some $i$, both $[v^{i-1}_n, v^i_n]$ and $[v^i_n, v^{i+1}_n]$ have lengths of order
$d_n$. Consider $[v^{i-1}_n, \bar{v}^i_n]\subset [v^{i-1}_n, v^i_n]$ and $[\tilde{v}^i_n,
v^{i+1}_n]\subset [v^i_n, v^{i+1}_n]$ maximal so that for any $x\in [v^{i-1}_n, \bar{v}^i_n]$ and $y\in
[\tilde{v}^i_n, v^{i+1}_n]$ we have $\dist (x,y)\geq \frac{1}{2C}\ell (x,y)$. By maximality we have
that $\dist (\bar{v}^i_n,\tilde{v}^i_n)= \frac{1}{2C}\ell (\bar{v}^i_n,\tilde{v}^i_n)$. This also
implies that $\bar{v}^i_n,\tilde{v}^i_n$ are the only points to realize the distance between
$[v^{i-1}_n, \bar{v}^i_n]$ and $[\tilde{v}^i_n, v^{i+1}_n]$, as any other pair of points on the two
sub-segments are at a larger $\ell$-distance, hence at a larger distance. The hypothesis that
$\lio{\pgot_n}$ is a $C$-bi-Lipschitz path also implies that $\bar{v}^i_n,\tilde{v}^i_n$ are at
distance $o(d_n)$ from $v^i_n$. We then modify $\pgot_n$ by replacing $[\bar{v}^i_n,v^i_n]\sqcup
[v^i_n, \tilde{v}^i_n]$ with a geodesic $[\bar{v}^i_n,\tilde{v}^i_n]$ in $L$.

Assume now that some edge $[v^i_n, v^{i+1}_n]$ has length $o(d_n)$. Then consider $[v^{i-1}_n,
\bar{v}^i_n]\subset [v^{i-1}_n, v^i_n]$ and $[\tilde{v}^{i+1}_n, v^{i+2}_n]\subset [v^{i+1}_n,
v^{i+2}_n]$ maximal so that for any $x\in [v^{i-1}_n, \bar{v}^i_n]$ and $y\in [\tilde{v}^{i+1}_n,
v^{i+2}_n]$ we have $\dist (x,y)\geq \frac{1}{2C}\ell (x,y)$. Then modify $\pgot_n$ by replacing
$[\bar{v}^i_n,v^i_n]\sqcup [v^i_n, v^{i+1}_n]\sqcup [v^{i+1}_n, \tilde{v}^{i+1}_n]$ with a geodesic
$[\bar{v}^i_n,\tilde{v}^{i+1}_n]$ in $L$.

The piecewise $L$ path $\pgot_n$ thus modified is \uass a $C'$-bi-Lipschitz path according to a slight
modification of the argument in Lemma \ref{lemlimprelim}, and clearly $\lio{\pgot_n}=\pgot$.\endproof

%%%%%%%%%%%%%%%%%%%%
%%%%%%%%%%%%%%%%%%%%
%%%%%%%%%%%%%%%%%%%%%
%%%%%%%%%%%%%%%%%%%%%%%

\section{Characterization of asymptotic cut{-}points and Morse geodesics}
\label{sec3}

In this section we shall give internal characterizations of spaces all (some) of whose asymptotic cones
have cut-points. The characterization is in terms of divergence functions. There are several possible
definitions of divergence. Each of them estimates the ``cost" of going from a point $a$ to a point $b$
while staying away from a ball around a point $c$. The difference between various definitions is in the
allowed position of $c$ (how close can $c$ be to $a$ or $b$ and whether $c$ belongs to a geodesic
$[a,b]$). We show that these definitions give equivalent functions, in particular  in the case of
Cayley graphs of finitely generated {one-ended} groups. We also show that for finitely generated one-ended groups
these functions are equivalent to {the} Gersten divergence function.

In the second part of the section, we characterize Morse geodesics in terms of divergence.

%%%%%%%%%%%%%%%%%%%%%
%%%%%%%%%%%%%%%%%%%%
%%%%%%%%%%%%%%%%%%%%%%%

\subsection{Divergence and asymptotic cut-points}

Typically the spaces that we have in mind when defining divergence are finitely generated groups,
geodesic metric spaces $X$ quasi-isometric to finitely generated groups, and geodesic metric spaces $X$
such that the action of their group of isometries is co-bounded, that is the orbit of a ball under
$Isom (X)$ covers $X$ (for simplicity we call such spaces \emph{periodic}).

We consider the usual relation on the set of functions $\RR_+\to \RR_+\, $, $f\preceq_C g$ if
$$f(n) \le Cg(Cn)+Cn+C$$ for some $C>1$ and all $x$. This defines the known equivalence relation on the set of
functions $\RR_+\to \RR_+\, $, $f\equiv_C g$ if $f\preceq_C g$ and $g\preceq_C f$.

Most of the time we obliterate the constant $C$ from the index and simply write  $f\preceq g$ and
$f\equiv g$.

We do not distinguish equivalent functions in this paper, so, for instance, all linear functions
(including all constants) are equivalent.

\begin{definition}\label{divabc} Let $(X, \dist)$ be a
geodesic metric space (one can formulate a similar definition for arbitrary length spaces), and let
$0<\delta<1$ {and $\gamma \geq 0$}. Let $a,b,c\in X$ with $\dist(c,\{a,b\})=r>0$, where $\dist(c,\{a,b\})$ is the minimum of
$\dist(c,a)$ and $\dist(c,b)\, $. Define $\dv_{{\gamma }}(a,b,c;\delta)$ as the infimum of the lengths of paths
connecting $a, b$ and avoiding the ball $\Ball(c,\delta r {-\gamma })$ {(note that by definition a ball of non-positive radius is empty)}. If no such path exists, take
$\dv_{{\gamma }}(a,b,c;\delta)=\infty$.
\end{definition}

The behavior of the function $\dv_{{\gamma }}(a,b,c;\delta)$ with respect to quasi-isometry is easily checked.

\begin{lemma}\label{lqi}
Let $q : X \to Y$ be an $(L,C)$-quasi-isometry between two geodesic metric spaces. Then {for every $0< \delta < 1$ and $\gamma \geq 0$ there exists $\gamma_1= \gamma_1(\delta , \gamma , L,C )\geq \gamma$ such that} for any three
points $a,b,c$ in $X$,
\begin{equation}\label{qi}
\dv_{\red{\gamma }}(q(a),q(b),q(c);\delta)\geq \frac{1}{2L} \dv_{\red{\gamma_1 }}(a,b,c;\delta \red{/L^2}) -C\, .
\end{equation}
\end{lemma}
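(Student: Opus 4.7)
The inequality is trivially true when the left-hand side is infinite, so we may assume there exists a path $p$ in $Y$ joining $q(a)$ and $q(b)$, avoiding $\Ball(q(c),\delta r'-\gamma)$ (with $r' = \dist(q(c),\{q(a),q(b)\})$), and of length $\ell$ arbitrarily close to $\dv_{\gamma}(q(a),q(b),q(c);\delta)$. The plan is to discretize $p$, lift the samples to $X$ via a quasi-inverse of $q$, reconnect them by geodesics in $X$, and verify that the resulting path satisfies the required length and ball-avoidance bounds.

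First I would fix a scale $d>0$ (to be chosen later, depending only on $L,C$) and sample $p$ at points $q(a)=y_0,y_1,\dots,y_n=q(b)$ with $\dist(y_i,y_{i+1})\le d$ and $n\le \ell/d+1$. Using the standard fact that an $(L,C)$-quasi-isometry admits a quasi-inverse, for each $y_i$ pick $x_i\in X$ with $\dist(q(x_i),y_i)\le C$, setting $x_0=a$ and $x_n=b$. Then
\[
\dist(x_i,x_{i+1})\le L\bigl(\dist(q(x_i),q(x_{i+1}))+C\bigr)\le L(d+3C),
\]
so the concatenation $\pgot$ of geodesics $[x_i,x_{i+1}]$ in $X$ has length at most $nL(d+3C)\le L(d+3C)(\ell/d+1)$. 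Choosing $d$ comparable to $C$ (say $d=3C$, or $d=1$ if $C=0$) yields a bound of the form $\length(\pgot)\le 2L\,\ell + C_0$, where $C_0=C_0(L,C)$; this gives the factor $\frac{1}{2L}$ on the right-hand side, with the additive $-C$ absorbing $C_0$ up to increasing $C$ if necessary.

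Next I would check that $\pgot$ avoids a suitable ball around $c$. For each sample point,
\[
\dist(x_i,c)\ge \tfrac{1}{L}\dist(q(x_i),q(c))-C \ge \tfrac{1}{L}\bigl(\delta r'-\gamma-C\bigr)-C.
\]
Since $r'\ge r/L-C$ where $r=\dist(c,\{a,b\})$, we get $\tfrac{\delta r'}{L}\ge \tfrac{\delta r}{L^2}-\tfrac{\delta C}{L}$, hence
\[
\dist(x_i,c)\ge \tfrac{\delta r}{L^2}-\Bigl(\tfrac{\gamma}{L}+\tfrac{(1+\delta)C}{L}+C\Bigr).
\]
Since each geodesic $[x_i,x_{i+1}]$ has length at most $L(d+3C)$, every point of $\pgot$ is within $L(d+3C)$ of some $x_i$, and therefore
\[
\dist(z,c)\ge \tfrac{\delta r}{L^2}-\gamma_1 \quad \text{for every } z\in\pgot,
\]
provided $\gamma_1:=\tfrac{\gamma}{L}+\tfrac{(1+\delta)C}{L}+C+L(d+3C)$ (and $\gamma_1\ge \gamma$, which is ensured by enlarging $\gamma_1$ if needed). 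This means $\pgot$ avoids $\Ball(c,\tfrac{\delta}{L^2}r-\gamma_1)$, and taking the infimum over all such choices of $p$ gives the claimed inequality.

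The main obstacle is bookkeeping: one must simultaneously control the length blow-up (giving the factor $2L$ up to an additive constant absorbed into $-C$) and the worst-case depth by which the reconnecting geodesics can dip toward $c$ (giving the enlargement of the parameter from $\gamma$ to $\gamma_1$ and the shrinking of $\delta$ to $\delta/L^2$). Degenerate cases—where $\delta r-\gamma$ or $\delta r'-\gamma$ are non-positive (so the ball in the definition is empty) or where $r'$ is very small—are harmless, since then the divergence reduces to the distance between the endpoints, which is already controlled by the quasi-isometry inequalities.
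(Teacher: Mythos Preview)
The paper does not prove this lemma at all; it simply states that ``the behavior of the function $\dv_\gamma(a,b,c;\delta)$ with respect to quasi-isometry is easily checked'' and moves on. Your argument is precisely the standard one and is correct: discretize a near-optimal path in $Y$, lift the sample points to $X$ using coarse surjectivity of $q$, reconnect by geodesics, and bound both the total length and the distance of each point to $c$.

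One minor remark: the additive constant your computation actually yields is some $C_0=C_0(L,C)$ (for instance $6LC$ with your choice $d=3C$), not literally the quasi-isometry constant $C$ that appears in the statement. Your phrase ``absorbing $C_0$ up to increasing $C$ if necessary'' papers over this, but of course one cannot retroactively enlarge the given quasi-isometry constant. This is almost certainly a harmless imprecision in the paper's formulation rather than a defect in your proof; every application of the lemma in the paper only needs \emph{some} additive constant depending on $(L,C)$, and that is exactly what your argument provides.
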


\me

%%%%%%%%%%%%%%%%%%%%%%%%functia Div

\begin{definition}\label{Div}
The {\em divergence function} $\Dv_{\red{\gamma }}(n ,\delta)$ of the space $X$ is defined as the supremum of all
numbers $\dv_{\red{\gamma }}(a,b,c;\delta)$ with $\dist(a,b)\le n$.
\end{definition}

\me

Clearly if $\delta\le \delta'$ \red{and $\gamma \geq \gamma'$} then  $\Dv_{\red{\gamma }}(n;\delta)\le\Dv_{\red{\gamma' }}(n;\delta')$ for every $n$.

\me

\begin{lemma} \label{rem6}
If $X$ is one-ended, \red{proper, periodic, and every point is at distance less than $\kappa$ from a
bi-infinite bi-Lipschitz path then there exists $\delta_0$ such that for every $\gamma \geq 4 \kappa $} the function $\Dv_{\red{\gamma }}(n ,\delta_0)$ takes only finite values.

In particular this holds if $X$ is a Cayley graph of a finitely generated one-ended group, and one can
take \red{$\delta_0 = \frac12$ and $\kappa=\frac12$} in this case.
\end{lemma}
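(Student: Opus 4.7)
The plan is, for each fixed $n$, to bound $\Dv_{\gamma}(n,\delta_{0})=\sup\{\dv_{\gamma}(a,b,c;\delta_{0}):\dist(a,b)\le n\}$ by case analysis on $r:=\dist(c,\{a,b\})$. I will write $s:=\delta_{0}r-\gamma$, let $C\ge 1$ denote the common bi-Lipschitz constant of the paths in the hypothesized family, and choose $\delta_{0}$ small in terms of $C$.

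I would first dispose of two cheap regimes. If $s\le 0$, then $\Ball(c,s)$ is empty and a geodesic $[a,b]$ gives $\dv_{\gamma}(a,b,c;\delta_{0})\le n$. If $r>(n-\gamma)/(1-\delta_{0})$, any geodesic $[a,b]$ lies at distance $\ge r-n>s$ from $c$ and so avoids $\Ball(c,s)$, again giving a bound by $n$. Only triples with $r$ in a bounded interval $I_{n}$ remain; for these $a,b,c$ all lie in a ball of radius $O(n)$ in $X$.

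In this substantive regime I would use the bi-infinite bi-Lipschitz paths as escape routes to infinity. Let $a'\in\pgot_{a}$ with $\dist(a,a')<\kappa$ and write $a'=\pgot_{a}(t_{0})$. Since $\pgot_{a}$ is bi-infinite and $C$-bi-Lipschitz, and $\Ball(c,s)$ has diameter $\le 2s$, the preimage $\pgot_{a}^{-1}(\Ball(c,s))$ lies in a parameter interval $[u_{0},u_{1}]$ of length $\le 2Cs$, leaving two unbounded tails of $\pgot_{a}$ entirely outside $\Ball(c,s)$. A triangle inequality argument comparing the bi-Lipschitz upper bound $\dist(a',\pgot_{a}(u_{0}))\le 2C^{2}s$ to the lower bound $\dist(a',\pgot_{a}(u_{0}))\ge r-\kappa-s$ (using $\dist(a,a')<\kappa$ and $\pgot_{a}(u_{0})\in\overline{\Ball}(c,s)$), combined with the slack $\gamma\ge 4\kappa$ and $\delta_{0}$ chosen small enough in terms of $C$, forces $t_{0}\notin[u_{0},u_{1}]$. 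Hence following $\pgot_{a}$ from $a'$ in the direction away from $[u_{0},u_{1}]$ stays outside $\Ball(c,s)$ and reaches arbitrarily large distance from $c$, and likewise for $b$ via $\pgot_{b}$. One-endedness of $X$ then joins the two escape endpoints through the unique unbounded component of $X\setminus\Ball(c,s)$, producing the required detour from $a$ to $b$ avoiding the ball.

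The uniform bound on the length comes from properness and periodicity. In the regime $r\in I_{n}$, modulo the cobounded action of $\operatorname{Isom}(X)$ the set of admissible triples is relatively compact; the minimal avoiding-path length is finite on it by the previous step and upper semicontinuous, hence bounded. Combined with the cheap regimes this gives $\Dv_{\gamma}(n,\delta_{0})<\infty$. The Cayley graph specialization then reduces to verifying that the hypotheses hold with $\kappa=\tfrac{1}{2}$ (each vertex of a one-ended finitely generated group's Cayley graph lies on a bi-infinite geodesic, by a local-finiteness plus one-endedness compactness argument, and $C=1$) and that one may take $\delta_{0}=\tfrac{1}{2}$. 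The main obstacle is the bi-Lipschitz escape computation: one must rule out the scenario in which $a'$ lies in a bounded pocket of $\pgot_{a}\setminus\Ball(c,s)$ between two re-entries of $\pgot_{a}$ into the ball, and the buffer $\gamma\ge 4\kappa$ combined with the suitable $\delta_{0}$ is exactly what achieves this.
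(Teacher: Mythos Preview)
Your overall strategy---cheap regimes, then escape-to-infinity along the bi-Lipschitz paths, then one-endedness, then a compactness argument for uniformity---matches the paper's. The escape computation is correct and is essentially the paper's argument: the paper phrases it as ``one of the two halves of $\pgot_a\setminus\{a\}$ misses $\Ball(c,\delta_0 r)$'' (working first with $a$ \emph{on} the path and $\gamma=0$), while you work with $a'$ near the path and absorb the $\kappa$-error into $\gamma\ge 4\kappa$; both give the same conclusion. Note that the paper's version, by putting $a$ exactly on the path, yields the sharper threshold $\delta_0=\tfrac1{C^2+1}$, which for $C=1$ gives the claimed $\delta_0=\tfrac12$ in the Cayley graph case; your inequality $r-\kappa-s\le 2C^2 s$ only gives $\delta_0<\tfrac1{2C^2+1}$, so $\delta_0<\tfrac13$ for $C=1$.

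The genuine gap is the upper semicontinuity claim. The function $(a,b,c)\mapsto \dv_\gamma(a,b,c;\delta_0)$ need not be upper semicontinuous: a near-optimal path $\p$ from $a$ to $b$ may be tangent to $\partial\Ball(c,\delta_0 r-\gamma)$, so that an arbitrarily small perturbation of $c$ (with $r$ essentially unchanged) pushes $\p$ into the new ball, and nothing guarantees an alternative avoiding path of comparable length. You are holding $\gamma$ fixed, so there is no slack to absorb the perturbation. The paper circumvents this entirely: it proves the elementary perturbation inequality
\[
\dv_{4\kappa}(a',b',c';\delta_0)\ \le\ \dv_{0}(a,b,c;\delta_0)+2\kappa
\quad\text{whenever }\dist(a,a'),\dist(b,b'),\dist(c,c')\le\kappa,
\]
then covers the compact range of triples by finitely many $\kappa$-balls with centres $(a,b,c)$ lying on the bi-Lipschitz paths, and takes the maximum of the finitely many (finite) values $\dv_0(a,b,c;\delta_0)+2\kappa$. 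In other words, the role of ``$\gamma\ge 4\kappa$'' is not only in the escape computation but precisely to furnish this discrete perturbation inequality in place of the unavailable continuity. Replacing your semicontinuity step by this inequality fixes the argument.
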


\proof \red{Since the space is periodic and proper we may assume that there exist finitely many bi-infinite bi-Lipschitz paths $\pgot_1,...,\pgot_n ,$ such that every point in $X$ is at distance less than $\kappa$ from a path $g\pgot_i$ with $g\in Isom (X)$ and $i\in \{1,2,...,n\}$. Let $C \geq 1$ be such that $\pgot_i$ is $C$-bi-Lipschitz for every $i\in \{1,2,...,n\}$. Let $\delta_0=\frac1{C^2+1}$, let $a$ be a point on a path $\pgot_a=g\pgot_i$ and $b$ a point on a path $\pgot_b=g'\pgot_j$, where $g,g'\in Isom (X)$ and $i,j\in \{1,2,...,n\}$.  We prove that the value $\dv_{\red{0}}(a,b,c;\delta_0)$ is always
finite.}

\red{Let
$r=\dist(c,\{a,b\})>0$.} We claim that one of the two connected components of $\pgot_a\setminus \{a\}$
does not intersect $\Ball(c,\delta_0 r)$. Indeed, otherwise there would be two points on $\pgot_a$ at
distance at most \red{$u=2\delta_0 r$} from each other in $X$ but at distance \red{$v > \frac{2}{C}(1-\delta_0 )r$} along $\pgot_a$. This contradicts the assumption that $\pgot_a$ is $C$-bi-Lipschitz
since \red{$v/u >\frac{1-\delta_0 }{C\delta_0 } = C$}. It remains to note that since the space is one-ended, every two points on $\pgot_a$
and $\pgot_b$ respectively that are far enough from $c$, are connected by a path outside
$\Ball(c,\delta_0 r)$.

Now we prove that for any $n$ \red{and any $\gamma \geq 4\kappa$} the value $\Dv_{\red{\gamma }}(n ,\delta_0)$ is finite. Take $a,b,c$ such that
$\dist(a,b)\le n$, and let $r=\dist (c, \{a,b\})$. If a geodesic $[a,b]$ does not intersect $\Ball (c,
\delta_0 r)$ then $\dv_{\red{\gamma }}(a,b,c;\delta_0)\leq n$.

Assume that a geodesic $[a,b]$ intersects $\Ball (c, \delta_0 r)$. Then $r\leq \delta_0 r +n$, hence $r\leq
\frac{n}{1-\delta_0 }$.

Without loss of generality we may assume that $a$ is in a fixed compact. Then $b$ and $c$ are in
tubular neighborhoods of this compact, which are other compacts. Each of these compacts is covered by
finitely many balls of radius \red{$\kappa$ and with center on some path $g\pgot_i$  with $g\in Isom (X)$ and $i\in \{1,2,...,n\}$.} Therefore, to finish
the proof, it suffices to \red{note that for $a,b,c$ fixed, all triples
$a',b',c'$ with $\dist (a,a'), \dist(b,b'), \dist (c,c')\le \kappa $ satisfy $$\dv_{\red{4\kappa }}(a',b',c';\delta_0)\leq \dv_0 (a,b,c;\delta_0 ) +2\kappa \, .$$}
\endproof

\begin{remark} \label{rem7}

\red{For a space as in Lemma \ref{rem6} and a fixed $\delta \in (0,1)$, we must require at least that $\gamma$ is larger than $\delta \kappa $,
 otherwise $\dv_{\red{\gamma }}(a,b,c;\delta)$ can be infinite for} $a,b$ arbitrarily far away from each
other. Indeed, consider a \red{Cayley graph $Y$ of a finitely generated one-ended group}, and construct a metric space
$X$ by attaching to each \red{vertex} $y$ in $Y$ a copy $T_y$ of the same finite simplicial \red{ non-trivial tree $T$ with basepoint $v$}. Define a metric on $X$ in the natural way: to get from a point $p\in T_x$ to a point
$q\in T_y$, one needs to first get from $p$ to $x$ inside $T_x$, then from $x$ to $y$ inside $Y$, then
from $y$ to $q$ inside $T_y$. The space $X$ obviously admits a co-compact isometric group action. It
has one end since $Y$ is one-ended and $T$ is finite. \red{Every point in $X$ is within distance $\leq \kappa$ of a bi-infinite geodesic, where $\kappa$ is the Hausdorff distance between $T$ and $\{v\}$. If we take a point $a$ in $T_x$ with $\dist(a,x)=\kappa$,
a point $b$ in $Y$ with $\dist(b,x)\geq \kappa$, $c=x$ and  $\gamma < \delta \kappa $ then there is no path connecting $a$ to $b$ in $X\setminus
\Ball(c,\delta r - \gamma )$.} So $\dv_{\red{\gamma }}(a,b,c;\delta)=\infty$.

There \red{is one} equivalent and equally natural \red{way} to
define divergence function $\Dv_{\red{\gamma }}$ for arbitrary
one-ended length spaces admitting co-compact isometry group actions.
\red{One} chooses $C>0$ and replaces paths in the definition of
$\dv_{\red{\gamma }}(a,b,c;\delta)$ by $C$-paths, as defined in
Section \ref{defgenr}. One can easily check that all the statements
in this section remain true for \red{this more general function}
(after some obvious modifications).
\end{remark}

%%%%%%%%%%%%%%%%%%%%%%%%%%%%%%%%%
%%%%%%%%%%%%%%%%%%%%%%%%%%%%%%%%%Div function, end

\me

%%%%%%%%%%%%%%%%%%%%%%%%%
%%%%%%%%%%%%%%%%%%%%%%%%%div function

We now define a new divergence function, closer to the idea of divergence of rays that was inspiring
the notion, in particular closer to the Gersten divergence.

\begin{definition}\label{div}
Let  $\lambda\ge 2$. The {\em small divergence function}
$\dv_{\red{\gamma }}(n;\lambda,\delta)$ is defined as the supremum
of all numbers $\dv_{\red{\gamma }}(a,b,c;\delta)$ with  $\red{0\le
} \dist(a,b)\le n$ and
\begin{equation}\label{lambda}
\lambda\dist(c,\{a,b\})\ge \dist(a,b).
\end{equation}

\end{definition}

\me

The next lemma is obvious.

\begin{lemma}\label{Divdiv}
The following inequalities are true for every geodesic metric space $X$.

\begin{enumerate}
\item If $\delta\le \delta'$, then $\dv_{\red{\gamma }}(n;\lambda,\delta)\le \dv_{\red{\gamma }}(n;\lambda,\delta')$ for every $n,\lambda$ \red{and $\gamma$}.

\item If $\lambda\le \lambda'$, then $\dv_{\red{\gamma
}}(n;\lambda,\delta)\le \dv_{\red{\gamma }}(n;\lambda',\delta)$ for
every $n,\delta$ \red{and $\gamma$}.
\end{enumerate}
\end{lemma}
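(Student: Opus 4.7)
The plan is to verify both monotonicity statements by unwinding the definitions of $\dv_{\gamma}(a,b,c;\delta)$ and of the small divergence function, and observing in each case that either the set of admissible paths or the set of admissible triples $(a,b,c)$ grows. Since the author flags the lemma as ``obvious,'' this amounts to two short set-theoretic comparisons followed by a monotonicity argument for infima and suprema.

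For part (1), I fix a triple $(a,b,c)$ with $r=\dist(c,\{a,b\})>0$ and note that if $\delta\le\delta'$ then $\delta r-\gamma\le\delta' r-\gamma$, so $\Ball(c,\delta r-\gamma)\subseteq\Ball(c,\delta' r-\gamma)$. Any path avoiding the larger ball automatically avoids the smaller one, so the infimum over paths avoiding $\Ball(c,\delta r-\gamma)$ is no larger than the infimum over paths avoiding $\Ball(c,\delta' r-\gamma)$; that is, $\dv_{\gamma}(a,b,c;\delta)\le \dv_{\gamma}(a,b,c;\delta')$. Because the auxiliary constraint $\lambda\dist(c,\{a,b\})\ge\dist(a,b)$ in Definition \ref{div} involves neither $\delta$ nor $\gamma$, the admissible set of triples is identical on both sides, so taking the supremum over this set preserves the inequality and yields (1). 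The edge case $\dist(c,\{a,b\})=0$ is outside the scope of the original Definition \ref{divabc}, so it does not enter the supremum.

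For part (2), I fix $\gamma$, $\delta$, and $n$, and observe that the constraint \eqref{lambda} is monotone in $\lambda$: if $\lambda\le\lambda'$, then any triple $(a,b,c)$ with $\dist(a,b)\le n$ satisfying $\lambda\dist(c,\{a,b\})\ge\dist(a,b)$ also satisfies $\lambda'\dist(c,\{a,b\})\ge\dist(a,b)$. Hence the set of triples over which the supremum is taken for $\lambda$ is a subset of the corresponding set for $\lambda'$. Taking suprema over a larger set can only increase the value, which gives $\dv_{\gamma}(n;\lambda,\delta)\le\dv_{\gamma}(n;\lambda',\delta)$.

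There is essentially no obstacle here; the only mild bookkeeping point is to make sure that, in (1), changing $\delta$ does not alter the admissible set of triples (it does not, since \eqref{lambda} is $\delta$-free), and that the pointwise inequality $\dv_{\gamma}(a,b,c;\delta)\le \dv_{\gamma}(a,b,c;\delta')$ passes to the supremum. Both points are immediate from the definitions, so the whole proof is a two-line unpacking.
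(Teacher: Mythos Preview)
Your proof is correct and is exactly the kind of routine unpacking the paper has in mind: the authors simply declare the lemma ``obvious'' and give no argument, so there is nothing further to compare.
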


\me

\begin{proposition}
\begin{itemize}
    \item[(1)] The function $\dv_{\red{\gamma }} (n;\lambda , \delta )$ takes only finite values if and only if for every $n$ there exists $d_n$ such
that if for some $a,b,c$ satisfying (\ref{lambda}),
$\dv_{\red{\gamma }}(a,b,c;\delta)\geq d_n$ then  $\dist(a,b)\geq
n$.

\me
    \item[(2)] If $(X, \dist )$ is \red{a geodesic metric space} quasi-isometric to a \red{finitely generated} group then the existence of some
    $\delta \in (0,1)\red{\, ,\, \gamma \geq 0}$ and $\lambda\ge 2$
     such that the function $\dv_{\red{\gamma }}(\, \cdot\, ;\lambda ,\delta )$ takes only finite values is granted if
    and only if $X$ is one ended.
\end{itemize}
\end{proposition}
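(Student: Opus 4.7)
The plan is to handle (1) as a direct unwinding of the definition of a supremum, and to handle (2) by combining Lemma~\ref{rem6} with the quasi-isometry invariance of Lemma~\ref{lqi} for the ``if'' direction, and by using a separating compact set produced by multi-endedness to construct blocked triples for the ``only if'' direction.

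Part (1) is purely formal. If $\dv_{\gamma}(n;\lambda,\delta)<\infty$ for every $n$, then $d_n:=\dv_{\gamma}(n;\lambda,\delta)+1$ has the required threshold property by definition of the supremum. Conversely, given such a $d_n$, its contrapositive says that every triple $(a,b,c)$ satisfying (\ref{lambda}) with $\dist(a,b)<n$ has $\dv_{\gamma}(a,b,c;\delta)<d_n$, hence $\dv_{\gamma}(n';\lambda,\delta)\leq d_n$ for every $n'<n$. So (1) follows by rewriting quantifiers.

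For the ``if'' direction of (2), suppose $X$ is one-ended. Since the number of ends is a quasi-isometry invariant, any finitely generated group $G$ quasi-isometric to $X$ is also one-ended, and Lemma~\ref{rem6} applied to a Cayley graph $Y$ of $G$ yields $\Dv^Y_{\gamma}(n;\tfrac12)<\infty$ for every $n$ and every $\gamma\geq 2$. Apply Lemma~\ref{lqi} to an $(L,C)$-quasi-isometry $q\colon X\to Y$: for any triple $(a,b,c)$ in $X$ with $\dist^X(a,b)\leq n$ one obtains, for some $\gamma_1$,
$$\dv^X_{\gamma_1}\bigl(a,b,c;\tfrac{1}{2L^2}\bigr)\leq 2L\,\dv^Y_{\gamma}\bigl(q(a),q(b),q(c);\tfrac12\bigr)+2LC\leq 2L\,\Dv^Y_{\gamma}(Ln+C;\tfrac12)+2LC,$$
since $\dist^Y(q(a),q(b))\leq Ln+C$. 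Taking the supremum on the left over triples additionally satisfying (\ref{lambda}) shows $\dv^X_{\gamma_1}\bigl(n;\lambda,\tfrac{1}{2L^2}\bigr)<\infty$ for every $\lambda\geq 2$. Note that bounding $\dv^Y$ above by the unconstrained $\Dv^Y$ sidesteps the need to verify that the constraint (\ref{lambda}) transports through $q$.

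For the ``only if'' direction of (2), I would argue by contrapositive. If $X$ is not one-ended, there is a compact set $K\subset X$ whose complement has at least two unbounded components $U_1,U_2$. Fix $c\in K$ and set $D:=\diam(K\cup\{c\})$. Given any $\delta\in(0,1)$, $\gamma\geq 0$ and $\lambda\geq 2$, choose $R$ so large that $\delta R-\gamma>D$, and (using geodesicity of $X$ and unboundedness of each $U_i$) pick $a\in U_1$, $b\in U_2$ with $\dist(c,a)=\dist(c,b)=R$. Then $\dist(a,b)\leq 2R\leq \lambda R=\lambda\,\dist(c,\{a,b\})$, so (\ref{lambda}) holds. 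Every path from $a$ to $b$ must cross $K\subset\Ball(c,D)\subset\Ball(c,\delta R-\gamma)$, so no admissible path exists and $\dv_{\gamma}(a,b,c;\delta)=\infty$. Letting $R\to\infty$ forces $\dv_{\gamma}(n;\lambda,\delta)=\infty$ for arbitrarily large $n$, contradicting finiteness. The main delicate point will be the parameter bookkeeping in the quasi-isometric transfer of the ``if'' direction; as explained, replacing $\dv^Y$ by the stronger $\Dv^Y$ is the key move that removes any need to track the constraint (\ref{lambda}) after applying $q$.
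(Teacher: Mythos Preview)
Your proof is correct and follows essentially the same approach as the paper: both reduce (2) to the Cayley graph via Lemma~\ref{lqi}, use one-endedness there to bound the divergence (you route through $\Dv$ and Lemma~\ref{rem6}, the paper argues directly for $\dv$ on $G$ using finiteness of the relevant set of triples), and both use a separating compact set to produce infinite divergence when there is more than one end. Your bookkeeping is a bit more explicit, but the ideas and ingredients are the same.
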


\proof The statement in (1) is an easy exercise.

In view of Lemma \ref{lqi} and of (1), it suffices to prove (2) for
$X=G$ finitely generated group with a word metric.

If $G$ has infinitely many ends then for any $0<\delta <1$\red{, $\gamma >0$} and $\lambda \ge 2$, there exist $a,b,c$
satisfying (\ref{lambda}) and with $r=\dist (c,\{a,b\})$ large enough so that
$\dv_{\red{\gamma }}(a,b,c;\delta)=\infty$.

Assume that $G$ is one-ended. Then all $\dv_{\red{\gamma }} (a,b,c;\delta)$ \red{with $\delta \in (0,1/2]$ and $\gamma \geq 2$} are finite.

We rewrite (1) as follows: for every $n$ there exists $d_n$ such that if $\dist (a,b)\in [0,n]$ then
$\dv_{\red{\gamma }}(a,b,c;\delta)\leq d_n$.

Assume then that $\dist (a,b)\in [0,n]$. Without loss of generality we may assume that $a=1$, hence
$b\in \Ball (1,n)$. Take an arbitrary vertex $c$ satisfying (\ref{lambda}), and take $r=\dist (c,\{a,b\})$.
If $\Ball (c, \delta r)$ does not intersect all geodesics joining $a,b$ then $\dv_{\red{\gamma }}(a,b,c;\delta )=\dist
(a,b)\le n$. If it does then $r\leq \delta r+ \dist (a,b)$, hence $r\le \frac{n}{1-\delta }$. Thus
there are finitely many possibilities for $b,c$, hence the supremum of $\dv_{\red{\gamma }}(a,b,c;\delta )$ over all
$a,b,c$ satisfying (\ref{lambda}) and $\dist (a,b)\le n$ is finite.
\endproof

\begin{remark}\label{imends}
When $G$ has infinitely many ends, one may consider another definition of the divergence function.
Indeed, in this case it is known by \red{Stallings'} theorem \cite[Theorems 4.A.6.5 and 5.A.9]{Stallings}
that the group splits as a free product with amalgamation over a finite subgroup. When $G$ is finitely presented \red{it} can be moreover written as the fundamental group of a finite graph of groups with
finite edge groups and one-ended vertex groups \cite{Dunwoody:acces}. Then one can take the divergence
function of $G$ as the supremum function of the divergence functions of the vertex groups, and thus
obtain a finite function.

Note that this new function will provide more information on a group with infinitely many ends, as it
will provide upper bound for divergences of factors. On the other hand it will no longer distinguish
between the group $\Z^2$ and $\Z^2 * \Z^2$, for instance. Thus, it is appropriate only when restricting
to the class of groups with infinitely many ends.
\end{remark}

\me

We need to introduce two more functions, further restricting the choice of $c$. We assume, as before,
that $X$ is a geodesic metric space. For every pair of points $a,b\in X$, we choose and fix a geodesic
$[a,b]$ joining them \red{such that if $x,y$ are points on a geodesic $[a,b]$ chosen to join $a,b$ the sub-geodesic $[x,y] \subseteq [a,b]$ is chosen for $x,y$.} The next \red{lemmas} will show that the definition we are about to give does not depend
on the choice of the geodesic $[a,b]$. We say that a point $c$ is {\em between} $a$ and $b$ if $c$ is
on the fixed geodesic segment $[a,b]$.

We define $\Dv'_{\red{\gamma }}(n;\delta)$ and $\dv'_{\red{\gamma }}(n;\lambda,\delta)$ as $\Dv_{\red{\gamma }}$ and $\dv_{\red{\gamma }}$ before, but restricting $c$
to the set of points between $a$ and $b$. Clearly  $\Dv_{\red{\gamma }}'(n;\delta)\le \Dv_{\red{\gamma }}(n;\delta)$ and
$\dv_{\red{\gamma }}'(n;\lambda,\delta)\le \dv_{\red{\gamma }}(n;\lambda,\delta)$ for every $\lambda,\delta$.

\begin{lemma} \label{shtrih} For every $a,b\in X$, every $\delta\in (0,1)$ and every $\lambda \geq  2$, we have

\begin{equation} \label{div2} \begin{array}{ll} \red{\sup\limits_{\stackrel{c\in [a,b]}{ \lambda\dist(c,\{a,b\})\ge\dist(a,b)}}} \dv_{\red{\gamma }}(a,b,c;\delta/3) &  \le \red{\sup\limits_{\lambda\dist(c,\{a,b\})\ge\dist(a,b)} \dv_{\red{\gamma }}}(a,b,c;\delta/3)\\
& \\
&  \le \red{\sup\limits_{\stackrel{c\in [a,b]}{2\lambda\dist(c,\{a,b\})\ge\dist(a,b)}} } \dv_{\red{\gamma }}(a,b,c;\delta)
+\dist(a,b).\end{array}
\end{equation}

 \end{lemma}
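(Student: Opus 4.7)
The first inequality is immediate since it is a supremum over a more restrictive condition on $c$.

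For the second inequality, the plan is to fix an arbitrary $c$ with $\lambda\dist(c,\{a,b\})\geq \dist(a,b)$, let $r=\dist(c,\{a,b\})$, and ``project'' $c$ to a nearest point $c'$ on the chosen geodesic $[a,b]$. Setting $r'=\dist(c',\{a,b\})$, I would split into two cases according to whether the geodesic $[a,b]$ meets the ball $\Ball(c,\delta r/3-\gamma)$. In the first case, the geodesic itself gives an admissible path, so $\dv_\gamma(a,b,c;\delta/3)\leq \dist(a,b)$, which is absorbed by the additive error term $\dist(a,b)$ on the right-hand side.

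In the second (substantial) case, the existence of a point of $[a,b]$ inside the open ball $\Ball(c,\delta r/3-\gamma)$ forces $\dist(c,c')<\delta r/3-\gamma$. By the triangle inequality this yields
\[
r'\ \ge\ r-\dist(c,c')\ >\ r\!\left(1-\tfrac{\delta}{3}\right)+\gamma\ >\ \tfrac{2r}{3},
\]
because $\delta<1$. Combined with $r\geq \dist(a,b)/\lambda$ this gives $2\lambda r'>\dist(a,b)$, so $c'$ is indeed an admissible point for the supremum on the right-hand side. The key technical step is then the ball inclusion
\[
\Ball\!\left(c,\tfrac{\delta r}{3}-\gamma\right)\ \subseteq\ \Ball(c',\delta r'-\gamma),
\]
which follows from the triangle inequality: any $x$ in the left-hand ball satisfies $\dist(x,c')<2\delta r/3-2\gamma$, and the estimate on $r'$ combined with $\delta\leq 1,\ \gamma\geq 0$ gives $2\delta r/3-2\gamma\leq \delta r'-\gamma$ (the margin comes from $\delta r(1-\delta)/3\geq 0$). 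From this inclusion, every path avoiding the larger ball centered at $c'$ also avoids the smaller ball centered at $c$, hence $\dv_\gamma(a,b,c;\delta/3)\leq \dv_\gamma(a,b,c';\delta)$.

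Taking suprema over all admissible $c$ (and using the trivial bound from the first case) then yields the second inequality of \eqref{div2}. The only genuine calculation is the ball-inclusion step; the rest is bookkeeping about the projection and the two cases. The main thing to be careful about is the interplay between the three parameters $\delta,\gamma,\lambda$ and the fact that the constant $3$ in $\delta/3$ together with the factor $2$ in $2\lambda$ are exactly what is needed so that both the projection $c'$ remains far from $\{a,b\}$ and the balls nest correctly.
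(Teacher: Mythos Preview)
Your proof is correct and follows essentially the same approach as the paper: split according to whether $[a,b]$ meets the ball $\Ball(c,\delta r/3-\gamma)$, and in the nontrivial case project $c$ to a nearby point $c'\in[a,b]$ and verify the ball inclusion $\Ball(c,\delta r/3-\gamma)\subseteq\Ball(c',\delta r'-\gamma)$ using $\tfrac{2}{3}\delta<\delta(1-\tfrac{\delta}{3})$. Your write-up is in fact slightly more careful than the paper's, since you explicitly check that $c'$ satisfies the constraint $2\lambda\dist(c',\{a,b\})\ge\dist(a,b)$, which the paper leaves implicit.
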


\proof The first inequality in (\ref{div2}) is obvious. We prove only the second inequality in
(\ref{div2}).

Let $c\in X$, $r=\dist(c,\{a,b\})$ and assume that (\ref{lambda}) is satisfied. Suppose first that the
distance between $c$ and the geodesic segment $[a,b]$ is at least $\frac\delta{3}r$. Then $[a,b]$
avoids the ball $\Ball\left(c, \frac\delta{3} r \red{-\gamma } \right)$, and so $\dv_{\red{\gamma }}(a,b,c;\delta/3)=\dist(a,b)$, and
the second inequality in (\ref{div2}) holds.

Suppose now that $\dist(c,[a,b])<\frac\delta{3} r$. Then there exists a point $c'$ on $[a,b]$ at
distance at most $\frac\delta{3} r$ from $c$. Note that the distance $r'$ from $c'$ to $\{a,b\}$ is at
least $\left( 1-\frac\delta{3} \right) r$. Since $\delta<1$, we have
$$\frac23\, \delta<\delta\left( 1-\frac\delta{3} \right).$$
Therefore the ball $B'=\Ball(c', \delta r'\red{-\gamma } )\supseteq \Ball(c', \delta(1-\frac\delta{3})r \red{-\gamma })$ contains the
ball $B=\Ball(c,\frac\delta{3}r \red{-\gamma } )$. Hence every path avoiding the ball $B'$ also avoids the ball $B$.
Hence $\dv_{\red{\gamma }}(a,b,c';\delta)\ge\dv_{\red{\gamma }}(a,b,c;\frac\delta{3})$. This gives the second inequality in
(\ref{div2}).\endproof

\begin{lemma} \label{shtrih2}
\begin{enumerate}
    \item[(a)] For every $a,b\in X$, every $\delta\in (0,1)$\red{, $\gamma \geq 0$} and every $\lambda \geq  2$, we have
\begin{equation}\label{div1}
\red{\sup }_{c\in [a,b]}\dv_{\red{\gamma }}(a,b,c;\delta/3)\le \red{\sup }_{c\in X} \dv_{\red{\gamma }}(a,b,c;\delta/3)\le \red{\sup }_{c\in [a,b]}
\dv_{\red{\gamma }}(a,b,c;\delta)+\dist(a,b);\end{equation}

\begin{equation}\label{div3} \dv_{\red{\gamma }}(n;\lambda,\delta)\le \Dv_{\red{\gamma }}(n;\delta),\; \forall \; \red{n,\lambda \geq 2 ,\delta \in (0,1), \gamma \geq 0  \, .}
\end{equation}

\bigskip

    \item[(b)] \red{ For every $\delta\in (0,1)$ and $\gamma \geq 0$ we have}
    \begin{equation}\label{div4} \Dv_{\red{\gamma }}\red{\left( n ; \frac{\delta }{3} \right)}\le \dv_{\red{\gamma }}(n;2,\delta)+\red{2n \, ,\, \forall \,  n}\, .
\end{equation}

\bigskip

\item[(c)] \red{Let $X$ be a space as in Lemma \ref{rem6}, and let $\delta_0$ and $\gamma_0 = 4\kappa $ be the constants provided by the proof of that lemma. For every $0<\delta'\le \delta \leq \delta_0$ and $\gamma' \ge \gamma \geq \gamma_0$, $\Dv_\gamma (n;\delta) \equiv \Dv_{\gamma'}(n;\delta')$.}
\end{enumerate}

 \end{lemma}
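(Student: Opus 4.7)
\emph{Part (a)} consists of the chain (\ref{div1}) and the trivial inequality (\ref{div3}). The second inequality of (\ref{div1}) follows verbatim from the proof of Lemma~\ref{shtrih}, whose argument never invokes the constraint~(\ref{lambda}): if $\dist(c,[a,b])\ge \frac{\delta}{3}r$ then the geodesic $[a,b]$ itself witnesses $\dv_\gamma(a,b,c;\delta/3)\le \dist(a,b)$, and otherwise we project $c$ to a nearest point $c'\in [a,b]$ with $r'\ge (1-\frac{\delta}{3})r$ and observe that $\Ball(c',\delta r'-\gamma)\supseteq \Ball(c,\frac{\delta}{3}r-\gamma)$. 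The inequality (\ref{div3}) is immediate since $\dv_\gamma(n;\lambda,\delta)$ is a supremum over a sub-collection of the triples defining $\Dv_\gamma(n;\delta)$.

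\emph{Part (b)} proceeds in two steps. First, by (\ref{div1}) of part (a), $\dv_\gamma(a,b,c;\delta/3)\le \sup_{c'\in [a,b]}\dv_\gamma(a,b,c';\delta)+\dist(a,b)$, contributing the first $+n$. Second, for $c'\in [a,b]$, set $r=\dist(c',\{a,b\})$ and assume $r=\dist(c',a)\le \dist(c',b)$. Choose $b''\in [c',b]$ with $\dist(c',b'')=r$; then $(a,b'',c')$ satisfies $\dist(a,b'')=2r=2\dist(c',\{a,b''\})$, so it is a valid triple for the small divergence $\dv_\gamma(n;2,\delta)$. A witnessing path for $\dv_\gamma(a,b'',c';\delta)$ avoids $\Ball(c',\delta r-\gamma)$, and the continuation $[b'',b]\subseteq [c',b]$ stays at distance $\ge r\ge \delta r-\gamma$ from $c'$; concatenation yields a path avoiding $\Ball(c',\delta r-\gamma)$ of length $\le \dv_\gamma(n;2,\delta)+(\dist(a,b)-2r)\le \dv_\gamma(n;2,\delta)+n$, contributing the second $+n$.

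\emph{Part (c)} has the easy direction $\Dv_{\gamma'}(n;\delta')\le \Dv_\gamma(n;\delta)$ from the inclusion $\Ball(c,\delta'r-\gamma')\subseteq \Ball(c,\delta r-\gamma)$. For the converse $\Dv_\gamma(n;\delta)\preceq \Dv_{\gamma'}(n;\delta')$ I use the geometric structure behind Lemma~\ref{rem6}. Given $(a,b,c)$ with $\dist(a,b)\le n$ and $r=\dist(c,\{a,b\})=\dist(c,a)$, we may assume $r\le n/(1-\delta_0)$ (else $[a,b]$ itself witnesses $\dv_\gamma(a,b,c;\delta)\le n$). Pick bi-infinite $C$-bi-Lipschitz paths $\pgot_a,\pgot_b$ within $\kappa$ of $a,b$; the rays from their basepoints $\tilde a,\tilde b$ provided by the proof of Lemma~\ref{rem6} avoid $\Ball(c,\delta_0\dist(c,\tilde a))$ and $\Ball(c,\delta_0\dist(c,\tilde b))$ respectively, and since $\delta\le \delta_0$ and $\gamma\ge 4\kappa$, these balls contain $\Ball(c,\delta r-\gamma)$. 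Walk a parameter distance $T=O(r)$ along each ray to reach $a',b'$ satisfying $\dist(c,a'),\dist(c,b')\ge (\delta r-\gamma+\gamma')/\delta'$; this is achievable because the bi-Lipschitz lower bound gives $\dist(c,\pgot_a(T))\ge T/C-\dist(c,\tilde a)$, and the walk has length $O(T)=O(n)$. Then $(a',b',c)$ satisfies $\delta'\dist(c,\{a',b'\})-\gamma'\ge \delta r-\gamma$, so any witnessing $\dv_{\gamma'}$-path from $a'$ to $b'$ automatically avoids $\Ball(c,\delta r-\gamma)$; since $\dist(a',b')=O(n)$, its length is $\le \Dv_{\gamma'}(O(n);\delta')$. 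The short $\kappa$-segments connecting $a,\tilde a$ and $b,\tilde b$ remain outside $\Ball(c,\delta r-\gamma)$ by the triangle inequality and $\gamma\ge 4\kappa$, so the full concatenation gives a path of length $O(n)+\Dv_{\gamma'}(O(n);\delta')\preceq \Dv_{\gamma'}(n;\delta')$.

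The main obstacle is part (c): lining up the constants so that the walks along $\pgot_a,\pgot_b$ simultaneously stay outside the ``big'' ball $\Ball(c,\delta r-\gamma)$ while terminating at points whose ``small'' ball $\Ball(c,\delta'r'-\gamma')$ dominates $\Ball(c,\delta r-\gamma)$. Parts (a) and (b) are essentially a repackaging of the argument in Lemma~\ref{shtrih} and a short symmetrization trick around $c$, respectively.
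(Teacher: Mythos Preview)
Your proof is correct and follows essentially the same approach as the paper: parts (a) and (b) match the paper's reductions via Lemma~\ref{shtrih} and the symmetrization trick around $c$ almost verbatim, and for part (c) both you and the paper walk out along the bi-Lipschitz rays provided by Lemma~\ref{rem6} to reach points $a',b'$ far enough from $c$ that the $(\delta',\gamma')$-ball dominates the $(\delta,\gamma)$-ball, then invoke $\Dv_{\gamma'}(O(n);\delta')$ to connect. The only cosmetic difference is that the paper pins down the target radius exactly via $\delta' r' - \gamma' = \delta r - \gamma$ whereas you work with inequalities and big-$O$ constants, but the content is identical.
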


\proof \textbf{(a)}\quad The first inequality in (\ref{div1}) and inequality (\ref{div3}) are obvious.
The second inequality in (\ref{div1}) is proved exactly in the same manner in which it was proved for
(\ref{div2}).

\me

\textbf{(b)}\quad In order to prove (\ref{div4}) it suffices to prove a similar inequality for $\Dv_{\red{\gamma }}'$
and $\dv_{\red{\gamma }}'$ according to (\ref{div2}) and (\ref{div1}). Take $a,b$ with \red{$\dist(a,b)\leq n$}, and take $c\in
[a,b]$. Let $r=\dist (c,\{a,b\})$ and assume that $r=\dist(a,c)$. Let $b'\in [c,b]$ at distance $r$
from $c$.

Then $\dv_{\red{\gamma }} (a,b,c;\delta )\leq \dv_{\red{\gamma }} (a,b',c ; \delta )+n \red{\le \dv_{\red{\gamma }}(n;2,\delta)+n.}$

\me

\red{\textbf{(c)}\quad We prove that $\Dv_\gamma (n;\delta) \preceq \Dv_{\gamma'}(n;\delta')$. As in the proof of Lemma \ref{rem6} we may assume there exist $\pgot_1,...,\pgot_n ,$ bi-infinite $C$-bi-Lipschitz paths such that every point in $X$ is at distance $\leq \kappa$ from a path $g\pgot_i$ with $g\in Isom (X)$ and $i\in \{1,2,...,n\}$; we take $\delta_0=\frac1{C^2+1}$.}

\me

\red{Let $a,b \in X$ be such that $\dist (a,b)\leq n$, and for $c\in X$ let $r=\dist (c,\{ a,b\})=\dist (c,a)$. Assume that $B(c, \delta r-\gamma )$ intersects $[a,b]$ (otherwise $\dv_\gamma (a,b,c,;\delta )\leq n$) whence $r\leq \frac{n}{1-\delta }$.}

\red{The points $a,b$ are at distance $\leq \kappa $ from points $a'$ respectively $b'$ on paths $\pgot_a = g\pgot_i$,  $\pgot_b = g'\pgot_j$. Note that $\dist (a',c)\geq r-\kappa > \delta r - \gamma$ and the same for $\dist (b',c)$. An argument as in the proof of Lemma \ref{rem6} implies that one of the two connected components of $\pgot_a\setminus \{a'\}$
does not intersect $\Ball(c,\delta r -\gamma )$. We denote it by $\cgot_a$. Likewise we obtain $\cgot_b$ ray in $\pgot_b$. Let $r'$ be such that $\delta' r' -\gamma'=\delta r - \gamma$. There exists a point $a_1$ on $[a,a']\cup \cgot_a$ at distance $r'$ from $c$, and a point $b_1$ on $[b,b']\cup \cgot_b$ at distance $\ge r'$ from $c$. Moreover $\dist (a,a_1) \leq r+r'$ and $\dist (b,b_1)\leq r+n+ r'$, whence $\dist (a_1,b_1) \leq 2(r+r')+n \leq D n +D$. Then $\dv_\gamma (a,b,c;\delta ) \leq \dv_{\gamma'} (a_1,b_1,c;\delta' ) + 2\kappa + CDn+CD \leq \Dv_{\gamma'} (Dn+D; \delta') + 2\kappa + CDn+CD$.}
\endproof

Lemmas \ref{shtrih} and \ref{shtrih2} immediately imply.

\begin{cor}\label{cdiv}

\red{Let $X$ be a space as in Lemma \ref{rem6}, and $\delta_0$, $\gamma_0 = 4\kappa $ the constants provided by the proof of that lemma.}

\me

\begin{enumerate}
    \item The functions $\dv_{\red{\gamma }}'(n;\lambda,\delta)$ and $\Dv_{\red{\gamma }}'(n;\delta)$ \red{with $\delta \leq \delta_0,\, \gamma \geq \gamma_0\, ,$} do not depend (up to the equivalence
relation $\equiv$) on the choice of geodesics $[a,b]$ for every pair of points $a,b$.

\me

\item \red{For every $\delta \leq \delta_0$, $\gamma \ge \gamma_0$, and $\lambda \geq 2$}
\begin{equation}\label{eq:equiv}
\red{\Dv_\gamma (n; \delta ) \equiv \Dv_\gamma' (n; \delta ) \equiv \dv_\gamma (n; \lambda ,\delta )\equiv \dv_\gamma' (n; \lambda ,\delta )\, . }
\end{equation}

\red{Moreover all functions in (\ref{eq:equiv}) are independent of  $\delta \leq \delta_0$ and $\gamma \ge \gamma_0$ (up to the equivalence
relation $\equiv$).}

\me

    \item The function $\Dv_{\red{\gamma }}(n;\delta)$ is equivalent to
$\dv_{\red{\gamma }}'(n;2,\delta)$ as a function in $n$. Thus in order to estimate $\Dv_{\red{\gamma }}(n,\delta)$ \red{for $\delta\leq \delta_0$} it is enough to consider points $a,b,c$ where $c$ is the
midpoint of a (fixed) geodesic segment connecting $a$ and $b$.
\end{enumerate}
\end{cor}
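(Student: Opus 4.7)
The plan is to chain together the four inequalities of Lemmas \ref{shtrih} and \ref{shtrih2} so as to show that in the range $\delta \in (0,\delta_0]$, $\gamma \geq \gamma_0$, $\lambda \geq 2$, all four functions $\Dv_\gamma(n;\delta)$, $\Dv'_\gamma(n;\delta)$, $\dv_\gamma(n;\lambda,\delta)$, $\dv'_\gamma(n;\lambda,\delta)$ lie in a single equivalence class under $\equiv$, and that this class is independent of the particular choice of $\delta$ and $\gamma$ in the prescribed range. All three statements of the corollary will then fall out of this single collapse of equivalence classes.

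I would begin with the core equivalence asserted in part (2). The three inequalities $\dv'_\gamma(n;\lambda,\delta) \leq \dv_\gamma(n;\lambda,\delta) \leq \Dv_\gamma(n;\delta)$ are immediate from the definitions, each step corresponding to relaxing the constraints on the admissible triple $(a,b,c)$. In the reverse direction, inequality (\ref{div4}) of Lemma \ref{shtrih2}(b) gives $\Dv_\gamma(n;\delta/3) \leq \dv_\gamma(n;2,\delta) + 2n$, and the second inequality in (\ref{div2}) of Lemma \ref{shtrih} gives $\dv_\gamma(n;\lambda,\delta/3) \leq \dv'_\gamma(n;2\lambda,\delta) + n$, thereby closing the loop between $\Dv_\gamma$, $\dv_\gamma$, and $\dv'_\gamma$. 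The factor $1/3$ in $\delta$ and the shift in $\gamma$ are then absorbed by part (c) of Lemma \ref{shtrih2}, which yields $\Dv_\gamma(n;\delta) \equiv \Dv_{\gamma'}(n;\delta')$ for any $0 < \delta' \leq \delta \leq \delta_0$ and $\gamma' \geq \gamma \geq \gamma_0$; monotonicity of $\dv_\gamma$ in $\lambda$ from Lemma \ref{Divdiv} eliminates the resulting dependence on the choice of $\lambda\geq 2$. The equivalence $\Dv'_\gamma \equiv \Dv_\gamma$ follows from the trivial bound $\Dv'_\gamma \leq \Dv_\gamma$ together with $\dv'_\gamma(n;2,\delta) \leq \Dv'_\gamma(n;\delta)$ and the already-proved equivalence $\dv'_\gamma(n;2,\delta) \equiv \Dv_\gamma(n;\delta)$, completing the proof of (\ref{eq:equiv}).

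Parts (1) and (3) are then formal consequences of part (2). For (1), the functions $\Dv_\gamma$ and $\dv_\gamma$ are manifestly defined without reference to any choice of geodesics $[a,b]$, hence the equivalence of $\Dv'_\gamma$ and $\dv'_\gamma$ with these unprimed functions automatically forces independence from the choice of geodesics up to $\equiv$. For (3), the equivalence $\Dv_\gamma(n;\delta) \equiv \dv'_\gamma(n;2,\delta)$ is part (2) specialized to $\lambda=2$, and the geometric remark that $c$ may be taken to be the midpoint of $[a,b]$ reduces to the observation that for $c\in [a,b]$ the identity $\dist(a,c)+\dist(c,b)=\dist(a,b)$ combined with the constraint $2\dist(c,\{a,b\})\geq \dist(a,b)$ forces $\dist(c,\{a,b\}) = \dist(a,b)/2$, i.e.\ $c$ is exactly the midpoint.

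I do not foresee any serious obstacle, since the entire argument is a mechanical chase of constants. The only point requiring care is to verify that every intermediate shift $\delta \mapsto \delta/3$ and every shift of $\gamma$ introduced by the various inequalities keeps the arguments inside the admissible range $\delta \leq \delta_0$, $\gamma \geq \gamma_0$, which is precisely the regime in which Lemma \ref{shtrih2}(c) applies and Lemma \ref{rem6} guarantees finiteness; since only finitely many shifts are involved, this is transparent.
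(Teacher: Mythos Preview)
Your proposal is correct and follows the same route as the paper: the corollary is stated there as an immediate consequence of Lemmas \ref{shtrih} and \ref{shtrih2}, and you have simply written out the chain of inequalities (trivial inclusions in one direction, (\ref{div2}) and (\ref{div4}) in the other, with Lemma \ref{shtrih2}(c) absorbing the $\delta\mapsto\delta/3$ shifts) that the paper leaves implicit. Your derivations of parts (1) and (3) from part (2) are also the intended ones.
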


\me

We now recall another definition of divergence function, due to S. Gersten
(\cite{Gersten:divergence},\cite{Gersten:divergence3}). Let $X$ be a geodesic metric space, let $x_0$
be a fixed point and let $\rho \in (0,1)$. Denote by $S_r$ the sphere of center $x_0$ and radius $r$
and by $C_r$ the complementary set of the open ball $\Ball (x_0,r)$.

For every $x,y\in S_{r}$ define $\red{\DG}_\rho (x,y)$ to be the shortest path distance in $C_{\rho r}$
between $x,y$. Then define $\red{\DG}_\rho (r)$ as the supremum of $\red{\DG}_\rho (x,y)$, over all $x,y\in
S_r$ that can be connected by a path in $C_{\rho r}$.

The collection of functions $\red{\Delta} =\{\red{\DG}_\rho \mid \rho \in (0,1)\}$ is called \emph{the
divergence} of $X$. Such a collection is considered up to the equivalence relation $\sim$ defined in
what follows. We write $\red{\Delta} \preceq \red{\Delta}'$ if and only if there exist $\epsilon , \epsilon'\in
(0,1)$ and $C\ge 1$ such that for every $\rho < \epsilon$ there exists $\rho' <\epsilon'$ such that
$\red{\DG}_\rho \preceq_C \red{\DG}'_{\rho'}$.

Then we define $\red{\Delta} \sim \red{\Delta}'$ by $\red{\Delta} \preceq \red{\Delta}'$ and $\red{\Delta}' \preceq \red{\Delta}$.

It can be easily checked that the collection of functions $\red{\Delta}$ up to the equivalence relation $
\sim$ is independent of the basepoint $x_0$, and it is a quasi-isometry invariant. Thus we may assume
that $x_0$ is an arbitrary fixed basepoint.

The relationship between Gersten divergence and the small divergence function is the following.

\begin{lemma}\label{divGer}
\begin{enumerate}
    \item Let $X$ be a geodesic metric space. Then for every $\rho \in (0,1)$, $\red{\DG}_\rho (n) \leq \dv_{\red{0}}(\pi n;2,\rho
    )$ for every $n$.
    \item\label{ger2} \red{Let $X$ be a space as in Lemma \ref{rem6}, and $\delta_0$, $\gamma_0 = 4\kappa $ the constants provided by the proof of that lemma.}
    Then for every \red{$0<\rho <\rho'\leq \delta_0$ and $\gamma \geq \gamma_0$}
    $$
\dv_{\red{\gamma}}' (n;2,\rho )\leq \sup_{\red{x\le n}}\red{\DG}_{\rho'} \left( \frac{x}{2}+O(1)\right) + n \red{+O(1)}\, .
    $$
\end{enumerate}
\end{lemma}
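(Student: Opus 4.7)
The plan is to prove the two inequalities by directly matching up the paths that each divergence function considers: in both cases a path avoiding a metric ball around a chosen point is (after minor adjustments) exactly a path in a Gersten-type ``annular complement''.

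For part (1), given $x,y\in S_{n}$ that can be connected by a path in $C_{\rho n}$, I would apply the definition of $\dv_{0}$ with $a=x$, $b=y$ and $c=x_{0}$. Here $\dist(c,\{a,b\})=n$, and the condition $2\dist(c,\{a,b\})\ge\dist(a,b)$ reduces to $2n\ge\dist(x,y)$, which is automatic from the triangle inequality. A path realising $\dv_{0}(x,y,x_{0};\rho)$ avoids $\Ball(x_{0},\rho n)$, hence lies in $C_{\rho n}$, so $\DG_{\rho}(x,y)\le\dv_{0}(x,y,x_{0};\rho)\le\dv_{0}(2n;2,\rho)$. Taking the supremum over all such pairs gives the stated bound; the constant $\pi$ in the paper is a harmless overestimate of the factor $2$ coming from the sphere-diameter bound $\dist(x,y)\le 2n$.

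For part (2), I would first make a structural observation. When $c\in[a,b]$ and $2\dist(c,\{a,b\})\ge\dist(a,b)$, the identity $\dist(a,c)+\dist(c,b)=\dist(a,b)$ forces $\min(\dist(c,a),\dist(c,b))=\dist(a,b)/2$; hence $c$ is the midpoint and $r:=\dist(c,a)=\dist(c,b)=\dist(a,b)/2$. So $a,b$ are antipodal on the sphere of radius $r$ centred at $c$. Using periodicity of $X$ (from the hypothesis of Lemma \ref{rem6}), choose an isometry $g$ with $\dist(gx_{0},c)\le D$ for a uniform $D$, and slide $a$, $b$ to nearby points $a',b'$ lying exactly on the sphere $S^{gx_{0}}_{r'}$ with $r'=r+O(1)$, paying $O(1)$ in length. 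The Gersten divergence at basepoint $gx_{0}$ (which by $g$-equivariance equals $\DG_{\rho'}(r')$) then supplies a path from $a'$ to $b'$ inside $X\setminus\Ball(gx_{0},\rho' r')$ of length at most $\DG_{\rho'}(r')$. Since $\rho<\rho'$ and $D,\gamma$ are bounded, the containment
\[
\Ball(c,\rho r-\gamma)\subseteq\Ball(gx_{0},\rho' r')
\]
holds once $(\rho'-\rho)r$ dominates $D+\gamma$, so that path in fact avoids the prescribed ball; concatenating with the $a\to a'$ and $b'\to b$ pieces of length $O(1)$ yields $\dv_{\gamma}(a,b,c;\rho)\le\DG_{\rho'}(r+O(1))+O(1)$.

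The additive $+n$ term in the conclusion takes care of the small-$r$ regime. When $\rho r-\gamma\le 0$ the ball $\Ball(c,\rho r-\gamma)$ is empty and the trivial geodesic bound $\dv_{\gamma}(a,b,c;\rho)\le\dist(a,b)\le n$ already suffices; this uniformly handles the bounded range of $r$ in which the ball-containment computation above is delicate or the Gersten function is not meaningful. Taking the supremum over all admissible $(a,b,c)$ with $\dist(a,b)\le n$ (equivalently $r\le n/2$) then produces the announced estimate $\sup_{x\le n}\DG_{\rho'}(x/2+O(1))+n+O(1)$.

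The main obstacle is the bookkeeping in the basepoint change from $x_{0}$ to $c$: I must verify quantitatively that the gap $\rho'-\rho>0$ really absorbs both the centre drift $D$ produced by periodicity and the additive $\gamma$, which is exactly why the statement requires $\rho<\rho'$ strictly rather than $\rho=\rho'$; handling the finite window of ``medium'' radii where neither the Gersten estimate nor the trivial estimate $\le n$ is sharp is what motivates the safety-net $+n$ term.
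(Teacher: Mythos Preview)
The paper does not actually prove this lemma: immediately after the statement it says ``We leave the proof to the reader.'' So there is no in-text argument to compare against, and your sketch is precisely the kind of direct verification the authors have in mind. Your treatment of part~(1) is correct (and indeed $2n$ would suffice in place of $\pi n$). Your treatment of part~(2) identifies the right mechanism: with $\lambda=2$ and $c\in[a,b]$ the point $c$ is forced to be the midpoint, and periodicity lets you recentre the Gersten function near~$c$; the strict inequality $\rho<\rho'$ absorbs the basepoint drift, and the additive $+n$ covers the bounded-radius regime.

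One point deserves a sentence of justification that you currently elide. When you write that ``the Gersten divergence \ldots\ supplies a path from $a'$ to $b'$ inside $X\setminus\Ball(gx_{0},\rho' r')$ of length at most $\DG_{\rho'}(r')$'', you are implicitly assuming that $a'$ and $b'$ are connectable in $C_{\rho' r'}$; recall that $\DG_{\rho'}(r')$ is defined as the supremum only over connectable pairs on the sphere. This is where the full hypothesis of Lemma~\ref{rem6} (every point within $\kappa$ of a bi-infinite bi-Lipschitz path, together with one-endedness) is needed, exactly as in the proof of that lemma: from each of $a',b'$ one escapes along a half of such a bi-Lipschitz line into the unbounded component of the complement, and one-endedness then gives a connecting path. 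The same structure guarantees that the short ``sliding'' segments $a\to a'$ and $b\to b'$ can be chosen to avoid $\Ball(c,\rho r-\gamma)$. With that paragraph added your argument is complete.
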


We leave the proof to the reader. Note that if $X$ is as in Lemma \ref{divGer}, (\ref{ger2}), and if
the collection of functions $\red{\Delta}$ is equivalent to a non-decreasing function $f$ (i.e. to the
collection $\red{\DG}'_{\rho'}=f$ for every $\rho'$) then the divergence function is equivalent to $f$. In
all the cases when Gersten divergence is computed this is precisely the case.

%%%%%%%%%%%%%%%%%%%%%%
%%%%%%%%%%%%%%%%%%%%%%
%%%%%%%%%%%%%%%%%%%%%%

\me

\Notat\, Let $(d_n), (d_n')$ be two sequences of numbers, and let $\omega$ be an ultrafilter. We write
$d_n'=O_\omega(d_n)$ if for some constant $C>1$, $\frac1Cd_n'<d_n<Cd_n'$ $\omega$-a.s. for all $n$.

\medskip

Recall that a finitely generated group $G$ is called {\em wide} if none of its asymptotic cones has a
cut-point; it is called \emph{unconstricted} if one of its asymptotic cones does not have cut-points.
We say that a closed ball $\bar B=\overline{\Ball}(c,\delta)$ in a metric space $X$ {\em separates }
point $u$ from point $v$ if $u$ and $v$ are in different connected components of $X\setminus \bar B$.

\begin{lemma}\label{lm00}
Let $X$ be a geodesic metric space. Let $\omega$ be any ultrafilter,
and let $(d_n)$ be a sequence of positive numbers such that
$\lim_\omega d_n =\infty$. Let $\calc= {\Con^\omega(X, (o_n),
(d_n))}$, $A=(a_n)^\omega, B=(b_n)^\omega, C=(c_n)^\omega\in
\mathcal{C}$. Let $r=\dist(C, \{A,B\})$. The following conditions
are equivalent for any $0\le \delta<1$.

\begin{itemize}
\item[(i)] The closed ball $\overline{\Ball}(C,\delta)$ in $\calc$ separates $A$ from $B$.
\item[(ii)] For every $\delta'>\delta$ \red{and every (some) $\gamma \geq 0$} the limit $\lim_\omega \frac{\dv_{\red{\gamma }}(a_n,b_n,c_n;\frac{\delta'}{r})}{d_n}$ is $\infty$.
\end{itemize}
\end{lemma}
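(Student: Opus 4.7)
The plan is to prove both implications by contrapositive, translating between paths in $X$ and paths in $\calc$ via $\omega$-limits. First I note that the ``every $\gamma$'' and ``some $\gamma$'' variants of (ii) are equivalent: for $\gamma_1\le\gamma_2$ and $\delta''\in(\delta,\delta')$, the difference $((\delta'-\delta'')/r)r_n$ grows without bound and eventually dominates $\gamma_2-\gamma_1$, so $\Ball(c_n,(\delta''/r)r_n-\gamma_1)\subseteq\Ball(c_n,(\delta'/r)r_n-\gamma_2)$ \uas, giving $\dv_{\gamma_1}(a_n,b_n,c_n;\delta''/r)\le\dv_{\gamma_2}(a_n,b_n,c_n;\delta'/r)$. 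Throughout I work in the substantive range $\delta<r$; otherwise $\delta'/r\ge 1$, one of $a_n,b_n$ eventually lies inside the forbidden ball (so $\dv_\gamma=\infty$) and (i) fails for trivial reasons.

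For \textbf{(i)$\Rightarrow$(ii)}, suppose to the contrary that $M:=\lim_\omega \dv_\gamma(a_n,b_n,c_n;\delta'/r)/d_n<\infty$ for some $\delta'>\delta$ and $\gamma\ge 0$. Then \uass there exist arclength-parametrized paths $p_n\colon[0,L_n]\to X$ from $a_n$ to $b_n$ with $L_n\le(M+1)d_n$, all avoiding $\Ball(c_n,(\delta'/r)r_n-\gamma)$, where $r_n=\dist(c_n,\{a_n,b_n\})$. Setting $L:=\lim_\omega L_n/d_n$ and reparametrizing each $p_n$ linearly to $[0,L]$, the assignment $p(t):=(p_n(tL_n/L))^\omega$ yields a $1$-Lipschitz curve in $\calc$ from $A$ to $B$; since $(\delta'/r)r_n/d_n\to\delta'$, every value $p(t)$ satisfies $\dist(p(t),C)\ge\delta'>\delta$, so $p$ lies in $\calc\setminus\overline{\Ball}(C,\delta)$, contradicting (i).

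For \textbf{(ii)$\Rightarrow$(i)}, suppose $\overline{\Ball}(C,\delta)$ does not separate $A$ from $B$, so they lie in a common connected component of $\calc\setminus\overline{\Ball}(C,\delta)$. Take $\ttt$ to be the collection of limit geodesics in $\calc$: the $\omega$-limit of any geodesics $[u_n,v_n]$ in $X$ is a geodesic of $\calc$ between $(u_n)^\omega$ and $(v_n)^\omega$, and sub-arcs of limit geodesics are limit geodesics, so Convention \ref{cvttt} holds with $\lambda=1,\kappa=0$. Lemma \ref{piecewiseg} then produces a piecewise limit-geodesic path $\pgot=[A,P_1]\sqcup\cdots\sqcup[P_k,B]$ in $\calc$ avoiding $\overline{\Ball}(C,\delta)$; since its image is compact in the open set $\calc\setminus\overline{\Ball}(C,\delta)$, we have $\dist(\pgot,C)\ge\delta+\eta$ for some $\eta>0$. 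Writing $P_i=(p^i_n)^\omega$ with $p^0_n:=a_n,\,p^{k+1}_n:=b_n$, form the piecewise geodesic $\pgot_n:=[a_n,p^1_n]\sqcup\cdots\sqcup[p^k_n,b_n]$ in $X$, whose length $\sum_i\dist(p^i_n,p^{i+1}_n)$ is $\omega$-asymptotic to $d_n\length(\pgot)$ and which satisfies $\lio{\pgot_n}=\pgot$.

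The key verification is that \uass $\pgot_n$ avoids $\Ball(c_n,(\delta+\eta/2)d_n)$. If this fails, there is $y_n\in\pgot_n$ with $\dist(y_n,c_n)<(\delta+\eta/2)d_n$ for $\omega$-many $n$; the length bound ensures $y_n$ is at distance $O(d_n)$ from $a_n$, so $y:=(y_n)^\omega\in\calc$, and $y\in\lio{\pgot_n}=\pgot$ forces $\dist(y,C)\le\delta+\eta/2<\delta+\eta$, a contradiction. Now for any $\delta'\in(\delta,\delta+\eta/2)$ and any $\gamma\ge 0$, $(\delta'/r)r_n/d_n\to\delta'<\delta+\eta/2$ gives $\Ball(c_n,(\delta'/r)r_n-\gamma)\subseteq\Ball(c_n,(\delta+\eta/2)d_n)$ \uas, whence $\dv_\gamma(a_n,b_n,c_n;\delta'/r)\le\length(\pgot_n)$ and $\lim_\omega \dv_\gamma(a_n,b_n,c_n;\delta'/r)/d_n\le\length(\pgot)<\infty$, falsifying (ii). The main obstacle is precisely this lifting step: the gap of size $\eta$ between $\pgot$ and $C$ in the cone must be converted into uniform quantitative clearance around $c_n$ at scale $d_n$ for the explicit lifts $\pgot_n$, made possible by the compactness of $\pgot$ together with the linear length bound on $\pgot_n$.
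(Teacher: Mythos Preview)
Your proof follows the same strategy as the paper's and is essentially correct, but there is one imprecision in the direction (ii)$\Rightarrow$(i) that you should fix. After obtaining the piecewise \emph{limit-geodesic} path $\pgot=[A,P_1]\sqcup\cdots\sqcup[P_k,B]$ in $\calc$, you pick representatives $P_i=(p_n^i)^\omega$ and then set $\pgot_n=[a_n,p_n^1]\sqcup\cdots\sqcup[p_n^k,b_n]$ using \emph{arbitrary} geodesics in $X$ between these representatives, asserting that $\lio{\pgot_n}=\pgot$. This does not follow: asymptotic cones are not in general uniquely geodesic, so $\lio{[p_n^i,p_n^{i+1}]}$ is \emph{some} geodesic from $P_i$ to $P_{i+1}$ but need not be the particular limit geodesic occurring in $\pgot$. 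If it happens to pass through $\overline{\Ball}(C,\delta)$, your contradiction ``$y\in\lio{\pgot_n}=\pgot$'' breaks down.

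The repair is immediate and is exactly what the paper (implicitly via Lemma~\ref{lemlim1}) does: since each segment $[P_i,P_{i+1}]$ of $\pgot$ is a limit geodesic, it equals $\lio{g_n^{(i)}}$ for \emph{specific} geodesics $g_n^{(i)}$ in $X$; build $\pgot_n$ from these $g_n^{(i)}$, inserting short connecting geodesics at the joints (whose limits are points). Then $\lio{\pgot_n}=\pgot$ genuinely holds, and the rest of your argument goes through unchanged. Apart from this, your treatment of the $\gamma$-independence, the degenerate case $\delta\ge r$, and both contrapositives matches the paper's proof.
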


\proof Suppose that $\frac{\dv_{\red{\gamma }}(a_n,b_n,c_n;\frac{\delta'}{r})}{d_n}$ is bounded by some constant $M$
\uas. This means there exists ($\omega$-a.s.) a path $\pgot_n$ of length $O(d_n)$ connecting $a_n, b_n$
and avoiding the ball $\Ball \left( c_n,\frac{\delta'}{r}r_n \right)$ where
$r_n=\dist(c_n,\{a_n,b_n\}))$. Since $\lio{r_n/d_n}=r$, the $\omega$-limit of these balls in $\calc$ is
the closed ball $\overline{\Ball}(C,\delta')$. The limit $\pgot$ of the paths $\pgot_n$ in $\calc$
exists (because the length of  $\pgot_n$ is $O_\omega(d_n)$), connects $A$ and $B$ and avoids the open
ball $\Ball(C,\delta')$. Hence $\pgot$ avoids every closed ball $\overline{\Ball}(C,\delta)$ for
$\delta<\delta'$. Thus if there exists a closed ball $\overline{\Ball}(C,\delta)$ separating $A$ from
$B$, then for any $\delta'>\delta$ the set of numbers $\frac{\dv_{\red{\gamma }}(a_n,b_n,c_n;\frac{\delta'}{r})}{d_n}$
cannot be bounded \uas. So (i) implies (ii).

Now suppose that property (ii) holds and that $\overline{\Ball}(C,\delta)$ does not separate $A$ from
$B$. Then there exists a path $\pgot$ in $\calc$ connecting $A$ and $B$ and avoiding the ball
$\overline{\Ball}(C,\delta)$. By Lemma \ref{piecewiseg} we can  assume that $\pgot$ is a piece-wise
geodesic, avoiding $\overline{\Ball}(C,\delta')$ for some $\delta'>\delta$, and that $\pgot$ is an
$\omega$-limit of paths $\pgot_n$ in the space $X$. Then the lengths of $\pgot_n$ are $O_\omega(d_n)$
and these paths avoid ($\omega$-a.s.) balls $\Ball(c_n, \delta'' r_n)$ where
$r_n=\dist(c_n,\{a_n,b_n\}))$, $\delta<\delta''<\delta'$. Therefore the set of numbers
$\frac{\dv_{\red{\gamma }}(a_n,b_n,c_n;\delta'')}{d_n}$ is bounded, which contradicts (ii). \endproof

Taking $\delta=0$ in Lemma \ref{lm00}, we obtain the following

\begin{lemma}\label{lm1} Let $X, \omega, (d_n)$ be as in Lemma \ref{lm00}. Then the following conditions are equivalent:

\begin{itemize}
\item[(i)] The asymptotic cone  {$\Con^\omega(X, (o_n), (d_n))$} has a cut-point.

\item[(ii)] There exists a sequence of pairs of points $(a_n,b_n)$ in $X$ with $$\dist(a_n,b_n)=O_\omega(d_n)\mbox{ and }\; \frac{\dist(a_n,o_n)}{d_n}\, ,\, \frac{\dist(b_n,o_n)}{d_n}\mbox{ bounded,} $$
and a sequence of midpoints  $c_n$ of geodesics $[a_n,b_n]$ such that the sequence
    $\dv_{\red{\gamma }}(a_n,b_n,c_n;2,\delta)$, $n\ge 1$, is superlinear $\omega$-a.s. for every $\delta\in (0,1)$ \red{and $\gamma \geq 0$}.

Moreover, if that condition holds, then the point $C=(c_n)^\omega$ is a cut-point in
 {$\Con^\omega(X, (o_n),(d_n))$} separating $A=(a_n)^\omega$ from $B=(b_n)^\omega$.
\end{itemize}
\end{lemma}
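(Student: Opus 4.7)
The overall approach is to reduce both implications to Lemma \ref{lm00}, specialized at $\delta=0$: that lemma already asserts that $\{C\}$ separates $A$ from $B$ in $\calc$ if and only if for every $\delta'>0$ and $\gamma\geq 0$ the sequence $\dv_\gamma(a_n,b_n,c_n;\delta'/r)/d_n$ tends to $\infty$ along $\omega$, where $r=\dist(C,\{A,B\})$. The only additional content of Lemma \ref{lm1} is that the representatives can be chosen so that $c_n$ is the midpoint of a geodesic $[a_n,b_n]$ in $X$; this is a matter of passing to a well-chosen sub-geodesic of an arbitrary geodesic $[a_n^0,b_n^0]$.

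For (i)$\Rightarrow$(ii): Assume $\calc$ has a cut-point $C_0$, and pick $A_0,B_0$ in distinct components of $\calc\setminus\{C_0\}$, with representatives $(a_n^0),(b_n^0)$. For each $n$ choose a geodesic $[a_n^0,b_n^0]\subseteq X$; its $\omega$-limit is a geodesic in $\calc$ from $A_0$ to $B_0$ and so, since $C_0$ is a cut-point separating its endpoints, it must contain $C_0$. Hence $C_0=(\tilde c_n)^\omega$ for some $\tilde c_n\in[a_n^0,b_n^0]$. Fix $\epsilon>0$ strictly smaller than $\min(\dist(A_0,C_0),\dist(C_0,B_0))$, and pick $a_n,b_n\in[a_n^0,b_n^0]$ on opposite sides of $\tilde c_n$ at distance $\epsilon d_n$ from $\tilde c_n$; set $c_n:=\tilde c_n$, which is now the midpoint of the geodesic $[a_n,b_n]$. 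Let $A=(a_n)^\omega$, $B=(b_n)^\omega$. Then $A,B$ lie on $[A_0,C_0]$ and $[C_0,B_0]$ strictly short of $C_0$, so they still belong to the two different components determined by $C_0$; in particular $\{C_0\}$ separates $A$ from $B$. Since $\dist(A,\{B\})=2\epsilon$, Lemma \ref{lm00} with $\delta=0$ and $r=\epsilon$ yields $\lim_\omega \dv_\gamma(a_n,b_n,c_n;\delta'/\epsilon)/d_n=\infty$ for every $\delta'>0$; rewriting $\delta=\delta'/\epsilon$ gives superlinearity for every $\delta\in(0,1)$. The distance and basepoint bounds are immediate because $a_n,b_n\in[a_n^0,b_n^0]$ and $\dist(a_n,b_n)\leq 2\epsilon d_n$. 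Finally $2\dist(c_n,\{a_n,b_n\})=\dist(a_n,b_n)$, which is exactly the condition $\lambda=2$ implicit in the notation $\dv_\gamma(a_n,b_n,c_n;2,\delta)$.

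For (ii)$\Rightarrow$(i) together with the ``moreover'' clause: Put $A=(a_n)^\omega$, $B=(b_n)^\omega$, $C=(c_n)^\omega$; these lie in $\calc$ by the assumed bounds on $a_n,b_n$ and the fact that $c_n$ sits on $[a_n,b_n]$. Since $c_n$ is the midpoint of $[a_n,b_n]$ we have $\dist(A,C)=\dist(C,B)=\dist(A,B)/2$, so setting $r:=\dist(C,\{A,B\})$ equals $\dist(A,B)/2$. If $A=B$, then $\dv_\gamma(a_n,b_n,c_n;\delta)\leq\dist(a_n,b_n)=o(d_n)$, contradicting superlinearity; hence $r>0$. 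For any $\delta^*>0$, pick $\delta\in(0,1)$ with $\delta r\leq\delta^*$; by monotonicity of $\dv_\gamma$ in its last argument and the hypothesis, $\lim_\omega \dv_\gamma(a_n,b_n,c_n;\delta^*/r)/d_n=\infty$. Applying Lemma \ref{lm00} with $\delta=0$ now shows that $\{C\}$ separates $A$ from $B$ in $\calc$, which is simultaneously a proof that $\calc$ has a cut-point and the ``moreover'' statement.

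The main subtlety is the forward direction: a generic representative $\tilde c_n$ of the cut-point $C_0$ need not be the midpoint of any geodesic between the chosen representatives of $A_0,B_0$, and a priori one cannot expect the midpoints of the geodesics $[a_n^0,b_n^0]$ to converge to $C_0$. The workaround is to shrink $A_0,B_0$ inward along a single limit geodesic containing $C_0$ until $C_0$ becomes the midpoint; this is why one must pass to a sub-geodesic $[a_n,b_n]$ of $[a_n^0,b_n^0]$ instead of trying to work with the original endpoints.
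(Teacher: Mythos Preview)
Your proposal is correct and follows exactly the paper's approach, which is literally the one-line remark ``Taking $\delta=0$ in Lemma~\ref{lm00}, we obtain the following''; you simply spell out the details the paper leaves implicit, most notably the passage to a symmetric sub-geodesic $[a_n,b_n]\subset [a_n^0,b_n^0]$ so that the cut-point representative $\tilde c_n$ becomes an honest midpoint. One small simplification: in the backward direction your separate check that $A\neq B$ is unnecessary, since the hypothesis $\dist(a_n,b_n)=O_\omega(d_n)$ in the paper's two-sided sense already gives $\dist(A,B)>0$.
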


\begin{lemma}\label{lm01}
Let $X$ be \red{a periodic geodesic metric space which contains} a bi-infinite quasi-geodesic. Suppose
that one of its asymptotic cones $\calc$ has a closed ball $\overline{\Ball}(C,\delta)$ separating $A$
from $B$ and $\dist(C, \{A,B\})>3\delta$. Then $X$ is not wide, that is one of the asymptotic cones of
$X$ has a cut-point.
\end{lemma}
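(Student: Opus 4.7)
By Lemma~\ref{lm00}, the hypothesis is equivalent to the following superlinear divergence statement: for every $\delta'>\delta$ the quantity $f_n(\delta'):=\dv(a_n,b_n,c_n;\delta'/r)/d_n$ satisfies $\lim_\omega f_n(\delta')=\infty$, where $r=\dist(C,\{A,B\})$. Since $r>3\delta$, I can fix $\delta'$ with $\delta<\delta'<r/3$ and abbreviate $f_n:=f_n(\delta')$. The idea is to pass to a new asymptotic cone of $X$ taken at a larger scale, in which the image of the sequence $(c_n)$ becomes an actual cut-point.

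Choose $m_n\to\infty$ slowly enough that $m_n=o(f_n)$ (for instance $m_n=\lfloor\sqrt{f_n}\rfloor$), set $e_n:=m_n d_n$, and consider $\calc^*:=\Con^\omega(X,(c_n),(e_n))$ with basepoint $C^*$. Using periodicity together with the given bi-infinite quasi-geodesic, I plan to construct points $a_n^*, b_n^*\in X$ with the following properties: (i) $\dist(a_n^*,c_n)=\Theta(e_n)$ and $\dist(b_n^*,c_n)=\Theta(e_n)$, so the images $A^*,B^*\in\calc^*$ lie at positive bounded distance from $C^*$; and (ii) there exist paths $P_n^a$ from $a_n$ to $a_n^*$ and $P_n^b$ from $b_n$ to $b_n^*$, each of length $O(e_n)$ in $X$, staying outside $\Ball(c_n,\delta' d_n)$. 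The $a_n^*,b_n^*$ are produced as points on outgoing rays of bi-infinite quasi-geodesics through $a_n$ and $b_n$, obtained by translating the given quasi-geodesic via periodicity and selecting the direction moving away from $c_n$; the condition $r>3\delta'$ (guaranteed by $r>3\delta$ after choosing $\delta'$ close to $\delta$) provides the geometric slack needed for such outward rays not to curve back into $\Ball(c_n,\delta' d_n)$.

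To verify that $C^*$ separates $A^*$ from $B^*$ in $\calc^*$, suppose for contradiction there is a path $\pgot$ in $\calc^*\setminus\{C^*\}$ from $A^*$ to $B^*$. Being compact, $\pgot$ avoids some closed ball $\overline{\Ball}(C^*,\eta)$ with $\eta>0$. By the lifting argument used in the proof of Lemma~\ref{lm00}, $\pgot$ is the $\omega$-limit of paths $Q_n$ in $X$ from $a_n^*$ to $b_n^*$ of length $O(e_n)$ which $\omega$-almost surely avoid $\Ball(c_n,\tfrac{\eta}{2}e_n)$. Since $e_n=m_n d_n$ with $m_n\to\infty$, for $n$ sufficiently large $\tfrac{\eta}{2}e_n>\delta' d_n$, so $Q_n$ avoids the separating ball $\Ball(c_n,\delta' d_n)$. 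Concatenating $P_n^a$, $Q_n$, and the reverse of $P_n^b$ produces a path from $a_n$ to $b_n$ avoiding $\Ball(c_n,\delta' d_n)$ of total length $O(e_n)=O(m_n d_n)$. Therefore $\dv(a_n,b_n,c_n;\delta'/r)=O(m_n d_n)$, i.e.\ $f_n=O(m_n)$, contradicting $m_n=o(f_n)$. Thus $C^*$ is a cut-point of $\calc^*$, so $X$ has an asymptotic cone with a cut-point.

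The main obstacle is establishing step (ii): producing outward paths from $a_n$ and $b_n$ of length $O(e_n)$ that remain outside $\Ball(c_n,\delta' d_n)$. This is where both the bi-infinite quasi-geodesic and periodicity are essential, and where the factor $3$ in the hypothesis $r>3\delta$ is used — it provides exactly the room needed for an outward quasi-geodesic ray from $a_n$ (at distance $r d_n$ from $c_n$, well outside the separating ball) to reach distance $e_n$ from $c_n$ without ever dipping back to within $\delta' d_n$ of $c_n$.
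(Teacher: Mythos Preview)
Your approach is correct and takes a genuinely different route from the paper. The paper works entirely inside the cone $\calc$: using homogeneity of $\calc$ (inherited from periodicity of $X$), it places bi-Lipschitz lines through $A$ and through $B$, selects the outgoing rays $\pgot, \pgot'$ that avoid $\overline{\Ball}(C,\delta)$ (this is where $r>3\delta$ enters, via the argument of Lemma~\ref{rem6}), takes points $A_n\in\pgot$, $B_n\in\pgot'$ at distance $n$, observes that these remain separated by $\overline{\Ball}(C,\delta)$ (a purely topological fact in $\calc$: anything connected to $A$ by a path missing the ball lies on $A$'s side), and then applies Lemma~\ref{lm00} to the iterated cone $\Con^\omega(\calc,(C),(n))$, invoking that an asymptotic cone of a cone of $X$ is again a cone of $X$.

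Your version unwinds this two-step construction into one: you stay in $X$ throughout, translate the given quasi-geodesic to $a_n,b_n$ by periodicity of $X$ rather than of $\calc$, push outward to $a_n^*,b_n^*$ at the hand-picked larger scale $e_n=m_n d_n$ with $m_n=o(f_n)$, and take a single new cone of $X$. This trades the cone-of-cone black box for the explicit choice of $m_n$ and the concatenation step $P_n^a\cup Q_n\cup (P_n^b)^{-1}$; the paper's clean topological separation argument in $\calc$ is replaced by this quantitative estimate. Your step (ii) is exactly the $X$-level analogue of the paper's ray selection in $\calc$, and the ``$r>3\delta'$ gives room'' justification you sketch is made precise by the same both-rays-enter-the-ball contradiction used in the proof of Lemma~\ref{rem6}. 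One small point: $f_n$ may be $+\infty$ on some indices, so define $m_n$ by a formula like $m_n=\min(n,\lfloor\sqrt{f_n}\rfloor)$ to keep $e_n$ finite while preserving $\lim_\omega m_n=\infty$ and $\lim_\omega f_n/m_n=\infty$.
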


\proof Let $\overline{\Ball}(C,\delta)$ be a cut-ball,
$C=(c_n)^\omega$,  and let $A=(a_n)^\omega, B=(b_n)^\omega$ be two
points in $\calc$ separated by $\overline{\Ball}(C,\delta)$. Since
$X$ contains a bi-infinite quasi-geodesic and it has a cobounded
action of $Isom (X)$, \red{the cone $\calc$} contains a bi-infinite
bi-Lipschitz line \red{and it} is a homogeneous space\red{.
Therefore} there exist bi-infinite bi-Lipschitz lines containing $A$
and $B$ respectively. We proceed as in Lemma \ref{rem6}. The point
$A$ cuts the bi-Lipschitz line containing it into two bi-Lipschitz
rays. Since $\dist(C, \{A,B\})>3\delta$, one of these rays does not
cross $\overline{\Ball}(C,\delta)$. Let us denote it by $\pgot$.
Similarly, there exists an infinite bi-Lipschitz ray $\pgot'$
starting at $B$ that does not cross $\overline{\Ball}(C,\delta)$.
Clearly every point $A'$ on $\pgot$ and every point $B'$ on $\pgot'$
are separated by $\overline{\Ball}(C,\delta)$. For every $n>1$ let
$A_n$ be a point on $\pgot$ at distance $n$ from $A$, $B_n$ be a
point on $\pgot'$ at distance $n$ from $B$. Note that
$\dist(A_n,B_n)=O(n)$. Indeed otherwise for a big enough $n$, any
geodesic $[A_n,B_n]$ avoids $\overline{\Ball}(C,\delta)$. Also note
that $\dist(C,\{A_n,B_n\})=O(n)$. Now consider the asymptotic cone
$\calc'=\Con^\omega(\calc, (C),(n))$ of the space $\calc$. Applying
Lemma \ref{lm00} to $\calc$, we get that  $(C)^\omega$ is a
cut-point in $\calc'$. Since $\calc'$ is an asymptotic cone of $X$
\cite{DrutuSapir:TreeGraded}, $X$ is not wide.
\endproof

\begin{lemma}\label{lm02}
\begin{itemize}
\item[(i)] \red{Let $X$ be a geodesic metric space. If there exists $\delta \in (0,1)$ and $\gamma \geq 0$ such that the function $\Dv_{\gamma }(n;\delta)$ is bounded by a linear function then $X$ is wide.}
\item[(ii)] \red{Let $X$ be a periodic geodesic metric space which contains a bi-infinite quasi-geodesic. If $X$ is wide then for every $0<\delta <\frac{1}{54}$ and every $\gamma \ge 0$, the function $\Dv_{\gamma }(n;\delta)$ is bounded by a linear function.}
\end{itemize}
\end{lemma}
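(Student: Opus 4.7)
I argue by contrapositive. Assume some asymptotic cone $\calc=\Con^\omega(X,(o_n),(d_n))$ of $X$ has a cut-point; I will show $\Dv_\gamma(n;\delta)$ cannot be linear. Lemma \ref{lm1} produces a sequence of pairs $(a_n,b_n)$ in $X$ with $\dist(a_n,b_n)=O_\omega(d_n)$ together with midpoints $c_n$ of geodesics $[a_n,b_n]$ such that $\dv_\gamma(a_n,b_n,c_n;\delta)/\dist(a_n,b_n)\to\infty$ along $\omega$. Since $\dv_\gamma(a_n,b_n,c_n;\delta)\le \Dv_\gamma(\dist(a_n,b_n);\delta)$, any linear bound $\Dv_\gamma(m;\delta)\le Cm$ would force the ratio above to stay $\le C$, a contradiction.

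\textbf{Part (ii).} Assume $X$ is wide and, for contradiction, that $\Dv_\gamma(n;\delta)$ is superlinear for some $0<\delta<\tfrac{1}{54}$ and $\gamma\ge 0$. The plan is to reduce, using Lemmas \ref{shtrih2}(b) and \ref{shtrih}, to triples with $c$ on a geodesic $[a,b]$ and $\dist(c,\{a,b\})$ a definite fraction of $\dist(a,b)$, then to pass to a suitably scaled asymptotic cone and produce a ball separating the limits of $a$ and $b$, contradicting wideness via Lemma \ref{lm01}. Explicitly, Lemma \ref{shtrih2}(b) gives $\Dv_\gamma(n;\delta)\le \dv_\gamma(n;2,3\delta)+2n$, so $\dv_\gamma(n;2,3\delta)$ is superlinear; Lemma \ref{shtrih}, applied with $\lambda=2$ and with $9\delta$ in place of $\delta$ (so that $9\delta/3=3\delta$), yields $\dv_\gamma(n;2,3\delta)\le \dv_\gamma'(n;4,9\delta)+n$, hence $\dv_\gamma'(n;4,9\delta)$ is superlinear. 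This produces triples $(a_k,b_k,c_k)$ with $c_k\in[a_k,b_k]$, $m_k:=\dist(a_k,b_k)\to\infty$, $r_k:=\dist(c_k,\{a_k,b_k\})\in[m_k/4,m_k/2]$, and $\dv_\gamma(a_k,b_k,c_k;9\delta)/m_k\to\infty$.

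Form the asymptotic cone $\calc=\Con^\omega(X,(a_k),(m_k))$ and set $A=(a_k)^\omega$, $B=(b_k)^\omega$, $C=(c_k)^\omega$. Then $\dist(A,B)=1$ and, after passing to a subsequence, $r:=\dist(C,\{A,B\})\in[1/4,1/2]$. Since $\dv_\gamma(a,b,c;\eta)$ is non-decreasing in $\eta$, the superlinearity at $\eta=9\delta$ gives the same for every $\eta\ge 9\delta$; hence for any $\delta_*\ge 9\delta\, r$, condition (ii) of Lemma \ref{lm00} is satisfied with $\delta_*$ in place of $\delta$ there, so $\overline{\Ball}(C,\delta_*)$ separates $A$ from $B$ in $\calc$. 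Taking $\delta_*=9\delta\, r$ and using $\delta<\tfrac{1}{54}<\tfrac{1}{27}$, we obtain $r>27\delta\, r=3\delta_*$; since $X$ is periodic and contains a bi-infinite quasi-geodesic, Lemma \ref{lm01} then asserts that $X$ is not wide, contradicting our hypothesis. The main technical point is bookkeeping the reduction chain so that the constants remain tight enough to produce the strict inequality $r>3\delta_*$ required by Lemma \ref{lm01}.
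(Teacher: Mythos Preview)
Your proof is correct and follows the same approach as the paper: the contrapositive via Lemma~\ref{lm1} (equivalently Lemma~\ref{lm00}) for part~(i), and for part~(ii) the same reduction chain $\Dv_\gamma \to \dv_\gamma \to \dv_\gamma'$ via inequalities (\ref{div4}) and (\ref{div2}), followed by passage to a cone and Lemmas~\ref{lm00}, \ref{lm01}. One small difference worth noting: you choose the separating ball of radius $\delta_*=9\delta r$, whereas the paper takes $\delta_*=9\delta/2$ (using $r\le 1/2$); your choice actually yields the sharper sufficient condition $\delta<1/27$ rather than $\delta<1/54$, though of course the stated hypothesis $\delta<1/54$ is all that is claimed. (A stylistic quibble: ``after passing to a subsequence'' is unnecessary---the $\omega$-limit $r=\lim_\omega r_k/m_k$ automatically lies in $[1/4,1/2]$.)
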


\proof \red{\textbf{(i)}} \quad Suppose that $X$ is not wide, and
{$\calc=\Con^\omega(X,(o_n),(d_n))$} has a cut-point. Then $\calc$
has a closed ball $\overline{\Ball}(C,0)$ (where $C=(c_n)^\omega$),
separating a pair of points $A=(a_n)^\omega$ and $B=(b_n)^\omega$.
By Lemma \ref{lm00}, for every $\delta >0$ \red{and $\gamma \ge 0$}
the limit $\lim_\omega \frac{\dv_{\red{\gamma }}(a_n,b_n,c_n;\delta
)}{d_n}$ is $\infty $. Since \uass $\dist(a_n,b_n)<\kappa d_n$ for
some $\kappa$, $\Dv_{\red{\gamma }}(\kappa d_n,\delta)$ is not
bounded by any linear function in $d_n$.

\red{\textbf{(ii)}} \quad \red{Suppose that for some $\delta < \frac{1}{54}$ and $\gamma \ge 0$ the function $\Dv_{\red{\gamma }}(n,\delta )$ is superlinear. By (\ref{div2}) and  (\ref{div4})},
the same holds for \red{$\dv_{\gamma }' (n;4,9 \delta )$}, hence there exist $d_n\geq n$ such that $\red{\dv_{\gamma }' (d_n;4, 9\delta
)}\geq 2n d_n$. Consequently, there exists a sequence of triples of points $a_n,b_n, c_n$, where $c_n$
is \red{on} a geodesic $[a_n,b_n]$, \red{$\dist (c_n \{a_n,b_n\})\geq \frac{\dist(a_n,b_n)}{4}$}, such that $\dist(a_n,b_n)\leq d_n$ and
$\frac{\red{\dv_{\gamma }(a_n,b_n,c_n;9\delta )}}{d_n}$ is at least $n$. We can assume without loss of generality that
$\dist(a_n,b_n)=d_n$. In the asymptotic cone $\Con^\omega(X, (c_n), (d_n))$ of $X$, the distance
between $A=(a_n)^\omega$ and $B=(b_n)^\omega$ is $1$ and the ball $\bar B=\overline{\Ball}\left( C,\red{\frac{9\delta }{2}} \right)$
separates $A$ from $B$ (here $C=(c_n)^\omega$) by Lemma \ref{lm00}. By Lemma \ref{lm01}, $X$ is not
wide since \red{$\dist(C,\{A,B\})\ge \frac{1}{4}>3\cdot \frac{9\delta }{2}$}. The lemma is proved.
\endproof

\subsection{Morse quasi-geodesics}

Recall the definition of tree-graded spaces.

\begin{definition}(\cite{DrutuSapir:TreeGraded})\label{deftgr}
Let $\free$ be a complete geodesic metric space and let $\pp$ be a collection of closed geodesic
subsets, called {\it{pieces}}. Suppose that the following two properties are satisfied:
\begin{enumerate}

\item[($T_1$)] Every two different pieces have at most one point in
common.

\item[($T_2$)] Every simple non-trivial geodesic triangle in $\free$
is contained in one piece.
\end{enumerate}
Then we say that the space $\free$ is {\em tree-graded with respect to }$\pp$.

When there is no risk of confusion as to the set $\pp$, we simply say that $\free$ is
\emph{tree-graded}.
\end{definition}

 {The topological arcs starting in a given point and intersecting each piece in at most one point compose a real tree called \emph{transversal tree}. Some transversal trees may reduce to singletons.}

We shall need the following general facts.

\begin{lemma}\label{lm2}\cite[Theorem 3.30]{DrutuSapir:TreeGraded}
Let $(X_n,\pp_n)$ be a sequence of tree-graded spaces, $\omega$ be an ultrafilter. Let
$\free=\lio{X_n,o_n}$ be the $\omega$-limit of $X_n$ with observation points $o_n$. Let $\tilde\pp$ be
the set of $\omega$-limits $\lio{M_n}$ where $M_n\in \pp_n$ (the same ultralimit is counted only once;
the empty ultralimits, corresponding to $M_n\in \pp_n$ such that $\lio{\dist_n(o_n,M_n)}=\infty $, are
not counted). Then $\free$ is tree-graded with respect to $\tilde\pp$.
\end{lemma}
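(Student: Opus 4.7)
The plan is to verify directly the two defining axioms (T$_1$) and (T$_2$) of tree-gradedness for the pair $(\free, \tilde\pp)$, after first noting that each $\tilde M = \lio{M_n} \in \tilde\pp$ is a closed geodesic subset of $\free$. Closedness is automatic for nonempty ultralimits of subsets; for the geodesic property, given $x = \lio{x_n}$ and $y = \lio{y_n}$ with representatives in $M_n$ $\omega$-a.s., the $\omega$-limit of geodesics $[x_n, y_n] \subseteq M_n$ is a geodesic in $\free$ from $x$ to $y$ contained in $\tilde M$.

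For (T$_1$), suppose $\tilde M = \lio{M_n}$ and $\tilde N = \lio{N_n}$ are distinct pieces in $\tilde\pp$. The convention identifying equal ultralimits forces $M_n \neq N_n$ $\omega$-almost surely, since otherwise the two limits would coincide as subsets of $\free$. Axiom (T$_1$) in $X_n$ then gives $M_n \cap N_n = \{p_n\}$ or $\emptyset$. The plan is to invoke the standard consequence of tree-gradedness of $X_n$ that for $x \in M_n$ every geodesic from $x$ to $N_n$ passes through $p_n$, so $p_n$ is the unique nearest point of $N_n$ to $x$. If $x \in \tilde M \cap \tilde N$ admits both an $M_n$-representative $(x_n)$ and an $N_n$-representative $(x_n')$, then $\lio{d(x_n, x_n')/d_n} = 0$, which forces $\lio{d(x_n, p_n)/d_n} = 0$ and hence $x = \lio{p_n}$. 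The same argument applied to a second common point $y$ yields $y = \lio{p_n} = x$, so $\tilde M \cap \tilde N$ contains at most one point.

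For (T$_2$), let $T = [A,B] \cup [B,C] \cup [C,A]$ be a simple non-trivial geodesic triangle in $\free$, with $A = \lio{a_n}$, $B = \lio{b_n}$, $C = \lio{c_n}$, and realize its sides as ultralimits of geodesic triangles $T_n = \alpha_n \cup \beta_n \cup \gamma_n$ in $X_n$. The approach is to exploit the central-piece structure of geodesic triangles in tree-graded spaces: in each $X_n$, either the three sides of $T_n$ share a branching point, or there is a distinguished piece $M_n \in \pp_n$, the \emph{central piece} of $T_n$, which contains the three central subarcs of the sides where they separate from one another. If the central structure were a single point $\omega$-a.s., the ultralimit would give $T$ a tripod structure with a common branch point on all three sides, contradicting simplicity. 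Hence $\omega$-a.s.\ the central structure is a piece $M_n$. Then one shows that each side of $T_n$ lies in $M_n$ up to error $o(d_n)$; otherwise the ultralimit would exhibit two distinct sides of $T$ meeting transversally at a point other than a shared vertex, contradicting the simplicity of $T$ in $\free$. Passing to the limit gives $T \subseteq \lio{M_n} \in \tilde\pp$.

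The hard part is the last step of (T$_2$): translating the simplicity of the limit triangle into a quantitative bound that controls, at scale $d_n$, how far each side of $T_n$ can stray from its central piece $M_n$. This is where the specific geometry of geodesics in tree-graded spaces developed earlier in Dru\c tu--Sapir is needed; once in place, $\tilde M := \lio{M_n}$ is a piece of $\tilde\pp$ containing $T$, completing the verification of (T$_2$).
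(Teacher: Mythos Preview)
This lemma is not proved in the present paper; it is cited from \cite[Theorem~3.30]{DrutuSapir:TreeGraded}. Your argument for (T$_1$) is essentially correct (the case $M_n\cap N_n=\emptyset$ needs a word, handled by the same nearest-point-projection reasoning).

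For (T$_2$) there is a real gap. You write ``realize its sides as ultralimits of geodesic triangles $T_n$'', but an arbitrary geodesic in an ultralimit need not be a limit geodesic --- the paper itself makes this point in Remark~\ref{remlim1}. You can only form a limit triangle $T'=\lio{T_n}$ with the same vertices as $T$ but possibly different sides, and $T'$ need not be simple. Your appeal to the simplicity of $T$ to force the tripod legs of $T_n$ to shrink therefore conflates $T$ with $T'$: the legs produce overlapping initial segments in the sides of $T'$, which says nothing directly about the sides of $T$. So what you flag as ``the hard part'' is not the actual difficulty; the difficulty is earlier, in linking the given triangle $T$ to the central-piece data extracted from $T_n$. (Also, there is no rescaling in this statement, so ``$o(d_n)$'' should read ``$o(1)$''.)

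The repair uses two further inputs from the structure theory of tree-graded spaces. First, each $\tilde M=\lio{M_n}$ is convex in $\free$ in the strong sense: if $x,y\in\tilde M$ and $z$ lies on \emph{any} geodesic $[x,y]$ in $\free$, then with $p_n=\pi_{M_n}(z_n)$ one has $d(x_n,z_n)+d(z_n,y_n)=2d(z_n,p_n)+d(p_n,x_n)+d(p_n,y_n)\ge 2d(z_n,p_n)+d(x_n,y_n)$, whence $d(z_n,M_n)\to 0$ and $z\in\tilde M$. Second, the branch point $a_n'=\pi_{M_n}(a_n)$ is a genuine cut point of $X_n$ separating $a_n$ from both $b_n$ and $c_n$; a short argument then shows that $A'=\lio{a_n'}$ lies on \emph{every} geodesic in $\free$ from $A$ to $B$ and from $A$ to $C$, in particular on both sides of $T$ incident to $A$. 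Now simplicity of $T$ legitimately gives $A'=A\in\tilde M$, likewise $B,C\in\tilde M$, and convexity of $\tilde M$ yields $T\subseteq\tilde M$.
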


\begin{proposition}\label{lm3} Let $(X_n, \pp_n)$ be a sequence of homogeneous
unbounded tree-graded metric spaces with observation points $o_n$. Let $\omega$ be an ultrafilter. Then
the ultralimit $\lio{X_n, o_n}$ has a tree-graded structure with a non-trivial transversal tree at
every point.
\end{proposition}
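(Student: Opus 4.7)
By Lemma \ref{lm2} the ultralimit $\free = \lio{X_n, o_n}$ carries a tree-graded structure with respect to $\tilde\pp = \{\lio{M_n} : M_n \in \pp_n\}$, so what remains is to establish the non-triviality of the transversal tree at every point of $\free$. My strategy is to first upgrade the homogeneity of the $X_n$ to homogeneity of $\free$, reducing the task to a single point; and then construct a transversal arc of length $1$ at $o_n \in X_n$ for each $n$ so that its $\omega$-limit is a non-trivial transversal arc at $o = (o_n)^\omega$.

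For the first reduction, given any two points $p=(p_n)^\omega$ and $q=(q_n)^\omega$ in $\free$, I would use the homogeneity of each $X_n$ to pick isometries $\phi_n \in \mathrm{Isom}(X_n)$ with $\phi_n(p_n) = q_n$. Since $\dist(p_n, q_n)$ is $\omega$-bounded, a triangle-inequality estimate shows $\dist(o_n, \phi_n(o_n))$ is $\omega$-bounded, so the ultralimit $\phi = \lio{\phi_n}$ defines an isometry of $\free$ sending $p$ to $q$; and because each $\phi_n$ permutes $\pp_n$, $\phi$ permutes $\tilde\pp$ and carries the transversal tree at $p$ isometrically onto the one at $q$. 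It therefore suffices to exhibit a non-trivial transversal arc at $o$.

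The heart of the proof is analyzing the possible structures of a homogeneous non-trivially tree-graded space. The group $\mathrm{Isom}(X_n)$ acts transitively on points and permutes pieces, so all pieces are mutually isometric with common diameter $D_n$. If $D_n > 0$, pick an interior point $p$ of a piece $M$ (a point not in $M \cap M'$ for any other piece $M'$); then by transitivity on appropriate (point, piece) pairs one can translate a cut-point of $X_n$ onto $p$, producing two distinct pieces meeting at $p$ and making $p$ a cut-point of $X_n$, contradicting its interiority. Hence $D_n = 0$, pieces are singletons, and $X_n$ is an $\mathbb{R}$-tree. Since $X_n$ is unbounded, it admits an isometric embedding $\gamma_n \colon [0,1] \to X_n$ with $\gamma_n(0) = o_n$, and such an arc is automatically transversal because every piece is a single point.

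Finally, let $\gamma = \lio{\gamma_n}$; this is an isometric embedding of $[0,1]$ into $\free$ starting at $o$. If $\gamma$ met some $\tilde M = \lio{M_n} \in \tilde\pp$ at two distinct points $x=(x_n)^\omega \neq y=(y_n)^\omega$, then $\omega$-a.s.\ $x_n, y_n \in M_n \cap \gamma_n$, forcing $x_n = y_n$ by transversality of $\gamma_n$ and contradicting $x \neq y$; hence $\gamma$ lies in the transversal tree at $o$, which is therefore non-trivial, and by the homogeneity of $\free$ the conclusion holds at every point. The main obstacle is the classification step in the third paragraph: showing that homogeneity of a non-trivially tree-graded space forces its pieces to be singletons, i.e.\ that $X_n$ must be an $\mathbb{R}$-tree. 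This is where homogeneity and tree-gradedness interact most sharply, and it is what ultimately allows the uniform length-$1$ transversal arcs at each $o_n$ to be produced.
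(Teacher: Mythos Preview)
Your proof contains a fatal error in the third paragraph: the claim that a homogeneous tree-graded space must be an $\mathbb{R}$-tree is false. Consider the asymptotic cone of a free product such as $\mathbb{Z}^2 * \mathbb{Z}^2$: this is a homogeneous tree-graded space whose pieces are copies of $\mathbb{R}^2$, certainly not singletons. More generally, asymptotic cones of non-elementary relatively hyperbolic groups furnish a large supply of such examples---indeed, these are precisely the spaces the proposition is meant to apply to.

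The argument breaks down in two places. First, transitivity of $\mathrm{Isom}(X_n)$ on points together with the fact that isometries permute pieces does \emph{not} imply that all pieces are mutually isometric: an isometry sending $p\in M$ to $p'\in M'$ sends $M$ to \emph{some} piece through $p'$, but several pieces may pass through $p'$, and there is no reason this piece is $M'$. (The asymptotic cone of $\mathbb{Z}^2*\mathbb{Z}^3$ is homogeneous with non-isometric pieces.) Second, and more decisively, your ``interior points'' need not exist: in the cone of $\mathbb{Z}^2*\mathbb{Z}^2$, through every point of every piece there passes at least one other piece, so the contradiction never materializes.

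The paper's proof proceeds quite differently and does not attempt to control the pieces of $X_n$. After normalizing so that pieces have no cut-points and transversal trees are absorbed into $\pp_n$ (so that the $X_n$ themselves may have \emph{no} non-trivial transversal arcs), one constructs in each $X_n$ a geodesic $\pgot_n$ of length between $c$ and $2c$ built from $n$ short hops: alternately a sub-geodesic of length $c/n$ inside a piece, then a jump of length at most $c/n$ through a cut-point into a different piece (homogeneity is used to ensure such pieces and cut-points are available at every stage). By construction $\pgot_n$ meets each piece of $\pp_n$ in a segment of length at most $c/n$. The $\omega$-limit $\pgot$ is then shown to be transversal in $\mathcal{L}=\lio{X_n,o_n}$: a nondegenerate intersection of $\pgot$ with a limit piece $\lio{N_n}$ would force, via projections, $\pgot_n$ to meet $N_n$ in a segment of length bounded away from zero, contradicting the $c/n$ bound. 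The idea you are missing is this ``fragmentation'' construction, which manufactures transversal behaviour in the limit even when none is present in the approximants.
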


\proof By \cite[Lemma 2.31]{DrutuSapir:TreeGraded}, we can first assume that pieces from $\pp_n$ do not
have cut\red{-}points. We can also add all transversal trees to the collection of pieces and assume that
transversal trees of $X_n$ are trivial (each of them consists of one point), that is we assume that  {every arc in $X_n$ intersects at least one piece in a non-trivial sub-arc}. By \cite[Lemma
2.15]{DrutuSapir:TreeGraded} every path-connected subspace of $X_n$ without cut-points is in one of the
pieces of $\pp_n$. Hence every isometry of $X_n$ permutes the pieces of $\pp_n$. If the $\omega$-limit
${\mathcal L}=\lio{X_n, o_n}$ is an $\RR$-tree, there is nothing to prove. So we can assume that one of
the maximal subsets of $\mathcal L$ without cut\red{-}points has non-zero (possibly infinite) diameter
$\tau$. In particular, \uass $X_n$ is not an $\RR$-tree.

We assume that $n>2$. Since $X_n$ is homogeneous and has cut\red{-}points, every point of $X_n$ is a cut
point. Pick a number $c>0$ smaller than the non-zero diameter of a subset of $\mathcal L$ without cut
points. We can assume that $\pp_n$ contains a piece of diameter $\ge c$ and without cut\red{-}points. By
homogeneity, there exists such a piece $M_{n,0}$ in $\pp_n$ containing $o_n$ and such that a geodesic
$\pgot_{n,1}=[o_n,x_{n,1}]$ of length $c/n$ is contained in $M_{n,0}$. Since $x_{n,1}$ is a cut\red{-}point,
$X\setminus \{x_{n,1}\}$ has at least two connected components, one of which contains $M_{n,0}\setminus
\{x_{n,1}\}$. Let $y$ be a point in another connected component at distance at most $c/n$ from
$x_{n,1}$. By homogeneity, there is an isometric copy $M_{n,2}$ of $M_{n,0}$ containing $y$. Let
$x_{n,2}$ be the projection of $x_{n,1}$ onto $M_{n,2}$. {Note that $\dist(x_{n,2}, x_{n,1})\le
c/n$.} We can assume (again by homogeneity) that there exists a geodesic $[x_{n,2}, x_{n,3}]$ in
$M_{n,2}$ of length $c/n$. By induction, we can construct a sequence of points $o_n=x_{n,0}, x_{n,1},
..., x_{n,n}$ such that:
\begin{itemize}
\item $\dist(x_{n,i},x_{n,i+1})\le c/n$,
\item $c\le\sum\dist(x_{n,i}, x_{n,i+1})\le 2c$,
\item If $i$ is even then $[x_{n,i},x_{n,i+1}]$ is contained in a
piece $M_{n,i}$ from $\pp_n$ without cut\red{-}points, all these pieces are isometric and different,
\item If $i$ is odd then $x_{n,i+1}$ is the projection of $x_{n,i}$
onto $M_{n,i+1}$.
\end{itemize}
By \cite[Lemma 2.28]{DrutuSapir:TreeGraded}, this implies that the union of geodesic segments
$\pgot_n=\bigsqcup_{i=0}^{n-1} [x_{n,i},x_{n,i+1}]$ is a geodesic intersecting every piece of $\pp_n$
in a sub-geodesic of length at most $c/n$.

Let $\pgot$ be the $\omega$-limit of $\pgot_n$. Then $\pgot$ is a geodesic in ${\mathcal L}$. Let us
show that $\pgot$ is a transversal geodesic. Suppose that $\pgot$ contains two points
{$A=(u_n)^\omega$, $B=(v_n)^\omega$} belonging to a piece $\lio{N_n}$ where $N_n\in\pp_n$, $u_n,
v_n\in \pgot_n$. Let $l=\dist(A,B)$. That means $\lio{\dist(u_n,N_n)}=\lio{\dist(v_n,N_n)}=0$ while
$\lio{\dist(u_n,v_n)}=l$. Let $u_n'$ (resp. $v_n'$) be the projections of $u_n$ (resp. $v_n$) onto
$N_n$. {These projections exist and are unique by \cite[Lemma 2.6]{DrutuSapir:TreeGraded}.
Applying the strong convexity of pieces in a tree-graded space \cite[Corollary
2.10]{DrutuSapir:TreeGraded}, we can conclude that any geodesic $[u_n',v_n']$ is in $N_n$. Moreover, a
union of three geodesics $[u_n,u_n']\sqcup [u_n',v_n']\sqcup [v_n',v_n]$ is a topological arc. This
follows from the fact that any two consecutive geodesics intersect only in their common endpoint, and
$[u_n,u_n']\cap [v_n,v_n']=\emptyset$ since these geodesics are at distance at least $l/2$ \uas.}

{According to \cite[Proposition 2.17]{DrutuSapir:TreeGraded} the projections $u_n',v_n'$ are on
$\pgot_n$, in between $u_n$ and $v_n$.} Thus \uass $[u_n,v_n]$ intersects $N_n$ in a sub-geodesic of
length at least $l/2$. But by our construction it intersects $N_n$ in a sub-geodesic of length at most
$c/n$ \uas, a contradiction.

Thus $\mathcal L$ contains a non-trivial transversal geodesic. By homogeneity, every transversal tree
of $\mathcal L$ is non-trivial.
\endproof

\begin{remark}\label{Osin}
As noticed by Denis Osin, if the Continuum Hypothesis is true, Proposition \ref{lm3} implies that every
tree-graded asymptotic cone of a finitely generated group has a non-trivial transversal tree at every
point. Indeed, by Corollary 5.5 in \cite{KSTT} every asymptotic cone of a finitely generated group is
isometric to its $\omega$-limit for every ultrafilter $\omega$.
\end{remark}

\begin{definition}\label{midth} Let $X$ be a metric space.
Given a quasi-geodesic $\q :[0,\ell ] \to X$ we call the \emph{middle third} of $\q$ its restriction
to $[\ell/3 , 2\ell/3]$.  {Note that when $\q$ is continuous and of finite length the middle third in the above sense does not in general coincide with the middle third in the arc-length sense.}
\end{definition}

\begin{definition}
A bi-infinite quasi-geodesic $\q$ in a geodesic metric space $(X,\dist)$ is called a \emph{Morse
quasi-geodesic} if for every $L\ge 1, C\ge 0$, every $(L,C)$-quasi-geodesic $\pgot$ with endpoints on
the image of $\q$ stays $M$-close to $\q \, $, where $M$ depends only on $L, C$.
\end{definition}

Note that the above property is equivalent to the fact that $\pgot$ is at uniformly bounded Hausdorff
distance from a sub-quasi-geodesic of $\q$ having the same endpoints.

\blue{For the correct formulation of the next Proposition and for the corrected proof, see Section \ref{s:err}.}

\begin{proposition}[Morse quasi-geodesics]\label{prop3}
Let $X$ be a metric space and for every pair of points $a,b\in X$ let $L(a,b)$ be a fixed set of
$(\lambda,\kappa)$-quasi-geodesics (for some constants $\lambda \geq 1$ and $\kappa \geq 0$) connecting $a$ to $b$.
Let $L=\bigcup_{a,b\in X} L(a,b)$.

{Let $\q$ be a bi-infinite quasi-geodesic $\q$ in $X$, and for every two points $x,y$ on $\q$
denote by $\q_{xy}$ the maximal sub-quasi-geodesic of $\q$ with endpoints $x$ and $y$.}

{The following conditions are equivalent for $\q \, $:}
\begin{itemize}
\item[(1)] In every asymptotic cone of $X$, the ultralimit of $\q$ is
either empty or contained in a transversal tree for some tree-graded structure;
\item[(2)] $\q$ is a Morse quasi-geodesic;
\item[(3)] For every $C\ge 1$ there exists $D\ge 0$ such that every path of length $\le Cn$ connecting two points $a,b$ on $\q$ at distance $\ge n$
crosses the $D$-neighborhood of the middle third of {$\q_{ab}$};
\item[(4)] For every $C\geq 1$ and natural  $k>0$ there exists $D\ge 0$ such that every
 $k$-piecewise $L$ quasi-path $\pgot$ that:
     \begin{itemize}
     \item connects two points $a, b\in \q$,
     \item has quasi-length $\leq C \dist(a,b)$,
     \end{itemize}
crosses the $D$-neighborhood of the middle third of {$\q_{ab}$}.
\item[(5)] for every $C\ge 1$ there exists $D\ge 0$ such that for every $a, b\in \q$,
and every path $\p$ of length $\le C\dist(a,b)$ connecting $a,b$, the {sub-quasi-geodesic
$\q_{ab}$} is contained in the $D$-neighborhood of $\p$.
\end{itemize}
\end{proposition}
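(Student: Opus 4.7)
The plan is to prove the equivalence of (2)--(5) through the cyclic chain $(2)\Rightarrow(5)\Rightarrow(3)\Rightarrow(4)\Rightarrow(2)$, and then to prove $(1)\Leftrightarrow(3)$ via a passage between $X$ and its asymptotic cones. The easy implications are $(5)\Rightarrow(3)$, since the middle third of $\q_{ab}$ is part of $\q_{ab}$ and so within $D$ of any $\p$ as in (5); and $(3)\Rightarrow(4)$, because a $k$-piecewise $L$ quasi-path of quasi-length at most $C\,\dist(a,b)$ has metric length in $X$ controlled linearly in $\dist(a,b)$ with constant depending only on $C,k,\lambda,\kappa$, to which (3) applies. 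For $(2)\Rightarrow(5)$, I would discretize a path $\p$ of length $\leq C\,\dist(a,b)$ by sampling at unit spacing, extract a $(C,1)$-quasi-geodesic from the resulting $1$-path via a greedy procedure that drops backtracking samples, apply the Morse property to this extracted quasi-geodesic, and deduce closeness of $\q_{ab}$ to $\p$ by comparing $\p$ with its discretization.

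The substantive metric implication is $(4)\Rightarrow(2)$, proved by divide-and-conquer. Given an $(L',C')$-quasi-geodesic $\pgot$ with endpoints $a,b\in\q$, approximate $\pgot$ by a piecewise $L$ quasi-path, obtained by sampling $\pgot$ at unit spacing and connecting consecutive samples by quasi-geodesics from $L(\cdot,\cdot)$; the quasi-length of this approximation is $O(L'\,\dist(a,b)+C')$. Condition (4) then locates a point of $\pgot$ within $D+O(1)$ of the middle third of $\q_{ab}$. Splitting $\pgot$ at this point yields two sub-pieces whose quasi-lengths remain controlled by the distances between their endpoints, since the middle-third point is at distance at least $\dist(a,b)/3 - O(1)$ from both $a$ and $b$; thus (4) applies recursively with the same constants. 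Iterating and summing a geometric series shows every point of $\pgot$ lies within a uniform distance of $\q_{ab}$.

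The heart of the proposition is the equivalence $(1)\Leftrightarrow(3)$. For $(1)\Rightarrow(3)$, I would argue by contradiction: if (3) fails, there exist sequences $a_n,b_n\in\q$ and paths $\p_n$ of length $\leq C\,\dist(a_n,b_n)$ avoiding the $n$-neighborhoods of the middle thirds of $\q_{a_nb_n}$. Rescaling by $d_n=\dist(a_n,b_n)$ with observation points chosen in the middle thirds, one passes to an asymptotic cone $\calc$ in which, by Lemma \ref{lemlim1}(2), the ultralimit of $\p_n$ (after extracting a bi-Lipschitz subpath) gives a bi-Lipschitz arc in $\calc$ joining the ultralimits of $a_n$ and $b_n$ while disjoint from a nontrivial sub-arc of the ultralimit $\q_\omega$ of $\q$. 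This contradicts the hypothesis of (1), since a nontrivial sub-arc of $\q_\omega$ lying inside a transversal tree must separate its endpoints in the whole tree-graded space. For $(3)\Rightarrow(1)$, given any asymptotic cone $\calc$ with non-empty $\q_\omega$, I would construct the tree-graded structure by taking as pieces the closures of the path-connected components of $\calc\setminus\q_\omega$, together with pieces inherited from other cut-point structure on $\calc$; condition (3), translated to the cone via Lemma \ref{lm00}, forces every point of $\q_\omega$ to be a cut-point of $\calc$, from which the transversal-tree property for $\q_\omega$ follows.

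I expect the principal obstacle to be the verification of axioms $(T_1)$ and especially $(T_2)$ of Definition \ref{deftgr} for the constructed tree-graded structure in $(3)\Rightarrow(1)$: one must rule out simple non-trivial geodesic triangles that straddle distinct pieces across $\q_\omega$, which requires lifting a putative such triangle back to triples of paths in $X$ via Lemma \ref{lemlim1} and then exploiting (3) on each side to derive a quantitative contradiction with the geodesicity of the triangle's sides. A secondary technicality lies in the greedy quasi-geodesic extraction in $(2)\Rightarrow(5)$, where one must ensure that after dropping backtracking samples the remaining sequence is both a genuine quasi-geodesic and sufficiently dense in the original path to transfer the closeness estimate back to $\p$.
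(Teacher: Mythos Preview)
Your implication $(2)\Rightarrow(5)$ is false as stated, and this is the central gap. The paper's own erratum (Section~\ref{s:err}) supplies the counterexample: take $\q$ a bi-infinite geodesic in $\HH^2$ and let $\p_n$ be the concatenation of $\q|_{[-n,-\log n]}$, half of the hyperbolic circle of radius $\log n$ centred at $\q(0)$, and $\q|_{[\log n,n]}$. Then $\p_n$ has length at most $Cn$ for a fixed $C$, yet $\q(0)$ is at distance $\log n$ from $\p_n$; so $\q$ is Morse but fails (5). Your greedy extraction cannot produce a $(C,1)$-quasi-geodesic from the samples of $\p_n$ with constants independent of $n$: every sample point lies at distance $\ge\log n$ from $\q(0)$, whereas in $\HH^2$ any $(K,K)$-quasi-geodesic between $\q(-n)$ and $\q(n)$ must pass within a bound depending only on $K$ of $\q(0)$. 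The erratum therefore replaces (5) by the sublinear version ``$\q_{ab}\subset\nn_{\epsilon\dist(a,b)}(\p)$ for every $\epsilon>0$ once $\dist(a,b)$ is large''.

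The same phenomenon undermines your cone argument for $(1)\Rightarrow(3)$: rescaling by $d_n=\dist(a_n,b_n)$ and using that the limit midpoint is a cut-point forces the limit path through it, but this only yields $\dist(\p_n,\text{middle third})=o(d_n)$, not $\le D$; asymptotic-cone arguments intrinsically give sublinear, not constant, bounds. Accordingly, the corrected proof in the paper routes everything through (1) as a hub --- $(4)\to(1)\to(2)$ and $(1)\leftrightarrow(5_{\mathrm{new}})$ via cones --- and defers the genuinely constant-$D$ implication $(2)\to(3)$ to \cite{T}. Finally, your divide-and-conquer for $(4)\Rightarrow(2)$ has an independent gap: sampling an $(L',C')$-quasi-geodesic at unit spacing yields $k\approx L'\dist(a,b)$ pieces, not a uniformly bounded $k$, so condition (4) as stated does not apply to the approximation.
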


\proof We  prove $(1)\Leftrightarrow (2)$ and $(1)\to (5)\to (3)\to (4)\to (1)$.

$\mathbf{(1)\to (2), (1)\to (5).}$ These two implications are proved in a similar manner, so we prove
only the {second one, leaving the first one to the reader.} Assume that in every asymptotic cone
$\calc$, $\lio \q$ is in the transversal tree. Also, by contradiction, assume that for some $C\geq 1$
and every natural $n\ge 1$ there exists a sequence $\pgot_n$ of paths with endpoints $a_n,b_n$ on $\q$
and lengths $\le C \dist (a_n,b_n)$ such that $\q_n=\qgot_{a_nb_n}$ is not in $\onn(\pgot_n,n)$. Then
there exists a point {$z_n\in \qgot_{n}$ at distance $\delta_n\ge n$ from $\pgot_n$. We choose
$z_n\in \q_n$ so that $\delta_n=\dist(z_n, \pgot_n)$ is maximal. Thus $\q_n$ is in
$\onn(\pgot_n,\delta_n)$.}

In the asymptotic cone $\Con^\omega(X,(z_n), (\delta_n))$, {the ultralimits $\q_\omega
=\lio{\q_n}$ and $\pgot_\omega=\lio{\pgot_n}$ are paths such that $\q_\omega$ is transversal, staying
$1$-close to $\pgot_\omega$ and containing a point $z=(z_n)^\omega$ at distance $1$ from
$\pgot_\omega$. Note that since $\pgot_n$ can be parameterized so that $\pgot_n :[0,\dist(a_n,b_n)]\to
X$ is $C$-Lipschitz, $\pgot_\omega$ can also be seen as a $C$-Lipschitz path. Both $\pgot_\omega$ and
$\q_\omega$ are either finite, or infinite or bi-infinite simultaneously, and with the same endpoints,
if any.}

{Assume that both $\pgot_\omega$ and $\q_\omega$ are finite. Possibly by diminishing both, we may
assume that they intersect only in their endpoints $a,b$. By \cite[Corollary
2.11]{DrutuSapir:TreeGraded} applied to the piece $M=T_z$ the transversal tree in $z$, $\pgot_\omega
\setminus\{a,b\}$ projects onto $T_z$ both in $a$ and in $b$. This contradicts the \red{uniqueness} of the
projection point also stated in \cite[Corollary 2.11]{DrutuSapir:TreeGraded}.}

\medskip

{Assume that both $\pgot_\omega$ and $\q_\omega$ are infinite. On the infinite branch $\q'$ of
$\q$ starting at $z$ consider a point $t$ at distance $10$ from $z$, and the sub-path $\q''$ of $\q'$
of endpoints $z$ and $t$. Let $t'$ be a nearest point to $t$ on $\pgot_\omega$, and consider a geodesic
$[t,t']$. Replacing $\q'$ by $\q''$ on $\q_\omega$ and the infinite branch on $\pgot_\omega$ starting
at $t'$ by $[t',t]$, and using for the thus modified paths the argument in the finite case, we obtain
that $z$ must be contained in the modified $\pgot_\omega$. Since $z$ is at distance at least $9$ from
$[t',t]$, it cannot be contained in it, so $z$ must be contained in the path $\pgot_\omega$. This
contradicts the fact that $z$ is at distance $1$ from $\pgot_\omega$.}

{If $\pgot_\omega$ and $\q_\omega$ are bi-infinite then the same operation as before may be
performed on both sides of $z$, obtaining again a contradiction.}

\medskip

$\mathbf{(2)\to (1)}$.  Let $\q$ be a bi-infinite quasi-geodesic satisfying (2). Any asymptotic cone in
which the ultralimit of $\q$ is non-empty equals to an asymptotic cone of the form $\co{X;(x_n), d}$,
where $x_n$ is a sequence of points on $\q$ and $d=(d_n)$ is a sequence of positive numbers with
$\lio{d_n}=\infty \, $. Take one of these asymptotic cones ${\mathcal C}$. Consider any point
$M=(m_n)^\omega$ with $m_n$ on $\q$.

It is enough to show that the two halves of $\lio{\q}$, before $M$ and after $M$, are in two different
connected components of ${\mathcal C}\setminus \{M\}$. {Indeed, we can then consider the
tree-graded structure on ${\mathcal C}$ with the maximal subsets of $\mathcal C$ without cut-points as
pieces \cite[Lemma 2.31]{DrutuSapir:TreeGraded}. The limit $\lio{\q}$ cannot intersect any piece in a
non-trivial arc, hence it is in a transversal tree of this tree-graded structure.}

Suppose there exist two points $A=(x_n)^\omega$, $B=(y_n)^\omega $ in ${\mathcal C}\setminus \{M\}$,
with $x_n,y_n\in \q$ and $m_n\in \q_{x_ny_n}$ such that  $A,B$ can be connected by a path $\pgot$ in
${\mathcal C}\setminus \{M\}$.

Let $2\epsilon$ be the distance from $M$ to $\pgot$. By Lemma \ref{piecewiseg}, we can assume that
$\pgot$ is the concatenation  of a finite number of limit geodesics. We can also assume that $\pgot$ is
simple.

By Lemmas \ref{lemlim0} and \ref{lemlim1}, there exists a constant $C\ge 1$, and a sequence of
$C$-bi-Lipschitz paths $\pgot_n$ connecting $x_n$ with $y_n$ such that $\lio{\pgot_n}$ is in
$\onn_\epsilon (\pgot)$. By Property (2), each path $\pgot_n$ must be contained in  the
$D$-neighborhood of $\q_{x_ny_n}$ for some constant $D$. It follows easily that $\lio{\pgot_n}=
\lio{\q_{x_ny_n}}$, and that $M$ is at distance $\le \epsilon$ from $\pgot$, a contradiction.

\medskip

$\mathbf{(5)\to (3)}$ and $\mathbf{(3)\to (4)}$ are obvious. In  $\mathbf{(3)\to (4)}$ one must use the
fact that every quasi-geodesic is at finite Hausdorff distance from a Lipschitz quasi-geodesic with the
same endpoints and with quasi-length of the same order \cite[Proposition 8.3.4]{Buragos}.

\medskip

$\mathbf{(4)\to (1)}$.  Let $\q$ be a bi-infinite quasi-geodesic satisfying (4). We argue as in $(2)\to
(1)$, and suppose there exist two points $A=(x_n)^\omega$, $B=(y_n)^\omega $ in ${\mathcal C}\setminus
\{M\}$, with $x_n,y_n\in \q$ and $m_n\in \q_{x_ny_n}$ such that  $A,B$ can be connected by a path
$\pgot$ in ${\mathcal C}\setminus \{M\}$, where $M=(m_n)^\omega$.

Let $2\epsilon$ be the distance from $M$ to $\pgot$. By Lemma \ref{piecewiseg}, we can assume that
$\pgot$ is the concatenation  of a finite number of limits of quasi-geodesics in $L$.

{Let $A'=(x_n')^\omega$, $B'=(y_n')^\omega $ with $x_n',y_n'\in \q$ be such that
$\q_{A'B'}=\lio{\q_{x_n'y_n'}}$ contains $M$ in its middle third, and this middle third is of diameter
at most $\epsilon $. Consider $\pgot'=\arct{A'A}\sqcup  \pgot \sqcup \arct{BB'}$, where $\arct{A'A}$
and $\arct{BB'}$ are limits of quasi-geodesics in $L$ joining $A',A$ and $B,B'$, respectively.}

{By Lemma \ref{lemlim1},  (\ref{quasi}), $\pgot' = \lio{\pgot_n}$, where each $\pgot_n$ is a
$k$-piecewise $L$ quasi-path joining $x_n'$ and $y_n'$, moreover each $\pgot_n$ is of quasi-length
$\leq D'\dist(a_n,b_n)$. By Property (4), each path $\pgot_n$ must cross the $D$-neighborhood of the
middle third of $\q_{x_n'y_n'}$ for some constant $D$. Then $\pgot'$ must cross the middle third of
$\q_{A'B'}$. Neither  $\arct{A'A}$ nor $\arct{BB'}$ can cross this middle third. Hence $\pgot$ contains
a point from the middle third of $\q_{A'B'}$, hence at distance $\le \epsilon$ from $M$. This
contradicts the fact that $\pgot$ is at distance $2\epsilon$ of $M$.}\endproof

\subsection{Morse elements}

Recall that an element $h$ of a finitely generated group $G$ is called {\em Morse} if $(h^n)_{n\in \ZZ
}$ is a Morse quasi-geodesic. Notice that the property of being a Morse element obviously does not
depend on the choice of a finite generating set of $G$.

\begin{lemma}\label{lmeasy} Suppose that $h$ is a Morse element in $G$, and $H$ is a finitely generated subgroup in $G$ containing $h$. Then $h$ is Morse in the group $H$ (considered with its own word metric).
\end{lemma}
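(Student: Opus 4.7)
The plan is to verify condition (3) of Proposition \ref{prop3} for the orbit $\q = (h^n)_{n \in \ZZ}$ inside the Cayley graph of $H$; by that proposition this will give Morse in $H$. The main difficulty is that the inclusion $(H, d_H) \hookrightarrow (G, d_G)$ is in general not a quasi-isometric embedding, so Morse-type distance bounds in $G$ do not abstractly pull back to $H$. Two features of the setup rescue the situation: on the cyclic subgroup $\langle h\rangle$ itself the two word metrics are quasi-isometric (both recognize $n \mapsto h^n$ as a quasi-geodesic), and any $G$-ball of fixed radius in the finitely generated group $G$ is finite.

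Fix a finite generating set $S_H$ and set $K := \max_{s \in S_H}|s|_G$, so that $d_G \le K\, d_H$ on $H$. First I check that $\q$ is itself a quasi-geodesic in $H$: the upper bound $d_H(h^n, h^m) \le |h|_H |n-m|$ is immediate, and the lower bound follows by combining the hypothesis that $\q$ is $(L_G, C_G)$-quasi-geodesic in $G$ with $d_H \ge d_G/K$. A consequence is that if $a = h^i$ and $b = h^j$ satisfy $d_H(a,b) \ge n$, then $|i-j| \ge n/L_H - C_H$ and hence $d_G(a,b) \ge c\, n - C''$ for explicit constants $c > 0$ and $C''$. Now given $C \ge 1$ and a path $\pgot$ in the Cayley graph of $H$ of $H$-length $\le C n$ joining such $a, b$, its image in $G$ has $G$-length at most $K C n$, and for $n$ beyond a fixed threshold the ratio of $G$-length to $d_G(a,b)$ is bounded above by a universal constant $C'$. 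Condition (3) of Proposition \ref{prop3} applied to the Morse quasi-geodesic $\q$ in $G$ then produces a point of the image path that is $d_G$-close, up to a universal constant, to some $h^k$ in the middle third of $\q_{ab}$; by moving to the nearest $H$-vertex on $\pgot$ (at $d_G$-cost at most $K$) we may assume this close point is itself an element $p \in H$ with $d_G(p, h^k) \le D'$ for some $D'$ depending only on $C$. Note that the middle third is unambiguous, since $\q$ is parameterized by $\ZZ$ and the middle-third indices are independent of which word metric is used.

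The remaining task is to convert $d_G(p, h^k) \le D'$ into a $d_H$-bound. Since $p, h^k \in H$, the element $(h^k)^{-1} p$ lies in the intersection $H \cap \overline{\Ball}_G(1, D')$; this intersection is finite because the ambient $G$-ball is finite. Hence $D_H := \max\{|g|_H : g \in H \cap \overline{\Ball}_G(1, D')\}$ is a finite constant depending only on $D'$, and $d_H(p, h^k) \le D_H$. For the finitely many small values of $n$ below the threshold, the whole sub-quasi-geodesic $\q_{ab}$ has $H$-diameter bounded in terms of $C$ alone, so the endpoint $a \in \pgot$ trivially serves as the required crossing point after enlarging $D$. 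This verifies condition (3) of Proposition \ref{prop3} in $H$, so $h$ is Morse in $H$. The only genuine obstacle is the distortion of $H$ in $G$, and it is resolved by the finiteness trick in this last paragraph.
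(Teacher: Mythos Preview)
Your proof is correct and follows essentially the same route as the paper's: verify that $\q=(h^n)$ is a quasi-geodesic in $H$, push an $H$-path into $G$, apply condition~(3) of Proposition~\ref{prop3} for the Morse quasi-geodesic in $G$, and then convert the resulting $d_G$-bound into a $d_H$-bound via the finiteness of $H\cap \overline{\Ball}_G(1,D')$. The only cosmetic difference is that the paper first arranges $S_H\subseteq S_G$ so that an $H$-path is literally a $G$-path of the same length (eliminating your constant $K$ and the ``nearest $H$-vertex'' adjustment), whereas you work with an arbitrary $S_H$ and carry the Lipschitz factor through; the key finiteness trick is identical.
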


\proof Without loss of generality, we may assume that the generating set of $H$ contains $h$ and is
inside the generating set of $G$. Since $h$ is Morse in $G$, $\q=\{h^n ; n\in \ZZ\}$ is a Morse
quasi-geodesic in $G$. In particular, $m\ge \dist_G(h^n,h^{n+m})\ge \frac1Lm-C$ for every $n,m$ and
some constants $C, L$. Since $\dist_H(u,v)\ge \dist_G(u,v)$ for every $u,v\in H$, we deduce that $\q$
is a quasi-geodesic in $H$.

Consider an arbitrary path $\pgot$ in $H$ connecting $h^{n}$ with $h^{n+m}$, $\pgot$ of length $\le
C_1m$ for some constant $C_1$. Then $\pgot$ is a path in $G$ having the same length. By Proposition
\ref{prop3}, Part (3), $\pgot$ comes $D$-close (in $G$) to the middle third of $[h^n, h^{m+n}]\subset
\q$, i.e. $\pgot$ contains a point $x$ which is at distance at most $D$ from $h^{n+i}$ where $\frac
m3\le i\le \frac{2m}{3}$. Here $D$ is a constant depending only on $C_1$. Note that the set
$\Ball_G(h^i, D)\cap H$ is finite, so it is contained in some ball $\Ball_H(h^i,D_1)$ in $H$ where
$D_1$ depends only on $D=D(C_1)$ . Hence $\dist_H(x, h^i)\le D_1$. Therefore $\pgot$ comes $D_1$-close
to the middle third of $[h^n, h^{m+n}]$. By Proposition \ref{prop3}, $\q$ is a Morse quasi-geodesic in
$H$.
\endproof

 {Lemma \ref{lmeasy} immediately implies}

\begin{proposition}\label{propsub} Let $H$ be a finitely generated subgroup of a finitely generated group $G$.
Suppose that $H$ does not have its own Morse elements. Then $H$ cannot contain Morse elements of $G$.
\end{proposition}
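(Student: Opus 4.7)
The proposition is essentially the contrapositive reformulation of Lemma \ref{lmeasy}, so the plan is to argue by contradiction and directly invoke that lemma. Assume for contradiction that $H$ contains some element $h \in G$ which is Morse in $G$. I want to derive that $h$ is then also a Morse element in $H$ (with its own word metric), contradicting the hypothesis that $H$ has no Morse elements of its own.

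The key step is exactly the content of Lemma \ref{lmeasy}: since $h \in H$ and $H$ is finitely generated, and since $h$ is Morse in $G$, we can apply that lemma directly to conclude $\{h^n : n \in \ZZ\}$ is a Morse quasi-geodesic in $(H, \dist_H)$. This gives $h$ as a Morse element of $H$, contradicting the assumption. There is essentially no obstacle beyond citing the previous lemma; the whole force of the statement was already extracted there.

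Thus the proof is a single line: by Lemma \ref{lmeasy}, any Morse element of $G$ lying in the finitely generated subgroup $H$ is automatically a Morse element of $H$; since $H$ has no Morse elements of its own, $H$ contains no Morse elements of $G$. \qedsymbol
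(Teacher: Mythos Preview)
Your proposal is correct and matches the paper's approach exactly: the paper simply states that Lemma \ref{lmeasy} immediately implies Proposition \ref{propsub}, which is precisely the contrapositive argument you give.
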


 {According to Proposition \ref{prop3} a group $H$ such that at least one asymptotic cone of it is
without cut-points (an \emph{unconstricted} group, in the terminology of \cite{DrutuSapir:TreeGraded})
does not contain Morse elements. Hence, Proposition \ref{propsub} can be applied to the cases when $H$
satisfies a non-trivial law or has an infinite cyclic central subgroup, or $H$ is a co-compact lattice
in a semi-simple Lie group of rank $\ge 2$, or $H$ is $\SL_n(\OS)$ for $n\ge 3$, or $H$ is a Cartesian
product of two infinite subgroups. Also if $H$ is a torsion group it cannot contain a Morse element.
Among the groups $G$ where Morse elements exist and play an important part can be counted hyperbolic or
relatively hyperbolic groups, or mapping class groups of a surface.}

\section{Cut-points in asymptotic cones of groups acting on hyperbolic graphs}
\label{sec4}

In this section, we prove that groups acting acylindrically on trees and more general hyperbolic graphs
have cut-points in their asymptotic cones and Morse quasi-geodesics. Recall that the action of a group
$G$ on a graph $X$ is \emph{acylindrical} if there exist $l$ and $N$ such that for any pair of vertices
$x,y$ in $X$ with $\dist (x,y)\geq l$ there are at most $N$ distinct elements, $g\in G$, such that
$gx=x,\, gy=y\, $ (in this case the action is called $l$-acylindrical).

\subsection{Trees}
\label{sec41}

\begin{theorem}\label{th31}  Suppose that a finitely generated group $G$
acts acylindrically on a simplicial tree $X$. Then every element $g\in G$ that does not fix a point in
$X$ is Morse. In particular, every asymptotic cone of $G$ has cut-points and non-trivial transversal
trees.
\end{theorem}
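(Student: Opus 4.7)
The plan is to verify condition (3) of Proposition \ref{prop3} for the sequence $\q=(g^n)_{n\in\Z}$ viewed inside the Cayley graph of $G$; Proposition \ref{prop3} will then certify that $\q$ is Morse, and together with Proposition \ref{prop3}(1) and the homogeneity of asymptotic cones of $G$, this yields cut-points and non-trivial transversal trees in every asymptotic cone of $G$.

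First I set up the axis and the orbit map. Because $g$ fixes no vertex of $X$, the dichotomy already quoted in the excerpt (cf.~\cite{Bowditch:tightgeod}) produces a bi-infinite $\langle g\rangle$-invariant axis $\pgot\subset X$ on which $g$ translates by some length $\tau>0$. Fix a vertex $v\in\pgot$ and let $\Phi\colon G\to X$, $h\mapsto hv$, be the orbit map; for any finite generating set $S$ of $G$, $\Phi$ is $K$-Lipschitz with $K=\max_{s\in S}d_X(v,sv)$. Since $d_X(v,g^nv)=|n|\tau$, we get $d_G(1,g^n)\ge|n|\tau/K$, and combined with the trivial upper bound this shows $\q$ is a quasi-geodesic in $G$. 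Moreover, given any Cayley-graph path $\sigma=(h_0=1,h_1,\dots,h_k=g^n)$ of length $k\le Cn$, its image under $\Phi$ is a sequence of points in $X$ with consecutive gaps at most $K$ running from $v$ to $g^nv$; because $X$ is a tree and the unique geodesic from $v$ to $g^nv$ is the axis segment $[v,g^nv]\subset\pgot$, this sequence must pass within $K$ of every point of $[v,g^nv]$. Thus for every $m\in[n/3,2n/3]$ there exists an index $j(m)$ with $d_X(h_{j(m)}v,g^mv)\le K$.

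The core step is to promote this $X$-proximity to a $G$-distance bound $d_G(h_{j(m)},g^m)\le D$ independent of $n$. I first strengthen acylindricity to the following approximate-fixing version: for every $R\ge 0$ there exist constants $L_R$ and $N_R$ such that
\[
|\{\xi\in G:d_X(\xi x,x)\le R,\ d_X(\xi y,y)\le R\}|\le N_R
\]
whenever $x,y$ are vertices with $d_X(x,y)\ge L_R$. Using the elliptic/loxodromic dichotomy in the tree, any such $\xi$ is either elliptic, in which case its fixed subtree contains a sub-segment of $[x,y]$ of length $\ge d_X(x,y)-R$ and strict acylindricity applies to two vertices of this segment, or loxodromic with translation length $\le R$ and axis within $R/2$ of $[x,y]$, in which case a difference argument between two such candidates sharing (essentially) the same axis produces an elliptic element approximately fixing a long segment, which returns us to the previous case; the integer-translation-length restriction for simplicial isometries bounds the number of powers that contribute. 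With this at hand, choose $m_1<m_2$ in $[n/3,2n/3]$ with $(m_2-m_1)\tau\ge L_R$. The element
\[
\xi := h_{j(m_1)}^{-1}\,h_{j(m_2)}\,g^{-(m_2-m_1)}
\]
displaces $v$ and $g^{m_2-m_1}v$ in $X$ by at most $2K$, so it lies in a uniformly finite set and therefore has $G$-word length bounded by some $D'$. Chaining a bounded number of such steps across indices covering the middle third and applying the triangle inequality in the Cayley graph delivers $d_G(h_{j(m)},g^m)\le D$ for some $m\in[n/3,2n/3]$, which is precisely Proposition \ref{prop3}(3).

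The technical heart of the argument---and the main obstacle---is the approximate-fixation strengthening of acylindricity, in particular controlling loxodromic candidates with small translation length whose axes are uniformly close to the given long segment. This is handled via difference arguments that collapse back to the elliptic case, exploiting the integer translation lengths of simplicial tree isometries so that only finitely many powers of such a candidate can participate in the approximate fixing. Once this is settled, the rest is routine.
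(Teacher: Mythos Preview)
The setup and the approximate–acylindricity strengthening are fine, but the core step has a genuine gap. You assert that
\[
\xi = h_{j(m_1)}^{-1}\,h_{j(m_2)}\,g^{-(m_2-m_1)}
\]
moves both $v$ and $g^{m_2-m_1}v$ by at most $2K$. From $d_X(h_{j(m_i)}v,\,g^{m_i}v)\le K$ you only control what each $h_{j(m_i)}$ does to the \emph{single} point $v$. For instance
\(
\xi v = h_{j(m_1)}^{-1}\bigl(h_{j(m_2)}\cdot g^{\,m_1-m_2}v\bigr),
\)
and nothing in your hypotheses bounds $d_X\bigl(h_{j(m_2)}\cdot g^{\,m_1-m_2}v,\ g^{m_1}v\bigr)$; the same obstruction arises for the second point. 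A second problem compounds this: even if $\xi$ did lie in the approximate stabilizer of $(v,g^{M}v)$, ``uniformly bounded cardinality'' does not yield ``uniformly bounded $G$--word length'' unless the set is independent of $n$. With $M=m_2-m_1$ growing with $n$ the set varies; with $M$ fixed your ``chaining'' from an endpoint of $\q$ to the middle third needs $O(n)$ steps and accumulates $O(n)$ error. (Vertex stabilizers in an acylindrical tree action may well be infinite, so the one--point condition $d_X(\eta_m v,v)\le K$ is genuinely weak.)

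The paper supplies exactly the missing idea: a pigeonhole on the \emph{$G$--length} of the path combined with a coset lemma. One arranges (via a suitable generating set) that the projected path visits the orbit points $g^i\!\cdot o$ themselves, so $\sigma$ crosses every fibre $Y_i=\pi^{-1}(g^i\!\cdot o)$. Since $\sigma$ has $G$--length $\le Cn$, for a fixed $k$ with $k\tau\ge l$ some block of $k$ consecutive crossings has total $G$--length $\le 4Ck$. If $x$ is the entry point of $\sigma$ into $Y_i$, then both $x$ and $g^i$ belong to
\[
V_{a,b}=\{h\in G:\ h\!\cdot o=a,\ \exists\,w\in B_G(1,R)\ \text{with}\ hw\!\cdot o=b\},\qquad a=g^i\!\cdot o,\ b=g^{i+k}\!\cdot o,
\]
with $R=\max(4Ck,\,k|g|)$. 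Lemma~\ref{lem9} shows $V_{a,b}$ is covered by at most $|B_G(1,R)|$ right cosets of the finite pair--stabilizer $\mathrm{Stab}(a)\cap\mathrm{Stab}(b)$, and (after translating to finitely many model pairs $b\in B_G(1,R)\!\cdot o$) has diameter bounded by a constant $D=D(R)$. Hence $d_G(x,g^i)\le D$, which is Property~(3). The pigeonhole on $G$--length is what converts the one--point tree information into the second constraint you need; your difference element $\xi$ never acquires that second constraint.
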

We remark that all the actions on trees we consider are such that there are no inversions of edges. One can always pass to this setting by splitting each edge into two edges by adding a new vertex at the middle of the edge.

Throughout the proof of this theorem, we suppose that $G$ acts $l$-acylindrically on the
simplicial tree~$X$.

Since $X$ is a tree, there exists a bi-infinite geodesic $\pgot$ in $X$ stabilized by $g$. The element
$g$ acts on $\pgot$ with some translation number $\tau$, and for every $x\in X$ we have that $\dist(x,
g\cdot x)\ge \tau$.

Consider a vertex $o\in \pgot$. We can assume that the tree $X$ is the convex hull of $G\cdot o$. The
factor-graph $X/G$ is finite, and $X$ is the Bass-Serre tree of some finite graph of groups $K$ (and
$G$ is the  fundamental group of that graph of groups). For every $h\in G$, we denote $h\cdot o$ by
$\pi(h)$.  { We consider $G$ endowed with a word metric corresponding to a finite generating set $U$ stable with respect to inversion. Let $\Ball (1,R)$ be the ball of radius $R$ in the group $G$ with respect to the word metric. Let $N$ be the upper bound on the cardinality of the stabilizers of arcs of length $l$ in $X$.}

\begin{lemma}\label{lem9}
\begin{enumerate}
    \item\label{card} For every $R\ge 0$ and every pair of vertices $a,b$ in the orbit $G\cdot o$
    the set $V_{a,b}$ of elements $g\in\pi\iv(a)$ such
that $\dist(g,\pi\iv(b))\le R$ is either empty or covered by at most $|\Ball (1,R)|$ right cosets of the
stabilizer of the pair $(a,b)$.
    \item\label{diam} In particular if $\dist(a,b)\geq l$, then
    there exists $D=D(R)$ such that $V_{a,b}$ has diameter at most $D$.
\end{enumerate}
\end{lemma}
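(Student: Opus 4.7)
The plan is to analyze the map $\Phi\colon V_{a,b}\to \Ball(1,R)\cdot o$ defined by $\Phi(g)=g\iv\cdot b$. This map is well-defined: if $g\in V_{a,b}$, then by definition there exists $h\in\pi\iv(b)$ with $\dist(g,h)\le R$, so $u:=g\iv h\in\Ball(1,R)$ satisfies $u\cdot o=g\iv\cdot b=\Phi(g)$. Since $\Ball(1,R)$ is finite, the image of $\Phi$ lies in a finite set of cardinality at most $|\Ball(1,R)|$.

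For part~(\ref{card}), I would verify that the fibers of $\Phi$ are precisely right cosets of $\Stab(a,b)$. If $\Phi(g_1)=\Phi(g_2)$, then $(g_2g_1\iv)\cdot b=g_2\cdot(g_1\iv\cdot b)=g_2\cdot(g_2\iv\cdot b)=b$, and $(g_2g_1\iv)\cdot a=g_2\cdot o=a$, so $g_2g_1\iv\in\Stab(a,b)$, meaning $g_1,g_2$ lie in the same right coset $\Stab(a,b)g_1$. Conversely, if $g_2=sg_1$ with $s\in\Stab(a,b)$, then $\Phi(g_2)=g_1\iv s\iv\cdot b=g_1\iv\cdot b=\Phi(g_1)$. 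Consequently $V_{a,b}$ is the union of at most $|\Ball(1,R)|$ right cosets of $\Stab(a,b)$.

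For part~(\ref{diam}), assume $\dist(a,b)\ge l$. Because $G$ acts by isometries on $X$, for every $g\in V_{a,b}$ one has $\dist_X(o,\Phi(g))=\dist_X(g\cdot o,b)=\dist_X(a,b)\ge l$, so acylindricity yields $|\Stab(o,\Phi(g))|\le N$. Given $g_1,g_2\in V_{a,b}$, set $p_i=\Phi(g_i)$; the element $\phi:=g_1\iv g_2$ fixes $o$ (since both $g_i\cdot o=a$) and sends $p_2$ to $p_1$ (since $g_1\iv g_2\cdot p_2=g_1\iv\cdot b=p_1$). The set $P:=\Ball(1,R)\cdot o$ being finite, I would fix once and for all, for each ordered pair $(p_1,p_2)\in P\times P$ lying in a common $\Stab(o)$-orbit, one specific element $g_0(p_1,p_2)\in\Stab(o)$ mapping $p_2$ to $p_1$ (taking the identity when $p_1=p_2$; existence otherwise is witnessed by $\phi$ whenever the pair actually arises). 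Then $\phi\in g_0(p_1,p_2)\Stab(o,p_2)$, so setting
$$D(R):=\max_{(p_1,p_2)}|g_0(p_1,p_2)|+\max_{\substack{p\in P\\ \dist_X(o,p)\ge l}}\max_{\sigma\in\Stab(o,p)}|\sigma|,$$
both maxima being finite (the second by acylindricity), yields $\dist_G(g_1,g_2)=|\phi|\le D(R)$.

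The conceptual point is that although individual elements of $V_{a,b}$ may lie arbitrarily far from the identity in $G$, their pairwise differences $g_1\iv g_2$ are trapped in a finite set determined solely by $R$, via $\Ball(1,R)\cdot o$ and the acylindricity bound on stabilizers of distant pairs. I anticipate no serious obstacle: the argument is bookkeeping that invokes acylindricity to control $|\Stab(o,p)|$ for the relevant $p$, and otherwise exploits only the finiteness of $\Ball(1,R)$.
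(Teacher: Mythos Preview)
Your proof is correct and follows essentially the same approach as the paper. Both arguments reduce to translating elements of $V_{a,b}$ back to a neighborhood of the identity via left multiplication by $g^{-1}$ (your map $\Phi(g)=g^{-1}\cdot b$ encodes this), then invoking the finiteness of $\Ball(1,R)\cdot o$ together with the acylindricity bound on $|\Stab(o,p)|$; your packaging via the explicit fiber map $\Phi$ is slightly cleaner than the paper's coset-inclusion manipulations, but the content is the same.
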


\proof Without loss of generality we may assume that $a=o$. Let $S_o$ be the stabilizer of
$o$, and $S_b$ be the stabilizer of $b$. Then $\pi\iv (a)=S_o$, $\pi\iv (b)= g_bS_o$ and $S_b=g_bS_og_b\iv$, for some $g_b$ such
that $g_b\cdot o= b$.

(\ref{card}) Every element $g\in V_{o,b}$ has the property that $g\in S_o$ and for some $w$ in the ball $\Ball (1,R)$ of
$G$, $gw\in \pi\iv (b)$. Therefore
$$
V_{o,b}\subseteq g_bS_o\Ball (1,R) \cap S_o= g_bS_og_b\iv g_b\Ball (1,R) \cap S_o= S_bg_b \Ball (1,R)\cap S_o.
$$
For every $v\in g_b\Ball (1,R)$, either $S_bv\cap S_o$ is empty or $v= h(v)g(v)$ for some $h(v)\in S_b,
g(v)\in S_o$ and $S_bv\cap S_o=(S_b\cap S_o)g(v)$. Therefore the set $V_{o,b}$ is a union of at most
$|\Ball (1,R)|$ right cosets of $S_b\cap S_o$.
\medskip

(\ref{diam})  {Since $\dist(o,b)\ge l$, the stabilizer of $(o,b)$ has cardinality $\le N$, hence by (\ref{card}), the diameter of $V_{o,b}$ is finite. Let $D= D(R)$ be the supremum over all diameters of  $V_{o,b}$ with $b\in \Ball (1,R)\cdot o$, $\dist(o,b)\ge l$. If an arbitrary vertex $b$ with $\dist(o,b)\ge l$ is such that $V_{o,b}$ is non-empty then $b=g w o$ where $g\in S_o$ and $w\in \Ball (1,R)$. Therefore $V_{o,b} = g V_{o,wo}$  has diameter at most $D$.}\endproof

Now we are ready to {\em prove Theorem \ref{th31}.}

We choose a finite generating set $U$ of $G$
such that:

\smallskip

\noindent (Int) for every generator $u$, the geodesic in $X$ joining $o$ with $u\cdot o$ intersects $G\cdot o$ only in its endpoints.

\smallskip

Indeed, consider a finite  generating set $S$ of $G$ and, for every $s\in S$, consider the geodesic  $[o,s\cdot o]$ connecting $o$ and $s\cdot o$, and  $g_1,...,g_m$ a finite sequence of elements in $G$ such that $o, g_1\cdot o,  g_2\cdot o,..., g_{m-1}\cdot o , s\cdot o $ are the consecutive intersections of  $[o,s\cdot o]$ with  $G\cdot o$. Let $h_k=g_{k-1}\iv g_k$ for $k=1,2,...,m$, where we take $g_0=1$ and $g_m =s$. Then $s=h_1h_2...h_m$. It follows that the union of finite sets $\{ h_1^{\pm 1}, ..., h_m^{\pm 1} \}$ for all $s\in S$, composes a finite generating set satisfying (Int).

Let  $\lambda$ be a constant such that for every $u\in U$,
\begin{equation}\label{d1}
\dist_X(o, u\cdot o)\le \lambda \, .\end{equation}

Since $g$ acts on $X$ with translation number $\tau$, the sequence $(g^n)_{n\in \ZZ}$ is a
$(\frac{\lambda }{\tau}|g|, |g|)$-quasi-geodesic, where $\lambda$ is the constant in (\ref{d1}).

By Proposition \ref{prop3}, we need to show that for every $C>0$ there exists $D\ge 0$ such that every
path of length $\le Cn$ in the Cayley graph of $G$ joining $g^{-3n}$ and $g^{3n}$ passes within
distance $D$ from $g^i$ for some $i$ between $-n$ and $n$.

Consider an arbitrary constant $C$, $n>1$, and a path $\q$ of length $<Cn$ connecting $g^{-3n}$ with
$g^{3n}$ in the Cayley graph of $G$ ($n$ is large enough).

Denote the preimage $\pi\iv(\pi(g^i))$ by $Y_i$. Note also that $\dist(\pi(g^{-n}),\pi(g^n))\ge \tau
n$.

Let $h_1,h_2,...,h_m$ be the consecutive vertices of $\q$. Then by (\ref{d1}),
$\dist_X(\pi(h_i),\pi(h_{i+1}))\le \lambda$. Connecting $\pi(h_i)$ with $\pi(h_{i+1})$ by a geodesic in
$X$, we get a path $\pi(\q)$ on $X$ connecting $\pi(g^{-3n})$ with $\pi(g^{3n})$, and intersecting
$G\cdot o$ only in  $\pi(h_i),\, i=1,2,...,m$. Since $X$ is a tree, $\pi(\q)$ must cover the
sub-interval $[\pi(g^{-3n}), \pi(g^{3n})]$ of $\pgot$, hence $\pi (\q) \cap G\cdot o$ must contain all
$\pi (g^i),\, -3n\le i\le 3n$. Therefore $\q$ must cross all $Y_i$, $-3n\le i\le 3n$, on its way from
$g^{-3n}$ to $g^{3n}$.

Let $k-1$ be the integral part of $l/\tau$. We may assume that $n>k$.

The maximal sub-path of the path $\q$ joining a point in $Y_{-n}$ with a point in $Y_n$ can be divided
into sub-paths joining $Y_i$ to $Y_{i+1}$ such that their lengths sum up to the length of $\q$. There
exists $i\in [-n ,n]$ such that the sum of the lengths of the sub-paths of $\q$ between $Y_j$ and
$Y_{j+1} $ with $i\leq j \leq i+k$ is at most $4C k$ (otherwise the total length of $\q$ would be
greater than $Cn$). Note that the distance between $\pi(g^i)$ and $\pi(g^{i+k})$ is at least $k\tau >
l$, and the distance between $g^i\in Y_i$ and $g^{i+k}\in Y_{i+k}$ is at most $|g|k$. Let $R$ be the
maximum of $|g|k$ and $4Ck$. Let $x$ be the start point of the first sub-path of $\pgot$ joining $Y_i$
with $Y_{i+1}$. Applying Lemma \ref{lem9}, we find a constant $D=D(R)$ such that $x$ is at distance at
most $D$ from $g^i$ as required.\hspace*{\fill}$\square$

\begin{remark}\label{rem31} One cannot replace ``simplicial tree" by ``$\RR$-tree" in the formulation of Theorem \ref{th31}.
Indeed, the group $\ZZ^2$ acts freely on the real line but the asymptotic cones of $\ZZ^2$ do not have
cut-points.
\end{remark}

\subsection{ {Uniformly locally} finite hyperbolic graphs}

\label{sec42}

Theorem \ref{th31} can be easily generalized to actions on hyperbolic uniformly locally finite graphs.
Let $G$ be a group acting on a hyperbolic graph $X$. An element $g\in G$ is called {\em loxodromic} if
its translation length is $>0$.

\begin{theorem}\label{th32} Let $G$ be an infinite finitely generated group acting acylindrically on an infinite hyperbolic  uniformly locally finite graph $X$.
Suppose that for some $l>0$ the stabilizer of any pair of points $x,y\in X$ with $\dist(x,y)\ge l$ is
finite of uniformly bounded size. Let $g$ be a loxodromic element of $G$. Then the sequence
$(g^n)_{n\in \ZZ}$ is a Morse quasi-geodesic in $G$. In particular, every asymptotic cone of $G$ has
cut-points.
\end{theorem}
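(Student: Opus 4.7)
The plan is to transcribe the argument of Theorem \ref{th31}, replacing tree-geodesic rigidity by hyperbolicity of $X$ and finite valence by uniform local finiteness. Since $g$ is loxodromic in the $\delta$-hyperbolic graph $X$, the orbit $\pgot=(g^n\cdot o)_{n\in\ZZ}$ is an $(L_0,K_0)$-quasi-geodesic of translation length $\tau>0$. Fix a finite generating set $U$ of $G$ and set $\lambda=\max_{u\in U}\dist_X(o,u\cdot o)<\infty$. By characterization (3) of Proposition \ref{prop3}, it suffices to produce, for every $C\ge 1$, a constant $D\ge 0$ such that any path $\q=(h_0,\ldots,h_m)$ of length $\le Cn$ in $\Cay(G,U)$ from $g^{-3n}$ to $g^{3n}$ has some vertex within $G$-distance $D$ of some $g^i$ with $|i|\le n$.

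Projecting via $\pi(h)=h\cdot o$ gives a $\lambda$-discrete path $\pi(\q)$ in $X$ of combinatorial length $\le Cn$, joining two points on $\pgot$ at $X$-distance of order $n$. The hyperbolic input is that the quasi-axis $\pgot$ is strongly contracting in $X$: projections of balls disjoint from a fixed neighborhood of $\pgot$ have uniformly bounded diameter. This is a consequence of $\delta$-hyperbolicity together with the fact that $\pgot$ is a $g$-invariant quasi-geodesic on which $g$ acts loxodromically. Applying strong contraction block by block, one extracts a constant $D_1=D_1(C,\lambda,\delta,L_0,K_0)$ such that for every $|i|\le n$ there is a vertex $h_j$ of $\q$ with $\dist_X(h_j\cdot o,g^i\cdot o)\le D_1$; this is the hyperbolic analogue of the tree fact that $\pi(\q)$ traverses every orbit vertex between its endpoints.

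To translate this $X$-closeness into $G$-closeness, observe that uniform local finiteness bounds $|\overline{\Ball}_X(g^i\cdot o,D_1)|\le M$, so $\pi^{-1}\bigl(\overline{\Ball}_X(g^i\cdot o,D_1)\bigr)$ is a finite union of left cosets of vertex stabilizers. An analogue of Lemma \ref{lem9}, in which the condition $\pi(h)=a$ is relaxed to $\pi(h)\in\overline{\Ball}_X(a,D_1)$, then implies: whenever $\dist_X(a,b)\ge l$, the set of elements $h\in G$ with $\pi(h)$ within $D_1$ of $a$ and within Cayley distance $R$ of some $h'$ with $\pi(h')$ within $D_1$ of $b$ is a union of $\le|\Ball_G(1,R+2D_1\lambda)|$ cosets of $\Stab(a)\cap\Stab(b)$, and hence has bounded $G$-diameter, because acylindricity forces $|\Stab(a)\cap\Stab(b)|\le N$. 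Choose $k$ with $k\tau\ge l+2D_1$, so that $\dist_X(g^i\cdot o,g^{i+k}\cdot o)\ge l$; the averaging step of the tree proof, carried out over consecutive $k$-step blocks of the middle third of $\q$, extracts a sub-path of $\q$ of length $\le 4Ck$ whose endpoints have $\pi$-images within $D_1$ of $g^i\cdot o$ and $g^{i+k}\cdot o$ respectively. The adapted Lemma \ref{lem9} then bounds the starting vertex of this sub-path to within $G$-distance $D$ of $g^i$.

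The principal obstacle is the hyperbolic step of the second paragraph, namely upgrading the logarithmic closeness given by the classical Morse lemma for arbitrary continuous paths in $\delta$-hyperbolic spaces to a constant closeness bound on $\pi(\q)$ near each $g^i\cdot o$. This is precisely where the acylindrical action enters, in the guise of strong contraction of $\pgot$; an alternative route is to subdivide $\q$ into a uniformly bounded number of sub-paths that are themselves genuine quasi-geodesics of $X$ (using that $\pi$ is $\lambda$-Lipschitz on $U$) and apply the usual Morse lemma to each piece before pasting. Once the constant bound $D_1$ is secured, the remainder of the argument is a direct adaptation of the tree case, with $D_1$-closeness in $X$ playing the role of equality in $G\cdot o$.
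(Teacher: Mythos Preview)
Your framework is right and matches the paper's corrected proof: project $\q$ to $X$, use hyperbolicity to control how $\pi(\q)$ tracks the axis, then run the averaging argument plus a ball-version of Lemma \ref{lem9} (using uniform local finiteness and acylindricity). The gap is the claim in your second paragraph that strong contraction yields a constant $D_1$ with \emph{every} $g^i\cdot o$, $|i|\le n$, within $D_1$ of $\pi(\q)$. This is false: in $\HH^2$ the path that follows the axis on $[-n,-\log n]$, then half the circle of radius $\log n$ about the midpoint, then the axis on $[\log n,n]$ has length $\le Cn$ but avoids a $(\log n)$-ball around the midpoint (this is exactly the counterexample in the paper's Erratum, Remark \ref{r:1}). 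Strong contraction of a quasi-geodesic in a hyperbolic space is a consequence of $\delta$-hyperbolicity alone and has nothing to do with the acylindrical action; it cannot be upgraded by acylindricity as you suggest. Your alternative of subdividing $\q$ into boundedly many $X$-quasi-geodesics also fails: $\pi(\q)$ is merely a $\lambda$-Lipschitz path, with no quasi-geodesic structure and no bound on the number of pieces needed.

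What strong contraction actually gives is that the subpaths of $\pi(\q)$ lying outside a $4D_0$-neighbourhood of $\pgot$ have nearest-point projection to $\pgot$ of \emph{total} length at most $\frac{c}{D_0}|\pi(\q)|\le \frac{n}{k}$ for $D_0$ chosen large. Hence \emph{most} (not all) points $g^i\cdot o$ with $|i|\le n$ lie within $6D_0$ of $\pi(\q)$. The paper (Section \ref{s:err}) then runs a pigeonhole argument on an arithmetic progression of step $m\approx 20D_0$ inside $[-n,n]$ to locate a \emph{consecutive pair} $i,i+m$ with both $g^i\cdot o$ and $g^{i+m}\cdot o$ close to $\pi(\q)$; a second averaging (your third paragraph, which is correct) finds vertices $w_i,w_{i+m}$ of $\q$ near these with Cayley distance bounded independently of $n$, and only then does acylindricity via Lemma \ref{lem9} finish. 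The missing idea is precisely this: you do not need closeness at every $i$, only at one good consecutive pair, and that is what the total-projection-length estimate provides.
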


\blue{For the corrected proof see Section \ref{s:err}.}

\proof The proof is similar to the proof of Theorem \ref{th31}. By \cite[Lemma 3.4]
{Bowditch:tightgeod}, some power $g^m$, $m>0$, stabilizes a bi-infinite geodesic $\pgot$ in $X$. Since
we can always replace $g$ by $g^m\ne 1$, we can assume that $g$ stabilizes $\pgot$. {Let $o$ be a vertex on  $\pgot$.  We denote $g\cdot o$ also by
$\pi(g)$, for every $g\in G$.}

The proof of Lemma \ref{lem9} does not use the fact that $X$ is a tree, so it holds in our case too.

 {The fact that $X$ was a simplicial tree was used in the proof of Theorem \ref{th31} twice:}

\begin{itemize}
\item[(P1)] In the choice of a finite generating set of $G$ with property (Int) we used the uniqueness of a geodesic joining two points.

    \smallskip

\item[(P2)] We used that $X$ is a tree to deduce that $\pi(\q)$ must cover the interval $[\pi(g^{-3n}), \pi(g^{3n})]$ of $\pgot$.
\end{itemize}

In this proof we do not need a finite generating set of $G$ with property (Int). Instead of (P2) we can use the fact that $\pgot$ is a Morse quasi-geodesic
 (as is every bi-infinite geodesic in a Gromov hyperbolic space by, say, Proposition \ref{prop3}).
  Then by Proposition \ref{prop3}, part (5), the $D_0$-tubular neighborhood of $\pi(\q)$ must contain the interval $[\pi(g^{-3n}), \pi(g^{3n})]$ (for some constant $D_0=D_0(C)$).

Instead of preimages of points $Y_i=\pi\iv(\pi(g^i))$ let us consider the sets $Y_i'$ which are
$\pi$-preimages of balls of radius {$D_0+\lambda $ around $\pi(g^i)$. The path $\pi(\q)$ visits
each ball $\Ball(\pi(g^i),D_0)$, $-n\le i\le n$, so the path $\q$ must visit each $Y_i', -n\le i\le
n$.}

We need a version of Lemma \ref{lem9}, (\ref{diam}), for pairs of vertices $a,b$ in $G\cdot
o\subset X$ with $\dist (a,b)$ large enough, and for the set $\widetilde{V}_{a,b}$ of elements
$g\in \pi\iv (\Ball (a,D_0+\lambda ))$ at distance at most $R$ from $\pi\iv (\Ball(b,D_0+\lambda))$.

Consider a pair of vertices $a,b$  at distance $\geq l+ 4(D_0+\lambda)$ such that  $\widetilde{V}_{a,b}$ is non-empty. Then there exists $g\in G , w \in \Ball (1,R)$ such that $g\cdot o\in \Ball (a,D_0+\lambda )$ and  $gw\cdot o\in \Ball (b,D_0+\lambda )$. It follows that  $\widetilde{V}_{a,b} \subseteq g \widetilde{V}'$, where $\widetilde{V}'$ is the set of elements $ h\in \pi\iv (B(o, 2(D_0+\lambda)))$ at distance at most $R$ from $\pi\iv (\Ball(w\cdot o, 2(D_0+\lambda)))$. In other words $\widetilde{V}'$ is covered by the sets $V_{x,y}$ with $x\in B(o, 2(D_0+\lambda))$ and $y$ at distance at most $2(D_0+\lambda)$ from a vertex $w\cdot o$ in $B(1,R)\cdot o$ satisfying $\dist (o, w\cdot o)\geq l+ 4(D_0+\lambda)$. Since $X$ is locally finite it follows that there are finitely many pairs of vertices $x,y$ as above, all $V_{x,y}$ are finite, hence $\widetilde{V}'$ has finite diameter.

The rest of the proof of Theorem \ref{th31} carries almost without change.
\endproof

%%%%%%%%%%%%%%%%%%%%%
%%%%%%%%%%%%%%%%%%%%
%%%%%%%%%%%%%%%%%%%%%%%
%%%%%%%%%%%%%%%%%%%%%%

\subsection{Bowditch's graphs}
\label{sec43}

Theorem \ref{th31} can be also generalized to actions on a family of hyperbolic graphs which was first
introduced by Bowditch in \cite[$\S 3$]{Bowditch:tightgeod}, and which we call here Bowditch graphs.
This family of graphs includes the $1$-skeleta of curve complexes of surfaces. As a result, we recover
the theorem of Behrstock \cite{Behrstock:asymptotic} that asymptotic cones of mapping class groups have
cut-points, and for every pseudo-Anosov mapping class $g$, the sequence $(g^n)_{n\in\ZZ}$ is a Morse
quasi-geodesic.

Let us define Bowditch's graphs.

Let $\calgg$ be a $\delta$-hyperbolic graph. For any two vertices $a,b$, choose a non-empty set of
geodesics connecting $a,b$ and call these geodesics {\em tight}.

For every $r\ge 0$, we denote by $T(a,b;r)$ the union of all the points on all tight geodesics
connecting a point in the ball $\Ball(a,r)$ with a point in $\Ball(b,r)$.

\begin{definition}\label{bowd}
We say that a hyperbolic graph $\calgg$ equipped with a collection of tight geodesics as above is a
\emph{Bowditch graph} if it satisfies the following conditions:
\medskip

\begin{enumerate}
\item[(F0)] Every subpath of a tight geodesic is tight.

\medskip

\item[(F1)] For every $r>0$ there exist  $m=m(r)$ and $k=k(r)$ such that for every three vertices $a,b,c$ in $\calgg$ with $\dist(c, \{a,b\})\ge k$, the set $F_c(a,b;r)=\Ball(c,r)\cap T(a,b;r)$ contains at most $m$ points.
\end{enumerate}
\end{definition}

According to \cite[Theorems 1.1 and 1.2]{Bowditch:tightgeod}, the $1$-skeleton of a curve complex of a
surface is a Bowditch graph.

The first statement of the following lemma is \cite[Lemma 3.4]{Bowditch:tightgeod}, the second
statement is obvious.

\begin{lemma}\label{B1}
For every loxodromic isometry $g$ of $\calgg$, there exists a bi-infinite geodesic $\g$ and a natural
number  {$m\le m_0$, where $m_0=m_0(\calgg)$}, such that  $g^m\g=\g$.

If moreover the isometry $g$ preserves tight geodesics then for every vertex $c$ in $\g$,
$gF_c(a,b;r)=F_{g\cdot c}(g\cdot a, g\cdot b;r)$.
\end{lemma}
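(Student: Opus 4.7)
My plan is to handle the two statements separately.

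For the first statement, I would simply invoke Lemma 3.4 of \cite{Bowditch:tightgeod}, which yields exactly the conclusion. To reconstruct the idea: a loxodromic isometry $g$ of a $\delta$-hyperbolic graph has positive translation length, and by iterating $g$ on any basepoint one obtains a $g$-invariant bi-infinite quasi-geodesic (its coarse axis). A standard hyperbolicity argument shows that only finitely many bi-infinite geodesics of $\calgg$ lie within a uniformly bounded Hausdorff distance of this axis, the number of them being controlled by $\delta$ alone. Since $g$ permutes this finite collection, some power $g^m$ fixes one of the geodesics setwise, and the order $m$ is bounded by a constant $m_0=m_0(\calgg)$ depending only on the hyperbolicity data of $\calgg$. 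The only delicate point is extracting the bound $m_0$ uniformly in $g$, but this is precisely what Bowditch's argument provides.

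For the second statement, the verification is purely formal. Unpacking the definition,
\[
F_c(a,b;r)=\Ball(c,r)\cap T(a,b;r),
\]
where $T(a,b;r)$ is the union of all tight geodesics of $\calgg$ connecting a point of $\Ball(a,r)$ with a point of $\Ball(b,r)$. Two facts suffice: first, since $g$ is an isometry, $g\Ball(x,r)=\Ball(g\cdot x,r)$ for every vertex $x$ and every radius $r\ge 0$; second, because $g$ preserves the family of tight geodesics by hypothesis and sends $\Ball(a,r),\Ball(b,r)$ to $\Ball(g\cdot a,r),\Ball(g\cdot b,r)$ respectively, we obtain $gT(a,b;r)=T(g\cdot a,g\cdot b;r)$.

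Combining these observations and using that $g$ (as a bijection of the vertex set of $\calgg$) commutes with set-theoretic intersection, I get
\[
gF_c(a,b;r)=g\Ball(c,r)\cap gT(a,b;r)=\Ball(g\cdot c,r)\cap T(g\cdot a,g\cdot b;r)=F_{g\cdot c}(g\cdot a,g\cdot b;r),
\]
which is the desired identity. There is no genuine obstacle: the first statement is imported from Bowditch, and the second is a direct unwinding of the definitions using that $g$ is a tight-geodesic-preserving isometry.
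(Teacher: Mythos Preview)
Your proposal is correct and matches the paper's own treatment exactly: the paper states that the first assertion is \cite[Lemma 3.4]{Bowditch:tightgeod} and that the second is obvious, which is precisely what you do. One minor remark: your informal reconstruction of Bowditch's argument (finitely many bi-infinite geodesics near the axis, bounded by $\delta$ alone) is not literally valid for non-locally-finite hyperbolic graphs such as curve complexes, but since you correctly defer to the cited lemma for the actual proof this does not affect the validity of your argument.
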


\begin{definition} Suppose that $G=\la S\ra$ acts on a Bowditch graph $\calgg$ and
 {the set of vertices $V(\calgg)=G\cdot\Delta$ for some finite set of vertices $\Delta$.} Let
$g,h\in G$, $o\in \calgg$. We say that a geodesic $\g$ in $\calgg$ with endpoints $g\cdot o$ and $
h\cdot o$ {\em $\Delta$-shadows} a path $p$ with vertices $p_1=g, p_2, p_3,...,p_m=h$ in $\Cay(G,S)$ if
the sets $p_i\cdot \Delta$, $i=1,...,m$, cover the  {set of vertices of the} geodesic $\g$.
\end{definition}

\begin{definition}
We say that a group $G$ acts {\em tightly} on $\calgg$ if:

\begin{itemize}
\item[(T1)] $G$ stabilizes the set of tight geodesics in $\calgg$;

\medskip

\item[(T2)] For every vertex $o$ of $\calgg$, there exist a finite set of vertices $\Delta$ in $\calgg$, and numbers $\lambda \ge 1, \kappa \ge 0$ such that every $g,h\in G$ can be joined by a
$(\lambda,\kappa)$-quasi-geodesic $\q$ in $\Cay(G,S)$ that is $\Delta$-shadowed by a tight geodesic
connecting $g\cdot o, h\cdot o$.
\end{itemize}

Following the terminology from \cite{MasurMinsky:complex2} for the mapping class group, we call
$(\lambda,\kappa )$-quasi-geodesics $\q$ as in (T2) \emph{hierarchy paths}.
\end{definition}

It follows from \cite{MasurMinsky:complex2} that the mapping class group of a punctured surface acts
tightly on the curve complex of the surface (see Lemma \ref{mmmm} below).

\begin{theorem}\label{th33}
Let $G$ be a finitely generated group acting tightly and acylindrically on a Bowditch graph $\calgg$.
Then every loxodromic element of $G$ is Morse.  In particular, if $G$ has loxodromic elements then every asymptotic cone of $G$ has cut-points.
\end{theorem}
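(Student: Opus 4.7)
Plan. The plan is to adapt the proof of Theorem \ref{th32} using two new ingredients: Proposition \ref{prop3}, part (4) (rather than part (3)), applied to the collection $L$ of hierarchy paths supplied by (T2); and Bowditch's finiteness axiom (F1) in place of the local finiteness of $X$.

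First I would invoke Lemma \ref{B1} to pass to a power of $g$ stabilizing a bi-infinite geodesic $\g$ in $\calgg$; fix a vertex $o$ on $\g$ and let $\pi(h)=h\cdot o$ be the (Lipschitz) orbit map. The collection $L$ satisfies Convention \ref{cvttt} by (T2) and (F0), so Proposition \ref{prop3}, part (4), reduces the claim to showing that for every $C\ge 1$ and every integer $k\ge 1$ there is $D=D(C,k)$ such that every $k$-piecewise $L$ quasi-path $\pgot=\q_1\sqcup\cdots\sqcup\q_k$ joining $g^{-3n}$ to $g^{3n}$, with joints $h_0=g^{-3n},h_1,\ldots,h_k=g^{3n}$ and quasi-length at most $C\dist_G(g^{-3n},g^{3n})$, must come within $D$ of some $g^i$ with $|i|\le n$. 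By (T2), each image $\pi(\q_j)$ is within Hausdorff distance $\diam(\Delta)$ of a tight geodesic $\tau_j$ from $\pi(h_{j-1})$ to $\pi(h_j)$, and the total length of $\pi(\pgot)$ is $O(n)$ with constants depending only on $C$, $k$, and the data in (T2). Since $\g$ is Morse in the hyperbolic graph $\calgg$, the sub-geodesic of $\g$ between $g^{-n}\cdot o$ and $g^{n}\cdot o$ lies in the $D_0$-neighborhood of $\pi(\pgot)$ for some $D_0=D_0(C,k)$. For each $|i|\le n$, pick a vertex $h^{(i)}$ of some piece $\q_{j(i)}$ with $\dist_\calgg(\pi(h^{(i)}),g^i\cdot o)\le D_0$; by $\Delta$-shadowing, $\pi(h^{(i)})$ lies within $D_0+\diam(\Delta)$ of a point on $\tau_{j(i)}$.

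Next, I would use (F1) to replace local finiteness. Fix a large enough threshold $r=r(C,k)$. If both endpoints of $\tau_{j(i)}$ are at distance $\ge k(r)$ from $g^i\cdot o$, then (F1) yields that $\Ball(g^i\cdot o,r)\cap T(\pi(h_{j(i)-1}),\pi(h_{j(i)});0)$ has at most $m(r)$ elements, so $\pi(h^{(i)})$ takes finitely many values; translating by $g^{-i}$ (using (T1) and Lemma \ref{B1}), $\pi(g^{-i}h^{(i)})$ lies in a uniformly bounded set of vertices near $o$. The argument of Lemma \ref{lem9}, which uses only the acylindricity bound on stabilizers of pairs of points at distance $\ge l$ in $\calgg$, then yields $\dist_G(g^{-i}h^{(i)},1)\le D$, that is $\dist_G(h^{(i)},g^i)\le D$. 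If, on the other hand, one endpoint of $\tau_{j(i)}$ is closer than $k(r)$ to $g^i\cdot o$, that endpoint is itself a joint of $\pgot$ whose $\pi$-image is close to $g^i\cdot o$, and we iterate the replacement with that joint in place of $h^{(i)}$. Since there are at most $k+1$ joints and $\pi(g^{\pm 3n})$ is at distance $\Omega(n)$ from $g^i\cdot o$ (for $|i|\le n$ and $n$ large, by the loxodromic property), the iteration terminates at a vertex whose tight-geodesic enclosure has both endpoints far from $g^i\cdot o$, and the previous argument applies.

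The main obstacle is this iteration step: as we pass from $h^{(i)}$ to a nearby joint and invoke (F1) again, controlling the constants requires a uniform choice of the threshold $r$ that simultaneously bookkeeps the (T2)-constants, the threshold function $k(\cdot)$ from (F1), and the slack $D_0+\diam(\Delta)$, together with a pigeonhole over the at most $k+1$ joints to ensure the iteration terminates without inflating the final bound $D$ beyond a quantity depending only on $C$ and $k$. Once this bookkeeping is done, the group-theoretic passage via acylindricity is a direct descendant of Lemma \ref{lem9} as used in Theorem \ref{th32}, and the \emph{in particular} clause about cut-points follows from Proposition \ref{prop3} once the Morse property of $g$ is established.
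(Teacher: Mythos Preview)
There is a genuine gap in your application of acylindricity. Knowing that $\pi(g^{-i}h^{(i)})$ lies in a uniformly finite set of vertices of $\calgg$ near $o$ does \emph{not} bound $\dist_G(g^{-i}h^{(i)},1)$: the stabilizer of a single vertex of $\calgg$ may be infinite, so the preimage $\pi^{-1}$ of a finite set of vertices can be an infinite set in $G$. Acylindricity only controls the stabilizer of a \emph{pair} of vertices at distance $\ge l$, and Lemma \ref{lem9} correspondingly bounds a set $V_{a,b}$ defined by two such vertices together with a bound $R$ on the $G$-distance between their witnesses. Your argument produces, for each $i$, a single vertex $h^{(i)}$ with $\pi(h^{(i)})$ near $g^i\cdot o$, but never exhibits a companion $h'$ with $\dist_G(h^{(i)},h')$ bounded and $\dist_\calgg(\pi(h^{(i)}),\pi(h'))\ge l$. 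The ``main obstacle'' you identify (bookkeeping of the (F1) thresholds under iteration at joints) is not the real one.

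The paper's proof supplies exactly this missing step. After reducing to a single tight sub-geodesic $\ft$ of $\pi(\q)$ that stays $K_4$-close to a long sub-arc $\g'\subset [g^{-n}\cdot o, g^n\cdot o]$, it runs the pigeonhole argument of Theorems \ref{th31} and \ref{th32} along the hierarchy sub-path $\q'$ shadowed by $\ft$: since $|\q'|$ is $O(n)$ and its shadow has length $\Omega(n)$, some sub-arc $\q''$ of $\q'$ with endpoints $h,h'$ at $G$-distance $\le K_5 l$ is shadowed by a sub-arc of $\ft$ of length at least $l$ plus a constant exceeding $\diam(\Delta)+2K_4$. Then (F1) bounds the number of vertices of $\ft$ in $F_u, F_v$ for the corresponding $u,v\in\g'$, so $h$ lies in a bounded union of sets $V_{a,b}$ with $\dist_\calgg(a,b)\ge l$, and now Lemma \ref{lem9} applies to give $\dist_G(h,g^i)\le D$. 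To repair your argument you must insert this pigeonhole over consecutive $i$'s (or along one hierarchy piece) to produce a pair $(h,h')$ with bounded $G$-distance and $\pi$-images at $\calgg$-distance $\ge l$, before invoking acylindricity.
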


\proof \blue{For a correction of this proof see Section \ref{s:err}.}
The proof is similar to the proofs of Theorems \ref{th31}, \ref{th32}. By Lemma \ref{B1} we can
assume that $g$ stabilizes  a bi-infinite geodesic $\pgot$, and that it acts on it with translation
length $\tau$. Pick a vertex $o$ on $\g$, and let $\Delta$ be the set from (T2).  Without loss of
generality we may assume that $o\in \Delta$. Since $\la g\ra$ acts co-compactly on $\g$, it has a
finite fundamental domain, which we include for convenience in $\Delta$ as well. Hence $\g$ is covered
by the sets $g^i\Delta$, $i\in \Z$.

Let $k$ be a natural number. By Proposition \ref{prop3}, (4), it is enough to show that if $\q$ is an
arbitrary $k$-piecewise hierarchy path connecting $g^{-3n}$, $g^{3n}$, with $n\gg1$, $\q$ at Hausdorff
distance $\le K_0$ from a path joining $g^{-3n}$, $g^{3n}$ and of length $\le K_0 n$, then $\q$ crosses
the $K$-neighborhood of the quasi-geodesic $[g^{-n}, g^n]$ where $K$ depends only on $k$ and on $g$
(but not on $\q$ nor on $n$).

Since $\q$ is a $k$-piecewise hierarchy path, by property (T2) it is shadowed by a $k$-piecewise tight
geodesic $\pi(\q)$ in $\calgg$ of length $\leq K_1n$ (for some constant $K_1$) connecting $g^{-3n}\cdot
o$ and $g^{3n}\cdot o$. The fact that geodesics in a hyperbolic graph are Morse and part (5) of
Proposition \ref{prop3} imply that the sub-arc $[g^{-3n}\cdot o , g^{3n}\cdot o]$ in $\g$ is contained
in the $D$-tubular neighborhood of $\pi(\q)$ for some constant $D$. In particular $[g^{-n}\cdot o ,
g^{n}\cdot o]$ has a sub-arc $\g'$ of length $\geq K_2n$ (for some constant $K_2$) contained in the
$D$-tubular neighborhood of one of the tight geodesic subpaths $\ft$ of $\pi (\q )$. Notice that the
length $|\ft|$ is $\geq K_2n-2D\ge K_3n$ for some constant $K_3$ (since $n\gg 1$).

Since $\g'$ and $\ft$ are two geodesics in a hyperbolic space and $\dist(\g'_-,\ft_-)\le D,
\dist(\g'_+,\ft_+)\le D$, the Hausdorff distance between these two geodesics is at most $K_4$ for some
constant $K_4$.

Since $\ft$ is tight, by (T2), $\ft$ is covered by sets $F_x=F_x(\g'_-, \g'_+; K_4)$ where $x\in \g'$.

Let $\q'$ be the hierarchy sub-path in $\q$ shadowed by $\ft$. As in the proofs of Theorems \ref{th31}
and \ref{th32}, we find a subarc $\q''$ with endpoints $h, h'\in G$ at distance $\le K_5l$ (for some
constant $K_5$) which is shadowed by a sub-arc of $\ft$ of length at least $l+K_6$ of $\ft$ where $K_6$
is any number exceeding the diameter of $\Delta$ plus $2K_4$ (recall that $2K_4$ bounds the diameters
of $F_x, x\in \g'$). Denote the endpoints of that sub-arc of $\ft$ by $x, y$. Then there are vertices
$u, v$ in $\g'$ such that $x\in F_u, y\in F_v$, and there are two powers of $g$, say, $g^i, g^j$,
$-n\le i,j\le n$, such that $u\in g^i\Delta$, $v\in g^j\Delta$ (recall that $g^m\Delta$, $m\in \Z$,
covers $\g$). By Lemma \ref{B1}, the number of elements in $F_u\cup F_v$ is bounded by constants which
do not depend on $n$. Hence $h$ is contained in a union of bounded (independently of $n$) number of
subsets $V_{a,b}$ with $a,b\in \calgg$, $\dist(a,b)\ge l$. It is easy to establish the natural
generalization of Lemma \ref{lem9} to Bowditch graphs. Hence the distance $\dist(h,g^i)$ is bounded by
a number that does not depend on $n$.
\endproof

\begin{lemma}\label{mmmm}
The mapping class group of a compact connected orientable surface acts tightly on the $1$-skeleton of
the curve complex.
\end{lemma}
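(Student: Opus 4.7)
\proof The plan is to verify conditions (T1) and (T2) of the definition of a tight action, relying essentially on the theory developed by Masur--Minsky in \cite{MasurMinsky:complex2} together with Bowditch's formalism from \cite{Bowditch:tightgeod}. Let $S$ be a compact connected orientable surface, let $\calc(S)$ denote its curve complex, and let $\MCG(S)$ denote its mapping class group, equipped with a fixed finite generating set.

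First I would fix the notion of tight geodesic in $\calc(S)^{(1)}$. Following Bowditch \cite{Bowditch:tightgeod}, one declares a geodesic $\alpha_0,\alpha_1,\dots,\alpha_n$ in $\calc(S)^{(1)}$ to be \emph{tight} if for each interior index $i$ the curve $\alpha_i$ is (a component of) the boundary of a regular neighborhood of $\alpha_{i-1}\cup\alpha_{i+1}$. This definition is phrased entirely in topological terms (intersection patterns and regular neighborhoods on $S$), and in particular it is invariant under every element of $\MCG(S)$. Subpaths of tight geodesics are tight by construction, so property (F0) is immediate; property (F1) is exactly the finiteness statement established by Bowditch in \cite[Theorem 1.1]{Bowditch:tightgeod} (see also the Masur--Minsky finiteness of tight multigeodesics in \cite{MasurMinsky:complex2}). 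Since the topological definition is $\MCG(S)$-invariant, condition (T1) holds.

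Next I would establish (T2) by invoking hierarchies. Pick any vertex $o\in\calc(S)^{(1)}$. Masur and Minsky \cite{MasurMinsky:complex2} associate to any pair of markings (or, after a bounded adjustment, any pair of mapping classes $g,h\in\MCG(S)$) a \emph{hierarchy} $H$ whose main geodesic is a tight geodesic $\tau_{gh}$ in $\calc(S)$ joining $g\cdot o$ and $h\cdot o$, and out of which one extracts a sequence of markings that forms a $(\lambda,\kappa)$-quasi-geodesic in $\MCG(S)$ from $g$ to $h$ (with constants depending only on $S$); this is the hierarchy path. The projection of the consecutive markings of a hierarchy path onto $\calc(S)$ fellow-travels $\tau_{gh}$ within a uniform constant $R$ depending only on $S$ (this is part of the Masur--Minsky machinery, as reformulated e.g.\ in Behrstock's work). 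Taking $\Delta$ to be the finite set of vertices at $\calc(S)$-distance $\le R+1$ from $o$ (which is finite because $\MCG(S)$ acts cocompactly on $\calc(S)$ with finite vertex stabilizers modulo the kernel, so one can enlarge $\Delta$ to contain a fundamental domain for a chosen orbit), every vertex of $\tau_{gh}$ is covered by some $p_i\cdot\Delta$ as $p_i$ runs over the vertices of the hierarchy path. This is exactly the $\Delta$-shadowing condition required in (T2).

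The main subtlety lies in (T2): one must use the non-trivial statement that hierarchy paths, projected to the curve complex, track the main tight geodesic of the hierarchy with bounded error, and that this projection can be realized via a uniformly finite vertex set $\Delta$ independent of $g,h$. Granting the Masur--Minsky construction (and the standard adjustment that replaces a marking with an element of $\MCG(S)$, which is harmless up to finite Hausdorff distance since $\MCG(S)$ acts cocompactly on the marking complex), both conditions are satisfied, and the lemma follows.
\endproof
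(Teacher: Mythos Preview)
Your verification of (T1) is fine and matches the paper's, but there is a genuine gap in your argument for (T2): the set $\Delta$ you define is \emph{not finite}. The curve complex $\calc(S)$ is not locally finite --- every vertex has infinitely many neighbors --- so the set of vertices within $\calc(S)$-distance $R+1$ of a fixed vertex $o$ is infinite. Your parenthetical justification (cocompactness of the $\MCG(S)$-action and finite vertex stabilizers) does not help: cocompactness only says there are finitely many \emph{orbits} of vertices, not that balls are finite. Since the definition of a tight action requires $\Delta$ to be a finite set of vertices, your construction does not establish (T2).

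The underlying issue is that ``fellow-traveling within $R$'' is too weak a conclusion to extract a finite $\Delta$. The paper avoids this by working through the \emph{marking complex} $\MM(S)$, which \emph{is} locally finite. One takes a finite set $\Phi$ of markings with $\MCG(S)\cdot\Phi=\MM(S)$ and lets $\Delta$ be the union of the base curves of the markings in $\Phi$; this is finite because each marking has only $3g+p-3$ base curves. The key Masur--Minsky input is then not mere fellow-traveling but the stronger fact that along a hierarchy path the base curves of the successive markings actually \emph{cover} the vertices of the main tight geodesic. Pulling this back to $\MCG(S)$ via the quasi-isometry with $\MM(S)$ gives the required $\Delta$-shadowing with a genuinely finite $\Delta$.
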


\proof Let $S$ be a surface of genus $g$ with $p$ punctures. The fact that the action of the mapping
class group $\MCG(S)$ on the curve complex of $S$ satisfies (T1) is proved in
\cite{MasurMinsky:complex2}.

By \cite{MasurMinsky:complex2}, the mapping class group $\MCG(S)$ acts co-compactly on the so called
marking complex $\MM(S)$. Each marking $\mu\in\MM(S)$ consists of a pair of data: a \emph{base
multicurve} denoted $\base(\mu)$, which is a multicurve composed of $3g+p-3$ disjoint curves, and a
collection of \emph{transversal curves}. The 1-skeleton of $\MM(S)$ is a locally finite graph. So there
exists a finite collection of markings $\Phi$ such that $\MCG(S)\cdot \Phi=\MM(S)$. Let $\Delta$ be the
union of all the base curves of all markings in $\Phi$.

If is proved in \cite{MasurMinsky:complex2} that there exists an $\MCG(S)$-equivariant projection $\pi$
of $\MM(S)$ onto the curve complex of $S$ such that for every two markings $\mu, \nu$, there exists an
$(L,C)$-quasi-geodesic in $\MM(S)$ connecting $\mu$ and $\nu$ and $\Delta$-shadowed by a tight geodesic
connecting $\pi(\mu), \pi(\nu)$, where $L, C$ are uniform constants. Using the quasi-isometry of
$\MCG(S)$ and $\MM(S)$, we can pull the set of hierarchy paths of $\MM(S)$ into $\MCG(S)$ and obtain a
collection of hierarchy paths in $\MCG(S)$ satisfying (T2).
\endproof

Lemma \ref{mmmm} and Theorem \ref{th33} immediately imply Behrstock's theorem from
\cite{Behrstock:asymptotic} that every pseudo-Anosov element in the mapping class group is Morse since
pseudo-Anosov elements are loxodromic (see, for example, \cite{Bowditch:tightgeod}).

\section{Lattices in semi-simple Lie groups. The $\Q$-rank one case}
\label{sec5}
\subsection{Preliminaries}

\subsubsection{Horoballs and horospheres}\label{h}

Let $X$ be a CAT(0)-space and $\rho $ a geodesic ray in $X$. {\it The Busemann function associated to
$\rho$ } is the function $ f_\rho:X\to \RR ,\; f_\rho(x)=\lim_{t\to \infty}[d(x,\rho(t))-t]\; .$ A
level hypersurface $H_a(\rho )=\lbrace x\in X \mid f_\rho(x)= a \rbrace$ is called {\it horosphere}, a
level set $Hb_a(\rho ):=\lbrace x\in X \mid f_\rho(x)\leq a \rbrace$ is called {\it closed horoball}
and its interior, $\Hbo_a(\rho ):=\lbrace x\in X \mid f_\rho(x)< a \rbrace$, {\it open horoball}. We
use the notations $H(\rho ),\; Hb(\rho ),\; \Hbo(\rho )$ for the horosphere, the closed and open
horoball corresponding to the value $a=0$.

For two asymptotic rays, their Busemann functions differ by a constant \cite{BridsonHaefliger}. Thus
the families of horoballs and horospheres are the same and we shall call them horoballs and horospheres
{\it of basepoint }$\alpha$, where $\alpha $ is the common point at infinity of the two rays.

\begin{lemma}(\cite[Lemma 2.C.2]{Drutu:Filling})\label{3}
Let  $X$ be a product of symmetric spaces and Euclidean buildings and $\alpha_1 ,\alpha_2 ,\alpha_3$
three distinct points in $\bbi X$. If there exist three open horoballs $\Hbo_i$ of basepoints
$\alpha_i,\; i=1,2,3$, which are mutually disjoint then $\alpha_1 ,\alpha_2 ,\alpha_3$ have the same
projection on the model chamber $\Delta_{mod} $.
\end{lemma}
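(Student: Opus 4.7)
The plan is to translate the disjointness hypothesis from a metric condition on horoballs into a Tits-geometric condition on the basepoints, and then use the combinatorics of the spherical Tits building structure at infinity of $X$ to conclude that the three types coincide.

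First I would reduce the hypothesis to a statement about Busemann functions: $\Hbo_i \cap \Hbo_j = \emptyset$ is equivalent to $\max(f_{\rho_i}-c_i,\, f_{\rho_j}-c_j)\ge 0$ everywhere on $X$, for appropriate constants $c_i,c_j$ encoding the horoball levels. In a product of symmetric spaces and Euclidean buildings, any two boundary points $\alpha,\beta$ lie in a common maximal flat $F$, on which both Busemann functions are affine (with unit gradients pointing opposite to $\alpha$ and $\beta$). If $\angle_T(\alpha,\beta)<\pi$, one easily produces a ray in $F$ along which both $f_{\rho_\alpha}$ and $f_{\rho_\beta}$ tend to $-\infty$, violating the disjointness. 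Hence pairwise disjointness of the $\Hbo_i$ forces $\angle_T(\alpha_i,\alpha_j)=\pi$, i.e.\ each pair $(\alpha_i,\alpha_j)$ is opposite in the Tits boundary (this is the content of the expected Lemma~2.C.1 in \cite{Drutu:Filling}).

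Next I would exploit that on the spherical building $\partial_\infty X$ Tits-opposition coincides with combinatorial opposition and therefore acts on types via the opposition involution $\iota\colon\Delta_{mod}\to\Delta_{mod}$. Writing $\tau_i$ for the projection of $\alpha_i$ to $\Delta_{mod}$, the three relations obtained from Step~1 read $\tau_j=\iota(\tau_i)$ for all $i\neq j$. From $\tau_2=\iota(\tau_1)$ and $\tau_3=\iota(\tau_1)$ one gets $\tau_2=\tau_3$; from $\tau_3=\iota(\tau_1)$ and $\tau_3=\iota(\tau_2)$ one gets $\iota(\tau_1)=\iota(\tau_2)$, hence $\tau_1=\tau_2$. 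Thus $\tau_1=\tau_2=\tau_3$, which is the desired conclusion (and incidentally shows that the common type is $\iota$-fixed). For $X$ a product, one applies the same argument in each factor using the join structure of $\partial_\infty X$ and of $\Delta_{mod}$.

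The main obstacle is Step~1: one must carefully justify that any two boundary points of $X$ lie in a common maximal flat on which Busemann functions are affine. For symmetric spaces of non-compact type this is classical; for Euclidean buildings it is part of the standard structure theory; for products it follows from the factorwise statement, but care is needed because each factor flat contributes both to the affine behavior of the Busemann function and to the computation of the Tits angle, and because the choice of horoball levels $c_i$ does not transfer directly from one factor to the product. I would appeal to the Kleiner--Leeb results in \cite{KleinerLeeb:buildings} (as well as to the earlier parts of Section~2.C of \cite{Drutu:Filling}) to handle these points uniformly.
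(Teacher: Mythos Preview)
The paper does not supply its own proof of this lemma; it is quoted directly from \cite[Lemma 2.C.2]{Drutu:Filling} and used as a black box. Your two-step outline---first reducing disjointness of horoballs to pairwise Tits-opposition of the $\alpha_i$ (the expected content of \cite[Lemma 2.C.1]{Drutu:Filling}), then playing the opposition involution $\iota$ against three pairwise opposite points to force equality of types---is the standard argument and is essentially what one finds in the cited source. Your identification of Step~1 as the place requiring care (the existence of a common flat and the affine behaviour of Busemann functions there, handled uniformly via \cite{KleinerLeeb:buildings}) is accurate.
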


\subsubsection{Spherical and Euclidean buildings}

Let $Y$ be an Euclidean building.

An Euclidean building is called {\emph{$c$-thick}} if every wall bounds at least $c$ half-apartments
with disjoint interiors.

\begin{lemma}[\cite{KleinerLeeb:buildings}, proof
of Proposition 4.2.1]\label{lflats} Two geodesic rays $r_1,r_2$ in $Y$ are asymptotic respectively to
two rays $r_1',r_2'$ bounding an Euclidean planar sector with angular value the Tits distance between
$r_1(\infty)$ and $r_2(\infty).$
\end{lemma}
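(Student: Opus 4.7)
The plan is to reduce the statement to a standard fact inside a single apartment of the Euclidean building $Y$. Let $\xi_i = r_i(\infty) \in \partial_\infty Y$ for $i = 1,2$, and let $\alpha$ be the Tits distance between $\xi_1$ and $\xi_2$. My goal is to exhibit an apartment $A \subset Y$ whose visual boundary contains both $\xi_1$ and $\xi_2$; once this is done, picking any basepoint $p \in A$ and letting $r_i'$ be the unique ray in $A \cong \mathbb{E}^n$ from $p$ to $\xi_i$ will automatically produce rays asymptotic to $r_1, r_2$ that bound a Euclidean sector (or half-plane, in the degenerate case $\alpha = \pi$). In a Euclidean apartment the angle at a basepoint between two rays equals the angular (Tits) distance of their endpoints at infinity, so the angular value of this sector is exactly $\alpha$.

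The key step, and the main obstacle, is therefore the following: \emph{any two ideal points of $Y$ lie in the visual boundary of a common apartment.} For a Euclidean building this is built into the axiomatics via the spherical building $\partial_{Tits} Y$ at infinity. Since $\partial_{Tits} Y$ is a spherical building, any two of its points lie in a common apartment of $\partial_{Tits} Y$ (if their Tits distance is $<\pi$ they are joined by a unique Tits geodesic which is contained in any apartment through them, and if their Tits distance is $\pi$ they are antipodal and still jointly contained in an apartment). By the correspondence between apartments of $\partial_{Tits} Y$ and parallel classes of flats in $Y$, such an apartment of the spherical building is the boundary of some flat $A$ in $Y$; this flat $A$ is the desired apartment.

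With $A$ in hand the rest is routine: inside the Euclidean space $A$, fix $p \in A$ and take $r_i'$ to be the ray from $p$ with $r_i'(\infty) = \xi_i$. Asymptoticity of $r_i'$ to $r_i$ follows because asymptotic rays in a CAT(0) space are characterized by having the same endpoint at infinity, together with the fact that Busemann functions associated to asymptotic rays differ only by a constant, as recalled in Section \ref{h}. The two rays $r_1', r_2'$ emanating from the same point of the Euclidean apartment $A$ span a Euclidean planar sector whose opening angle equals the Euclidean angle $\angle_p(r_1', r_2')$; inside a flat this angle coincides with the Tits angle $\angle_{Tits}(\xi_1, \xi_2) = \alpha$, finishing the proof.

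I do not expect any computation to be needed beyond invoking the building structure; the only nontrivial input is the existence of an apartment at infinity containing both $\xi_1$ and $\xi_2$, which is the content of the spherical building axioms for $\partial_{Tits} Y$ and is precisely what makes Euclidean buildings ``rich'' enough to support such planar sectors between arbitrary pairs of ideal directions.
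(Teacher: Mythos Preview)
The paper does not give its own proof of this lemma; it simply cites the proof of Proposition 4.2.1 in \cite{KleinerLeeb:buildings}. Your argument is correct and is essentially the standard one (and the one implicit in Kleiner--Leeb): use that $\partial_{Tits}Y$ is a spherical building so that $\xi_1,\xi_2$ lie in a common spherical apartment, realize this as $\partial_\infty A$ for a flat $A\subset Y$, and take rays from a common basepoint in $A$.
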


Let $\Sigma$ be a spherical building of rank $2$. Then
\begin{itemize}
\item $\Sigma $ is a $CAT(1)$ spherical complex of dimension one, all its
 simplices are isometric (and called chambers), and isometric to an arc of circle of length $\pi/m$;
\item all simplicial cycles are of simplicial length at least $2m$ (so of length at least $2\pi $);
\item $\Sigma$ is of simplicial diameter $m+1$;
\item If two points in $\Sigma$ are at distance smaller than $\pi$ then there exists a unique geodesic joining
them.
\end{itemize}

\begin{definition}
Let $A$ be an apartment in $\Sigma$ and let $x$ be a point outside $A$. We call {\emph{entrance point
for $x$ in $A$}} any point $y$ such that the geodesic joining $x$ with $y$ intersects $A$ only in $y$.
\end{definition}

\begin{lemma}\label{entrv}
Every entrance point in an apartment $A$ for a point outside $A$ is a vertex.
\end{lemma}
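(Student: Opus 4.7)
The plan is to prove this by contradiction using the local structure of the one-dimensional complex $\Sigma$. Suppose $y$ is an entrance point for $x \notin A$ in $A$, and assume toward contradiction that $y$ is not a vertex. Then $y$ lies in the interior of some chamber $c$ of $A$.

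The key observation is that $\Sigma$, being a one-dimensional simplicial complex, glues chambers (edges) only along their vertex endpoints. Therefore, at the interior point $y$ of the edge $c$, the link of $y$ in $\Sigma$ consists of exactly two antipodal directions (the two ``sides'' of $c$ at $y$). Equivalently, there exists a sufficiently small metric radius $\epsilon > 0$ such that the open ball $\Ball(y,\epsilon)$ in $\Sigma$ is entirely contained in the edge $c$, and hence in the apartment $A$.

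Now consider the geodesic $\pgot = [x,y]$. Since $x \notin A$ and $y \in A$, the geodesic $\pgot$ is non-trivial, so it approaches $y$ along one of the two local directions in the link of $y$. In particular, there is a terminal subsegment $\pgot'$ of $\pgot$ of positive length entirely contained in $\Ball(y,\epsilon) \subset A$. This subsegment $\pgot'$ is a non-degenerate arc of $\pgot$ lying inside $A$, which contradicts the defining property of an entrance point, namely that $\pgot$ intersects $A$ only at the single point $y$.

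Hence $y$ must be a vertex, completing the proof. No obstacle is expected here: the argument uses only the one-dimensionality of $\Sigma$ together with the convention that simplicial complexes have no identifications in the interiors of top-dimensional cells, plus the obvious fact that a geodesic ending at $y$ contains a non-trivial terminal segment.
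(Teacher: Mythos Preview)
Your argument is correct. Both your proof and the paper's proceed by contradiction, assuming $y$ lies in the interior of a chamber $c \subset A$, but they diverge after that. You use only the local structure of the one-dimensional simplicial complex: interior points of edges have neighborhoods homeomorphic to intervals, so a terminal segment of $[x,y]$ is forced into $c \subset A$. The paper instead invokes the building axiom that $x$ and the chamber $c$ lie in a common apartment (a circle), and observes that in this circle the geodesic from $x \notin c$ to $y$ in the interior of $c$ must cross a vertex of $c$, which lies in $A$. Your approach is more elementary in that it avoids the apartment axiom and convexity of apartments; the paper's approach gives slightly more, namely it identifies a specific point of $A$ (a vertex of $c$) on $[x,y]$ rather than just an open terminal arc.
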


\proof If an entrance point $y$ is not a vertex, then it is in the interior of a chamber $\ww$. By the
axioms of buildings $\ww$ and $x$ are contained in one apartment. Obviously one of the vertices of
$\ww$ is on the geodesic joining $x$ to any interior point of $\ww$, in particular with $y$. On the
other hand the fact that the point $y$ is in $A$ implies that $\ww$ is in $A$.\endproof

According to the result in Lemma \ref{entrv}, we shall speak from now on of {\emph{entrance vertices
for a point in an apartment}}.

\begin{lemma}\label{entrpi}
Let $\Sigma$ be a spherical building of rank $2$, let $x$ be a point in
 it, and let $A$ be an apartment not containing $x$.

If the distance from $x$ to $A$ is less than $\pi/2$ then there exists at most one entrance vertex for
$x$ in $A$ which is
  at distance less than $\pi/2$ from $x$.

If the distance from $x$ to $A$ is $\pi/2$ then there exist at most two entrance vertices for $x$ in
$A$ which are
  at distance $\pi/2$ from $x$. Moreover if two such vertices exist, then they must be opposite.
\end{lemma}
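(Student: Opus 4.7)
The plan is to reach a contradiction in each scenario by producing a simple closed locally geodesic loop in $\Sigma$ of geometric length strictly less than $2\pi$, which is impossible since every simplicial cycle in $\Sigma$ has simplicial length at least $2m$, i.e.\ geometric length at least $2\pi$. Given two distinct entrance vertices $y_1,y_2 \in A$ for $x$, at distances $d_1,d_2$ from $x$, let $\gamma\subset A$ be the shorter of the two arcs in the cycle $A$ joining $y_1$ to $y_2$; its length $\ell$ satisfies $\ell\leq \pi$. Form the closed loop $L := [x,y_1]\cdot \gamma\cdot [y_2,x]$ of total length $d_1+\ell+d_2$. In the first case (both $d_i<\pi/2$) this length is already strictly less than $2\pi$; in the second case ($d_1=d_2=\pi/2$) it equals $\pi+\ell \leq 2\pi$, and is strictly less than $2\pi$ unless $\ell=\pi$, i.e.\ unless $y_1$ and $y_2$ are opposite in the cycle $A$.

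Next I would verify that $L$, possibly after a shortening replacement, is a simple closed locally geodesic loop. Since the link of a vertex in the rank-$2$ spherical building $\Sigma$ is a rank-$1$ spherical building, i.e.\ a discrete set whose points are pairwise at angular distance $\pi$, local geodesicity of $L$ at a vertex reduces to the two incident directions of $L$ at that vertex being distinct. At $y_i$ this is granted by the entrance condition $[x,y_i]\cap A=\{y_i\}$: the terminal edge of $[x,y_i]$ does not lie in $A$, so in particular it differs from the initial edge of $\gamma$. At $x$, either $[x,y_1]$ and $[x,y_2]$ leave $x$ in distinct directions, giving local geodesicity directly, or they share a non-trivial initial sub-arc; the divergence point $v$ of the two arms is then necessarily a vertex (geodesics in a one-dimensional complex can branch only at vertices), and one replaces $L$ by the shorter loop $L':=[v,y_1]\cdot \gamma\cdot [y_2,v]$ of length $d_1+\ell+d_2-2\dist(x,v)$, which is locally geodesic at $v$ by the same link argument. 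Simplicity of $L$ (or $L'$) follows from $\gamma\cap [x,y_i]=\{y_i\}$ together with the CAT(1) uniqueness of geodesics between points at distance $<\pi$: any further common point of the two arms would yield two distinct geodesics between points at distance $<\pi$, a contradiction.

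Combining these, one obtains a simple closed locally geodesic loop in $\Sigma$ - hence a simplicial cycle - of geometric length at most $d_1+\ell+d_2$, and strictly less if a shortening was performed. The first assertion then follows immediately, since the inequality $d_1+\ell+d_2<2\pi$ contradicts the girth hypothesis. For the second assertion, a shortening would force the strict inequality length$<2\pi$, contradicting the hypothesis; hence no shortening occurs, and then $\pi+\ell<2\pi$ is contradicted unless $\ell=\pi$, meaning $y_1,y_2$ are opposite in $A$; a third entrance vertex at distance $\pi/2$ would be opposite to both $y_1$ and $y_2$, contradicting the uniqueness of antipodes in the cycle $A$. The main subtle point will be the handling of arms sharing an initial sub-arc - the divergence-vertex shortening $L\mapsto L'$ is the key gadget - and one must check carefully that in both the shortening and the non-shortening subcases the strict-inequality arguments really do go through.
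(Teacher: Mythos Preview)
Your proof is correct and follows the same approach as the paper: form the loop $[x,y_1]\cup\gamma\cup[y_2,x]$ and use the fact that every simple cycle in $\Sigma$ has length at least $2\pi$. The paper's proof is only two sentences and does not address the possibility that $[x,y_1]$ and $[x,y_2]$ share an initial arc; your divergence-vertex shortening $L\mapsto L'$ is a genuine addition that makes this step rigorous, and your verification that the entrance condition forces $[x,y_i]\cap\gamma=\{y_i\}$ is exactly what is needed for simplicity.
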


\proof In the first case, since two vertices in $A$ \red{are at} distance at most $\pi$, the existence of two
entrance vertices would imply the existence of a cycle in $\Sigma$ of length $<2\pi$.

The second case is proved similarly.\endproof

\subsubsection{Lattices in semi-simple groups}

A {\it lattice} in a Lie group $G$ is a discrete subgroup $\Gamma $ such that $\Gamma \backslash G$
admits a finite $G$-invariant measure. We refer to \cite{Margulis:DiscreteSubgroups} or
\cite{Raghunathan:discrete} for a definition of $\Q$-rank for an arithmetic lattice in a semi-simple
group.

\begin{theorem}[Lubotzky-Mozes-Raghunathan, \cite{LMR:Cyclic},
\cite{LMR:metrics}]\label{lmr} On any irreducible lattice of a semisimple group of rank at least $2$,
the word metrics and the induced metric are bi-Lipschitz equivalent.
\end{theorem}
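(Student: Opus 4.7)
\proof[Proof proposal]
The plan is to prove the non-trivial direction, namely that the word metric $d_S$ on $\Gamma$ is bounded above by a constant multiple of the induced Riemannian metric $d_G|_\Gamma$; the reverse inequality is automatic with constant $C=\max_{s\in S}d_G(1,s)$. By Margulis arithmeticity, the higher $\RR$-rank hypothesis lets us assume $\Gamma$ is arithmetic. The cocompact case reduces to the \v{S}varc--Milnor lemma: $\Gamma$ acts properly and cocompactly by isometries on $G$, so the orbit map $\Gamma \to G$ is a quasi-isometry, and a standard Dirichlet-fundamental-domain argument upgrades this to a bi-Lipschitz equivalence. The heart of the problem is therefore the non-uniform case, where cusps could in principle cause exponential distortion as happens in rank one.

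The first step in the non-uniform case is to invoke reduction theory (Borel--Harish-Chandra): $\Gamma\backslash G$ is covered by finitely many Siegel sets $\Sigma_P$ associated to a set of representatives of $\Gamma$-conjugacy classes of minimal $\QQ$-parabolics $P$. Each Siegel set has the structure $K A^+_t N$, where $A^+_t$ is a shifted positive Weyl chamber in a maximal $\QQ$-split torus and $N$ is the unipotent radical. I would use this to transport the question of estimating $d_S(1,\gamma)$ to a question of estimating how efficiently a general $\gamma$ can be written as a product of (i) elements in a fixed compact set and (ii) ``cusp generators'' coming from $\Gamma\cap N$ for the various $P$. The key quantity to control is how, as we push out along $A^+_t$ into a cusp, the distortion of $\Gamma \cap N$ inside $\Gamma$ remains linear rather than becoming exponential.

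The main obstacle, and the essential use of higher rank, is the following: a unipotent $u \in \Gamma \cap N$ conjugated by $a\in A^+$ has Riemannian norm that can collapse (e.g.\ $a u a^{-1}$ has much smaller displacement than $u$ when $a$ contracts the root directions of $u$). In rank one this is fatal, and unipotents are exponentially distorted; in rank at least two, however, for each positive root $\alpha$ one can find another element $a' \in A^+_t$ which \emph{expands} the $\alpha$-root space while remaining in the Weyl chamber, and commutators/products among the different root groups can be used to ``refill'' a long unipotent with a short word. The step that requires genuine effort is to turn this into an effective algorithm: given $\gamma \in \Gamma$ with $d_G(1,\gamma)\le n$, produce an explicit decomposition of length $O(n)$ in $S$ by iteratively pulling $\gamma$ from its position in a Siegel set back toward a fixed compact fundamental region, using at each stage only a bounded number of generators per unit of Riemannian distance. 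This is where I would spend most of the work, bootstrapping from rank-two $\QQ$-subsystems (solved via explicit $\SL_3$-style commutator identities, in the spirit of \cite{LMR:Cyclic}) to arbitrary higher-rank arithmetic lattices by induction on the dimension of the maximal $\QQ$-split torus, handling the $\QQ$-rank $1$ case of $\RR$-rank $\ge 2$ lattices separately using the existence of a non-trivial anisotropic factor that provides the needed short commutators.
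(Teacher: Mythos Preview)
The paper does not prove this theorem at all: it is stated with attribution to \cite{LMR:Cyclic,LMR:metrics} and then invoked as a black box (notably in Section~\ref{sec5} and in the proof of Theorem~\ref{thm:SLd:NoCutPt}). There is no argument in the paper to compare your proposal against.

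That said, your outline is a fair high-level summary of the strategy in the original Lubotzky--Mozes--Raghunathan papers: the reduction to the non-uniform arithmetic case via Margulis arithmeticity and \v{S}varc--Milnor, the use of reduction theory and Siegel sets to localize the problem to unipotent radicals, and the crucial higher-rank mechanism (for each root direction there is a semisimple element in the lattice expanding it, so unipotents can be shortened by conjugation and commutator identities) are exactly the ingredients of \cite{LMR:metrics}. Your final paragraph correctly identifies where the real work lies; be aware that the induction and the $\QQ$-rank~$1$ case are substantially more delicate than a sketch suggests, and the full argument in \cite{LMR:metrics} runs to several dozen pages. But as a plan there is no gap to name.
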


By means of removing open horoballs one can construct a subspace $X_0$ of the symmetric space $X=G/K$
on which the lattice $\Gamma $ acts with compact quotient. In the particular case of lattices of
$\Q$-rank one, the family of open horoballs have the extra property of being disjoint.

\begin{theorem}[ \cite{Raghunathan:discrete}, \cite{GarlandRaghunathan}]\label{xo}
Let $\Gamma $ be an irreducible lattice of $\Q$-rank one in a semisimple group $G$. Then there exists a
finite set of geodesic rays $\{ \rho_1,\rho_2,\dots ,\rho_k \}$ such that the space $X_0 =X\setminus
\bigsqcup_{i=1}^k\bigcup_{\gamma \in \Gamma} \Hbo(\gamma \rho_i)$ has compact quotient with respect to
$\Gamma$ and such that any two of the horoballs $\Hbo(\gamma \rho_i)$ are disjoint or coincide.
\end{theorem}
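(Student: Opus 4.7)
The plan is to invoke the classical reduction theory of Borel and Harish-Chandra for arithmetic lattices in the $\Q$-rank one setting. First, I would use the Borel--Harish-Chandra finiteness theorem to assert that there are only finitely many $\Gamma$-conjugacy classes of minimal $\Q$-parabolic subgroups of $G$; let $P_1,\dots,P_k$ be representatives. Because $\Q$-$\rank(G)=1$, each $P_i$ is a maximal proper $\Q$-parabolic, so the unipotent radical $N_i$ of $P_i$ has a unique fixed point $\xi_i\in\bbi X$; pick a geodesic ray $\rho_i$ in $X$ asymptotic to $\xi_i$. Any $\Gamma$-translate of a boundary point of this form is of the form $\gamma\xi_i$, and the stabilizer of $\xi_i$ in $\Gamma$ equals $\Gamma\cap P_i$.

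The next step is the cusp separation (``Margulis lemma'') step: for each $i$ one shows that there exists a real number $t_i$ such that whenever $\gamma\in\Gamma$ satisfies $\gamma \Hbo_{t_i}(\rho_i)\cap \Hbo_{t_i}(\rho_i)\ne\emptyset$, in fact $\gamma\xi_i=\xi_i$ and hence $\gamma \Hbo_{t_i}(\rho_i)=\Hbo_{t_i}(\rho_i)$. This relies on the fact that $N_i\cap\Gamma$ acts cocompactly on each horosphere $H_s(\rho_i)$ (a consequence of $\Gamma$ being a lattice together with the Langlands decomposition of $P_i$), combined with the discreteness of $\Gamma$ and a standard contradiction argument: a sequence of $\gamma_n\notin P_i$ with $\gamma_n\Hbo_{t_n}(\rho_i)\cap \Hbo_{t_n}(\rho_i)\ne\emptyset$ and $t_n\to-\infty$ would produce non-trivial elements of $\Gamma$ accumulating at $1$. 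Disjointness across cusps is then handled by Lemma~\ref{3}: if three open horoballs based at pairwise distinct points $\alpha_1,\alpha_2,\alpha_3\in\bbi X$ are mutually disjoint they must share the same projection to the model chamber $\Delta_{mod}$; pushing $t_i$ deeper if necessary and using that, in $\Q$-rank one, the $\Gamma$-orbits of the $\xi_i$'s are finitely many and the $\Gamma$-action on $\bbi X$ is by isometries of the Tits metric, one concludes that the family $\{\gamma \Hbo_{t_i}(\rho_i)\st\gamma\in\Gamma,\ i=1,\dots,k\}$ can be chosen pairwise disjoint or equal.

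Finally, $\Gamma$-cocompactness of $X_0$ comes from reduction theory: there exist a compact set $\omega\subset X$ and Siegel sets $\mathfrak{S}_i$ based at $\xi_i$ with $\Gamma\cdot(\omega\cup\bigcup_i\mathfrak{S}_i)=X$, and each $\mathfrak{S}_i$ is contained in some horoball $\Hb_{s_i}(\rho_i)$. The ``truncated'' piece $\mathfrak{S}_i\setminus \Hbo_{t_i}(\rho_i)$ is then relatively compact modulo $\Gamma\cap P_i$, because it has bounded Busemann coordinate between $t_i$ and $s_i$ and $N_i\cap\Gamma$ acts cocompactly in the transverse direction. Putting everything together, $X_0$ is covered by $\omega$ together with finitely many $(\Gamma\cap P_i)$-cocompact pieces, yielding a compact quotient $\Gamma\backslash X_0$. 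The principal obstacle is the tension between \emph{disjointness} of the horoballs (which wants $t_i$ as negative as possible so the horoballs are small) and \emph{cocompactness} of the complement (which wants the horoballs large enough so their complement captures the non-compact directions); balancing these two conditions simultaneously is precisely what the Siegel-set description in reduction theory, together with the $\Q$-rank one hypothesis ensuring that distinct cusps correspond to opposite parabolic fixed points, makes possible.
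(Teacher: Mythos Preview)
The paper does not give its own proof of this theorem: it is simply quoted from \cite{Raghunathan:discrete} and \cite{GarlandRaghunathan} as a known structural fact about $\Q$-rank one lattices, and the paper immediately moves on to use it. So there is no ``paper's proof'' to compare against; your write-up is an attempt to reconstruct the classical argument.

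Your overall strategy --- Borel--Harish-Chandra finiteness of $\Gamma$-classes of minimal $\Q$-parabolics, choice of rays $\rho_i$ towards the corresponding boundary fixed points, cocompactness of $\Gamma\cap N_i$ on horospheres, and a Siegel-set/precise-reduction argument to get $\Gamma\backslash X_0$ compact --- is the right one and is essentially what is done in the cited references.

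There is, however, a genuine gap in your ``disjointness across cusps'' step. You invoke Lemma~\ref{3}, but that lemma runs in the opposite direction: it says that \emph{if} three open horoballs at distinct boundary points are pairwise disjoint \emph{then} their basepoints have the same projection to $\Delta_{mod}$. It is a consequence of disjointness, not a tool for establishing it, and sharing a projection on $\Delta_{mod}$ in no way forces two horoballs to be disjoint. In the actual argument, disjointness of horoballs based at $\Gamma$-inequivalent cusps is obtained exactly as in the single-cusp case: one uses precise reduction theory (separation of Siegel sets) together with discreteness of $\Gamma$ to show that, after shrinking the $t_i$, any overlap $\gamma\,\Hbo_{t_i}(\rho_i)\cap \Hbo_{t_j}(\rho_j)\ne\emptyset$ forces $\gamma\xi_i=\xi_j$, hence $i=j$ and $\gamma\in P_i$. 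Replace your appeal to Lemma~\ref{3} by this argument and the sketch is sound.
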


Let $p$ be the projection of the boundary at infinity onto the \emph{model chamber} $\Delta_{mod}$.
Lemma \ref{3} implies that $p(\lbrace \gamma \rho_i(\infty )\mid \gamma \in \Gamma ,\;  i\in
\lbrace1,2,\dots k\rbrace \rbrace)$ is a singleton which we denote by $\theta $, and we call {\it the
associated slope of }$\Gamma\, $. We have the following property of the associated slope :

\begin{proposition}[\cite{Drutu:Nondistorsion}, Proposition 5.7]\label{dir}
If $\Gamma $ is an irreducible $\Q$-rank one lattice in a semi-simple group $G$ of $\RR$-rank at least
$2$, the associated slope, $\theta$, is never parallel to a factor of $X= G/K$.
\end{proposition}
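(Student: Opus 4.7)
The plan is to argue by contradiction. Suppose $X$ admits a non-trivial splitting $X = X_1 \times X_2$ (corresponding to $G = G_1 \cdot G_2$ modulo finite center, with $X_i = G_i/K_i$) such that $\theta$ lies in the sub-simplex $\Delta_{mod}(X_1)$ of the join decomposition $\Delta_{mod}(X) = \Delta_{mod}(X_1) * \Delta_{mod}(X_2)$. I want to combine the product structure that this forces on all the horoballs with the irreducibility of $\Gamma$ to conclude that no horoballs were ever removed in Theorem \ref{xo}, contradicting the fact that a $\Q$-rank one lattice is non-uniform.

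First I would show that every removed horoball has product form. Since the Weyl group $W = W_1 \times W_2$ preserves the join decomposition $\bbi X = \bbi X_1 * \bbi X_2$, the fiber of $p\colon \bbi X\to \Delta_{mod}$ over any point of $\Delta_{mod}(X_1)$ is entirely contained in $\bbi X_1$. Applied to $\theta = p(\gamma \rho_i(\infty))$ for each $\gamma \in \Gamma$ and each $i$, this forces every point $\gamma \rho_i(\infty)$ to lie in $\bbi X_1$. Hence each ray $\gamma \rho_i$ is parallel to $X_1$ (of the form $(\sigma(t), c)$ for some ray $\sigma$ in $X_1$ and constant $c \in X_2$), its Busemann function depends only on the $X_1$-coordinate, and so each horoball $\Hbo(\gamma \rho_i)$ has the form $H_\alpha \times X_2$ for an open horoball $H_\alpha \subset X_1$.

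Next I would bring irreducibility into play. Writing $\gamma = (\gamma_1,\gamma_2) \in G_1 \times G_2$, the action on a product horoball is $\gamma \cdot (H_\alpha \times X_2) = (\gamma_1 H_\alpha)\times X_2$, so the projection $\Lambda_1 := \pi_1(\Gamma)$ permutes the family $\{H_\alpha\}$ inside $X_1$. The standard fact that an irreducible lattice in a semi-simple Lie group with no compact factors projects densely onto each factor (see e.g.\ Raghunathan or Margulis) gives that $\Lambda_1$ is dense in $G_1$. By Theorem \ref{xo} the space $X_0 = X \setminus \bigsqcup_\alpha (H_\alpha \times X_2)$ is non-empty, so I may pick $(x,y)\in X_0$ with $x \in X_1 \setminus \bigcup_\alpha H_\alpha$; the orbit $\Lambda_1 \cdot x$ then stays in this closed complement. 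But $G_1$ acts transitively on $X_1 = G_1/K_1$ and $\Lambda_1$ is dense in $G_1$, so $\Lambda_1 \cdot x$ is dense in $X_1$, forcing $\bigcup_\alpha H_\alpha = \emptyset$. Thus $X_0 = X$ and $\Gamma$ acts cocompactly on $X$, contradicting the fact that $\Q$-rank one arithmetic lattices are non-uniform.

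The main obstacle is the first step: one must verify carefully that a point of $\Delta_{mod}(X)$ lying in the sub-simplex $\Delta_{mod}(X_1)$ admits only lifts in $\bbi X_1 \subset \bbi X$, which relies on the explicit join structure of the spherical model chamber and the fact that $W$ respects this join. Once the product form of the horoballs is secured, the remainder is a clean density-versus-open-set dichotomy.
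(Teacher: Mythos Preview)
The paper does not prove this proposition at all; it is quoted verbatim as Proposition~5.7 of \cite{Drutu:Nondistorsion} and used as a black box in Section~\ref{sec5}. So there is no ``paper's own proof'' to compare your attempt against.

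That said, your argument is correct and self-contained. The two steps are exactly the right ones: (i) if $\theta\in\Delta_{mod}(X_1)$ then every removed horoball is a product $H_\alpha\times X_2$ (this is just the statement that the fiber of $p$ over $\Delta_{mod}(X_1)$ sits in $\bbi X_1$, which is immediate from the join structure $\bbi X=\bbi X_1*\bbi X_2$ and $W=W_1\times W_2$); and (ii) the projection $\Lambda_1=\pi_1(\Gamma)$ is dense in $G_1$ by irreducibility, permutes the $H_\alpha$'s, and hence stabilizes the closed non-empty set $X_1\setminus\bigcup_\alpha H_\alpha$, forcing that set to be all of $X_1$. The only points worth tightening in a final write-up are: make explicit that $G_2$ is non-compact (so that the density-of-projection statement applies), and note that $X_0\ne\emptyset$ because the removed horoballs are open, pairwise disjoint, and proper subsets of the connected space $X$.
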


In particular, if $G$ decomposes into a product of rank one factors, $\theta $ is a point in
$\mathrm{Int}\; \Delta_{mod}$.

Since the action of $\Gamma $ on $X_0$ has compact quotient, $\Gamma $ with the word metric is
quasi-isometric to $X_0$ with the length metric (the metric defining the distance between two points as
the length of the shortest curve between the two points). Thus, the asymptotic cones of $\Gamma $ are
bi-Lipschitz equivalent to the asymptotic cones of $X_0$. Theorem \ref{lmr}  implies that one may
consider $X_0$ with the induced metric instead of the length metric. We study the asymptotic cones of
$X_0$ with the induced  metric.

\begin{theorem}[\cite{KleinerLeeb:buildings}]\label{cX}
Any asymptotic cone of a product $X$ of symmetric spaces and Euclidean buildings, $X$ of rank $r\geq
2$, is an  Euclidean building ${\mathbf K}$ of rank $r$ which is homogeneous and $\aleph_1$-thick. The
apartments of ${\mathbf K}$ appear as limits of sequences of maximal flats in $X$. The same is true for
Weyl chambers and walls, singular subspaces and Weyl polytopes of ${\mathbf K}$. Consequently, $\bbi
{\mathbf K}$ and $\bbi X$ have the same model spherical chamber and model Coxeter complex.
\end{theorem}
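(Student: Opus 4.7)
The plan is to follow the ultralimit strategy of Kleiner--Leeb. Write $\mathbf{K} = \Con^\omega(X,(o_n),(d_n))$ for an arbitrary asymptotic cone. First I would record the easy structural properties: since each factor of $X$ is CAT(0) and CAT(0) is preserved under products and ultralimits, $\mathbf{K}$ is a complete geodesic CAT(0) space. Homogeneity follows because the isometry group of $X$ acts transitively on symmetric-space factors and cocompactly on building factors; the rescaled diagonal action induces a transitive isometry action on $\mathbf{K}$ (bounded distortions wash out under the ultralimit).

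Next I would \emph{define} the candidate apartments, Weyl chambers, walls, singular subspaces, and Weyl polytopes of $\mathbf{K}$ to be the $\omega$-limits $\lm^\omega(Y_n)$ of sequences of the corresponding objects $Y_n \subset X$ (rescaled by $1/d_n$, with $\dist(o_n,Y_n)/d_n$ bounded $\omega$-a.s.). Since each maximal flat in $X$ is isometric to $\RR^r$, each such limit apartment is isometric to $\RR^r$; similarly for chambers, walls, and polytopes. The essential covering property (every pair of points lies in a common apartment) would follow from the classical fact that in $X$ any two points lie in a common maximal flat: given $A=(a_n)^\omega$ and $B=(b_n)^\omega$, choose maximal flats $F_n \ni a_n,b_n$ and take $\lm^\omega(F_n)$.

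The core work is verifying the Euclidean building axioms for the system of limit apartments. One must show: (i) any two limit apartments are glued along a convex subset by an isometry extending the identity on their intersection; (ii) the induced ``Weyl structure'' on each apartment is compatible across apartments so one obtains a well-defined model Coxeter complex. I would argue (i) by approximation: two limit apartments $\lm^\omega(F_n^1),\lm^\omega(F_n^2)$ are locally modelled on the corresponding pair $(F_n^1,F_n^2)$, and the finite-scale transition isometries between maximal flats in $X$ (which exist because $X$ itself is a Euclidean building/symmetric space at every scale) pass to the ultralimit. For (ii) one exploits the fact that sectors/walls in $X$ have a common asymptotic combinatorial type governed by $\partial_\infty X$ and its model chamber $\Delta_{mod}$; the same combinatorics is inherited by their $\omega$-limits, which identifies the model Coxeter complex of $\mathbf{K}$ with that of $X$. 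The $\aleph_1$-thickness is obtained by noting that each wall in $X$ bounds infinitely many half-apartments and, by diagonalizing over $\omega$-sequences of distinct choices, one manufactures continuum-many distinct half-apartments in $\mathbf{K}$ bounding any given limit wall; homogeneity then upgrades this to uniform $\aleph_1$-thickness.

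The main obstacle is axiom (i): one has to control how two sequences of flats $F_n^1, F_n^2$ intersect under rescaling, and rule out pathological collapsing or the appearance of ``phantom'' apartments in $\mathbf{K}$ that are not of the form $\lm^\omega(F_n)$. This requires a local rigidity statement, namely that the link of any point in $\mathbf{K}$ is itself a spherical building isomorphic (as a labeled simplicial complex) to the spherical building of $X$ at infinity; Kleiner--Leeb prove this via a detailed analysis of geodesic germs, using that subspaces of $X$ isometric to small Euclidean convex sets extend uniquely to flats, and transferring this to $\mathbf{K}$ via the ultralimit. Once this local identification is in place, the apartment system defined above is forced to satisfy the remaining building axioms, and the identification of $\partial_\infty \mathbf{K}$ with $\partial_\infty X$ at the level of model chambers is immediate.
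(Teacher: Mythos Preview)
The paper does not prove this theorem; it is quoted from \cite{KleinerLeeb:buildings} and used as a black box. So there is no ``paper's own proof'' to compare against. What you have written is a reasonable high-level outline of the Kleiner--Leeb argument itself: define the apartment system as $\omega$-limits of maximal flats, verify the covering axiom via the fact that any two points of $X$ lie in a common flat, and reduce the compatibility axioms to a local statement about links being spherical buildings of the correct type.

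That said, your sketch underplays where the real work lies. The step you label ``axiom (i)'' is not just an approximation argument: one must show that \emph{every} $r$-flat in $\mathbf{K}$ (not just the ones you defined as limits) is a limit of maximal flats of $X$, otherwise your apartment system might fail to be complete in the building-theoretic sense. Kleiner--Leeb handle this via a substantial structure theory for flats in CAT(0) spaces with spherical-building links, and the identification of links in $\mathbf{K}$ with the spherical building $\partial_\infty X$ is itself nontrivial (it uses the rigidity of spherical buildings and a careful analysis of how Weyl sectors behave under rescaling). Your thickness argument also needs adjustment for the symmetric-space factors: there a wall does not bound discretely many half-apartments, and the $\aleph_1$-thickness of $\mathbf{K}$ comes instead from the continuous family of maximal flats containing a given singular hyperplane, diagonalized over the ultrafilter. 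None of this is wrong in spirit, but each of these points is a genuine theorem in \cite{KleinerLeeb:buildings} rather than a routine verification.
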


 In the sequel, \red{we use the terminology and notation from \cite{KleinerLeeb:buildings}}. In any asymptotic cone ${\mathbf K}$ of a
product $X$ of symmetric spaces and Euclidean buildings we shall consider the labeling on \red{the boundary at infinity}
$\partial_\infty {\mathbf K}$ induced by a fixed labeling on $\partial_\infty X$. We denote the
projection of $\partial_\infty {\mathbf K}$ on $\Delta_{mod} $ induced by this labeling by $P$ and the
associated Coxeter complex by $\mathsf{S}$.

Concerning the asymptotic cone of a space $X_0$ obtained from a product of symmetric spaces and
Euclidean buildings by deleting disjoint open horoballs, we have the following result

\begin{theorem}[\cite{Drutu:Remplissage}, Propositions 3.10, 3.11]\label{cX0}
Let $X$ be a CAT(0) geodesic metric space and let $\calc=\Con^\omega(X, (x_n),(d_n))$ be an asymptotic
cone of $X$.

(1) If $(\rho_n)$ is a sequence of geodesic rays in $X$ with $\frac{d(x_n,\rho_n)}{d_n}$ bounded and
$\rho =\lbrack \rho_n\rbrack $ is its limit ray in ${\mathbf K}$, then $H(\rho )=\lbrack
H(\rho_n)\rbrack $ and $Hb(\rho )=\lbrack Hb(\rho_n)\rbrack$.

(2) If $X_0=X \setminus \bigsqcup_{\rho \in {\mathcal R}}\Hbo(\rho )$ and $\frac{d(x_n,X_0)}{d_n}$ is
bounded then the limit set of $X_0$ (which is the same thing as the asymptotic cone of $X_0$ with the
induced metric)
 is
$$
\calc_0=\calc \setminus \bigsqcup_{\rho_\omega \in {\mathcal R}_\omega}\Hbo(\rho_\omega )\;
,\leqno(2.1)
$$ where
${\mathcal R}_\omega $ is the set of rays $\rho_\omega =\lbrack \rho_n\rbrack ,\; \rho_n \in {\mathcal
R}$.
\end{theorem}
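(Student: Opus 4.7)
I would reduce both identities to a single analytic statement: the Busemann function $f_\rho$ of the limit ray $\rho$ in $\calc$ coincides with the rescaled $\omega$-limit of the Busemann functions of $\rho_n$. Explicitly, for $y=(y_n)^\omega\in\calc$ set
$$
\tilde f(y)\;:=\;\lim_\omega \frac{f_{\rho_n}(y_n)}{d_n}.
$$
Since each $f_{\rho_n}$ is $1$-Lipschitz and $\dist(y_n,x_n)=O(d_n)$, the sequence is bounded and the limit exists; $1$-Lipschitz dependence on $y_n$ also makes $\tilde f$ independent of the chosen representative. The work is to identify $\tilde f$ with $f_\rho$, which requires commuting the limit $t\to\infty$ defining $f_\rho$ with $\lim_\omega$ and with the rescaling. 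This I would do by a diagonal argument: for each $n$ choose $t_n\to\infty$ with $t_n/d_n\to t$ and compare $(d(y_n,\rho_n(t_n))-t_n)/d_n$ with $d_\calc(y,\rho(t))-t$. Once $\tilde f=f_\rho$ is established, the descriptions of the horosphere and horoball are almost immediate: if $y_n\in H(\rho_n)$ (resp.\ $Hb(\rho_n)$) then $f_{\rho_n}(y_n)=0$ (resp.\ $\le 0$), so $f_\rho(y)=0$ (resp.\ $\le 0$) in the limit; conversely, if $f_\rho(y)=0$ (resp.\ $\le 0$), project $y_n$ along the Busemann gradient to obtain a representative $y_n'$ actually lying in $H(\rho_n)$ (resp.\ $Hb(\rho_n)$), at distance $|f_{\rho_n}(y_n)|=o(d_n)$ from $y_n$.

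\textbf{Part (2), easy inclusion.} Granted (1), the inclusion $\calc_0\subseteq \calc\setminus\bigsqcup_{\rho_\omega\in\mathcal R_\omega}\Hbo(\rho_\omega)$ is immediate: any representative $y_n\in X_0$ satisfies $f_{\rho_n}(y_n)\ge 0$ for every $\rho_n\in\mathcal R$, so for each limit ray $\rho_\omega=[\rho_n]\in\mathcal R_\omega$ part (1) gives $f_{\rho_\omega}(y)\ge 0$, i.e.\ $y\notin \Hbo(\rho_\omega)$.

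\textbf{Part (2), reverse inclusion and the main obstacle.} Given $y=(y_n)^\omega\in\calc\setminus\bigsqcup \Hbo(\rho_\omega)$, I want to produce representatives $y_n'\in X_0$ of $y$. If $y_n\in X_0$ $\omega$-almost surely take $y_n'=y_n$; otherwise, by disjointness of the horoballs, $y_n$ lies in a unique open horoball $\Hbo(\rho_n)$, $\rho_n\in\mathcal R$, and I set $y_n'$ to be the Busemann-gradient projection of $y_n$ onto $H(\rho_n)\subset X_0$, so $\dist(y_n,y_n')=|f_{\rho_n}(y_n)|=:D_n$. I must show $D_n=o(d_n)$. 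If $\dist(x_n,\rho_n)/d_n$ is bounded, the sequence $\rho_n$ produces a limit ray $\rho_\omega\in\mathcal R_\omega$, and then by part (1) together with the hypothesis $y\notin \Hbo(\rho_\omega)$,
$$
\lim_\omega \frac{D_n}{d_n}=\lim_\omega \frac{-f_{\rho_n}(y_n)}{d_n}=-f_{\rho_\omega}(y)\le 0,
$$
forcing $D_n=o(d_n)$. The subtle case, and the main obstacle, is when $\dist(x_n,\rho_n)/d_n$ is unbounded, so $\rho_n$ has no limit ray in $\mathcal R_\omega$. The strategy is to reparameterize: let $p_n$ be the projection of $y_n$ on $H(\rho_n)$ and $\sigma_n$ the ray based at $p_n$ asymptotic to $\rho_n$, so $f_{\sigma_n}=f_{\rho_n}$ and $\Hbo(\sigma_n)=\Hbo(\rho_n)$; the basepoints $p_n$ are at bounded rescaled distance from $x_n$ (since $\dist(p_n,y_n)=D_n=O(d_n)$), so $\sigma_n$ has a limit ray $\sigma_\omega\in\calc$. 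If $D_n$ were not $o(d_n)$, part (1) applied to $\sigma_n$ would place $y$ inside $\Hbo(\sigma_\omega)$, contradicting the hypothesis on $y$ -- provided $\sigma_\omega\in\mathcal R_\omega$. This last point is the delicate issue and is where the specific structure of $\mathcal R$ is used: in the $\Q$-rank one setting the $\Gamma$-equivariance of $\mathcal R$ and the finite family from Theorem \ref{xo} ensure that $\sigma_n$ is, up to bounded error, a $\Gamma$-translate of a ray in $\mathcal R$ based near $x_n$, so $\sigma_\omega\in \mathcal R_\omega$ and the desired contradiction is obtained.
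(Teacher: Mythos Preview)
The paper does not supply its own proof of this theorem; it is quoted verbatim from \cite{Drutu:Remplissage}, Propositions 3.10 and 3.11, so there is no in-paper argument to compare against. Your outline is essentially the standard one: reduce Part (1) to the identity $f_\rho=\lim_\omega f_{\rho_n}/d_n$ (which in CAT(0) follows from monotonicity of $t\mapsto d(y,\rho(t))-t$ together with a diagonal choice of $t_n$), and in Part (2) push representatives onto $X_0$ by Busemann-gradient projection.

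One comment on the reverse inclusion in Part (2). You correctly isolate the only nontrivial point: when the rays $\rho_n\in\mathcal R$ with $y_n\in\Hbo(\rho_n)$ satisfy $\dist(x_n,\rho_n)/d_n\to\infty$, the sequence $(\rho_n)$ itself has no limit ray, yet the horoballs $\Hbo(\rho_n)$ may have a nonempty $\omega$-limit. Your fix --- replacing $\rho_n$ by the asymptotic ray $\sigma_n$ based at $p_n\in H(\rho_n)$ --- is exactly right, and the bound $D_n=O(d_n)$ you need follows from $D_n=\dist(y_n,X_0)\le\dist(y_n,x_n)+\dist(x_n,X_0)=O(d_n)$, so $\sigma_\omega=[\sigma_n]$ exists in $\calc$. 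Where you overreach is in invoking the $\Q$-rank one lattice structure to force $\sigma_\omega\in\mathcal R_\omega$: the theorem is stated for an arbitrary CAT(0) space with an arbitrary family $\mathcal R$ of disjoint horoballs, and no such equivariance is available or needed. The resolution is purely definitional. Since $p_n\in H(\rho_n)$ one has $f_{\sigma_n}=f_{\rho_n}$, hence $\Hbo(\sigma_n)=\Hbo(\rho_n)$; so the collection $\{\Hbo(\rho_\omega):\rho_\omega\in\mathcal R_\omega\}$ in (2.1) is exactly the collection of nonempty $\omega$-limits $[\Hbo(\rho_n)]$ with $\rho_n\in\mathcal R$, and $\Hbo(\sigma_\omega)=[\Hbo(\sigma_n)]=[\Hbo(\rho_n)]$ is automatically one of them. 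In other words, $\mathcal R_\omega$ indexes the limit \emph{horoballs}, not the limits of the particular parameterized rays; read that way, your argument closes without any appeal to arithmetic structure.
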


We note that if $X$ is a product of symmetric spaces and Euclidean buildings, the disjointness of
$\Hbo(\rho ),\; \rho \in {\mathcal R}$, implies by Lemma \ref{3} that $p(\lbrace \rho (\infty )\mid
\rho \in {\mathcal  R}\rbrace )$ reduces to one point, $\theta $, if card ${\mathcal R}\neq 2$. Then
$P(\lbrace \rho_\omega (\infty )\mid \rho_\omega \in {\mathcal R}_\omega \rbrace ) =\theta $.

We also need the following result.

\begin{lemma}\label{platr}
Let $X$ be a product of symmetric spaces and Euclidean buildings, $X$ of rank $r\geq 2$, and
${\mathbf{K}}= {\Con_\omega(X,x_n,d_n)}$ be an asymptotic cone of it. Let $F_\omega $ and $\rho_\omega $ be an
apartment and a geodesic ray in ${\mathbf{K}}$, $F_\omega$ asymptotic to $\rho_\omega $. Let
$\rho_\omega =[\rho_n ]$, where $\rho_n$ have the same slopes as $\rho_\omega $. Then
\begin{itemize}
\item[(a)] $F_\omega$ can be written as limit set $F_\omega =[F_n]$ with $F_n$ asymptotic to $\rho_n$ $\omega $-almost surely ;
\item[(b)] every geodesic segment $[x,y]$ in $F_\omega \setminus \Hbo\, (\rho_\omega)$ may be written as limit set of segments $[x_n,y_n]\subset F_n \setminus \Hbo\, (\rho_n)$.
\end{itemize}
\end{lemma}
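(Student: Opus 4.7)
\emph{Plan.} I would prove (a) first, and then deduce (b) by combining (a) with the affine behavior of Busemann functions on flats.

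For part (a), the starting point is Theorem~\ref{cX}: apartments in $\mathbf{K}$ arise as ultralimits of maximal flats in $X$, so I can write $F_\omega=[F_n^{(0)}]$ for some maximal flats $F_n^{(0)}\subset X$. Since $F_\omega$ is asymptotic to $\rho_\omega$, pick a ray $\sigma_\omega\subset F_\omega$ with $\sigma_\omega(\infty)=\rho_\omega(\infty)$ and write $\sigma_\omega=[\sigma_n]$ with $\sigma_n\subset F_n^{(0)}$. The label-transfer statement of Theorem~\ref{cX} guarantees that the slope of $\sigma_n(\infty)$ on $\Delta_{mod}$ equals the slope of $\sigma_\omega(\infty)=\rho_\omega(\infty)$, which by hypothesis agrees with the slope of $\rho_n(\infty)$. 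Next I would compare $\sigma_n(\infty)$ with $\rho_n(\infty)$ in $X$. The fact that $\sigma_\omega$ and $\rho_\omega$ are asymptotic in the CAT(0) cone yields $\dist_{\mathbf{K}}(\sigma_\omega(t),\rho_\omega(t))\le C$ for all $t\ge 0$, which, together with the linear growth $\dist_X(\sigma_n(td_n),\rho_n(td_n))\sim 2\sin(\theta_n/2)\cdot td_n$ (where $\theta_n$ is the Tits angle between the two endpoints at infinity), forces $\lim_\omega\theta_n=0$.

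The main obstacle is to promote this to the existence of a flat $F_n$ asymptotic to $\rho_n$ with $[F_n]=F_\omega$. I would handle this factor by factor in the decomposition $X=\prod_i X_i$. On Euclidean building factors, the Tits boundary is discrete, so vertices of the same type at vanishing Tits distance must coincide $\omega$-a.s.; hence $\sigma_n(\infty)=\rho_n(\infty)\in\partial_\infty F_n^{(0)}$ there, and no modification is needed. On symmetric space factors, the Tits boundary is continuous, but two directions of the same slope at small Tits distance differ by an isometry of $X$ fixing $\sigma_n(0)$ and rotating $\sigma_n(\infty)$ onto $\rho_n(\infty)$ through the small angle $\theta_n$. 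Applying such an isometry to $F_n^{(0)}$ produces a maximal flat $F_n$ asymptotic to $\rho_n$ whose Hausdorff distance from $F_n^{(0)}$ on any ball of $X$-radius $O(d_n)$ about $\sigma_n(0)$ is $O(d_n\sin\theta_n)=o(d_n)$; taking products across factors, $F_n$ satisfies $[F_n]=[F_n^{(0)}]=F_\omega$. The hard part is precisely this perturbation step: it relies on the local structure of maximal flats through a given point of a symmetric space (i.e.\ the transitivity of the isotropy group on directions of fixed slope).

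For part (b), once $F_n$ is asymptotic to $\rho_n$, the Busemann function $f_{\rho_n}$ restricts to an affine function on the Euclidean flat $F_n$ (its gradient is minus the unit vector in $F_n$ representing $\rho_n(\infty)$). Write $x=[x_n]$, $y=[y_n]$ with $x_n,y_n\in F_n$. By Theorem~\ref{cX0}(1) the hypothesis $[x,y]\subset F_\omega\setminus\Hbo(\rho_\omega)$ translates into $\lim_\omega f_{\rho_n}(x_n)/d_n\ge 0$ and $\lim_\omega f_{\rho_n}(y_n)/d_n\ge 0$. After an $o(d_n)$ perturbation of $x_n,y_n$ within $F_n$ along $-\rho_n(\infty)$, I may assume $f_{\rho_n}(x_n),f_{\rho_n}(y_n)\ge 0$. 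Affinity of $f_{\rho_n}$ on the segment $[x_n,y_n]\subset F_n$ then forces $f_{\rho_n}\ge 0$ along the entire segment, so $[x_n,y_n]\subset F_n\setminus\Hbo(\rho_n)$, while the $o(d_n)$ size of the perturbation preserves the ultralimit $[x_n,y_n]=[x,y]$ in $\mathbf{K}$.
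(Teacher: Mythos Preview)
The paper states this lemma without proof (and without an explicit citation), so there is no argument in the paper to compare against; it is presumably meant to follow from the structure theory in \cite{KleinerLeeb:buildings} and the author's earlier work \cite{Drutu:Remplissage,Drutu:Filling}. Your outline is a reasonable reconstruction, and part~(b) is correct as written: once $F_n$ is asymptotic to $\rho_n$, the Busemann function $f_{\rho_n}$ is affine on $F_n$, and the $o(d_n)$ push along $-\rho_n(\infty)$ does exactly what is needed.

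There is one genuine imprecision in part~(a), in the building case. The sentence ``On Euclidean building factors, the Tits boundary is discrete'' is false as stated: the Tits boundary of a thick Euclidean building is a connected spherical building, not a discrete set. What you actually need (and what makes your conclusion valid) is the following: for a \emph{fixed} slope $\theta\in\Delta_{mod}$, the set of points of $\partial_\infty X_i$ projecting to $\theta$ is discrete in the Tits metric, with a positive lower bound $c(\theta)$ on the distance between distinct such points. This holds because two such points lie in a common spherical apartment, where they occupy the position $\theta$ in two distinct simplices of the same type; the minimum distance is then a combinatorial constant of the Coxeter complex. Since the slope of $\rho_n$ (and hence of $\sigma_n$) is fixed, $\lim_\omega\theta_n=0$ indeed forces $\sigma_n(\infty)=\rho_n(\infty)$ $\omega$-a.s. on the building factors. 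You should also note that $\theta_n\to 0$ in the product forces the factor-wise Tits angles to go to $0$: this follows from the spherical-join distance formula for $\partial_\infty(X_1\times\cdots\times X_k)$, using that the ``join coordinates'' (the weights on the factors) are determined by the slope and hence agree for $\sigma_n(\infty)$ and $\rho_n(\infty)$.

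For the symmetric-space factors your perturbation idea is correct but deserves one more line: the $K_{p_n}$-orbit of a boundary point of slope $\theta$ is a compact homogeneous space on which the Tits metric and the metric induced from $K_{p_n}$ are bi-Lipschitz equivalent (both are $K_{p_n}$-invariant), so Tits-closeness yields $k_n\to e$ and hence displacement $o(d_n)$ on balls of radius $O(d_n)$ around $p_n$, which is exactly what ``$[F_n]=[F_n^{(0)}]$'' requires.
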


\subsection{Lattices in ${\mathbb Q}$-rank one Lie groups of real rank $\ge 2$}

\begin{theorem}\label{thq1}
Let $X$ be a product of symmetric spaces of non-compact type and Euclidean buildings of rank at least
two, and let $\calr$ be a collection of geodesic rays in $X$, with no ray contained in a rank one
factor, and such that if $r,r'$ are two distinct elements of it then the open horoballs
$\Hbo(r),\Hbo(r')$ are disjoint. Then the space $X'=X \setminus \bigsqcup_{r\in \calr } \Hbo(r)$ has
linear divergence $\Dv_{\red{\gamma }}(n,\delta)$ for every $\delta\in (0,1)$.
\end{theorem}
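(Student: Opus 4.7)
The plan is to prove the theorem by contradiction, translating a superlinear divergence estimate into the existence of a cut-ball in an asymptotic cone of $X'$, and then ruling that out using the building structure of the cone.

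Suppose $\Dv_{\gamma}(n,\delta)$ is superlinear for some admissible $\delta$. Using the reductions of Corollary \ref{cdiv}, one extracts sequences $a_n,b_n,c_n$ in $X'$ with $d_n:=\dist(a_n,b_n)\to \infty$, $\dist(c_n,\{a_n,b_n\})$ comparable to $d_n$, and $\dv_\gamma(a_n,b_n,c_n;\delta)/d_n\to \infty$. Form the asymptotic cone $\calc=\Con^\omega(X,(c_n),(d_n))$. By Theorem \ref{cX}, $\calc$ is a homogeneous $\aleph_1$-thick Euclidean building of rank $r\ge 2$; by Theorem \ref{cX0}, the asymptotic cone of $X'$ is $\calc_0=\calc\setminus \bigsqcup_{\rho_\omega\in \calr_\omega}\Hbo(\rho_\omega)$, where $\calr_\omega$ consists of limits of rays in $\calr$. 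By the disjointness hypothesis and Lemma \ref{3}, all rays in $\calr_\omega$ have a common regular slope $\theta \in \Delta_{mod}$ (this uses the assumption that no $r\in \calr$ lies in a rank one factor, which forces $\theta$ off the walls of $\Delta_{mod}$, analogously to Proposition \ref{dir}). Finally, by Lemma \ref{lm00}, some closed ball $\bar{\Ball}(C,\delta')$ separates $A=(a_n)^\omega$ from $B=(b_n)^\omega$ in $\calc_0$; to reach a contradiction it suffices to produce a rectifiable path from $A$ to $B$ in $\calc_0$ of length $O(\dist(A,B))$ avoiding $\bar{\Ball}(C,\delta')$.

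To construct such a path, I would pick an apartment $F_\omega$ of $\calc$ through $A$ and $B$; this exists because $\calc$ is a thick building of rank $\geq 2$ and any two points lie in a common apartment. Since $F_\omega\cong \RR^r$ with $r\ge 2$, there is a piecewise linear path in $F_\omega$ from $A$ to $B$ going around the projection of $C$ onto $F_\omega$, of length at most $(1+\pi)\dist(A,B)$, staying outside $\bar{\Ball}(C,\delta')$. The only obstruction is that this path may enter some removed horoballs $\Hbo(\rho_\omega)$. I would analyze $F_\omega\cap \Hbo(\rho_\omega)$ in two cases: if $\rho_\omega(\infty)\in \partial_\infty F_\omega$, then, by the structure of Busemann functions on flats and Lemma \ref{platr}, the intersection is a closed half-space of $F_\omega$ bounded by a hyperplane orthogonal to $\rho_\omega(\infty)$, so one can route the path in $F_\omega$ along the complementary half-space; if $\rho_\omega(\infty)\notin \partial_\infty F_\omega$, Lemma \ref{3} (combined with the regularity of $\theta$ and the disjointness of horoballs in $\calr_\omega$) severely restricts how many such horoballs can meet $F_\omega$ nontrivially, and moreover their intersections with $F_\omega$ are bounded convex sets.

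Exploiting $\aleph_1$-thickness, I would in fact choose $F_\omega$ through $A,B$ so that only horoballs based at $\partial_\infty F_\omega$ interfere with the chosen route, reducing everything to the half-space case. Because the horoballs are mutually disjoint and all share the same regular slope, the resulting family of forbidden half-spaces in $F_\omega$ is coherent (their bounding hyperplanes are translates of finitely many parallel classes determined by $\theta$), and both $A$ and $B$ lie in the common complementary region. A standard Euclidean detour in $F_\omega$ around $\bar{\Ball}(C,\delta')$ and along the boundaries of these half-spaces then yields a path in $F_\omega\cap \calc_0$ of length $\leq K(\delta)\dist(A,B)$, contradicting the separation produced above. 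The main obstacle is the simultaneous control of all horoballs interacting with the chosen apartment: selecting a generic apartment (via thickness) that reduces case (ii) to case (i), and then handling the coherent family of half-space intersections uniformly, is the delicate step; once this is accomplished, the linear bound on the divergence follows immediately from the contradiction.
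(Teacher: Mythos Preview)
Your approach has a genuine gap at the step you yourself flag as ``delicate.'' You want to choose an apartment $F_\omega$ through $A,B$ so that every horoball meeting your detour is based at a point of $\partial_\infty F_\omega$, and then argue that the remaining forbidden regions are disjoint half-spaces in finitely many parallel classes, through which a Euclidean detour is ``standard.'' Neither claim is justified. First, $\aleph_1$-thickness gives you many apartments, but there is no mechanism offered for simultaneously pushing the boundaries of \emph{all} (possibly infinitely many) non-asymptotic horoballs off your path; Lemma \ref{infr} only says that an unbounded intersection forces Tits distance $\le \pi/2$, not that $\rho_\omega(\infty)\in\partial_\infty F_\omega$, so your dichotomy does not reduce case (ii) to bounded obstacles. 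Second, even granting that only asymptotic horoballs interfere, you have infinitely many disjoint open half-spaces in $\RR^r$ and you must produce a path in their common complement that \emph{also} avoids $\bar\Ball(C,\delta')$; this is not a ``standard Euclidean detour'' and you give no argument that the complement is even path-connected away from the ball.

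The paper takes a completely different route and never attempts to navigate a single apartment in the cone. It works directly in $X'$: since $X$ has rank $\ge 2$, $X$ itself has linear divergence, so one first takes a short path $\pgot$ in $X$ from $a$ to $b$ avoiding $\Ball(e,R/2)$, and then repairs each maximal excursion of $\pgot$ into a deleted horoball by replacing it with a path on the bounding horosphere. The substantive work is Proposition \ref{qprop1} (proved via the cone-level Proposition \ref{qprop0} and the apartment-switching Lemmas \ref{lrk2}, \ref{lm000}), which gives \emph{each individual horosphere} linear divergence with uniform constants; together with the fact (\cite{Drutu:Nondistorsion}) that these horospheres are uniformly bi-Lipschitz embedded in $X$, this makes the surgery quantitative. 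The key point you are missing is that the problem is handled one horoball at a time on its boundary horosphere, not globally inside one apartment.
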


Theorems \ref{xo} and \ref{thq1} immediately imply

\begin{cor}\label{c1} Every lattice of a semi-simple Lie group of $\QQ$-rank 1 and $\RR$-rank $\ge 2$ has linear divergence and no cut-points in its asymptotic cones.
\end{cor}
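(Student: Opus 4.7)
The plan is to show that no asymptotic cone of $X'$ has a cut-point, and then convert this into a linear bound on the divergence by an explicit detour construction in flats. First, by Theorem \ref{cX0}(2), any asymptotic cone $\calc_0 = \Con^\omega(X', (x_n), (d_n))$ equals $\calc \setminus \bigsqcup_{\rho_\omega \in \calr_\omega} \Hbo(\rho_\omega)$, where $\calc = \Con^\omega(X, (x_n), (d_n))$ is an $\aleph_1$-thick Euclidean building of rank $\geq 2$ (Theorem \ref{cX}) and $\calr_\omega$ consists of limits of rays from $\calr$. Pairwise disjointness of the original horoballs together with Lemma \ref{3} forces every ray in $\calr$ (and hence every $\rho_\omega$) to share a common slope $\theta \in \Delta_{mod}$; the hypothesis that no ray of $\calr$ lies in a rank one factor of $X$ guarantees (as in Proposition \ref{dir}) that $\theta$ is not parallel to any rank one factor of $\calc$.

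Next, I will verify that $\calc_0$ has no cut-points. Given $A, B, C \in \calc_0$ with $C \notin \{A, B\}$, I pick an apartment $F$ of $\calc$ containing $A$ and $B$. In $F$, a Euclidean space of dimension $r \geq 2$, each intersection $F \cap \Hbo(\rho_\omega)$ is either empty or, when $\rho_\omega(\infty) \in \partial F$, an open half-space whose bounding affine hyperplane has normal of type $\theta$; by Lemma \ref{platr} the disjointness of these half-spaces persists in $F$. Because $\theta$ is not parallel to a rank one factor, inside $F$ the direction $\theta$ admits a non-trivial orthogonal complement, so if $C \in F$ one can detour around $C$ inside an affine $2$-plane of $F$ transverse to $\theta$ via an arc of length $O(\dist(C, \{A, B\}))$ that avoids every removed half-space; if $C \notin F$ a path through $F \cap \calc_0$ directly suffices.

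I will then upgrade this to a linear bound on $\Dv_\gamma(n, \delta)$. By Corollary \ref{cdiv}(3) it suffices to estimate $\dv_\gamma'(n; 2, \delta)$, so I may take $c$ to be the midpoint of a geodesic $[a,b] \subset X$ with $\dist(a, b) \leq n$, whence $r = n/2$. A maximal flat $F^X$ of $X$ through $[a,b]$ exists by rank $\geq 2$; inside it I construct a bypass path around $\Ball(c, \delta r)$ of length $(1 + O(\delta)) n$, and wherever this bypass enters an open horoball $\Hbo(\rho)$ I replace the offending arc by a segment along the horosphere $H(\rho) \cap F^X$, which (because $\theta$ is transverse to the factors) is a codimension-one affine subspace of $F^X$. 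Summing the replacement costs, the total length remains $O(n)$.

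The main obstacle lies in the quantitative step just described: because infinitely many rays with the same slope $\theta$ may generate infinitely many parallel half-spaces in $F^X$, one must rule out accumulation phenomena that could force super-linear detours. Here the essential use of $\theta$ not being parallel to any rank one factor is to guarantee a direction inside each flat along which the bounding hyperplanes are uniformly transverse, so that each detour along a horosphere adds only a controlled amount to the length. Matching the cone-level picture with an explicit $X$-level path via Lemma \ref{platr} is the technical heart of the argument.
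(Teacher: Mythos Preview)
The corollary itself is a one-line deduction in the paper: by Theorem~\ref{xo} the lattice acts cocompactly on a space $X_0=X\setminus\bigsqcup\Hbo(\gamma\rho_i)$ of the form treated in Theorem~\ref{thq1}, and Proposition~\ref{dir} supplies the hypothesis that no ray lies in a rank one factor. What you are really attempting is an alternative proof of Theorem~\ref{thq1}, and that is where the difficulties lie.

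Your Part~1 contains a genuine gap. The dichotomy ``$F\cap\Hbo(\rho_\omega)$ is either empty or an open half-space'' is false: by Lemma~\ref{hrf} the intersection of an apartment with a closed horoball is a convex polytope, and it is a half-space only when the apartment is asymptotic to the ray; otherwise one obtains bounded polytopes or unbounded polytopes that are not half-spaces (compare Lemma~\ref{infr}). More seriously, even granting some control on the shapes removed from $F$, there is no reason $F\cap\calc_0$ should be path-connected, let alone admit a detour around $C$: removing infinitely many disjoint convex sets from $\RR^r$ can disconnect it. This is precisely why the paper does \emph{not} stay in one apartment. Proposition~\ref{qprop0} treats a single horosphere and uses thickness (Lemma~\ref{lrk2}) to change apartments; attempting to handle all of $\calc_0$ simultaneously inside one flat bypasses the tool that makes the argument work.

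Your Part~2 correctly identifies the accumulation obstacle but does not resolve it. Two remarks. First, if Part~1 had succeeded, Part~2 would be unnecessary: Lemma~\ref{lm02}(ii) converts ``every asymptotic cone is without cut-points'' directly into a linear bound on $\Dv_\gamma$, since $X_0$ is periodic and contains bi-infinite quasi-geodesics. Second, the paper's route around the accumulation problem is different from yours: it first proves that each \emph{horosphere} has linear divergence (Proposition~\ref{qprop1}, via the cone-level Proposition~\ref{qprop0}), and then, in the proof of Theorem~\ref{thq1}, takes a linear-length detour in the ambient $X$ (which itself has linear divergence) and replaces each horoball crossing by a path on the corresponding horosphere. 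Along a path of length $O(R)$ there are only boundedly many ``large'' horoball crossings (those with endpoints $\ge R/4M$ apart), while the ``small'' ones are handled with a uniform multiplicative constant; this is what dissolves the accumulation issue you flagged.
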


In what follows $X$ and $\calr$ will always be as in Theorem \ref{thq1}.

\begin{lemma}[Proposition 3.A.1, \cite{Drutu:Filling}]\label{hrf}
Let $Y$ be an Euclidean building of rank at least two, let $r$ be a geodesic ray in it and let $F$ be
an apartment intersecting the horoball $\Hb(r)$. The intersection of  $F$ with $\Hb (r)$ is a convex
polytope whose interior is $F\cap \Hbo(r)$. In particular, if the interior of the polytope is empty
then it is in the horosphere $H(r)$.
\end{lemma}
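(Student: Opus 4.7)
The plan is to prove this by showing that the restriction $f_r|_F$ of the Busemann function to the apartment $F$ is a convex, piecewise affine function on the Euclidean space $F$, with finitely many pieces, each a convex polyhedron. Once this is established, $F\cap\Hb(r)=\{f_r|_F\le 0\}$ is cut out from $F$ by finitely many affine inequalities, hence is a convex polytope; and the equality $\mathrm{int}(F\cap\Hb(r))=F\cap\Hbo(r)$ in the topology of $F$ follows from continuity of $f_r$ together with the piecewise affine structure of $f_r|_F$.

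Set $\xi=r(\infty)\in\partial_\infty Y$. The easy case is $\xi\in\partial_\infty F$. Here there is a ray $r'\subset F$ with $r'(\infty)=\xi$; since $r'$ is asymptotic to $r$ the Busemann functions $f_r$ and $f_{r'}$ differ by a constant, and because $F$ is flat and $r'\subset F$, $f_{r'}|_F$ is affine linear with gradient the unit vector opposite to the direction of $r'$. Thus $f_r|_F$ is affine and $F\cap\Hb(r)$ is a closed affine half-space of $F$, a convex polytope whose interior in $F$ is $F\cap\Hbo(r)$.

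The substantive case is $\xi\notin\partial_\infty F$. Let $\alpha=d_T(\xi,\partial_\infty F)$ in the Tits metric on $\partial_\infty Y$, and let $\mathcal{E}\subset\partial_\infty F$ be the union of faces of the Coxeter complex $\partial_\infty F$ that realize this minimum. For each face $\sigma\subset\mathcal{E}$ and each ray $r_\sigma\subset F$ asymptotic to an interior direction of $\sigma$, Lemma \ref{lflats} supplies a Euclidean sector of opening angle $\alpha$ sitting inside a single apartment of $Y$, with one boundary ray asymptotic to $r_\sigma$ and the other to $\xi$; inside this sector $f_r$ is affine linear with gradient of magnitude $\cos\alpha$ in the direction opposite to that of $r_\sigma$. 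The goal is then to propagate this local information to all of $F$, partitioning $F$ into finitely many convex polyhedral regions indexed by the faces of $\mathcal{E}$, on each of which $f_r|_F$ coincides with a specific affine linear function.

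The main obstacle is rigorously carrying out this propagation. One must use both the rank at least two Euclidean building axioms---to control how the geodesic from a point $p\in F$ to $\xi$ exits $F$ and to compare the various sectors supplied by Lemma \ref{lflats}---and the Coxeter combinatorics of $\partial_\infty F$---to ensure that the indexing set of faces of $\mathcal{E}$ is finite and that the resulting cells in $F$ are convex polyhedra. Once this is done, $F\cap\Hb(r)$ is an intersection of finitely many affine half-spaces in $F$, hence a convex polytope, and the statement about interiors follows as in the first paragraph.
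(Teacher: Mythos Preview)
The paper does not prove this lemma; it is quoted verbatim from \cite[Proposition~3.A.1]{Drutu:Filling} with no argument given. So there is no proof in the present paper to compare against.

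Your outline has the right shape: convexity of $f_r$ on the CAT(0) space $Y$ gives convexity of $f_r|_F$, and if you can show $f_r|_F$ is piecewise affine with \emph{finitely many} affine pieces, then $\{f_r|_F\le 0\}$ is a finite intersection of half-spaces, hence a convex polytope, and the interior statement follows (with the small caveat that you should check $f_r|_F$ is not identically zero on a full-dimensional region; this is ruled out because $\partial_\infty F$ has Tits diameter $\pi$, so some direction in $F$ makes Tits angle $<\pi/2$ with $\xi$, forcing $f_r|_F$ to be unbounded below along that direction). The case $\xi\in\partial_\infty F$ is handled correctly.

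However, the substantive case is not actually proved: you explicitly flag ``the main obstacle is rigorously carrying out this propagation'' and then stop. Moreover, your proposed bookkeeping---indexing the affine regions by faces of $\mathcal{E}$, the set of faces of $\partial_\infty F$ at minimal Tits distance from $\xi$---is not obviously the right one. What governs the affine pieces of $f_r|_F$ is how the geodesic ray from $p\in F$ toward $\xi$ leaves $F$, and this is controlled by a finite collection of walls of $F$, not directly by the nearest-face set in $\partial_\infty F$. A cleaner route, closer to what is done in \cite{Drutu:Filling}, is to produce finitely many apartments $F_1,\dots,F_m$ with $\xi\in\partial_\infty F_j$ such that the $F\cap F_j$ cover $F\cap\Hb(r)$; on each $F_j$ the function $f_r$ is affine (your easy case), and each $F\cap F_j$ is a Weyl polytope, so $f_r|_F$ is the maximum of finitely many affine functions on the relevant region. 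Until you fill in one of these arguments, the proof is a sketch rather than a proof.
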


\begin{lemma}\label{infr}
Let $Y$, $r$ and $F$ be as in Lemma \ref{hrf}. If $F\cap \Hb(r)$ has infinite diameter then the Tits
distance between $\bbi F$ and $r(\infty )$ is at most $\pi/2$.
\end{lemma}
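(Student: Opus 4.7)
My plan is to combine Lemma \ref{hrf} with the standard CAT(0) behaviour of Busemann functions. First, by Lemma \ref{hrf}, the set $F \cap \Hb(r)$ is a closed convex polytope in the Euclidean apartment $F$ whose interior is $F \cap \Hbo(r)$. Since by hypothesis this polytope has infinite diameter, it is unbounded, and an elementary fact of Euclidean convex geometry says that any unbounded closed convex subset of a Euclidean space contains a geodesic ray. So I choose a geodesic ray $\sigma\colon [0,\infty) \to F$ whose image lies entirely in $F \cap \Hb(r)$, and I set $\xi := \sigma(\infty) \in \bbi F$.

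Next, I study the restriction of the Busemann function to $\sigma$. Let $\phi(s) := f_r(\sigma(s))$. Since $\sigma(s) \in \Hb(r)$ for every $s \ge 0$, we have $\phi(s) \le 0$ throughout. The Busemann function $f_r$ is convex on $Y$ (a standard CAT(0) property) and is $1$-Lipschitz, so $\phi$ is a convex $1$-Lipschitz function on $[0,\infty)$ which is bounded above by $0$. This forces the asymptotic slope $L := \lim_{s\to\infty} \phi(s)/s$ to exist and to lie in $[-1,0]$.

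The heart of the argument is then to identify $L$ with a Tits angle via the standard CAT(0) formula
\[
\lim_{s\to\infty} \frac{f_r(\sigma(s))}{s} \;=\; -\cos d_{\mathrm{Tits}}(\xi,\, r(\infty)),
\]
which I would obtain by replacing $r$ with the asymptotic ray $r'$ based at $\sigma(0)$ (so that $f_r$ and $f_{r'}$ differ by a bounded amount, hence yield the same limit) and then applying CAT(0) comparison with Euclidean triangles, in which a direct computation gives exactly $-\cos\alpha$ for $\alpha$ the Euclidean angle between the two rays at the common basepoint; the Alexandrov angle at $\sigma(0)$ is in turn bounded by $d_{\mathrm{Tits}}(\xi, r(\infty))$ and matches it in the limit. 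Combined with $L \le 0$, this yields $\cos d_{\mathrm{Tits}}(\xi, r(\infty)) \ge 0$, i.e.\ $d_{\mathrm{Tits}}(\xi, r(\infty)) \le \pi/2$, and since $\xi \in \bbi F$ we conclude $d_{\mathrm{Tits}}(\bbi F, r(\infty)) \le \pi/2$ as required.

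The only genuinely non-elementary ingredient is the asymptotic formula in the third step; the rest is convex-geometric bookkeeping on top of Lemma \ref{hrf}. The main thing to be careful about is that a ray $\sigma$ in $F \cap \Hb(r)$ really exists (for which one needs both the convexity and the unboundedness coming from Lemma \ref{hrf}) and that the CAT(0) asymptotic formula is applied to $\sigma$ regarded as a ray in the whole building $Y$, not merely in the flat $F$, so that the Tits angle computed is the one in $\bbi Y$.
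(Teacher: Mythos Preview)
Your proof is correct. Both you and the paper begin identically: use Lemma \ref{hrf} to see that $F\cap \Hb(r)$ is an unbounded convex polytope in the flat $F$, and extract from it a geodesic ray $\sigma$ (the paper calls it $\rho$). The divergence is in how the angle bound is obtained.

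The paper invokes Lemma \ref{lflats} (Kleiner--Leeb, Proposition 4.2.1), which is specific to Euclidean buildings: it produces rays $\rho'\sim\rho$ and $r'\sim r$ that actually span a flat Euclidean sector of opening equal to the Tits angle. The argument then becomes a concrete two-dimensional picture: $\rho'$, being at finite Hausdorff distance from $\rho$, lies in a shifted horoball $\Hb_b(r')$, and in a Euclidean sector a ray that stays in a horoball of the other ray must make angle $\le\pi/2$ with it.

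You instead appeal to the general CAT(0) asymptotic slope formula $\lim_{s\to\infty} f_r(\sigma(s))/s = -\cos d_{\mathrm{Tits}}(\sigma(\infty),r(\infty))$, combined with the observation that this slope is $\le 0$ because $\sigma$ stays in $\Hb(r)$. This avoids the building-specific flat-sector lemma entirely and works verbatim in any complete CAT(0) space; in exchange one has to justify the slope formula (which, as you note, follows from convexity of $\phi(s)=f_r(\sigma(s))$, the first variation formula giving $\phi'(s^+)=-\cos\angle_{\sigma(s)}(\sigma(\infty),r(\infty))$, and the monotone convergence of these comparison angles to the Tits angle). Your route is slightly more abstract but more portable; the paper's is more hands-on and leans on the ambient building structure that is already in play throughout the section.
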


\proof Let $o$ be a fixed point in $F\cap \Hb(r)$. Since $F\cap \Hb(r)$ has infinite diameter and it is
a polytope, it contains a geodesic ray.

According to Lemma \ref{lflats} the rays $\rho$ and $r$ are asymptotic to rays $\rho'$ and $r'$
bounding an Euclidean sector with angular value  the Tits distance between $\rho (\infty )$ and $r
(\infty )$.

Since $\rho $ and $\rho '$ at finite Hausdorff distance it follows that $\rho '$ is contained in $\Hb_a
(r)$ for some $a>0$. The ray $r'$ is asymptotic to $r$, hence $\Hb_a(r)$ coincides with some $\Hb_b
(r')$, hence $\rho'$ is contained in $\Hb_b (r')$. This cannot happen if the angle between $\rho'$ and
$r'$ is larger than $\pi/2$.

It follows that the Tits distance between $\rho (\infty )$ and $r (\infty )$ is at most $\pi /2$, hence
the same holds for the Tits distance between $\bbi F$ and $r(\infty )$.\endproof

\begin{lemma}\label{lrk2}
Let $Y$ be a $4$-thick Euclidean building, of rank $2$, let $r$ be a geodesic ray in $Y$, not contained
in a factor of $Y$, let $F$ be an apartment intersecting $\Hbo (r)$ and let $a,b$ be two distinct
points in $F\cap H(r)$. Then there exists an apartment $F'$ in $Y$ containing $a,b$ and a point from
$\Hbo(r)$, and such that the Tits distance from $\bbi F$ to $r(\infty)$ is larger than $\pi /2$.
\end{lemma}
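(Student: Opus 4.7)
The plan is to analyze the convex polytope $P := F \cap \Hb(r)$ (convex polytope by Lemma~\ref{hrf}, with nonempty interior since $F \cap \Hbo(r) \neq \emptyset$). Since $a, b$ lie on $\partial P \cap H(r)$, the segment $[a,b]$ is contained in $P$ by convexity of the horoball. First I would observe that if $\bbi F$ already lies entirely at Tits distance $> \pi/2$ from $r(\infty)$, then $F' := F$ is the desired apartment (under the interpretation that the stated conclusion refers to $\bbi F'$). Otherwise there exists some $\eta \in \bbi F$ with Tits distance at most $\pi/2$ from $r(\infty)$, and by Lemma~\ref{infr} this reflects unbounded behavior of $P$ at infinity; in this case $F$ itself fails the conclusion and a genuine pivoting argument is needed.

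The next step is to pivot $F$ around the bi-infinite line $\ell \subset F$ determined by $a$ and $b$, with endpoints $\xi_+, \xi_- \in \bbi F$. Any apartment through both $a$ and $b$ must contain $\ell$ and hence contain $\xi_\pm$ at infinity. I will work in the rank-$2$ spherical building $\bbi Y$, which inherits thickness from the $4$-thickness of $Y$, and apply Lemmas~\ref{entrv} and~\ref{entrpi} on entrance vertices to select an apartment of $\bbi Y$ (a Tits circle through $\xi_\pm$) whose vertices all lie at Tits distance $> \pi/2$ from $r(\infty)$. Two hypotheses are essential here: the assumption that $r$ is not contained in a factor, so that $r(\infty)$ is a chamber-interior point and in particular not a vertex of $\bbi Y$ (so that Lemma~\ref{entrpi} applies cleanly and bounds the number of nearby entrance vertices); and $4$-thickness, which supplies at least four chambers on every panel through $\xi_\pm$, giving enough alternative apartments to avoid the ``dangerous'' direction $\eta$.

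Once $\bbi F'$ has been chosen, I would lift it to an apartment $F' \subset Y$ through the geodesic line $\ell$ via the standard Euclidean building axiom that any spherical apartment at infinity containing the two endpoints of a bi-infinite geodesic is realized by an ambient apartment containing that geodesic. To guarantee $F' \cap \Hbo(r) \neq \emptyset$, I would arrange the pivot so that $F'$ shares with $F$ a closed half-plane bounded by $\ell$ that meets the interior of $P$; such a half-plane exists because $P$ has nonempty interior with $[a,b] \subset \partial P$ lying on $\ell$, so on at least one side of $\ell$ the interior of $P$ accumulates to $[a,b]$.

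The main obstacle is the spherical-building combinatorial step: exhibiting an apartment through $\xi_\pm$ in $\bbi Y$ whose \emph{entire} Tits circle, not just its vertex set, stays at Tits distance $> \pi/2$ from $r(\infty)$. By the piecewise-geodesic structure of an apartment in a rank-$2$ spherical building the condition reduces to a vertex check, and invoking Lemma~\ref{entrpi} then bounds the count of ``bad'' vertices near $r(\infty)$ to at most two, which $4$-thickness easily defeats when combined with the freedom to rotate the apartment within the pencil of apartments through $\xi_\pm$. Coordinating this global combinatorial choice with the local requirement that $F'$ continues to dip into $\Hbo(r)$ is the delicate part of the argument.
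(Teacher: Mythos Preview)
Your strategy has a genuine gap in the ``coordination'' step you yourself flag as delicate. You want to pivot around the line $\ell$ through $a,b$ while \emph{keeping fixed} the half-plane $H_+$ of $F$ bounded by $\ell$ that meets $\Hbo(r)$. But then the semicircle $\bbi H_+\subset \bbi F'$ is forced, and nothing prevents the unique (or one of the two) entrance vertex for $r(\infty)$ at Tits distance $\le\pi/2$ from lying on this fixed semicircle. In that case no amount of $4$-thickness on the \emph{other} side of $\ell$ will push $\bbi F'$ to Tits distance $>\pi/2$ from $r(\infty)$. Concretely: $a,b\in H(r)$ carries no information about which side of $\ell$ the ``bad'' vertex sits on, and the interior of the polytope $P$ can accumulate to $[a,b]$ on the same side as the bad direction. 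A secondary error: the hypothesis ``$r$ not contained in a factor'' forces $r(\infty)$ to be a non-vertex only in the reducible case $Y=T_1\times T_2$; in the irreducible case the hypothesis is vacuous and $r(\infty)$ may well be a vertex of $\bbi Y$, so your appeal to it there is unjustified.

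The paper's argument avoids this by \emph{not} fixing the line through $a,b$. In the irreducible case it carries along a witness point $e\in F\cap\Hbo(r)$ and, at each stage, chooses a \emph{wall} $H$ (a singular line, so half-apartment surgery is available) whose boundary sphere lies in $A\setminus\{u\}$, where $u$ is the current nearest entrance vertex, and such that $H$ does \emph{not} separate $\{a,b,e\}$; it then replaces the half-apartment on the $u$-side using thickness. Lemma~\ref{entrpi} guarantees the new apartment has Tits distance at least $\delta+\theta$ from $r(\infty)$. One iterates until distance $\pi/2$ is reached, where a slightly more elaborate argument (and this is where $4$-thickness rather than $3$-thickness is actually needed) handles the possibility of two opposite entrance vertices at distance exactly $\pi/2$. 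The reducible case is handled separately by a direct product-of-trees construction. The key difference is that the wall is chosen adaptively to put the bad vertex on the discardable side, rather than being dictated by $a,b$.
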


\proof  By \cite[Proposition 4.2.1]{KleinerLeeb:buildings} the boundary at infinity $\bbi Y$ endowed
with the Tits metric is a spherical building of rank $2$. All its chambers are isometric, and isometric
to an arc of circle of angle $\theta =\pi /m$, with $m\in \N , m\geq 2$. The building $Y$ is reducible
if and only if $m=2$ \cite[Proposition 3.3.1]{KleinerLeeb:buildings}.

Assume that $Y$ is reducible. Then $Y$ is a product of trees $T_1\times T_2$, and $r(t)=(r_1(\sigma
t),r_2(\tau t))$, where $r_i$ is a ray in $T_i$, and $\sigma^2+\tau^2=1$, $\sigma >0$ and $\tau
>0$. Let $a=(a_1,a_2)$, let $b=(b_1,b_2)$. There exists a
geodesic line $L_i$ in $T_i$ containing $[a_i,b_i]$ and not asymptotic to $r_i$. Then the apartment
$L_1 \times L_2$ satisfies the hypothesis. Indeed, the Tits distance from $\bbi (L_1 \times L_2)$ to
$r(\infty)$ is larger than $\pi /2$, because all chambers in $\bbi (L_1 \times L_2)$ are opposite to
the chamber containing $r(\infty )$.

In order to prove that $L_1\times L_2$ contains a point in $\Hbo (r)$,  note first that the value of
the Busemann function $f_r (x_1,x_2)$ is equal to $\sigma f_{r_1}(x_1) +\tau f_{r_2}(x_2)$. Hence
$\sigma f_{r_1}(a_1) +\tau f_{r_2}(a_2) = \sigma f_{r_1} (b_1) +\tau f_{r_2} (b_2) =0$.

If $f_{r_1}(a_1) <f_{r_1}(b_1)$ then $f_{r_2}(a_2) > f_{r_2}(b_2)$ and the point $(a_1,b_2)$ in $L_1
\times L_2$ is in $\Hbo (r)$. If $f_{r_1}(a_1) = f_{r_1}(b_1)$ then $f_{r_2}(a_2) =f_{r_2}(b_2)$ and
either $a_1\neq b_1$ or $a_2\neq b_2$. Assume that $a_1\neq b_1$. The geodesic $[a_1,b_1]$ contains a
point $e_1$ such that $f_{r_1}(e_1) < f_{r_1}(a_1)$. Then the point $(e_1,a_2)$ in $L_1 \times L_2$ is
in $\Hbo (r)$.

Assume that $Y$ is irreducible. Let $F$ be an apartment in $Y$ containing $e\in \Hbo(r)$, and $a\ne
b\in H(r)$. Assume that the Tits distance $\delta$ from $r(\infty )$ to $A=\bbi F$ is smaller than
$\pi/2$. Then we construct an apartment $F'$ containing $a,b,e$ and a point from $\Hbo(r)$ such that
$\bbi F'$ is at Tits distance $\delta +\theta $ from $r(\infty )$.

Indeed, Lemma \ref{entrpi} implies that in this case there exists only one entrance vertex $u$ for
$r(\infty )$ in $A$ at distance $\delta$. All the other vertices in $A$ are at distance at least
$\delta +\theta $ from $r(\infty )$. Let $v,w$ be two opposite vertices in $A \setminus \{ u\}$. The
$0$-sphere $\{v,w\}$ is the boundary at infinity of a singular line $H$, and we may assume that this
line does not separate $\{ a,b,e\}$, but separates the set $\{a,b, e \}$ from a geodesic ray with point
at infinity $u$.

By the hypothesis of thickness there exists a half-apartment $\mathcald$ in $Y$ of boundary $H$ and
with interior disjoint from $F$. Let $\mathcald_i$, $i=1,2$, be the two half-apartments in $F$
determined by $H$ such that $D_1$ contains $u$ (hence $D_2$ contains $\{a,b,e\}$). Note that $D\cup
D_i,i=1,2,$ is an apartment. Lemma \ref{entrpi} applied to the spherical apartment $\bbi (D_1\cup D)$
implies that all the vertices in $\bbi D$ are at Tits distance at least $\delta +\theta$ from
$r(\infty)$. It follows that all the vertices in $\bbi (D_2\cup D)$ are at Tits distance at least
$\delta +\theta$ from $r(\infty)$. Take $F'=D_2\cup D$.

Now we can assume the Tits distance from $r(\infty )$ to $A=\bbi F$ is $\pi/2$. Then we construct an
apartment $F'$ containing $a,b,e$ such that $\bbi F'$ is at Tits distance $\pi/2 +\theta $ from
$r(\infty )$.

Lemma \ref{entrpi} implies that $\bbi F$ contains at most two entrance vertices for $r(\infty )$ in
$A$, and that in case there are two, they must be opposite. Let $H$ be the singular hyperplane in $F$
containing either one or both these entrance vertices in its boundary.

We may moreover assume that $H$ does not separate $a,b,e$. Let $H'$ be a singular hyperplane composing
with $H$ two opposite Weyl chambers, and which also does not separate $a,b,e$. By the irreducibility
assumption on $Y$, $H'$ is not orthogonal to $H$.

The line $H'$ splits $F$ into two half-apartments $D_1,D_2$, with $D_1$ containing $a,b,e$. By
$3$-thickness there exist $D_3$ a half-apartment of the boundary of $H'$ such that $D_1,D_2,D_3$ have
pairwise disjoint interiors.

Assume that $\bbi F$  contains two opposite entrance vertices $x,y\in A$ for  $r(\infty )$ in $A$ with
$x\in \bbi D_1$. If the apartment $\bbi D_1\cup D_3$ contains two entrance vertices at distance $\pi
/2$ then the second entry vertex must be in $D_3$, and opposite to $x$. The same point must be also
opposite to $y$, but since $x,y$ are not symmetric with respect to $\bbi H'$, this gives a
contradiction. Thus $\bbi D_3$ is at Tits distance $\pi/2 +\theta $ from $r(\infty )$, and the
apartment $F'=D_1\cup D_3$ is such that $\bbi F'$
 contains only one entrance vertex $x$ for $r(\infty )$ at distance $\pi /2$.

Thus we reduced to the case when $\bbi F$ contains only one entrance vertex $x$ for $r(\infty )$ at
distance $\pi /2$. The line $H'$ may be chosen such that it splits $F$ into two halves $D_1,D_2$, with
$D_1$ containing $a,b,e$ and $\bbi D_2$ containing $x$. By $4$-thickness it also bounds two half
apartments $D_3,D_4$ s.t. $D_i, i=1,...,4$ have disjoint interiors.

If $\bbi D_3$ is at Tits distance $\pi/2 +\theta$ from $r(\infty)$ then $F'=D_1\cup D_3$ is the
required apartment.

Assume that $\bbi D_3$ is at Tits distance $\pi/2$ from $r(\infty)$. Then the spherical apartment $\bbi
(D_2\cup D_3)$ has two opposite entrance points for $r(\infty )$ at distance $\pi /2$ from $r(\infty)$.
An argument as above implies that $\bbi D_4$ is at distance $\pi/2 +\theta$ from $r(\infty)$. We can
take $F'=D_1\cup D_4$.
\endproof

\begin{lemma}\label{lm000}
Let $[x,y]$ be a segment containing a point $o$ in its interior, and let $r$ be a ray in an Euclidean
building with origin $o$. Then there exists $x'\in [x,o)$, $y'\in (o,y]$ such that $r$ and $[x',y']$
are in the same apartment.
\end{lemma}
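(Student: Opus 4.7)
The plan is to work in the space of directions at $o$ and lift a configuration there back to an apartment of the Euclidean building $Y$. Let $\xi_x, \xi_y, \xi_r \in \Sigma_o Y$ denote the directions at $o$ of $[o,x]$, $[o,y]$ and of the ray $r$, respectively. Since $o$ is an interior point of the geodesic $[x,y]$, the Alexandrov angle between $[o,x]$ and $[o,y]$ at $o$ equals $\pi$, so $\xi_x$ and $\xi_y$ are antipodal in the spherical building $\Sigma_o Y$.

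The key step is to produce an apartment $\Sigma'$ of $\Sigma_o Y$ containing all three of $\xi_x$, $\xi_y$ and $\xi_r$. First pick an apartment of $\Sigma_o Y$ containing $\xi_r$ together with the pair $\xi_x,\xi_y$, proceeding via the antipodality of the latter pair: one starts with any apartment through $\xi_r$ and $\xi_x$ (two points always lie in a common apartment of a spherical building), uses the fact that the geodesic from $\xi_r$ extends through $\xi_x$ to a length-$\pi$ diameter of $\Sigma_o Y$, and adjusts by gallery/folding arguments so that this diameter ends at the prescribed antipode $\xi_y$. Invoke then the standard correspondence in Euclidean-building theory between apartments of the link $\Sigma_o Y$ at $o$ and apartments of $Y$ through $o$ (\cite{KleinerLeeb:buildings}) to obtain an apartment $F$ of $Y$ with $o\in F$ and $\Sigma_o F=\Sigma'$.

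Having such $F$, the conclusion is routine. The flat $F\cong\mathbb{R}^n$ contains, from the point $o$, a Euclidean ray in every direction of $\Sigma'$. By uniqueness of CAT(0)-geodesics with prescribed initial point and initial direction, the ray $r$ coincides with the Euclidean ray in $F$ of initial direction $\xi_r$, so $r\subset F$. Similarly the segments $[o,x]$ and $[o,y]$ coincide initially with the Euclidean rays in $F$ of directions $\xi_x$ and $\xi_y$; choose $x'\in[x,o)$ and $y'\in(o,y]$ close enough to $o$ that $[o,x']$ and $[o,y']$ are contained in $F$. The sub-segment $[x',y']=[x',o]\cup[o,y']$ and the ray $r$ then both lie in the apartment~$F$.

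The main obstacle is the combinatorial step: constructing the apartment $\Sigma'$ of the link containing the three prescribed directions. This is delicate because a point of a spherical building can have several antipodes and three arbitrary points need not lie in a common apartment, so the antipodality of the pair $\xi_x,\xi_y$ has to be used essentially to create enough freedom. Once this local step is handled, the lift to $Y$ and the CAT(0) uniqueness argument are standard.
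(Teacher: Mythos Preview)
Your approach has a genuine gap at the step where you conclude $r\subset F$. The assertion ``by uniqueness of CAT(0)-geodesics with prescribed initial point and initial direction'' is false: in a Euclidean building a geodesic ray is \emph{not} determined by its origin and its direction in $\Sigma_o Y$. Two rays from $o$ can share a nontrivial initial segment (hence have the same direction $\xi_r$) and then branch apart upon reaching a wall. Concretely, if two apartments $F_1,F_2$ share a half-apartment containing $o$, the straight continuations in $F_1$ and in $F_2$ of a common initial segment give distinct geodesic rays with identical germ at $o$. Thus the apartment $F$ you produce from the link contains only a germ of $r$ near $o$; there is no reason for the whole ray to stay inside $F$. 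Since the lemma asks for an apartment containing the \emph{entire} ray $r$, not just an initial piece, this breaks the argument.

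The paper's proof avoids exactly this issue by working with the boundary point $r(\infty)\in\partial_\infty Y$ rather than the local direction $\xi_r\in\Sigma_o Y$. In a CAT(0) space a ray \emph{is} uniquely determined by its origin together with its asymptotic class, so encoding $r$ via $r(\infty)$ retains the whole ray. One then studies, for $t\in[o,x)$, the angle $\theta(t)$ at $t$ between $[t,x]$ and the ray from $t$ asymptotic to $r(\infty)$; by \cite[Lemmas 2.1.5 and 5.2.2]{KleinerLeeb:buildings} this function is monotone, upper semicontinuous, and takes only finitely many values, hence is constant on some $[x',o]$ (and similarly on some $[o,y']$). Constancy of the angle towards $r(\infty)$ along the segment forces $[x',y']$ together with the rays towards $r(\infty)$ to span a flat, which then sits in an apartment. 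If you want to salvage the link approach, you would have to \emph{start} from an apartment containing all of $r$ and argue that it can be adjusted, without losing $r$, so that its link at $o$ swallows the antipodal pair $\xi_x,\xi_y$; that is essentially what the angle-function argument achieves.
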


\proof For every $t\in [o,x)$ denote by $\theta (t)$ the angle between the segment $[t,x]$ and the ray
of origin $t$ asymptotic to $r$. According to \cite[Lemmas 2.1.5 and 5.2.2]{KleinerLeeb:buildings} the
map $t\mapsto \theta (t)$ is an increasing upper semi-continuous function with finitely many values. It
follows that for some $x'\in [x,o)$ the function is constant on $[x',o]$. Similarly one can find a
point $y'\in (o,y]$ with the angle function constant on $[o,y']$. Since $[x',y']$ is a geodesic,  the
segment $[x',y']$ and the ray $r$ are in the same flat, hence in the same apartment.\endproof

\begin{proposition}\label{qprop0}
Let $Y$ be an Euclidean building, of rank at least $2$, and let $r$ be a geodesic ray not contained in
a factor of $Y$. Let $H(r)$ be the horosphere in $Y$ determined by $r$. Then for every three points
$a,b,c$ in $H(r)$ with $\dist (a,c)=\dist (b,c)=1$ there is a path in $H(r)$ connecting $a$ and $b$ and
avoiding $c$.
\end{proposition}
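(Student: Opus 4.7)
The plan is to reduce the problem to detouring around a single point within one or two apartments through $c$: in rank at least three we detour using an extra dimension of a single apartment, and in rank two we use Lemma \ref{lrk2} to switch to a second apartment.

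I first treat the main case $\dist(a,b)=2$, so that $c$ is the midpoint of the unique CAT(0) geodesic $[a,b]$. Convexity of the Busemann function $f_r$ together with $f_r(a)=f_r(b)=f_r(c)=0$ forces $f_r\equiv 0$ on $[a,b]$, hence $[a,b]\subset H(r)$. Let $\rho_c$ be the ray based at $c$ asymptotic to $r(\infty)$; since $f_r(\rho_c(t))=-t$, the ray enters $\Hbo(r)$ immediately for $t>0$. Applying Lemma \ref{lm000} to the segment $[a,b]$ (with interior point $c$) and the ray $\rho_c$ produces points $a'\in[a,c)$, $b'\in(c,b]$ and an apartment $F$ containing $[a',b']\cup\rho_c$. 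Because $r(\infty)\in \bbi F$, the Busemann function $f_r$ is affine on $F$, so $H(r)\cap F$ is an affine hyperplane of $F$ passing through $a'$, $c$, and $b'$.

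If $\mathrm{rank}(Y)\ge 3$, this hyperplane has dimension at least two, and one may simply join $a'$ to $b'$ by a path inside $H(r)\cap F$ avoiding the single point $c$. If $\mathrm{rank}(Y)=2$, the hyperplane is only the line through $a',c,b'$, and no detour within $F$ is possible; I then apply Lemma \ref{lrk2} to $F$ and the points $a',b'\in F\cap H(r)$, obtaining a second apartment $F'$ containing $a',b'$ and a point of $\Hbo(r)$, with $\bbi F'$ at Tits distance strictly greater than $\pi/2$ from $r(\infty)$. By Lemma \ref{infr} the intersection $F'\cap\Hb(r)$ has finite diameter, and by Lemma \ref{hrf} it is a two-dimensional convex polygon whose boundary $F'\cap H(r)$ is a polygonal circle. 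The points $a'$ and $b'$ both lie on this circle, and since any apartment containing $a'$ and $b'$ contains the geodesic $[a',b']$, the point $c$ lies on it as well. Of the two arcs of the circle joining $a'$ to $b'$, I select the one omitting $c$.

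In either rank sub-case we obtain a path in $H(r)$ from $a'$ to $b'$ avoiding $c$; concatenating with the two subsegments $[a,a']\subset[a,c)$ and $[b',b]\subset(c,b]$, which lie inside $[a,b]\subset H(r)$ and miss $c$, yields the required path. The residual possibility $\dist(a,b)<2$, in which $c$ is not on $[a,b]$, is handled by the same apartment-through-$c$ construction applied to a subsegment through $c$ inside a flat containing $\rho_c$; avoiding $c$ becomes automatic since it lies off the chosen routing. The principal obstacle is the rank-two step: guaranteeing the substitute apartment $F'$ of Lemma \ref{lrk2} with $\bbi F'$ far enough from $r(\infty)$ crucially uses both a thickness assumption on $Y$ and the hypothesis that $r$ is not contained in a factor, these being precisely the hypotheses of Lemma \ref{lrk2}.
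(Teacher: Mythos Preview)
Your argument for the ``main case'' $\dist(a,b)=2$ is correct and, in rank $\ge 3$, is actually cleaner than the paper's: by forcing the apartment $F$ to be asymptotic to $r$ via Lemma~\ref{lm000}, you get $H(r)\cap F$ to be a genuine affine hyperplane, so you bypass the parallel-faces sub-case that the paper treats using \cite[Lemma 3.C.2]{Drutu:Filling}. In rank $2$ your route through Lemmas~\ref{lrk2} and~\ref{infr} is essentially the paper's.

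The gap is the ``residual possibility'' $\dist(a,b)<2$. Your one sentence there is not a proof: there is no ``subsegment through $c$'' of $[a,b]$ to feed into Lemma~\ref{lm000}, and the geodesic $[a,b]$ need not lie in $H(r)$ (convexity of $f_r$ only gives $[a,b]\subset\Hb(r)$; the midpoint argument you used fails precisely because $c\notin[a,b]$). So the claim that ``avoiding $c$ becomes automatic'' has no content --- you have produced no path in $H(r)$ at all. This case genuinely requires work. The paper handles it uniformly (without splitting on $\dist(a,b)$) by starting from an apartment $F$ containing \emph{both} $a$ and $b$ and analysing $F\cap H(r)$: either $F$ meets $\Hbo(r)$, in which case (after Lemma~\ref{lrk2} in rank $2$) the set $F\cap H(r)$ is a loop or higher-dimensional and already contains a path $a\to b$ avoiding $c$; or $F\cap\Hbo(r)=\emptyset$, so $F\cap H(r)=F\cap\Hb(r)$ is a lower-dimensional polytope containing $[a,b]$, and then $c\notin[a,b]$ (since $\dist(a,c)+\dist(c,b)=2>\dist(a,b)$) lets you use $[a,b]$ directly. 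You should either adopt this argument or supply a genuine reduction of the $\dist(a,b)<2$ case to your main case.
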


\proof {\sc{Case 1}}.\quad Assume that $Y$ is of rank $2$. If the points $a,b$ are contained in an
apartment $F$ intersecting $\Hbo (r)$ then by Lemmas \ref{infr} and \ref{lrk2} it can be assumed that
$F\cap H(r)$ is the boundary of a finite convex polytope (i.e. a flat polygon since the rank is 2) with
non-empty interior. Then $F\cap H(r)$ is a simple loop containing $a,b$, and one of the boundary paths
of this loop connecting $a,b$ does not pass through $c$.

Now suppose that $a,b$ are contained in an apartment $F$ such that $F\cap \Hbo (r)$ is empty. Then
$F\cap H(r)=F\cap Hb(r)$ is a convex polytope of dimension 1, so it is a segment, a ray or a line. If
$c$ is not on the segment $[a,b]\subset F\cap H(r)$ then we are done.

Assume that $c\in [a,b]$. By Lemma \ref{lm000} there exists an apartment $F'$ which is asymptotic to
$r$ and which contains a sub-segment $[a',b']$ of $[a,b]$ having $c$ as an interior point.

Note that the apartment $F'$ intersects $\Hbo (r)$. Thus, the first part of the proof can be applied to
show that $a'$ and $b'$ can be connected in $H(r)$ avoiding $c$. This finishes the proof of Case 1.

\medskip

{\sc{Case 2}}.\quad Assume that $Y$ is of rank $n>2$. Consider an apartment $F$ containing $a,b$.

If $F$ also intersects $\Hbo (r)$ then $F\cap \Hb(r)$ is a polytope of non-empty interior and dimension
$n\geq 3$.  If this polytope has two non-parallel co-dimension one faces then its boundary is
connected, and so we can connect $a$ and $b$ by an arc on the boundary of the polytope avoiding $c$. So
suppose that the boundary of the polytope has just two (parallel) co-dimension one faces. We can assume
that $a,b$ belong to different faces. By \cite[Lemma 3.C.2]{Drutu:Filling}, we can find an apartment
$F'$ containing $a,b$ such that $F'\cap H(r)$ contains two non-parallel co-dimension one faces, and we
are done.

If $F$ does not intersect $\Hbo (r)$ then $F\cap H(r) =F\cap Hb(r)$ is a convex polytope of dimension
$d<n$. If $d\geq 2$ then this polytope cannot have cut-points. Assume that $d=1$, hence $F\cap H(r)$ is
a segment, a ray or a line. As above, if $c\not \in [a,b]$ then we are done.

If $c\in [a,b]$ then by Lemma \ref{lm000} there exists an apartment $F'$ asymptotic to $r$ and
containing a sub-segment $[a',b']$ of $[a,b]$ with $c$ in the interior. In particular $[a',b']$ is in
$F'\cap H(r)$, which is a hyperplane of dimension $n-1$ (because it is a horosphere of the flat $F'$).
Clearly $a',b'$ can be joined in this hyperplane by a path avoiding $c$.\endproof

\begin{proposition}\label{qprop1}
Let $X$ be as in Theorem \ref{thq1} and let $r$ be a geodesic ray not contained in a factor of $X$. Let
$H$ be the horosphere in $X$ corresponding to $r$. Then there exists a constant $\lambda \in (0,1)$ and
two positive constants $D$ and $L$ such that for every three points $a,b,c$ in $H$ with $\min \left\{
\dist (a,b),\dist (a , c), \dist(b,c)\right\}\geq D$, there is a path of length at most $L \dist(a,b)$
in $H$ connecting $a$ and $b$ and avoiding the ball of radius $\lambda \min(\dist(a,c), \dist(b,c))$
around $c$.
\end{proposition}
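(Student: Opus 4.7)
The plan is to argue by contradiction, reducing the statement to its qualitative predecessor Proposition \ref{qprop0} by a rescaling and ultralimit construction. The key inputs are that every asymptotic cone of $X$ is itself a Euclidean building of the same rank (Theorem \ref{cX}), that the horosphere $H=H(r)$ passes to a limit horosphere $H_\omega$ in any such cone (Theorem \ref{cX0}(1), applied to rays asymptotic to $r$ with origins near the chosen basepoints), and that the slope of $r$ is preserved, so the limit ray remains not parallel to any factor of the cone and Proposition \ref{qprop0} applies there.

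Suppose no constants $\lambda, D, L$ meet the conclusion. Then for each $n$ one may choose a triple $(a_n, b_n, c_n)$ in $H$ with pairwise distances at least $n$ such that every path in $H$ from $a_n$ to $b_n$ of length $\le n\dist(a_n,b_n)$ meets $\Ball(c_n, R_n/n)$, where $R_n:=\min(\dist(a_n,c_n),\dist(b_n,c_n))$. Set $d_n:=\dist(a_n,b_n)$ and, after extracting a subsequence, normalize by the behavior of the ratio $R_n/d_n$.

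In the main case, when $R_n/d_n$ is bounded above and bounded away from zero, pass to the asymptotic cone $\calc:=\Con^\omega(X,(a_n),(d_n))$; by Theorem \ref{cX} this is a Euclidean building of rank at least two, and by Theorem \ref{cX0}(1) the points $A:=(a_n)^\omega$, $B:=(b_n)^\omega$, $C:=(c_n)^\omega$ all lie in a limit horosphere $H_\omega$, with $\dist(A,B)=1$ and $\dist(A,C),\dist(B,C)>0$. Proposition \ref{qprop0}, applied in $\calc$ to a suitable rescaling of this triple, produces a path $\pgot_\omega$ in $H_\omega$ from $A$ to $B$ disjoint from an open ball $\Ball(C,\epsilon)$ with $\epsilon>0$. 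Invoking Lemma \ref{platr} on the apartment--horosphere intersection segments composing $\pgot_\omega$, one lifts $\pgot_\omega$ to paths $\pgot_n$ in $H$ joining $a_n$ and $b_n$ of length $O(d_n)$ that stay at distance $\ge(\epsilon/2)d_n$ from $c_n$ for $n$ large; since $R_n/d_n$ is bounded, $R_n/n<(\epsilon/2)d_n$ eventually, contradicting the choice of $(a_n,b_n,c_n)$. The two degenerate cases are easier: when $R_n/d_n\to\infty$, the triangle inequality forces any path of length $O(d_n)$ from $a_n$ to $b_n$ to stay at distance $\ge R_n/2\gg R_n/n$ from $c_n$, and such a linear-length path in $H$ is furnished by the same cone construction with $C$ now at infinity, using an apartment $F_\omega\ni A,B$ with $F_\omega\cap H_\omega$ a non-degenerate polytope as provided by Lemmas \ref{hrf}--\ref{lrk2}; when $R_n/d_n\to 0$, a single local detour of length $O(R_n/n)=o(d_n)$ around the tiny ball $\Ball(c_n,R_n/n)$ is constructed in an apartment of $X$ containing $a_n,b_n,c_n$ via Lemma \ref{hrf}, and the remainder is handled as in the main case.

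The principal obstacle is ensuring that the paths lifted from the asymptotic cone genuinely lie within the horosphere $H$ in $X$, rather than merely approximating $\pgot_\omega$ in $X$; Lemma \ref{platr} is precisely the technical device that lifts apartment--horosphere intersection segments from $\calc$ back to such segments in $X$, which makes the construction legitimate. A secondary technical point is that Proposition \ref{qprop0} as stated requires $\dist(a,c)=\dist(b,c)=1$, but its proof plainly yields a path for any triple with positive pairwise distances; this unnormalized version is what is invoked in the cone.
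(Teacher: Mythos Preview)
Your approach is essentially the paper's --- argue by contradiction, pass to an asymptotic cone (a Euclidean building by Theorem~\ref{cX}), invoke Proposition~\ref{qprop0} there, and lift the resulting path back to $H$ --- but the paper's execution is more economical and your case split introduces an unnecessary gap.

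Rather than distinguishing three regimes for $R_n/d_n$, the paper first assumes without loss of generality that $\dist(a_n,c_n)=\dist(b_n,c_n)=R_n$ and then observes that the contradiction hypothesis forces $\dist(a_n,b_n)\ge R_n/2$ (otherwise a short path from $a_n$ to $b_n$ already avoids $\Ball(c_n,R_n/4)$), so together with the triangle inequality all three pairwise distances are comparable. A single cone $\Con^\omega(X,(c_n),(R_n))$ then suffices, and in it the limit triple satisfies $\dist(a,c)=\dist(b,c)=1$, so Proposition~\ref{qprop0} applies verbatim with no rescaling. For the lifting step the paper simply invokes Lemma~\ref{piecewiseg} to replace the cone path by a piecewise limit-geodesic one and then reads it as a limit of paths $\g_n\subset H(r)$; your identification of ``the lifted path must genuinely lie in $H$'' as the principal obstacle is correct, and Lemma~\ref{platr} is indeed the mechanism lurking behind that sentence.

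Your degenerate case $R_n/d_n\to 0$ has a genuine gap: there is in general no single apartment of $X$ containing $a_n,b_n,c_n$, so the ``local detour in an apartment'' you propose is not available as stated. The paper's normalisation simply makes this regime disappear, which is both cleaner and avoids the issue.
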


\proof Assume by contradiction that there exists a sequence of triples $a_n,b_n,c_n$ such that the
minimum of $\dist (a_n,b_n),\dist (a_n, c_n), \dist(b_n,c_n) $, denoted by $D_n $, diverges to
infinity, and such that all paths joining $a_n$ and $b_n$ outside the ball of radius $\frac{1}{n}
\min(\dist(a_n,c_n), \dist(b_n,c_n))$ around $c_n$ have length at least $n\dist (a_n,b_n)$. Without
loss of generality we may assume that $\dist(a_n,c_n)=\dist (b_n,c_n)=R_n$. The assumptions imply that
$\dist (a_n,b_n)$ is at least $R_n/2$ otherwise any geodesic $[a_n,b_n]$ stays outside the ball of
radius $R_n/4$ around $c_n$. Thus $2R_n\ge D_n\ge R_n/2$.

The asymptotic cone $X_\omega=\co{X,(c_n), (R_n)}$ is an Euclidean building of rank at least two by
\cite{KleinerLeeb:buildings}. By Theorem \ref{cX0}, (2), the limit points $a=(a_n)^\omega$,
$b=(b_n)^\omega$ and $c=(c_n)^\omega$ are on the horosphere $H(r_\omega)$, where $r_\omega$ is the
limit of the ray $r$; the points $a$ and $b$ are at distance $1$ from $c$. By Lemma \ref{qprop0} there
exists a path $\g$ in
 $H(r_\omega)$ connecting $a$ and $b$ and avoiding $c$. By Lemma \ref{piecewiseg}, we can assume
that $\g$ is a limit of paths $\g_n$ of lengths $O(R_n)$ connecting $a_n, b_n$ in $H(r)$ and avoiding a
ball of radius $O(R_n)$ around $c_n$. This contradicts the assumptions in the previous paragraph.
\endproof

\medskip

{\bf Proof of Theorem \ref{thq1}.} Let $X$ and $\calr$ be as in Theorem \ref{thq1}. If $\calr$ has
cardinality at least three then according to Lemma \ref{3} the set of points $\{ r(\infty )\; ; \; r\in
\calr \}$ projects onto one point on the model chamber of $\bbi X$. This implies that all horospheres
$H(r)$ with $r\in \calr $ are isometric. Let $\lambda , D$ and $L$ be the three constants provided by
Proposition \ref{qprop1} for the above family of isometric horospheres.

If $\calr$ has cardinality at most 2 then take $\lambda$ be the minimum and $D,L$ be the maximum among
the corresponding constants for these horospheres.

Since rays from $\calr$ are not parallel to a rank one factor of $X$, the horospheres corresponding to
them are $M$-bi-Lipschitz embedded into $X$ for some constant $M$ (see \cite[Theorems 1.2 and
1.3]{Drutu:Nondistorsion}).

Since the rank of $X$ is at least $2$, it has linear divergence. Let $\pgot$ be a path of length
$K\dist(a,b)$ connecting $a, b$ in $X$ and avoiding the ball $\Ball(e,R/2)$.

Let $C=12ML/\lambda$, and three points $a,b,e\in X$ where $R=\dist(e,a)=\dist(e,b)=\dist(a,b)/2$. We
want to connect $a, b$ by a path in $X'$ of length $O(\dist(a,b))$ avoiding a ball of radius $R/C$
around $e$. Let $H$ be a horosphere corresponding to some ray $r\in \calr$ crossed by $\pgot$. Let $a'$
and $b'$ be the first and last points on $\pgot\cap H$.

If $\dist(a',b')<R/4M$ then (since the distortion of $H$ is linear) $a'$ and $b'$ can be connected in
$H$ by a path of length at most $M\dist(a',b')<R/4$. That path avoids the ball of radius $R/2$ around
$e$. By replacing all such sub-paths $[a',b']$ in $\pgot$ by the corresponding paths on the
horospheres, we obtain a path of length at most $MK$ connecting $a,b$ and avoiding the ball of radius
$R/4$ around $e$.

So we can assume that $\dist(a',b')\ge R/4M$. Let $e'$ be the projection of $e$ onto $H$ (in a
CAT(0)-space, for every point $x$ and every convex set $P$, there exists at most one point $x'$ in the
set $P$ that realizes the distance from $x$ to $P$ \cite{BridsonHaefliger}). If $\dist(e,e')>R/2C$ then
any path in $H$ joining $a'$ and $b'$ avoids the ball $\Ball(e,R/2C)$.

Assume that $\dist(e,e')\leq R/2C$. Then any path joining $a',b'$ outside the ball $\Ball(e',R/C)$ is
also outside the ball $\Ball(e,R/2C)$. By Proposition \ref{qprop1} $a'$ and $b'$ can be joined in $H$
by a path of length $\leq L\dist (a',b')$ outside the ball $\Ball(e', \lambda R/4M)$. Since $\lambda
R/4M > R/C$ we are done.
\endproof

\section{$\SL_n(\OS)$}

\label{sec6}

Let $\k$ be a number field. Let $\O\subset \k$ be its ring of integers. Let $\calS$ be a finite set of
places of $\k$ containing all the archimedean ones. Let $\OS$ be the ring of $\calS$-integer points of
$\k$. Let us denote $\k_{\calS}=\prod_{\nu\in\calS} \k_{\nu}$. Note that we have a natural diagonal
embedding of $\k$ in $\k_{\calS}$. The image of $\OS$ under this embedding is a cocompact lattice. When
speaking of an action of an element of $\k$ (or more generally of a matrix over $\k$) on $\k_{\calS}$
(respectively on a vector in $\k_{\calS}^{t}$) we will be implicitly using this diagonal embedding.
 For $x\in k$ we denote
 \[| x |_\calS = \max \{ | x|_{\nu}\st \nu\in\calS\}\, .
 \]
 Similarly for vectors $v\in\kS^{t}$ we have
 \[\| v \|_\calS = \max \{ \| v\|_{\nu}\st \nu\in\calS\}\, .
 \]

\begin{theorem}\label{thm:SLd:NoCutPt}
The asymptotic cones of $\Gamma=\SL_d(\OS)$, $d\ge 3$, do not have cut\red{-}points.
\end{theorem}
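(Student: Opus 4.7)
By Lemma~\ref{lm02} and Corollary~\ref{cdiv}, proving that the asymptotic cones of $\Gamma=\SL_d(\OS)$ have no cut-points is equivalent to showing that $\Gamma$ has linear divergence $\Dv_\gamma(n,\delta)$ for some $\delta\in(0,1)$ and $\gamma\geq 0$. By the left-invariance of the word metric, I may reduce to the following concrete statement: there exist constants $\delta>0$ and $K>0$ such that for every pair $A,B\in\Gamma$ with $\dist(A,B)\leq n$ and $\dist(I,\{A,B\})\geq n/3$, one can connect $A$ to $B$ in the Cayley graph of $\Gamma$ by a path of length at most $Kn$ that avoids the ball $\Ball(I,\delta n)$.

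By the Lubotzky--Mozes--Raghunathan theorem (Theorem~\ref{lmr}), the word metric on $\Gamma$ is bi-Lipschitz equivalent to the restriction of a left-invariant Riemannian metric on the ambient group $G=\prod_{\nu\in\calS}\SL_d(\k_\nu)$. Under this metric the distance from $I$ to a matrix $M$ is comparable to $\log\max_{\nu\in\calS}(\|M\|_\nu+\|M^{-1}\|_\nu)$, so I wish to construct a word-path along which this logarithmic norm stays at least $\delta n$. The crucial algebraic input, available precisely because $d\geq 3$, is the Steinberg commutator identity
\[ [E_{ik}(a),E_{kj}(b)] = E_{ij}(ab), \qquad i,j,k \text{ distinct}, \]
which allows me to realize any $E_{ij}(t)$ with $t\in\OS$ as a word of length $O(\log|t|_\calS)$ in elementary generators with bounded parameters; this is in fact the quantitative content of LMR for $\SL_d$.

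The desired path is built as a concatenation of three pieces. First, starting from $A$, I run an \emph{outgoing} word-path that multiplies successively on the left by well-chosen elementary matrices $E_{i_\alpha j_\alpha}(t_\alpha)$ to reach a matrix $\widetilde A$ with $\log\|\widetilde A\|_\calS\asymp n$; the row-operations are scheduled so that $\log\|\cdot\|_\calS$ grows at a uniform linear rate from the very first step (any bounded initial segment is handled by hand). Second, I apply the same outgoing recipe to $B$ to produce $\widetilde B$, and then connect $\widetilde A$ to $\widetilde B$ by a word-geodesic of length $O(\dist(A,B))=O(n)$; since $\log\|\cdot\|_\calS$ is $O(1)$-Lipschitz along an edge of the Cayley graph and both endpoints have log-norm of order $n$, every intermediate matrix satisfies $\log\|\cdot\|_\calS\geq\delta n$ once $\delta$ is chosen small enough relative to the LMR constants. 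Third, I reverse the outgoing construction from $\widetilde B$ to $B$.

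\textbf{Main obstacle.} The technical heart is the design of the outgoing path: I must guarantee that \emph{every} intermediate product stays outside $\Ball(I,\delta n)$, not merely the endpoints. For a generic matrix $A$ many elementary row-operations momentarily decrease the $\calS$-norm, so the schedule of operations and the commutator factorisations of each $E_{ij}(t)$ must be tuned so that the running norm grows monotonically; this will be done by writing $t=t_1+\cdots+t_N$ with $|t_\ell|_\calS$ increasing geometrically and applying the Steinberg commutator identity factor by factor, so that after each block of four generator steps the running product jumps to a strictly larger $\calS$-norm. The hypothesis $d\geq 3$ enters twice in an essential way: once to supply the third index $k$ in the commutator formula, and once to ensure that for every matrix there is always at least one of the $d(d-1)\geq 6$ elementary row-operations that is outward-pointing in $\calS$-norm — a property that genuinely fails when $d=2$.
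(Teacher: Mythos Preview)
Your middle step is where the argument breaks. You want to connect $\widetilde A$ and $\widetilde B$ by an arbitrary word-geodesic and control it with the Lipschitz bound on $\log\|\cdot\|_\calS$. But examine the two readings of ``same outgoing recipe'':

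\emph{Same word.} If $\widetilde A=PA$ and $\widetilde B=PB$ for a single product $P=s_m\cdots s_1$ of generators, then indeed $\dist(\widetilde A,\widetilde B)=\dist(A,B)$. But your schedule of row-operations was tuned to make $\log\|P_\ell A\|$ grow; nothing prevents $\log\|P_\ell B\|$ from dropping. All you know is $|\log\|P_\ell A\|-\log\|P_\ell B\||\le L\cdot\dist(A,B)\le Ln$ via LMR, and at the start of the path $\log\|P_\ell A\|\asymp n$ with the \emph{same} LMR constant, so the difference can swallow the value and send the $B$-path through $\Ball(I,\delta n)$.

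\emph{Different words.} If instead you run a separate recipe for $B$ so that its outgoing path is also monotone, then $\dist(\widetilde A,\widetilde B)$ is only bounded by the triangle inequality, i.e.\ by $O(Rn)$ where $R$ measures how far you pushed. The Lipschitz bound on the middle geodesic then gives $\log\|\cdot\|\ge Rn - O(LRn)$, and the constants do not close up no matter how large $R$ is.

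In either case you have reduced the problem to ``a geodesic between two points far from $I$ stays far from $I$'', which is exactly the statement you are trying to prove; in a free subgroup of $\SL_d(\OS)$ this is already false for generic geodesics. The paper does \emph{not} pass through a generic geodesic at any stage. Instead it (i) keeps a specific large entry alive by working inside the column-stabilizing unipotent $L$ (this is the content of Lemma~\ref{lem:gamma:EC:gammaL}: if the first column of $\gamma$ is large, then $\gamma$ is exteriorly connected to $\gamma\theta$ for any $\theta\in L$, because the short LMR word for $\theta$ can be chosen in generators that never touch that column), (ii) uses the effective stable range in $\OS$ to drive both matrices into the single two-dimensional unipotent $M$ (Lemma~\ref{lem:gamma:EC:u}), and (iii) connects inside $M$ by first translating the whole path along an eigen-direction of a hyperbolic element in $S_3$ to shift it away from the origin (Lemma~\ref{lem:u:EC:u}). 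Step (iii) is the genuine replacement for your middle geodesic, and it is not a Lipschitz estimate but an explicit detour.
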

\begin{remark} When $|\calS|>1$ then one may allow $d=2$ by Theorem \ref{thq1} (because in that case $\SL_2(\OS )$ is of $\Q$-rank one).
\end{remark}

We shall prove the theorem only for $d=3$. The case of $d > 3$ is similar (and easier). In fact one can
easily deduce that case from the case $d=3$ by using various embeddings of $\SL_3$ into $\SL_d$.

\subsection{Notation and terminology}
\begin{itemize}
\item
As usual, for a given set $S$ generating $\Gamma$ we shall denote by $\d_S(\cdot,\cdot)$ the word
metric on $\Gamma$ with respect to $S$.

\item
An entry $a$ of $\gamma\in\SL_d(\OS)$ is called {\em large} if
$$\log(1+|a|_\calS) \ge C \log \sqrt{|\tr\gamma^{*}\gamma|_\calS}$$ for
some fixed constant $C$.
\item
 We shall use the notation
$x\approx y$ to mean that for some constants (which we choose and fix for the given group $\Gamma$) $c_1,c_2>0$ $ c_1 \le(1+ |x|_\calS)/(1+|y|_\calS)
\le c_2$.
\item
Two elements $\alpha,\beta\in\SL_d(\OS)$ are said to be {\em ``of the
  same size''} if
$\d_S(\alpha,e)\approx \d_S(\beta,e)$
\item
Let $\kappa>0$ be fixed. Let $\gamma_1,\gamma_2\in\Gamma$. A {\em $\kappa$-exterior  trajectory} from
$\gamma_1$ to $\gamma_2$ is a path $\omega$ in the Cayley graph $\Cay(\Gamma,S)$ starting at
$\gamma_1$, ending at $\gamma_2$ such that:
\begin{enumerate}
\item
The length of $\omega$ is comparable to $\d_S(\gamma_1,\gamma_2)$, i.e. bounded by some constant (depending only on the group $\Gamma$) times
$\d_S(\gamma_1,\gamma_2)$.
\item The path $\omega$ remains outside a ball of center $e$ and radius
$\kappa\cdot \dist_S(e, \{\gamma_1, \gamma_2 \} )$.
\end{enumerate}
We shall usually omit the constant $\kappa$ and speak about {\em
  exterior trajectory} where $\kappa>0$ is implicit.
\item
Two elements $\gamma_1,\gamma_2\in \Gamma$ will be said to be {\em
  exteriorly connected} if there exist an exterior
  trajectory connecting them.
\item
We shall use the notation $\|\gamma\|_\calS=\sqrt{|\tr\gamma^{*}\gamma|_\calS}$. Note that
$\|\gamma\|_\calS\approx
  \max \{ |a_{ij}|_\calS\st \gamma=(a_{ij})\}$.
\item
Recall (cf. \cite{LMR:Cyclic},\cite{LMR:metrics}) that $\d_S(\gamma,e)\approx \log\|\gamma\|_\calS$.
\item
Let us denote the following subgroups of $\SL_3$:
$$L=\left\{\mat10*01*001\right\}
, N=\left\{\mat1**01*001\right\} , M=\left\{\mat100*10*01\right\} .$$
\end{itemize}

\subsection{Some facts about $\OS$}

Abusing notation, we write $\ZZ_{\calS}$ for the localization of $\ZZ$ with respect to the non
archimedean places in $\calS$ restricted to $\QQ$.

The next lemma contains well known facts about $\OS$ and its ideals.

\begin{lemma}\label{lmo1} \cite{AtiyahMacdonald}
\begin{itemize}
\item[(i)] The ring $\OS$ is a finitely generated $\ZZ_{\calS}$ module.
\item[(ii)] Every non-zero ideal of $\OS$ is a unique product of prime ideals, it is contained in finitely many prime ideals.
\end{itemize}
\end{lemma}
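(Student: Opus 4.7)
The plan is to derive both statements from standard facts about Dedekind domains and localizations, essentially citing Atiyah--Macdonald as the paper itself indicates, while filling in enough structure for the reader.

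For part (i), I would first note that the ring of integers $\O$ of the number field $\k$ is a free $\ZZ$-module of rank $[\k:\QQ]$; this is a foundational result in algebraic number theory. Let $p_1,\dots,p_r$ be the (finitely many) rational primes lying below the non-archimedean places in $\calS$. Then by the very definition of the ring of $\calS$-integers one has
\[
\OS = \O\bigl[\tfrac{1}{p_1},\dots,\tfrac{1}{p_r}\bigr] \;=\; \O \otimes_{\ZZ} \ZZ_\calS,
\]
where $\ZZ_\calS = \ZZ[1/p_1,\dots,1/p_r]$. If $\alpha_1,\dots,\alpha_n$ generate $\O$ as a $\ZZ$-module, the same elements generate $\OS$ as a $\ZZ_\calS$-module, which gives (i).

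For part (ii), I would invoke the fact that $\O$ is a Dedekind domain (a classical theorem: the ring of integers of a number field is integrally closed, Noetherian, and of Krull dimension one). Localization preserves each of these three properties, so $\OS$, being a localization of $\O$ at the multiplicative set generated by $p_1,\dots,p_r$, is again a Dedekind domain. Unique factorization of nonzero ideals into prime ideals in a Dedekind domain is then the standard structure theorem (Atiyah--Macdonald, Ch.~9). The finiteness of the set of prime ideals containing a given nonzero ideal $I$ is immediate from the factorization $I = \p_1^{a_1}\cdots \p_s^{a_s}$: a prime $\q$ contains $I$ iff $\q \supseteq \p_i$ for some $i$, and since $\OS$ has Krull dimension one, this forces $\q = \p_i$.

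There is essentially no obstacle here; the statement is a bookkeeping lemma recording facts that will be used implicitly in the constructions of Section~6. The only point that requires a sentence of care is the passage from $\O$ to $\OS$ in both parts: confirming that ``ring of $\calS$-integers'' is the same as the localization $\O[1/p_1,\dots,1/p_r]$, which follows by comparing valuations on both sides. Once that identification is made, (i) and (ii) reduce to standard results on Dedekind domains and their localizations, and the proof can be concluded with a reference to Atiyah--Macdonald as already suggested.
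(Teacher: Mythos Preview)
The paper gives no proof of this lemma; it simply records the statement with a citation to Atiyah--Macdonald, so your proposal is already more detailed than what appears in the text, and the overall strategy (reduce to standard Dedekind-domain theory) is exactly what the citation points to.

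One small inaccuracy: your identification $\OS = \O[1/p_1,\dots,1/p_r]$ is not correct in general. If $\calS$ contains some but not all primes of $\O$ lying over a rational prime $p$, then inverting $p$ inverts \emph{all} primes above it, so $\O[1/p_1,\dots,1/p_r]$ can be strictly larger than $\OS$. What is always true is the inclusion $\OS \subseteq \O[1/p_1,\dots,1/p_r] = \O \otimes_\ZZ \ZZ_\calS$, and since the right-hand side is a free $\ZZ_\calS$-module of rank $[\k:\QQ]$ and $\ZZ_\calS$ is a PID (hence Noetherian), the submodule $\OS$ is finitely generated over $\ZZ_\calS$. For part (ii) your argument is fine as stated: $\OS$ is genuinely a localization of $\O$ (at the multiplicative set of elements supported only on primes in $\calS$), hence Dedekind, and the rest follows. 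With that one-line fix your write-up is complete.
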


\begin{lemma}\label{lmo3} There exists a constant $C>1$ depending only on $\O$ such that every principal ideal $I$ of $\O$ of norm $k$  is generated by an element $x$ of absolute value $ |x|\le C(k)^{\frac1r}$, where $r$ is the number of archimedean valuations of $\O$. In case $r>1$ we actually have that for
 every $k'\ge k$, $I$  is generated by an element $x$ of absolute value $\frac1C(k')^{\frac1r}\le |x|\le C(k')^{\frac1r}$.

\end{lemma}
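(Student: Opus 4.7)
The plan is to invoke Dirichlet's unit theorem and play with the logarithmic embedding. Let $\sigma_1,\dots,\sigma_r$ be the archimedean places of $\k$, with local degrees $n_i\in\{1,2\}$ summing to $n := [\k : \QQ]$, and let $\ell : \k^* \to \RR^r$ be the logarithmic embedding $\ell(x)_i = \log|\sigma_i(x)|$; I adopt the convention $|x| := \max_i |\sigma_i(x)|$. Then $\sum_i n_i\,\ell(x)_i = \log |N_{\k/\QQ}(x)|$, so $\ell(\O^*)$ lies in the hyperplane $H := \{v \in \RR^r : \sum_i n_i v_i = 0\}$ and, by Dirichlet's unit theorem, is a cocompact lattice of rank $r-1$ in $H$. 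Hence there is a constant $D=D(\O)$ such that every point of $H$ lies within $\ell_\infty$-distance $D$ of some $\ell(u)$, $u\in\O^*$; this $D$ will be essentially the only ingredient used beyond elementary bookkeeping on $H$.

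For the first claim, pick any generator $y_0$ of $I$ (so $\sum_i n_i \ell(y_0)_i = \log k$) and set $t := (\tfrac{\log k}{n},\dots,\tfrac{\log k}{n})$. Then $t - \ell(y_0)\in H$, so one can choose $u \in \O^*$ with $\|\ell(u) - (t-\ell(y_0))\|_\infty \le D$; setting $x := uy_0$, the element $x$ still generates $I$ and satisfies $|\sigma_i(x)| \le e^{D} k^{1/n} \le e^{D} k^{1/r}$ for every $i$, so $|x| \le e^{D} k^{1/r}$.

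For the second claim, assume $r \ge 2$, let $k' \ge k$, and build a target vector $t' \in \RR^r$ by choosing any index $i_0$, setting $t'_{i_0} := \tfrac{\log k'}{r}$, and $t'_j := \alpha$ for $j \ne i_0$, where $\alpha := (\log k - n_{i_0}\tfrac{\log k'}{r})/(n-n_{i_0})$. Since $r \ge 2$ one has $n - n_{i_0} \ge 1$, and the choice of $\alpha$ ensures $\sum_i n_i t'_i = \log k$, so $t' - \ell(y_0) \in H$. The hypotheses $k \le k'$ and $r \le n$ give $\log k \le \tfrac{n}{r}\log k'$, which rearranges to $\alpha \le \tfrac{\log k'}{r}$, so $\max_i t'_i = \tfrac{\log k'}{r} = t'_{i_0}$. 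Picking $u \in \O^*$ with $\|\ell(u)-(t'-\ell(y_0))\|_\infty \le D$ and setting $x := uy_0$ yields $\ell(x)_i \in [t'_i - D,\, t'_i + D]$ for every $i$, whence $|x| \le e^{D}(k')^{1/r}$ and $|x| \ge |\sigma_{i_0}(x)| \ge e^{-D}(k')^{1/r}$. Taking $C := e^{D}$ handles both assertions.

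The restriction $r \ge 2$ in the second statement is essential: when $r = 1$ the unit group is finite and the only generators of $I$ have a fixed absolute value, so one cannot enlarge $|x|$ to be of order $(k')^{1/r}$ for arbitrary $k' \ge k$. In both claims the only nontrivial ingredient is the cocompactness of $\ell(\O^*)$ in $H$, which is precisely Dirichlet's theorem; the rest is linear bookkeeping inside the hyperplane.
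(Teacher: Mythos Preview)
Your proof is correct and uses the same underlying idea as the paper: Dirichlet's unit theorem and the logarithmic embedding to adjust a generator by a unit. The executions differ slightly. For the first assertion the paper chooses a fundamental domain of $\phi(\O^*)$ meeting the diagonal and picks the generator whose image lands there, whereas you aim directly at the target point $(\tfrac{\log k}{n},\dots,\tfrac{\log k}{n})$ and invoke cocompactness. For the second assertion the paper starts from the balanced generator $c'$ and repeatedly multiplies by a fixed unit $\epsilon$ whose largest archimedean absolute value occurs at the same place as that of $c'$, stopping at the right power; you instead build an explicit target vector $t'$ on the correct hyperplane with one coordinate forced to $\tfrac{\log k'}{r}$ and again use cocompactness in one step. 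Your treatment is a bit more uniform (the same mechanism handles both parts) and you keep track of the local degrees $n_i$, yielding the sharper intermediate bound $k^{1/n}$ in the first part before weakening to $k^{1/r}$; the paper's write-up tacitly conflates $n$ and $r$ in the product formula. Both arguments are equally short and neither buys anything the other does not.
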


\proof Let $r$ be the number of archimedean valuations of $\O$. We distinguish the case where $r=1$
from $r>1$. When $r=1$ we have only  finitely many choices of a generator for the given principal ideal
and all satisfy the assertion. Assume $r>1$. Consider the logarithmic map $\phi\colon \O\backslash
\{0\}\to \RR^r$ that takes $x$ to $(\log |\nu_1(x)|,\ldots,\log |\nu_r(x)|)$. By the Dirichlet theorem,
the image of the group of units $\O^*$ is a lattice in the subspace of $\RR^r$ given by the equation
$x_1+\ldots+x_r=0$. Let $c\O$ be a principal ideal of $\O$. Then the image under $\phi$ of the set of
all generators of $c\O$ is the coset $\phi(c)+\phi(\O^*)$.

Let $\Delta$ be the (closed) fundamental domain of the lattice $\phi(\O^*)$ containing the point on the
diagonal $x_1=x_2=...=x_r$. Let $c'$ be a generator of $c\O$ such that  $\phi(c')$ is in $\Delta$. Then
the difference between the maximal and minimal values of coordinates of $\phi(c')$ does not exceed a
constant $\lambda$ depending only on $\O$ (we can take $\lambda$ to be the diameter of $\Delta$).
Therefore for every $i,j$ between $1$ and $r$, $\frac{|\nu_i(c')|}{|\nu_j(c')|}\le \exp(\lambda)$.
Hence $|c'|_\calS=\max\{|\nu_i(c')|, i=1,\ldots,r\}$ does not exceed $(|\nu_1(c')|\cdots
|\nu_r(c')|)^{\frac{1}{r}}\exp\lambda\le k^{\frac1r}\exp\lambda$ and is at least
$\exp(-\lambda)k^{\frac1r}$.

Suppose now that $k'>k$. Let $|\nu_i(c')|$ be the maximal number among all $|\nu_j(c')|$. By
Dirichlet's theorem, there exists a unit $\epsilon$ of $\O$ (depending only on $\O$) such that
$|\nu_i(\epsilon)|>1$ is the maximal number among all $|\nu_j(\epsilon)|$. Then there exists a constant
$C$ and an integer $u>0$ such that $c''=\epsilon^uc'$ satisfies the desired inequalities
$$\frac{1}{C}(k')^{\frac1r} \le |c''|_\calS\le C(k')^{\frac1r}.$$
\endproof

\begin{lemma}\label{lmo4} Let $a,b,c\in \OS$, $c\neq 0$, and let $P_1,...,P_s$ be distinct prime ideals not containing $a\OS$, but s.t. $c\in P_1,\dots,P_s$. Then there exists $m\in \OS$ such that $b+ma$ is not contained in $P_1,...,P_s$ and $|m|_S$ is bounded by a polynomial in $|a|_\calS, |b|_\calS$, $|c|_\calS$ (the polynomial depends only on $\OS$).
\end{lemma}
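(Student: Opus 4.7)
\textbf{Plan for Lemma \ref{lmo4}.} I will split the proof into an existence step via the Chinese Remainder Theorem and a size-estimate step controlling the representative.

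\textbf{Step 1 (Existence of a suitable residue).} Since $a\OS\not\subseteq P_i$ means $a\notin P_i$, and since $P_i$ is a nonzero prime ideal of the Dedekind domain $\OS$, it is maximal, so $\OS/P_i$ is a (finite) field and $\bar a\in\OS/P_i$ is invertible. Therefore the congruence $b+ma\equiv0\pmod{P_i}$ has a unique solution $\bar m^{\mathrm{bad}}_i=-\bar b\bar a^{-1}\in\OS/P_i$. Because $|\OS/P_i|\ge 2$, we may choose some residue $\bar m_i\in\OS/P_i\setminus\{\bar m_i^{\mathrm{bad}}\}$; e.g.\ take $\bar m_i=0$ if $b\notin P_i$ and $\bar m_i=1$ if $b\in P_i$. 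The primes $P_1,\dots,P_s$ are pairwise coprime (distinct primes in a Dedekind domain), so the Chinese Remainder Theorem provides some $m\in\OS$ with $m\equiv\bar m_i\pmod{P_i}$ for every $i$, and for any such $m$ one has $b+ma\notin P_i$ for every $i$.

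\textbf{Step 2 (Polynomial size bound).} Any two admissible $m$'s differ by an element of $J:=P_1\cap\cdots\cap P_s=P_1\cdots P_s$, so it suffices to exhibit one representative with $|m|_\calS$ polynomially bounded. Because $c\in\bigcap_i P_i=J$, we have $c\OS\subseteq J$, hence $\OS/J$ is a quotient of $\OS/c\OS$. Using that all archimedean places lie in $\calS$, one has $|{\mathrm{Nm}}_{\k/\QQ}(c)|=\prod_{\nu\text{ arch.}}|c|_\nu^{e_\nu}\le |c|_\calS^{[\k:\QQ]}$. Now view $\OS$ via its diagonal embedding in $\k_\calS$ as a cocompact lattice whose covolume is a constant depending only on $\OS$; the sublattice $c\OS$ has covolume $|{\mathrm{Nm}}(c)|$ times that constant, hence polynomial in $|c|_\calS$. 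A standard Minkowski/fundamental-domain argument then gives, in every coset of $c\OS$ (and a fortiori of $J$) in $\OS$, a representative of $\calS$-absolute value bounded polynomially in $|c|_\calS$, hence polynomially in the triple $(|a|_\calS,|b|_\calS,|c|_\calS)$.

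\textbf{Main obstacle.} The existence part is essentially a one-line CRT argument; the real technical content lies in Step 2, namely in producing a \emph{short} representative of the coset. This reduces to a lattice estimate for $c\OS$ inside $\OS\subset\k_\calS$, for which the key inputs are that $\OS$ is finitely generated over $\ZZ_\calS$ (Lemma~\ref{lmo1}) and the trivial bound $|{\mathrm{Nm}}_{\k/\QQ}(c)|\le|c|_\calS^{[\k:\QQ]}$; once these are in place, the polynomial bound on $|m|_\calS$ follows by choosing coordinates with respect to a fixed $\ZZ_\calS$-generating set of $\OS$ and reducing each coordinate modulo the rational integer $N={\mathrm{Nm}}(c)\in c\OS\subseteq J$.
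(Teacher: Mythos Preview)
Your proof is correct but proceeds along a different axis than the paper's. The paper does not use the Chinese Remainder Theorem at all: instead it splits the primes into those not containing $b$ (say $P_1,\dots,P_u$) and those containing $b$, takes $m$ to be a generator of the principal ideal $(P_1\cdots P_u)^h$ (where $h$ is the class number of $K$), and checks directly that $b+ma\notin P_i$ in each of the two cases. The size bound on $m$ then comes from Lemma~\ref{lmo3}, which controls the absolute value of a generator of a principal ideal in terms of its norm via the Dirichlet unit theorem.

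Your route---CRT for existence, then reducing modulo $c\OS$ to get a short representative---is arguably more transparent and avoids both the class-number trick and Lemma~\ref{lmo3}; in fact your Step~2 is exactly the content of Lemma~\ref{module}, which the paper proves later for a different purpose. The paper's approach, on the other hand, produces an explicit $m$ with a specific ideal-theoretic description, which is occasionally convenient. One small remark on your ``Main obstacle'' paragraph: the claim $N=\mathrm{Nm}_{\k/\QQ}(c)\in c\OS$ is correct (Cayley--Hamilton for the $\ZZ_\calS$-module $\OS$ gives it, since $c$ is integral over $\ZZ_\calS$), but this deserves a word of justification since for $c\in\OS\setminus\O$ it is not entirely automatic; your fundamental-domain argument for $c\OS\subset\k_\calS$ sidesteps this and is the cleaner way to finish.
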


\proof  Let $h$ be the class number of $K$. Without loss of generality assume that ideals $P_1,...,P_u$
do not contain $b$ but $P_{u+1},...,P_s$ contain $b$. Since $c\in P_1\cdots P_u$, the norm of
$P_1\cdots P_u$ is smaller than the norm of the ideal $c\OS$ which is bounded by $O(|c|_\calS^r)$ where
$r$ is the number of valuations in $S$.

Then the ideal $P=(P_1\cdots P_u)^h$ is principal and its norm is bounded by $O(|c|_S^{rh})$. The
intersection $P'=P\cap \O$ is also a principal ideal of $\O$ with the same norm as $P$. By Lemma
\ref{lmo3}, there exists a generator $m$ of $P'$ with $|m|_S$ bounded by $O(|c|_S^{rh}) $. This element
$m$ generates the ideal $P$ as well.

If $1\le i\le u$, then $m\in P_i$ but $b\not\in P_i$, hence $b+ma\not\in P_i$. If $u<i\le s$, then
$b\in P_i$ but neither $m$ nor $a$ is in $P_i$, so $ma\not\in P_i$, hence $b+ma\not\in P_i$. So $b+ma$
is not in $P_i$ for any $i$.
\endproof

\begin{lemma}[Effective stable range]\label{lmo2}
For every three elements $a, b, c\in \OS$ such that $a\OS+b\OS+c\OS=\OS$ there exist two elements $k,
m\in \OS$ such that $(ma+b)\OS+(ka+c)\OS=\OS$ and the absolute values $|k|_\calS, |m|_\calS$ are
bounded by a polynomial in $|a|_\calS, |b|_\calS, |c|_S$ (for some polynomial depending only on $\OS$).
\end{lemma}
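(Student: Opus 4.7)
The plan is to reduce to Lemma~\ref{lmo4} and to observe that one may take $k=0$. First dispose of the degenerate case $c=0$, where the hypothesis gives $a\OS+b\OS=\OS$ and one chooses $m=k=0$. So assume $c\neq 0$.

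By Lemma~\ref{lmo1}(ii), the non-zero ideal $c\OS$ is contained in only finitely many prime ideals; let $P_1,\dots,P_s$ be those prime ideals which contain $c$ but do not contain $a$. Apply Lemma~\ref{lmo4} to produce $m\in\OS$ with $b+ma\notin P_i$ for every $i$, and with $|m|_\calS$ bounded by a polynomial in $|a|_\calS,|b|_\calS,|c|_\calS$. Set $b'=b+ma$ and take $k=0$; note that $|k|_\calS=0$ trivially satisfies the required bound. The conclusion will follow once we check $b'\OS+c\OS=\OS$.

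To verify this, I would argue by contradiction: suppose some prime $P$ contains both $b'$ and $c$. If $a\notin P$, then $P$ is one of the primes $P_1,\dots,P_s$, contradicting $b'\notin P_i$. If $a\in P$, then since $b=b'-ma$ we get $b\in P$ and thus $\{a,b,c\}\subset P$, contradicting the hypothesis $a\OS+b\OS+c\OS=\OS$. Either case is impossible, so $(b+ma)\OS+c\OS=\OS$, which is exactly $(ma+b)\OS+(ka+c)\OS=\OS$.

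The whole content of the proof lies in Lemma~\ref{lmo4}: identifying the correct finite set of primes to avoid, and producing a small $m$ realizing that avoidance. There is no genuine obstacle beyond that, since the observation that $k=0$ suffices collapses the stable-range manipulation to a single application of the previous lemma.
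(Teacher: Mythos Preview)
Your argument for the main case $c\neq 0$ is correct and is exactly the paper's proof: restrict attention to the primes over $c$ that miss $a$, invoke Lemma~\ref{lmo4} to produce a small $m$ with $b+ma$ avoiding all of them, and then argue by contradiction that no prime can contain both $b+ma$ and $c$.

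There is, however, a slip in your degenerate case. When $c=0$ you propose $m=k=0$, giving $(ma+b)\OS+(ka+c)\OS=b\OS$; but the hypothesis only yields $a\OS+b\OS=\OS$, not that $b$ is a unit, so $b\OS$ need not be all of $\OS$. The fix is immediate: take $k=1$ (and $m=0$), so that $(ma+b)\OS+(ka+c)\OS=b\OS+a\OS=\OS$. This is precisely what the paper records in the remark following the lemma.
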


\proof We can assume that $b, c\ne 0$. Let $P_1,...,P_t$ be all the prime ideals containing $c\OS$. Let
$P_1,...,P_s$ be the ideals that do not contain $a\OS$, $P_{s+1},...,P_t$ be the ideals containing
$a\OS$. By Lemma \ref{lmo4} we can find $m$  such that $(b+ma)\OS$ is not contained in $P_1,...,P_s$
and $|m|_S$ is polynomially bounded in terms of $|a|_S$, $|b|_{\calS}$ and $|c|_S$. We claim that
$c\OS+(ma+b)\OS=\OS$. Indeed, suppose that $c\OS+(ma+b)\OS\ne \OS$. Then there exists a prime ideal $P$
containing $c\OS+(ma+b)\OS$. Since $c\in P$, that prime ideal must be one of the $P_i$, $i=1,...,t$.
Since $ma+b$ is not in $P_1,...,P_s$, we have $i>s$. Then $P$ contains $a$, so $P$ contains $a,b,c$
which contradicts the equality $a\OS+b\OS+c\OS=\OS$. Hence $c\OS+(ma+b)\OS=\OS$.
\endproof

\begin{remark} The proof of Lemma \ref{lmo2} shows that one can always take $k=0$ unless $c=0$ in which case we can take  $k=1$.
\end{remark}

\begin{lemma} \label{module} For every $a, c\in \OS$ there exists an element $a'\in a+c\OS$ with $|a'|_\calS=O(|c|_\calS)$.
\end{lemma}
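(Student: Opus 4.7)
The plan is to deduce this from the classical fact that the diagonal embedding $\OS \hookrightarrow \kS = \prod_{\nu\in\calS}k_\nu$ realizes $\OS$ as a cocompact lattice (as noted at the opening of Section~6). The statement is equivalent to saying that in $\OS/c\OS$ one can choose representatives of bounded $|\cdot|_\calS$-norm after rescaling by $c$, which is precisely what cocompactness gives once we pass to the quotient by the sublattice $c\OS\subset \kS$.

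Concretely, the first step is to dispose of the degenerate case $c=0$ (take $a'=a$ and absorb into the implicit constant, or assume $a=0$ there) so that we may assume $c\neq 0$. Since $c\neq 0$ in $k$, it is nonzero in each $k_\nu$ with $\nu\in\calS$, and so the componentwise quotient $-a/c\in \kS$ is well-defined. Fix once and for all a relatively compact fundamental domain $F\subset \kS$ for the lattice $\OS$, and set
\[
R \;=\; \sup_{x\in F}\ \max_{\nu\in\calS}|x|_\nu \;<\;\infty.
\]
This constant depends only on $\OS$.

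The second step is to apply this directly: there exists $m\in \OS$ such that $-a/c - (-m) = -a/c + m$ lies in $F$, i.e.\ $\max_{\nu\in\calS}|a/c + m|_\nu \le R$. Multiplying through by $c$ and using that each absolute value $|\cdot|_\nu$ is multiplicative, we get, for every $\nu\in\calS$,
\[
|a + mc|_\nu \;=\; |c|_\nu\cdot |a/c + m|_\nu \;\le\; R\,|c|_\nu \;\le\; R\,|c|_\calS.
\]
Taking the maximum over $\nu\in\calS$ and setting $a' := a + mc \in a + c\OS$ yields $|a'|_\calS \le R\,|c|_\calS$, which is the required bound.

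The only potential obstacle is the well-definedness of the componentwise quotient $-a/c$ in $\kS$ and the uniformity of the constant $R$; both are immediate from $c\neq 0$ in $k$ and from the (standard) compactness of a fundamental domain for the cocompact embedding $\OS\hookrightarrow\kS$. No deeper input from the arithmetic of $\OS$ (class number, Dirichlet units, Lemmas \ref{lmo3}--\ref{lmo2}) is required for this particular statement.
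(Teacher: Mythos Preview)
Your proof is correct and follows essentially the same approach as the paper: both use that $\OS$ is a cocompact lattice in $\kS$, fix a bounded fundamental domain, and observe that (after scaling by $c$) this domain contains a representative of each coset in $\OS/c\OS$. The paper phrases this as ``$c\mathcal P$ contains a full set of representatives for $\OS/c\OS$'' while you divide by $c$ first and multiply back, but the arguments are identical; note only a harmless sign slip where $-a/c+m\in F$ becomes $|a/c+m|_\nu\le R$ in your write-up (replace $m$ by $-m$, or set $a'=a-mc$).
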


\proof Let $K_\calS=\bigoplus_{\nu\in \calS} K_\nu$. Fix a closed fundamental set (parallelepiped)
$\mathcal P$ for the lattice $\OS< K_\calS$. Observe that the set $c {\mathcal P}$ contains a full set
of representatives for $\OS/c\OS$. Hence we can find an element $a'\in a+c\OS$ such that $|a'|_\calS
\le |c|_\calS \red{\sup }_{x\in {\mathcal P}} |x|_\calS$.
\endproof

%See Section \ref{sub:sec:generators}

\subsection{Choice of a generating set.}\label{sub:sec:generators}
%\subsubsection{\null}
\label{ssb:generators} Since all the Cayley graphs $\Cay(\SL_3(\OS),T)$ for various finite generating
sets $T$  are quasi-isometric, it will be convenient in the argument to have a sufficiently rich
generating set.

\subsubsection{}
\label{gen12}
 Fix a finite set of generators $e_{\ell}$ , $1\le \ell \le N$, of the $\ZZ$-module $\O$.

Let
\[
S_0= \left\{ E_{i,j}(e_{\ell})\st i\neq j \in\{1,2,3\}\ , 1\le \ell\le N\right\}\, ,
\]
\[
S_{1}= \left\{ \left(\begin{array}{ccc}
 p & 0 & 0\\
 0 & p^{-1}  & 0 \\
 0  & 0 & 1
\end{array}
\right)^{\pm 1}, \left(\begin{array}{ccc}
 p &  0 & 0 \\
0 &1& 0\\
 0  & 0 & p^{-1}  \end{array}
\right)^{\pm 1}, \left(\begin{array}{ccc}
 1 & 0  & 0\\
 0 &p & 0    \\
 0  & 0 & p^{-1}
\end{array}
\right)^{\pm 1} \right\}\, ,
\]
where  $p$ ranges over the primes in $\calS$ restricted to $\ZZ$,
\[
S_{2}=\left\{ \left(
\begin{array}{ccc}
 1 & 0  & 0 \\
 0  & 0  & 1\\
 0  & -1 & 0
\end{array}
\right)^{\pm 1} , \left(
\begin{array}{ccc}
 0  & 1 & 0 \\
 -1& 0  & 0 \\
  0 & 0  & 1
\end{array}
\right)^{\pm 1} , \left(
\begin{array}{ccc}
  0 & 0  & 1\\
  0 & 1 & 0 \\
 -1& 0  & 0
\end{array}
\right)^{\pm 1} \right\}\, .
\]

\subsubsection{}
\label{gen34}

Recall that by the Dirichlet unit theorem the group of units of $\OS$ is a product of some finite \red{Abelian} group and $t=r_1+r_2-1 + r_3$
 cyclic groups, where $r_1$ is the number of places $\nu\in\calS$
 such that $\k_\nu=\RR$, $r_2$ is the number of places $\nu\in\calS$
 such that $\k_\nu=\CC$
 and $r_3$ is the number of non archimedean places in $\calS$. In case $t=0$, i.e., $\OS=\ZZ$, we shall choose a hyperbolic matrix $A\in\SL_{2}(\ZZ)$,
  and denote $\A=\{ A\}$.
 When $t>0$ let $\lambda_i$, $1\le i\le t$, be fixed generators of this product
 of cyclic groups. Let us denote by $\A$ the following set of
 elements
 \[
 \A=\left\{A(\lambda_i)=\left(\begin{array}{cc} \lambda_i & 0 \\ 0 &
 \lambda_i^{-1}\end{array}\right) \st 1\le i \le t\right\}\, .
 \]
 Fix some constant $0< c < 1$. For each $\nu\in \calS$ let us denote by $NC_\nu(\A)$ the
 following  set:
 \[
 NC_\nu(\A)= \{v\in \k_\nu^{2} \st \|  v A\|_{\nu}
> c \|v\|_{\nu} \ \forall A\in\langle\A\rangle\}\cup \{0\}
\]
Observe that for an appropriate $0<c<1$ we can find finitely many elements $\gamma_{i}$, $1\le i\le M$,
such that if we denote $\A^{\gamma_{i}}=\{\gamma_{i}^{-1}A\gamma_{i} \st A\in\A\} $ then
$\bigcup_{i=1}^{M}NC_\nu(\A^{\gamma_{i}}) = \k_\nu^2$, for each $\nu\in\calS$.

 Moreover, we may choose enough $\gamma_{i}$'s so
that for any line in $\k_\nu^2$ we will have some $\A^{\gamma_{i}}$ so that each of its eigenspaces
form an angle  $\pi/3 < \psi < 2\pi/3$ with the given line. Let
\[
S_{3}=\left\{\left(
\begin{array}{cc}
A^{\pm 1}& \begin{array}{c} 0\\ 0 \end{array} \\
\begin{array}{cc}0 & 0\end{array} & 1
\end{array}
\right), \left(
\begin{array}{cc}
1& \begin{array}{cc} 0 & 0\end{array} \\
\begin{array}{c} 0 \\ 0 \end{array} & A^{\pm 1}
\end{array}
\right) \st A\in \A^{\gamma_{i}}, \ 1\le i \le M \right\}\, .
\]

Let $S_4$ be a (finite) set of generators of $\SL_2(\OS)$ embedded into $\SL_3(\OS)$ as the lower right
corner.

\subsubsection{}\label{SFiveSix}
If $K= \QQ[\sqrt{-d}]$ for some $d\ge0$ then we also define $S_5$ and $S_6$  as follows (if $K$ is not
of this form then $S_5=S_6=\emptyset$). Fix a geometrically finite fundamental domain $\mathcal F$ of
the action of $\SL_2(\OS)$ on the corresponding symmetric space, which is the hyperbolic space $\HH^n$
of dimension $n=2,3$, note that in this case $\SL_2(\OS)$ is of real rank 1. For each face of $\mathcal
F$  we include in $S_5$ a generator taking $\mathcal F$ to the neighboring domain. Let
$\Omega_0,...,\Omega_\kappa$ be the points at infinity of $\mathcal F$. Let $P_0,...,P_\kappa$ be the
stabilizers of $\Omega_0,\ldots,\Omega_\kappa$ in $\SL_2(K)$. We assume that $P_0$ is the group of
upper triangular matrices in $\SL_2(K)$. Then for every $i=1,...,\kappa$ there exist $\alpha_i\in
\SL_2(K)$ conjugating $P_0\cap \SL_2(\OS)$ to $P_i\cap \SL_2(\OS)$. Let $\alpha_0= e$. In order to
define $S_6$, we need the following statement.

\begin{lemma}\label{t} There exist a finite set of matrices ${\mathcal T}= \left\{T_1,\ldots, T_\iota\right\}\subset \SL_3(\OS)$ such that for every $L\in \SL_3(\OS)$ there exists $T_j$,$1\le j\le\iota$, such that $\alpha_i LT_j\alpha_i^{-1}\in \SL_3(\OS)$ for every $i=0,\ldots,\kappa$. We may also require that the identity belongs to $\mathcal T$.
\end{lemma}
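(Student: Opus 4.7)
The plan is to reduce the lemma to the standard fact that a congruence subgroup of $\SL_3(\OS)$ has finite index, using the group
\[
H_i = \{L \in \SL_3(\OS) \st \alpha_i L \alpha_i^{-1} \in \SL_3(\OS)\} = \SL_3(\OS) \cap \alpha_i^{-1} \SL_3(\OS) \alpha_i,
\]
where the $\alpha_i$ are viewed (via the same embedding $\SL_2 \hookrightarrow \SL_3$ used to define $S_4$) as elements of $\SL_3(K)$. I would first show that each $H_i$, and hence the intersection $H=\bigcap_{i=0}^{\kappa}H_i$, has finite index in $\SL_3(\OS)$. Then I would take $\mathcal T=\{T_1,\ldots,T_\iota\}$ to consist of the inverses of a set of right coset representatives of $H$ in $\SL_3(\OS)$, with the representative of $H$ itself chosen to be $e$, so that $e\in \mathcal T$.

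Granted finite index, the lemma follows at once: if $\SL_3(\OS) = \bigsqcup_j H T_j^{-1}$, then for any $L\in\SL_3(\OS)$ there exists $j$ with $L = h T_j^{-1}$ for some $h\in H$, hence $LT_j = h \in H$ and thus $\alpha_i L T_j \alpha_i^{-1} \in \SL_3(\OS)$ for every $i = 0, \ldots, \kappa$.

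To show that $H$ has finite index, I would exhibit a principal congruence subgroup contained in $H$. Since each $\alpha_i$ has entries in $K$, there is a nonzero $d_i \in \OS$ such that both $d_i \alpha_i$ and $d_i \alpha_i^{-1}$ have entries in $\OS$. Set $D = \prod_{i=0}^{\kappa} d_i^2 \in \OS \setminus \{0\}$ and consider
\[
\Gamma(D) = \ker\bigl(\SL_3(\OS) \longrightarrow \SL_3(\OS / D\OS)\bigr).
\]
For $L = I + DM \in \Gamma(D)$ with $M$ having entries in $\OS$, we compute
\[
\alpha_i L \alpha_i^{-1} = I + D \cdot \alpha_i M \alpha_i^{-1},
\]
and since $D$ is divisible by $d_i^2$, the matrix $D\cdot\alpha_i M \alpha_i^{-1}$ has entries in $\OS$. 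Thus $\Gamma(D) \subseteq H_i$ for each $i$, so $\Gamma(D) \subseteq H$. Because $\OS$ is a Dedekind domain, the quotient $\OS/D\OS$ is finite (Lemma \ref{lmo1}), hence $\SL_3(\OS/D\OS)$ is finite and $\Gamma(D)$ has finite index in $\SL_3(\OS)$. Therefore $H$ does too, completing the proof.

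The argument is essentially a commensurability statement, so there is no real obstacle; the only point requiring care is fixing once and for all the embedding $\SL_2(K) \hookrightarrow \SL_3(K)$ (as the lower-right $2\times 2$ block, consistent with the definition of $S_4$), and verifying that $\OS/D\OS$ is finite in the $\calS$-integer setting, both of which are routine.
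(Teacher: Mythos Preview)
Your proof is correct and follows essentially the same line as the paper's: both find a nonzero $D\in\OS$ so that the principal congruence subgroup $\Gamma(D)$ is mapped into $\SL_3(\OS)$ under conjugation by every $\alpha_i$, and then take the $T_j$ from a set of coset representatives. Your write-up is in fact a bit more careful than the paper's (you explicitly define $H=\bigcap_i H_i$, check the coset bookkeeping, and note why $\OS/D\OS$ is finite), but the underlying idea is identical.
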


\proof Every element of $K$ is a fraction with numerator and denominator from $\OS$. Consider the
generic matrix $X=\mt{x_{11}}{x_{12}}{x_{13}}{x_{21}}{x_{22}}{x_{23}}{x_{31}}{x_{32}}{x_{33}}$ and the
matrices $\alpha_iX\alpha_i^{-1}$. The entries of these matrices are linear polynomials in $x_{11},...,
x_{33}$ with coefficients from $K$. Let $D$ be the product of denominators of all these coefficients.
Then for every $i$ and every $L\in \SL_3(\OS)$ the matrix $\alpha_i(1+DL)\alpha_i^{-1}$ belongs to
$\SL_3(\OS)$. Now it is enough to take $T_1,...,T_\iota$ to be representatives of the right cosets of
the congruence subgroup of $\SL_3(\OS)$ corresponding to $D$.
\endproof

\subsubsection{}\label{SSix}
 Now $S_6$ is defined by:
 \[
 S_6 = \left\{ \alpha_i T^{-1} sT'\alpha_i^{-1} \st  0\le i \le \kappa, \
 T, T'\in {\mathcal T}, \ s\in \bigcup\limits_{m=0}^{5} S_m
  \right\}\cap \SL_3(\OS)\, .
  \]
  Note that for any choice of $T\in {\mathcal T}$
  and $s\in \bigcup\limits_{m=0}^{5} S_m$ there exists at least one $T'$
  so that the corresponding element $\alpha_i T^{-1} sT'\alpha_i^{-1}$ belongs to $\SL_3(\OS)$.

Let us fix $S=S_0 \cup ...\cup S_{6}$ as the set of generators of $\SL_3(\OS)$.

\subsection{Proof of Theorem~\ref{thm:SLd:NoCutPt}}

In order to show that asymptotic cones of $\Gamma=\SL_3(\OS)$ do not have cut\red{-}points it suffices to
show that any two elements $\alpha,\beta\in\Gamma$ are exteriorly connected. This follows immediately
from the following two lemmas:

\begin{lemma}\label{lem:gamma:EC:u}
Let $\gamma\in\SL_3(\OS)$. There exists $\alpha\in M= \left(\begin{array}{ccc}
1 & 0  & 0 \\
* & 1 & 0 \\
* & 0  & 1
\end{array}\right)$
such that
\begin{enumerate}
\item
$\d_S(\alpha,e)\approx \d_S(\gamma,e)$, that is $\gamma$ and $\alpha$ are approximately of the same
size.
\item
$\gamma$ is exteriorly connected to $\alpha$.
\end{enumerate}
\end{lemma}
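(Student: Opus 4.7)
The plan is to construct an explicit exterior trajectory from $\gamma$ to a suitable $\alpha \in M$ as a sequence of right-multiplications by generators, so that each intermediate matrix has size comparable to $\|\gamma\|_\calS$. Since modifying $\gamma$ on either side by a bounded-length word in $S$ changes $d_S(\gamma,e)$ by at most an additive constant and allows extension of any exterior path, I would first use generators from $S_0$ and the Weyl permutations in $S_2$ to bring $\gamma$ into a convenient normal form: the first column $(a,d,g)^T$ should satisfy $|a|_\calS\approx|d|_\calS\approx|g|_\calS\approx\|\gamma\|_\calS$. This normalization is possible because the first column of $\gamma$ automatically generates $\OS$ as an ideal (by cofactor expansion of $\det\gamma=1$), and because applying a Weyl permutation from $S_2$ brings a column containing an entry of size $\|\gamma\|_\calS$ into the first position, after which a bounded number of elementary operations from $S_0$ redistributes size across the three entries.

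For the target I would choose $\alpha=\mat{1}{0}{0}{d_{0}}{1}{0}{g_{0}}{0}{1}\in M$ with $d_{0},g_{0}\in\OS$ selected via Lemma~\ref{module} so that $d_{0}\equiv d\pmod{a\OS}$, $g_{0}\equiv g\pmod{a\OS}$, and $|d_{0}|_\calS,|g_{0}|_\calS=O(|a|_\calS)\approx\|\gamma\|_\calS$. With this choice, $\alpha^{-1}\gamma$ lies (up to a bounded factor coming from the quotient in Lemma~\ref{module}) in the upper-triangular parabolic subgroup $P$ stabilizing the line $\la e_1\ra$, and its $\SL_2(\OS)$-block in the lower right can be expressed as a product of $O(\log\|\gamma\|_\calS)$ generators from $S_4$. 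Writing $\alpha^{-1}\gamma$ as a product of generators by combining an LMR-type decomposition inside the diagonal part with the generators in $S_0,S_1,S_3,S_4$, the effective stable range (Lemma~\ref{lmo2}) ensures the intermediate multipliers remain of polynomial size in $\|\gamma\|_\calS$; hence the resulting word length is $O(\log\|\gamma\|_\calS)\approx d_S(\gamma,e)$ by the Lubotzky--Mozes--Raghunathan theorem (Theorem~\ref{lmr}).

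To verify the exterior condition I would split the trajectory into three phases. First, a \emph{blow-up} phase in which multiplication by elements of $S_1$ or $S_3$ inflates some off-diagonal entry by a modest power of $\|\gamma\|_\calS$, establishing a \emph{protected coordinate} of guaranteed large norm. Next, a \emph{reduction} phase in which elementary generators from $S_0$ clear the unwanted entries of $\alpha^{-1}\gamma$, carried out so that the protected coordinate (typically living in the third column or the lower-right $2\times 2$ block) is never affected and therefore keeps $\|g_i\|_\calS\gtrsim\|\gamma\|_\calS$ throughout. Finally a \emph{descent} phase deflates the matrix back down to size $\|\gamma\|_\calS$ as it enters $M$. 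At every stage the higher-rank structure is essential: there is always a spectator row or column not currently being reduced that prevents $g_i$ from drifting close to the identity.

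The main obstacle is the reduction phase: one must verify that \emph{no} right-multiplication in this phase produces massive cancellation that drops $\|g_i\|_\calS$ below $\|\gamma\|_\calS^{\kappa}$. This is exactly where the careful design of the generating set in Section~\ref{ssb:generators} enters---the non-contracting generators in $S_3$, chosen so that for every line in $\k_\nu^2$ some conjugate $\A^{\gamma_{i}}$ expands it by a definite factor, guarantee that the protected coordinate is never contracted by an ill-chosen generator, and the auxiliary generators $S_6$ handle the delicate case $\k=\QQ[\sqrt{-d}]$ where $\SL_2(\OS)$ is rank-one and one must avoid descending into the cusps via the conjugators $\alpha_i$. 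Once these cases are handled uniformly, the exterior constant $\kappa$ can be chosen independently of $\gamma$, completing the proof.
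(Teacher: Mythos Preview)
Your construction of the target $\alpha$ does not do what you claim. You want $\alpha^{-1}\gamma$ to lie in the parabolic $P$ stabilizing $\langle e_1\rangle$, i.e.\ to have first column $(a,0,0)^T$. But with $\alpha=\mat{1}{0}{0}{d_0}{1}{0}{g_0}{0}{1}$ the first column of $\alpha^{-1}\gamma$ is $(a,\,d-d_0a,\,g-g_0a)^T$, so you need $a\mid d$ and $a\mid g$ in $\OS$, which is of course false in general. Choosing $d_0\equiv d\pmod{a\OS}$ via Lemma~\ref{module} gives $d_0=d+ka$ for some $k$, and then $d-d_0a=d(1-a)-ka^2$, which is neither zero nor bounded. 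The phrase ``up to a bounded factor'' does not rescue this: there is no bounded correction that lands you in $P$. Your preliminary normalization is also unjustified: you cannot make all three entries of the first column large using \emph{boundedly} many generators from $S_0$, since right-multiplication by $S_0$ performs column operations and does not allow you to redistribute size \emph{within} a single column. Beyond this, the ``blow-up/reduction/descent'' outline is a plan rather than a proof---the whole difficulty lies in checking that no single step produces catastrophic cancellation, and you have not specified a single concrete step.

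The paper proceeds quite differently: rather than fixing the target $\alpha$ in advance, it transforms $\gamma$ by an explicit sequence of ten right-multiplications (column operations) that reduce the \emph{first row} to $(1,0,0)$, so that what remains is $\mat{1}{0}{0}{x}{*}{*}{y}{*}{*}$, and then clears the lower-right $\SL_2$ block. The engine is Lemma~\ref{lem:gamma:EC:gammaL}: as long as the first column contains a large entry, right-multiplication by any $\theta\in L$ gives an exterior trajectory, because the non-contracting conjugates $\A^{\gamma_\ell}$ in $S_3$ were chosen precisely so that some $\A^{\gamma_\ell}$ expands the relevant vector $(a_{j1},a_{j2})$ at the dominant place. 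The effective stable range (Lemma~\ref{lmo2}) is used once, to make two entries of the first row coprime; after that, each step is an explicit elementary operation whose exterior property is verified directly. The delicate point is Step~10, reducing the $\SL_2(\OS)$ block when that group has rank one, which is where $S_5,S_6$ and the conjugators $\alpha_i$ actually enter.
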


\begin{lemma}\label{lem:u:EC:u}
Given any $\alpha=\left(\begin{array}{ccc}
1 & 0  & 0 \\
u_1 & 1 & 0 \\
u_2   & 0  & 1
\end{array}\right)$ , $\beta=\left(\begin{array}{ccc}
1 & 0  & 0 \\
v_1 & 1 & 0 \\
v_2   & 0  & 1
\end{array}\right)$ with $u_i,v_i\in \OS$ there exists an exterior
trajectory connecting them.
\end{lemma}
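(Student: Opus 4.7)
The plan is to construct an explicit exterior trajectory from $\alpha$ to $\beta$ using a Lubotzky--Mozes--Raghunathan--style efficient word for $\alpha^{-1}\beta\in M$. Since $M$ is abelian, write $w=\alpha^{-1}\beta=E_{21}(a)E_{31}(b)$ where $a=v_1-u_1$, $b=v_2-u_2$. Using the Steinberg commutator relation
$$E_{21}(a)=[E_{23}(a),E_{31}(1)]=E_{23}(a)E_{31}(1)E_{23}(-a)E_{31}(-1)$$
together with its analog $E_{31}(b)=[E_{32}(b),E_{21}(1)]$, I reduce to expressing the four ``large'' elementary matrices $E_{23}(\pm a),E_{32}(\pm b)$ efficiently. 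For each such $E_{ij}(x)$ with $|x|_\calS$ large I write $E_{ij}(x)=D\,E_{ij}(c)\,D^{-1}$ with $|c|_\calS=O(1)$ and $D=g_1\cdots g_k$ a product of $k=O(\log|x|_\calS)$ diagonal generators from $S_1\cup S_3$ (using primes of $\calS$ and fundamental units as in the LMR construction). This yields a word for $w$ of length $O(\log\|w\|_\calS)=O(\dist_S(\alpha,\beta))$ in the generators $S$, so the candidate path $\alpha,\alpha s_1,\alpha s_1 s_2,\ldots,\beta$ has length of the correct order.

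The main technical point is to verify that every partial product $\alpha\cdot(\text{prefix})$ along this word has $\|{\cdot}\|_\calS\ge\min(\|\alpha\|_\calS,\|\beta\|_\calS)^{\kappa}$ for an absolute $\kappa\in(0,1)$, which, by LMR, translates into distance $\ge\kappa\cdot\dist_S(e,\{\alpha,\beta\})$. Cases in which the sizes are very unbalanced ($\|\alpha\|_\calS\ll\|\beta\|_\calS$ or $\|w\|_\calS\ll\min(\|\alpha\|_\calS,\|\beta\|_\calS)$) are handled by the triangle inequality alone: since $\dist_S(\alpha,\beta)\sim\log\|w\|_\calS$, every vertex on the path lies within that small radius of $\alpha$ or $\beta$, so its distance to $e$ is automatically of order $\dist_S(e,\{\alpha,\beta\})$. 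In the remaining ``balanced'' case $\|\alpha\|_\calS\approx\|\beta\|_\calS\approx\|w\|_\calS\approx R$, I track the entries directly: when $\alpha$ is multiplied by successive diagonal generators $g_1,\ldots,g_j$ with $(g_i)_{11}=1/p$, the matrix $\alpha\,g_1\cdots g_j$ has a distinguished entry of order $\max(R p^{-j},p^{j})$, which dips only as low as $\sqrt R$, and once the bounded factor $E_{ij}(c)$ has been inserted, the resulting large off-diagonal entry of size $\sim|x|_\calS$ dominates all subsequent partial products up to the conjugating $D^{-1}$.

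The most delicate issue is the order in which one applies the two commutator blocks. A naive choice can introduce an intermediate $\alpha E_{21}(a)=U(v_1,u_2)$ (or $\alpha E_{31}(b)=U(u_1,v_2)$) whose norm collapses well below $\sqrt R$; e.g.\ for $\alpha=U(R,0)$, $\beta=U(0,R)$, applying $E_{21}$ first produces $e$ itself as a vertex, while applying $E_{31}$ first yields $U(R,R)$ of norm $R$. The remedy is to choose the order so that the intermediate waypoint has norm comparable to $\max(\|\alpha\|_\calS,\|\beta\|_\calS)$, which is always possible since at least one of $|u_1|,|u_2|$ and at least one of $|v_1|,|v_2|$ realizes this maximum. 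Combining these three ingredients---the Steinberg decomposition, the dip estimate $\sqrt R$ for the diagonal conjugation phase, and the correct ordering of commutator blocks---produces a path of length $O(\dist_S(\alpha,\beta))$ on which every vertex is at distance $\ge\tfrac12\dist_S(e,\{\alpha,\beta\})-O(1)$ from $e$, giving a $\kappa$-exterior trajectory for any $\kappa<\tfrac12$. The hard part is the entry-by-entry verification of the lower norm bound through every phase of the word.
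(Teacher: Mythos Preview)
Your approach differs substantially from the paper's. The paper stays entirely inside the solvable group $M\rtimes H$, where $H=\langle\A^{\gamma_\ell}\rangle$ is embedded in the lower-right $2\times2$ block. It first produces an LMR-type short word $s_1\cdots s_n$ for $U(w)$ inside this group (so every partial product has the shape $\bigl(\begin{smallmatrix}1&0\\z&B\end{smallmatrix}\bigr)$, and left-multiplying by $\alpha$ simply replaces $z$ by $u+z$), and then \emph{conjugates the entire word} by a translation $U(p,q)$, where $(p,q)$ lies near an expanding eigen-direction of a suitable $A\in\A^{\gamma_\ell}$ and has size $\approx\|u\|_\calS$. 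Because $M$ is abelian this conjugation does not change the product, but it rigidly translates the projection of the trajectory to $K_{\nu_0}^2$ away from the origin, which is what yields the exterior condition. There are no Steinberg commutators, no reduction to $E_{23}$ or $E_{32}$, no ordering of blocks, and no $\sqrt{R}$ dip.

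Your reduction has a genuine gap in the two most classical cases, $\OS=\ZZ$ and $\OS=\O_K$ for $K$ imaginary quadratic with $\calS=\{\infty\}$. There $\OS^*$ is finite and $\calS$ contains no finite primes, so $S_1=\emptyset$ and the set $\A$ underlying $S_3$ consists of a single \emph{non-diagonal} hyperbolic $A\in\SL_2(\ZZ)$. Consequently no diagonal $D\in\langle S_1\cup S_3\rangle$ with $d_2/d_3$ large exists: conjugating $E_{23}(c)$ by $\bigl(\begin{smallmatrix}1&0\\0&A\end{smallmatrix}\bigr)$ lands in the full lower-right $\SL_2$, not back in the root subgroup, so the expression $E_{ij}(x)=D\,E_{ij}(c)\,D^{-1}$ is simply unavailable. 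If instead you try to realize $E_{23}(a)$ by an LMR word in $L(H)$ with $H$ in the upper-left block, the partial products $\bigl(\begin{smallmatrix}A&\ast\\0&1\end{smallmatrix}\bigr)$ applied to $\alpha=U(u)$ give first column $(a_{11},\,u_1a_{11}+a_{21},\,u_2a_{11})$, and $|a_{11}|$ can be as small as $\|A\|^{-1}\approx|a|^{-1}$, so the first-column norm collapses to $O(1)$.

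Even when diagonal generators do exist, your dip analysis is inconsistent with your own scheme: for $E_{23}$ and $E_{32}$ one naturally takes $D=\mathrm{diag}(1,p^{\pm1},p^{\mp1})$, which \emph{fixes the first column of $\alpha D^j$ entirely} (no dip at all), whereas you write $(g_i)_{11}=1/p$, which corresponds to conjugating $E_{21}$ directly and contradicts your Steinberg reduction. The ordering trick for the single waypoint $U(v_1,u_2)$ versus $U(u_1,v_2)$ is correct, but it is not the only intermediate first-column configuration: after the inner $E_{31}(1)$ or $E_{21}(1)$ of each commutator you also pass through mixtures such as $(1,v_1,u_2+1)$, whose sizes must be tracked as well.
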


A basic tool in proving these lemmas is:

\begin{lemma}\label{lem:gamma:EC:gammaL}
Let $\gamma\in\SL_3(\OS)$ be an element having some large entry in the first column. For any
$\theta=\left(\begin{array}{ccc} 1 & 0 & m\\ 0 & 1 & n \\ 0 & 0 & 1 \end{array}\right)\in L$, $m,n\in
\OS$ there exists an exterior trajectory from $\gamma$ to $\gamma\theta$.
\end{lemma}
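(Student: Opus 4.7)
The key observation is that right-multiplication of $\gamma$ by any element of $L$ preserves its first two columns, so the hypothesized large entry $a$ in the first column of $\gamma$ persists unchanged in $\gamma\theta$. More generally, if $\mu \in \SL_3(\OS)$ has first column $(\lambda, 0, 0)^\top$ with $|\lambda|_\calS \geq 1$, then $c_1(\gamma\mu) = \lambda \cdot c_1(\gamma)$ contains an entry of $\calS$-norm at least $|a|_\calS \geq \|\gamma\|_\calS^C$, so $\|\gamma\mu\|_\calS \geq \|\gamma\|_\calS^C$. The plan is therefore to construct the trajectory by right-multiplying $\gamma$ along a short word representing $\theta$ whose every prefix $\mu_i$ has first column of the form $(\lambda_i, 0, 0)^\top$ with $|\lambda_i|_\calS \geq 1$.

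Since $L$ is abelian, write $\theta = E_{13}(m)E_{23}(n)$ and treat each factor separately---the intermediate element $\gamma E_{13}(m)$ still has $c_1(\gamma)$ as its first column, so the second leg again starts with a large entry in its first column. Following the Lubotzky--Mozes--Raghunathan construction \cite{LMR:Cyclic}, factor
\[
E_{13}(m) \ = \ \prod_{j} \tau_j \, E_{13}(e_{\ell_j})^{\epsilon_j} \, \tau_j^{-1},
\]
where $e_{\ell_j}$ is one of the fixed $\ZZ$-module generators of $\O$, $\epsilon_j \in \{\pm 1\}$, and $\tau_j$ is a word in the diagonal generators $\mathrm{diag}(p, p^{-1}, 1)^{\pm 1} \in S_1$ ($p \in \calS$ a prime) whose conjugation produces the desired coefficient of $e_{\ell_j}$. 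The analogous factorization of $E_{23}(n)$ uses the diagonals $\mathrm{diag}(1, p, p^{-1})^{\pm 1} \in S_1$. By the LMR word-length estimate the total length of the resulting word is $O(\log \|\theta\|_\calS) \asymp \d_S(\gamma, \gamma\theta)$.

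Next, I would trace the first column of the accumulated multiplier along the resulting path. Between completed blocks it lies in $L$, so the first column is $e_1$. Within a single block, after reading a prefix of $\tau_j$ the accumulated multiplier has first column $(p^e, 0, 0)^\top$ with $e \geq 0$; the central letter $E_{13}(e_{\ell_j})^{\epsilon_j}$ has first column $e_1$ and so leaves $e$ unchanged; reading a prefix of $\tau_j^{-1}$ decreases $e$ but cannot drive it below $0$, since $e$ is scheduled to return to $0$ precisely at the end of the block. Since $|p|_\calS \geq 2$ for every prime $p \in \calS$, we have $|p^e|_\calS \geq 1$ at every such intermediate state, and so $\|\gamma_i\|_\calS \geq |a|_\calS \geq \|\gamma\|_\calS^C$. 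The LMR quasi-isometry $\d_S(\cdot, e) \asymp \log\|\cdot\|_\calS$ then yields $\d_S(\gamma_i, e) \geq C' \d_S(\gamma, e) \geq C' \cdot \dist_S(e, \{\gamma, \gamma\theta\})$ for a fixed constant $C' > 0$, giving the required $\kappa$-exteriority with $\kappa = C'$.

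The main obstacle is producing the explicit LMR-style factorization of $E_{13}(m)$ and $E_{23}(n)$ as a product of conjugates $\tau_j \, E_{i3}(e_{\ell_j})^{\epsilon_j} \, \tau_j^{-1}$ with total $S$-word length linear in $\log\|\theta\|_\calS$; this amounts to the arithmetic core of the LMR argument, requiring an efficient decomposition of $m, n \in \OS$ in terms of the prime valuations in $\calS$ and the $\ZZ$-module basis of $\O$. Once such a factorization is available, the first-column bookkeeping above is routine.
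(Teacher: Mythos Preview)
There is a genuine gap in the length estimate. The product-of-conjugates factorization $E_{13}(m)=\prod_j \tau_j\,E_{13}(e_{\ell_j})^{\epsilon_j}\,\tau_j^{-1}$ does \emph{not} have length $O(\log|m|_\calS)$: each block costs $2|\tau_j|+1$ letters, and to build $m$ out of terms $\pm p^{k_j}e_{\ell_j}$ you need on the order of $\log|m|$ blocks with $k_j$ ranging up to $\log|m|$, giving total length $\Theta((\log|m|)^2)$. The LMR construction you invoke is \emph{not} a product of conjugates that returns to $L$ after each block; it is a word in the semidirect product $H\ltimes\OS^{2}$, and the logarithmic bound comes precisely from \emph{not} resetting the $H$-part between successive unipotent letters. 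Once you allow that, the accumulated multiplier lies in $L(H)=\bigl\{\left(\begin{smallmatrix}A&*\\0&1\end{smallmatrix}\right):A\in H\bigr\}$ and its first column is the first column of $A$, which is not of the scalar form $(\lambda,0,0)^\top$ your bookkeeping requires. There is also a case your scheme misses entirely: when $\calS$ contains only archimedean places (e.g.\ $\OS=\ZZ$, or $\OS=\O_K$ with $K$ imaginary quadratic) there are no rational primes in $\calS$, so $S_1=\emptyset$ and your diagonal conjugators $\mathrm{diag}(p,p^{-1},1)$ simply do not exist.

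The paper resolves both issues at once by taking $H=\langle\A\rangle$ with $\A$ either a fixed hyperbolic matrix in $\SL_2(\ZZ)$ (when the unit rank is zero) or the diagonal unit matrices $A(\lambda_i)$, embedded as the upper-left $2\times 2$ block (this is the set $S_3$). The LMR short word for $\theta$ then right-multiplies $\gamma$ by elements of $L(H)$, which mix the first \emph{two} columns of $\gamma$; largeness of an entry is therefore not automatic. This is exactly why the paper introduces the non-contracting cones $NC_{\nu_0}(\A^{\gamma_\ell})$ in \S\ref{gen34} and includes several conjugates $\A^{\gamma_\ell}$ in $S_3$: one picks a place $\nu_0$ at which $a_{j1}$ is large and then a conjugate $\gamma_\ell$ so that $(a_{j1},a_{j2})\in NC_{\nu_0}(\A^{\gamma_\ell})$, guaranteeing $\|(a_{j1},a_{j2})A\|_{\nu_0}\ge c\,\|(a_{j1},a_{j2})\|_{\nu_0}$ for every $A\in H^{\gamma_\ell}$ and hence the exteriority condition along the whole trajectory.
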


\begin{proof}
A path $\omega$ of length $k$ in the Cayley graph connecting $\gamma$ to $\gamma\theta$ corresponds to
a word $\theta=s_1 s_2\dots s_k$ where each $s_i\in S$,  $\omega(i)=\gamma s_1 s_2\dots s_i$, $0\le i
\le k$. Since we want the path to be an exterior trajectory it should satisfy the following conditions:
\begin{enumerate}
\item[$(E_1)$]
$k=\length(\omega) \approx \d_S(\theta,e)$
\item[$(E_2)$]
$\forall 0\le i \le k$, $\d_S(\omega(i),e)\ge \kappa \d_S(\gamma,e)$ for some constant
$\kappa=\kappa(\Gamma)$. (Note that we have that $\d_S(\gamma\theta,e) \ge C\d_S(\gamma,e)$ for some constant $C$
which depends on our notion of an ``entry being large'').
\end{enumerate}

In \cite{LMR:Cyclic} it was shown, in the particular case where $m,n\in \ZZ$ how to construct for any
$\theta=\left(\begin{array}{ccc} 1 & 0 & m\\0 & 1 & n \\ 0 & 0 & 1 \end{array}\right)$ a word of length
$O(\log(n^2+m^2+1))\approx \d_S(\theta,e)$ expressing it in terms of a given generating set. Let us
describe the slightly modified
argument for elements  $\theta=\left(\begin{array}{ccc} 1 & 0 & m\\
0 & 1 & n \\ 0 & 0 & 1 \end{array}\right)\in \SL_3(\OS)$, see also (2.12) in \cite{LMR:metrics}.

Let $H=\langle \A \rangle$ be the subgroup generated by the set $\A$ defined in \ref{gen34}. That is,
either $H$ is a cyclic group generated by some hyperbolic matrix $A\in\SL_2(\ZZ)$ when $\OS=\ZZ$, or
otherwise $$H=\left\{ \left(\begin{array}{cc} \lambda& 0 \\ 0 & \lambda^{-1}\end{array}\right) \st
\lambda \in \OSS\right\}$$ where $\OSS$ is the group of units of $\OS$.
 Consider the group
$\Lambda=H\ltimes\OST$. $\Lambda$ is a finitely generated group which is a cocompact lattice in the
group $H \ltimes \prod_{\nu\in \calS}K_\nu^2$ where $H$ acts on each of the factors of $\prod_{\nu\in
S}K_\nu^2$ via the corresponding embedding of $\OS\in K$ into $K_\nu$. Observe that \cite[$\S 3.15-3.18
$]{LMR:metrics} for each $\nu\in \calS$ the two dimensional vector space $K_\nu^2$ is spanned by
eigenspaces on which $H$ acts with eigenvalues of absolute value strictly bigger than $1$. It follows
as in \cite[section 3]{LMR:metrics} that the restriction to $K_\nu^2$ of the left invariant coarse path
metric on $H \ltimes \prod_{\nu\in S}K_\nu^2$ is such that the distance from the identity of
$\left(\begin{array}{ccc} 1 & 0 & x\\ 0 & 1 & y \\ 0 & 0 & 1
\end{array}\right)$  where $x,y\in K_\nu$ is
$O(\log(|x|_\calS^2+|y|_\calS^2+1))$. If we fix any $\gamma\in\Gamma$, then any  $\theta=\mt10m01n001$
with $m,n\in \OS$ can be expressed as a word  $s_1 s_2 \dots s_k$ of length
$k=O(\log(|n|_\calS^2+|m|_\calS^2+1))$ with respect to a set of generators of the form
\[
S(\gamma_\ell)=\left\{ \mt10{e_j}010001^{\pm 1} , \mt10001{e_j}001^{\pm 1} , \left(
\begin{array}{cc} A  & \begin{array}{c} 0\\ 0\end{array}\\
\begin{array}{cc} 0 & 0 \end{array} & 1\end{array}
\right)^{\pm 1} \right\}
\]
where $\{e_j\}$
 is a finite set generating $\O$ as a $\ZZ$-module
 and $\{A \in \A^{\gamma_\ell}\}$ where $\A$ is as in
 subsection~\ref{gen34} and $1\le \ell \le M$
(see the definition of $S_3$). In particular we have for any $1\le i \le k$ that
\[
s_1 s_2 \dots s_i\in L(H) =
 \left\{\left(\begin{array}{cc}
A    & {\begin{array}{c} s \\ t\end{array}}\\
{\begin{array}{cc}0 & 0\end{array}} & 1
\end{array}
\right) \st A\in H^{\gamma_\ell} \ s,t\in\OS\right\}.
\]
We recall that in our choice of a generating set  $S$ for $\SL_3(\OS)$ we have given ourselves several
possible choices of generators using various (finitely many) conjugates of $H$.

Let us denote $\gamma = \left(\begin{array}{ccc}
a_{11} & a_{12} & a_{13} \\
a_{21} & a_{22} & a_{23} \\
a_{31} & a_{32} & a_{33}
\end{array}
\right)$. By our assumption for some $1\le j\le 3$ we have $a_{j1}$ large. Let $\nu_0\in \calS$ be such
that $|a_{j1}|_{\nu_0} > c_1 |a_{j1}|_\calS$ for some fixed $c_1>0$ depending on $K$ and $\calS$. By
the choice of $S$ in section~\ref{sub:sec:generators} there is some $\gamma_\ell$, $1\le \ell \le M$,
so that $(a_{j1}, a_{j2})\in NC_{\nu_0}(\A^{\gamma_{\ell}})$. We shall use $\A^{\gamma_\ell}$ for
producing a short word representing the element $\theta$. Note that for any $A\in H^{\gamma_{\ell}}$
 $\|(a_{j1}, a_{j2}) A\|_{\nu_0} \ge c_0
\|(a_{j1},a_{j2})\|$
 where $c_0$ is the constant in \ref{sub:sec:generators}. This
 immediately implies that if we use $S(A)\subset S$ for expressing
 $\theta$ then ($E_2$) is satisfied.
\end{proof}
We turn now to the proof of Lemma~\ref{lem:gamma:EC:u}. As we use the right Cayley graph, applying
elements of $S$ on the right corresponds to column operations.

\begin{proof} {\em of Lemma~\ref{lem:gamma:EC:u}.}

\noindent{\bf Step 1.} $\gamma$ is  exteriorly connected to some $\gamma_1=\gamma s$ such that
$\gamma_1$ has  a large
 entry in the first column and $s\in \left\{I,
\left(
\begin{array}{ccc}
  0 & 1 &0  \\
 -1&  0 & 0 \\
  0 &  0 & 1
\end{array}
\right)^{\pm 1} , \left(
\begin{array}{ccc}
  0 & 0  & 1\\
  0 & 1 & 0 \\
 -1&  0 & 0
\end{array}
\right)^{\pm 1} \right\}. $

\noindent{\bf Proof of  1.} If there is no large entry in the first column we can exchange columns (and
reverse sign to keep the determinant $1$). We can also assume (for the sake of simplicity) that the
(1,1)-entry of the matrix is large.

\noindent{\bf Step 2.} $\gamma_1=\left(\begin{array}{ccc}
a & b & c \\
* & * & * \\
* & * & *
\end{array}\right)$
is exteriorly connected to $\gamma_2= \gamma_1 \left(\begin{array}{ccc}
1 & m & k \\
0 & 1 & 0 \\
0 & 0 & 1
\end{array}\right)=
\left(\begin{array}{ccc}
a & b'' & c' \\
* & * & * \\
* & * & *
\end{array}\right)$ so that $b''\OS + c'\OS = \OS$, with $m$ and $n$ polynomially controlled by the
size of $\gamma_1$.

\noindent{\bf Proof of  2.} This follows from Lemma~\ref{lmo2} and Lemma~\ref{lem:gamma:EC:gammaL}
using:
\[
\left(\begin{array}{ccc}
1 & m & k\\
0 & 1 & 0 \\
0 &  0 & 1
\end{array}\right)=
\left(\begin{array}{ccc}
1 & 0 & k\\
0 & 1 & 0 \\
0 & 0  & 1
\end{array}\right)
\left(\begin{array}{ccc}
1 & 0 & 0\\
0 & 0 & -1 \\
0 & 1 & 0
\end{array}\right)
\left(\begin{array}{ccc}
1 & 0 & -m\\
0 & 1 & 0\\
0 & 0 & 1
\end{array}\right)
\left(\begin{array}{ccc}
1 & 0 & 0\\
0 & 0 & 1 \\
0 & -1& 0
\end{array}\right).
\]

\noindent{\bf Step 3.} If $c'=0$ we may jump to step 7 (with clear changes of notation). Otherwise
$\gamma_{2}= \left(\begin{array}{ccc}
a & b'' & c' \\
* & * & * \\
* & * & *
\end{array}\right)$ is exteriorly connected to $\gamma_{3}=\gamma_{2}
\left(\begin{array}{ccc}
1 & 0 & 0\\
0 & 1 & 0 \\
0 & k & 1
\end{array}\right)=
\left(\begin{array}{ccc}
a & b' & c' \\
* & * & * \\
* & * & *
\end{array}\right)$ so that $b'$ is large, and $k$ is polynomially controlled by the size of $\gamma_2$.

\noindent{\bf Proof of  3.} When $c'\neq 0$ we can find $k\in OS$ so that $b'=b''+kc'$ is large and the
size of $k$ is polynomially controlled by the size of $\gamma$. Then apply
Lemma~\ref{lem:gamma:EC:gammaL}, where we use: $\left(\begin{array}{ccc}
1 & 0 & 0\\
0 & 1 & 0 \\
0 & k & 1
\end{array}\right)=
\left(\begin{array}{ccc}
1 & 0 & 0\\
0 & 0 & -1 \\
0 & 1 & 0
\end{array}\right)
\left(\begin{array}{ccc}
1 & 0 & 0\\
0 & 1 & -k \\
0 & 0 & 1
\end{array}\right)
\left(\begin{array}{ccc}
1 & 0 & 0\\
0 & 0 & 1 \\
0 & -1& 0
\end{array}\right)
$.

\noindent{\bf Step 4.} $\gamma_{3}= \left(\begin{array}{ccc}
a & b' & c' \\
* & * & * \\
* & * & *
\end{array}\right)$ is exteriorly connected to $\gamma_4=  \left(\begin{array}{ccc}
b' & -a & c' \\
* & * & * \\
* & * & *
\end{array}\right) = \gamma_3  \left(\begin{array}{ccc}
0 & -1 & 0 \\
1 & 0 & 0 \\
0  & 0 & 1
\end{array}\right)$.

\noindent{\bf Step 5.} $\gamma_4=  \left(\begin{array}{ccc}
b' & -a & c' \\
* & * & * \\
* & * & *
\end{array}\right)$ is  exteriorly connected to $\gamma_5=\gamma_4
\left(\begin{array}{ccc}
1 & u  & 0  \\
0  & 1 & 0  \\
0  & v & 1
\end{array}\right)=
 \left(\begin{array}{ccc}
b' & 1 & c' \\
* & * & * \\
* & * & *
\end{array}\right)$

\noindent{\bf Proof of  5.} Since $\OS= b' \OS + c'\OS$ we have $u',v' \in \OS$ so that $b'u'+c'v'=
a+1$. Observe that for any $u=u'+c'r\in u' +c'\OS$ we have $v=v'- b'r\in\OS$ such that $b'u+c'v=a+1$.
By Lemma~\ref{module} one can choose $u\in u'+c'\OS$ which satisfies $|u|_\calS\le const. |c'|_\calS \,
$, where the constant depends only on $\OS$.

\noindent{\bf Step 6.}
 $\gamma_5=
 \left(\begin{array}{ccc}
b' & 1 & c' \\
* & * & * \\
* & * & *
\end{array}\right)$
is  exteriorly connected to $\gamma_6=\gamma_5 \left(\begin{array}{ccc}
1 & 0  & 0  \\
0  & 1 & -c'  \\
0  & 0  & 1
\end{array}\right) = \left(\begin{array}{ccc}
b' & 1 & 0 \\
* & * & * \\
* & * & *
\end{array}\right).$

\noindent{\bf Proof of 6.} It follows from Lemma~\ref{lem:gamma:EC:gammaL}.

\medskip

\noindent{\bf Step 7.} In step 8 we are going to subtract $b'-1$ times the second column from the
first. To insure that the first column remains large, we first note that  there exists some $x\in \OS$
whose size is polynomially bounded in terms of the size of $\gamma_6$ and such that $\gamma_6$ is
exteriorly connected to $\gamma_7=\gamma_6 \left(
\begin{array}{ccc}
 1 & 0  & 0 \\
 0  & 1 & 0 \\
 0  & x  & 1
\end{array}
\right)$ so that both $\gamma_7$ as well as $\gamma_8=\gamma_7 \left(
\begin{array}{ccc}
 1 & 0  & 0 \\
 1-b'  & 1 & 0 \\
 0 & 0  & 1
\end{array}
\right)$ have large first column.

\noindent{\bf Step 8.} $\gamma_7$ is  exteriorly connected to $\gamma_8=\gamma_7 \left(
\begin{array}{ccc}
 1 & 0  & 0 \\
 1-b'  & 1 & 0 \\
 0 &  0 & 1
\end{array}
\right) =  \left(\begin{array}{ccc}
1 & 1 & 0 \\
* & * & * \\
* & * & *
\end{array}\right)$.

\noindent{\bf Step 9.} $\gamma_8=  \left(\begin{array}{ccc}
1 & 1 & 0 \\
* & * & * \\
* & * & *
\end{array}\right)$ is  exteriorly connected to $$\gamma_9= \gamma_8 \left(
\begin{array}{ccc}
 1 & -1  & 0 \\
 0  & 1 & 0 \\
 0 &  0 & 1
\end{array}
\right) =  \left(\begin{array}{cc}
1 & \begin{array}{cc}0 & 0\end{array} \\
\begin{array}{c} x \\ y\end{array} & B \\
\end{array}\right).$$

\noindent{\bf Step 10.} $\gamma_9$ is  exteriorly connected to $\gamma_{10}= \gamma_9
\left(\begin{array}{cc}
1 & \begin{array}{cc} 0 & 0\end{array} \\
\begin{array}{c} 0 \\ 0 \end{array} & B^{-1}
\end{array}\right)=\mat100x10y01$.

%Note that $\gamma_9$ and $\gamma_10$ are of the
%same size.

\noindent{\bf Proof of  10.} Let us distinguish two cases:

%\noindent
(i) The rank of $\SL_2(\OS)$ is at least $2$.

%\noindent
(ii) The rank of $\SL_2(\OS)$ is $1$, i.e., $\OS=\QQ(\sqrt{-d})$ for some integer $d \ge 0$,
$\calS=\{\infty\}$.

\noindent We shall abuse terminology and identify $\SL_2$ with its image under embedding into $\SL_3$
as the lower right corner. In the case when $\rank(\SL_2(\OS))\ge 2$ we may express (by the results of
\cite{LMR:metrics}) $\left(\begin{array}{cc}
1 & \begin{array}{cc} 0 & 0\end{array} \\
\begin{array}{c} 0 \\ 0 \end{array} & B^{-1}
\end{array}\right)$ as a short word $s_1 s_2\dots s_n$ using a subset of the
generating set which generates $\LRMat{\SL_2(\OS)}$. For any prefix of this word, we have that the
first column of $\gamma_9 s_1 s_2\dots s_i$ is the same as that of $\gamma_9$.

\medskip

In the case when $\rank(\SL_2(\OS))=1$ we may write $B^{-1}$ as a short word in terms of our generating
set using the following procedure.
 Consider the symmetric space $X=\HH^n$, $n=2,3$ associated with our rank $1$ group $\SL_2(K_\infty)$
 tesselated by fundamental domains for the action of $\SL_2(\OS)$.
We may choose an $\SL_2(\OS)$-invariant collection of horoballs so that  if one removes them from $X$
the resulting subset $X_0$ has a compact quotient modulo $\SL_2(\OS)$. Fix a point $O\in X_0$ and
consider the geodesic $\g$ connecting $O$ to $B^{-1} O$. Following that geodesic we obtain a word $s_1
s_2 \dots s_t$ in the generators $S_5$ (see Section~\ref{sub:sec:generators}) expressing $B^{-1}$.
Combining letters corresponding to parts of the geodesic spent inside various horoballs into sub-words,
we obtain a word of the form $W_1 W_2\dots W_k$, where each $W_i$ is either one of the generators $s_j$
corresponding to the geodesic $\g$ passing between domains outside the collection of horoballs, or
$W_i$ corresponds to the part spent in some horoball. Note that if we manage to replace each sub-word
$W_i$ corresponding to a horoball by a word, $w_i$, whose length is comparable to the length of the
part of the geodesic inside the horoball (namely comparable to $\log(1+\|W_i\|$) we shall obtain a word
of the required length expressing $B^{-1}$. Let us show by induction that we can indeed find for each
$W_i$ a short word $w_i$ representing it and such that the resulting trajectory does not get too close
to the identity. Suppose we have already treated $W_1,W_2,\dots,W_{m-1}$ and denote $\gamma'=\gamma_9
W_1 W_2 \dots W_{m-1}= \left(\begin{array}{cc}
1 & \begin{array}{cc}0 & 0\end{array} \\
\begin{array}{c} x \\ y\end{array} & B' \\
\end{array}\right)$. If $W_m=s_j$ for some $1\le j\le t$, i.e., it does not correspond to passing
through a horoball we let the word representing it to be simply $w_m=s_j$ and clearly the trajectory is
an exterior one (since the first column which contains a large element was not changed). Suppose $W_m$
corresponds to passing via some horoball. Let $\Omega_{k_0}$ be the cusp of the fundamental domain $\F$
corresponding to that horoball.

%%=====================Fix the last part of the text.

Conjugating by the element $\alpha = \alpha_{k_0}$ defined in \ref{SFiveSix} we have that
\[
W'_m = \alpha^{-1} W_m \alpha = \mt10001z001 .
\]
Applying lemma~\ref{lem:gamma:EC:gammaL} to $\gamma= \gamma' \alpha$ and $\theta = W'_m$ we obtain a
short word  $r_1 r_2 \dots r_\ell=W'_m$ such that the corresponding trajectory from $\gamma=\gamma'
\alpha$ to $\gamma'\alpha W'_m$ is an exterior trajectory. Notice that actually $\gamma' \alpha$ does
not belong to $\SL_3(\OS)$ but to $\SL_3(K)$ so the notion of ``exterior trajectory" should be
understood with respect to the metric induced from the Riemannian metric on the corresponding symmetric
space. Denote $T_0=e$. For each $r_i$, $1\le i\le \ell-1$,
there is $T_i\in{\mathcal T}$ so that we have a generator
$t_i=\alpha T_{i-1}^{-1} r_i T_i \alpha^{-1}\in S_6$ as in \ref{SSix}. Let $t_\ell= \alpha T_{\ell-1}^{-1}
r_\ell \alpha^{-1}$. Now observe that we have \[ W_m = \alpha W'_m \alpha^{-1} = t_1 t_2 \dots t_\ell\,
.
\]
This in particular implies that $t_\ell$ is indeed an element of $\SL_3(\OS)$ and hence $t_\ell\in
S_6$, and that we obtain an exterior trajectory connecting $\gamma_9$ to
$$\gamma_9 W_1 W_2 \dots W_{m}= \left(\begin{array}{cc}
1 & \begin{array}{cc}0 & 0\end{array} \\
\begin{array}{c} x \\ y\end{array} & B'' \\
\end{array}\right).$$
Repeating this process we shall obtain an exterior trajectory connecting $\gamma_9$ to $$\gamma_{10}=
\left(\begin{array}{cc}
1 & \begin{array}{cc}0 & 0\end{array} \\
\begin{array}{c} x \\ y\end{array} & I \\
\end{array}\right)=\mat100x10y01.$$
%%%%%%%%%%%%%%%%%%%%%%%%%%%%%%%%%%%%%%%%%%%%%%%%%%%%%%%%%%

%%%%%%%%%%%%%%%%%%%%%%%%%%%%%%%%%%%%%%%%%%%%%%%%%%%%
\noindent{\bf Remark.} Note that for each $1\le i \le 9$ the sizes of $\gamma_i$ and $\gamma_{i+1}$ are
polynomially comparable.
\end{proof}

\begin{proof}[Proof of Lemma~\ref{lem:u:EC:u}.]
We are given $\alpha=\Umat{u}$, $\beta=\Umat{v}$ where $u,v \in \OST$. Let us denote $w=v-u$. Our goal
is to produce an exterior trajectory connecting $\alpha$ to $\beta$. We shall construct a word using
generators belonging to a subset of $S_0\cup S_3$. The argument proving Lemma~\ref{lem:gamma:EC:gammaL}
allows us to produce a  short word $s_1 s_2 \dots s_n$  representing $\Umat{w}$. This gives us a path
$\p$ from $\Umat{v}$ to $\Umat{v}$ of the required length. However this path may get too close to the
origin. To avoid getting too close to the origin we shall show that we can ``shift the whole path away
from the origin". We fix an archimedean place $\nu_0$ of $K$. Look at the projection of the path $\p$
at this place in $K_{\nu_0}^2$, identified with $\Umat{K_{\nu_0}^2}$. There is a hyperbolic matrix
$A\in \SL_2(\ZZ)$ such that $\LRMat{A}\in S_3$ which has an eigen-direction so that when translating
the path $\p$ in this eigen-direction, $\p$ is moved away from the origin. Choose a word $t_1 t_2 \dots
t_m$ in the generators $\left\{ \LRMat{A}, \mat100110001, \mat100010101\right\}$ which represents an
element of the form $\mat100p10q01$  so that the vector $\left(\begin{array}{c} p \\ q
\end{array}\right)$ is close (actually within uniformly bounded distance) to the eigen-direction of $A$ and
whose size is comparable to the size of $\|u\|_\calS$, see \cite{LMR:Cyclic}. We claim that the
trajectory corresponding to:
\[
\Umat{v} = \Umat{u}  t_1 t_2 \dots t_m s_1 s_2 \dots s_n (t_1 t_2 \dots t_m)^{-1}
\]
gives a trajectory from $\Umat{u}$ to $\Umat{v}$ which never gets too close to the origin. Indeed
observe that as we go along  the path corresponding to $t_1 t_2 \dots t_m$ we get further away at the
$\nu_0$ place from the origin. We might be getting closer to the origin at some other (archimedean)
place but since the rate we move in any other archimedean place is comparable to the rate at which we
move at the $\nu_0$ place and at any non archimedean place moving along this path does not change our
distance from the origin, we conclude that we are always at a distance which is bounded below by a
fixed positive fraction of $\|u\|_\calS$. Once we have completed the path $t_1 t_2 \dots t_m$, the path
along $s_1 s_2 \dots s_n$ is away from the origin by a distance comparable to $\|u\|_\calS$ and
finally, as before, moving back on $t_1 t_2 \dots t_m$ cannot get us too close to the identity.
\end{proof}

%%%%%%%%%%%%%%%%%%%%%%%%%%%%%%%%
%\bibliographystyle{amsalpha} %
%\bibliography{cornelia} %
%%%%%%%%%%%%%%%%%%%%%%%%%

%\end{document}

\section{The Erratum}
\label{s:err}

The goal of this erratum is to correct Proposition 3.24, and the proofs of Theorems 4.4 and 4.9, relying on that Proposition. We thank the authors of \cite{T}, who kindly pointed out the mistake in the Proposition 3.24 to us.

In what follows we assume that we work in a fixed metric space $(X,\dist)$.

We call {\emph{quasi-geodesic segment in $X$}} a quasi-isometric embedding $\q: [a,b] \to X$, where $a<b$ are two finite real numbers. We call {\emph{bi-infinite quasi-geodesic in $X$}} (or simply {\emph{quasi-geodesic in $X$}}) a quasi-isometric embedding $\q: {\mathbb{R}} \to X$. Given a bi-infinite quasi-geodesic $\q$, its restriction to an interval (finite or infinite) is called a {\emph{sub-quasi-geodesic of $\q$}}.

A quasi-geodesic (segment) is \emph{Morse} if for every $L\ge 1$ and $ C\ge 0$, every $(L,C)$-quasi-geodesic $\pgot$ with endpoints on the image of $\q$ is contained in the $M$-tubular neighborhood of $\q \, $, where $M$ depends only on $L, C$.

The corrected version of Proposition 3.24 is given below.

\begin{proposition}\label{err:prop3}
Let $X$ be a metric space and for every pair of points $a,b\in X$ let $L(a,b)$ be a fixed set of
$(\lambda,\kappa)$-quasi-geodesics (for some constants $\lambda \geq 1$ and $\kappa \geq 0$) of endpoints $a$ and $b$.
Let $L=\bigcup_{a,b\in X} L(a,b)$.

Let $\q$ be a bi-infinite quasi-geodesic in $X$, and for every two points $x,y$ on $\q$
denote by $\q_{xy}$ the maximal sub-quasi-geodesic of $\q$ with endpoints $x$ and $y$.

The following conditions are equivalent:
\begin{itemize}
\item[(1)] In every asymptotic cone of $X$, the ultralimit of $\q$ is
either empty or contained in a transversal tree for some tree-graded structure;
\item[(2)] $\q$ is a Morse quasi-geodesic;
\item[(3)] For every $C\ge 1$ there exists $D\ge 0$ such that every path of length $\le Cn$ connecting two points $a,b$ on $\q$ at distance $\ge n$ crosses the $D$-neighborhood of the middle third of $\q_{ab}$;
\item[(4)] For every $C\geq 1$ and natural  $k>0$ there exists $D\ge 0$ such that every
 $k$-piecewise $L$ quasi-path $\pgot$ that:
     \begin{itemize}
     \item connects two points $a, b\in \q$,
     \item has quasi-length $\leq C \dist(a,b)$,
     \end{itemize}
crosses the $D$-neighborhood of the middle third of {$\q_{ab}$}.
\item[(5)] for every $C\ge 1$ and every $\epsilon>0$ there exists $D\ge 0$ such that for every $a, b\in \q$ with $\dist (a,b) \geq D$,
and every path $\p$ connecting $a,b$ of length $\le C\dist(a,b)$, the sub-quasi-geodesic
$\q_{ab}$ is contained in the $(\epsilon\dist(a,b))$-neighborhood of $\p$.
\end{itemize}
\end{proposition}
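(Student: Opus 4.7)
The plan is to prove the chain $(1)\Leftrightarrow(2)$, $(1)\Rightarrow(5)\Rightarrow(1)$, and $(1)\Rightarrow(3)\Rightarrow(4)\Rightarrow(1)$. The equivalence $(1)\Leftrightarrow(2)$ and the implication $(4)\Rightarrow(1)$ are exactly those established in the original version of the proposition: their arguments use only bi-Lipschitz paths as quasi-geodesics (together with the projection property in tree-graded spaces, Corollary~2.11 of~\cite{DrutuSapir:TreeGraded}) for the first, and the extension $\pgot'=\arct{A'A}\sqcup\pgot\sqcup\arct{BB'}$ together with Lemma~\ref{lemlim1} for the second, so I will reuse both verbatim. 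The implication $(3)\Rightarrow(4)$ is straightforward in the geodesic setting: I will replace each $(\lambda,\kappa)$-quasi-geodesic piece of a $k$-piecewise $L$ quasi-path by a sampled piecewise geodesic continuous path of length at most $\lambda\cdot(\mbox{quasi-length})+\kappa$, obtaining a continuous path of length bounded by $(\lambda C+k\kappa)\dist(a,b)$ at Hausdorff distance a constant from the original quasi-path, and apply (3).

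For $(1)\Rightarrow(5)$, I will suppose (5) fails for some $C,\epsilon$, extract sequences $a_n,b_n\in\q$, paths $\p_n$ of length $\le C\dist(a_n,b_n)$ joining them, and $m_n\in\q_{a_nb_n}$ with $\dist(m_n,\p_n)>\epsilon\dist(a_n,b_n)$, then pass to the asymptotic cone $\co{X;(m_n),(d_n)}$ with $d_n=\dist(a_n,b_n)\to\infty$. Here $M=(m_n)^\omega$ is a cut point of the transversal tree containing $\lio{\q}$ (by (1)) separating $A=(a_n)^\omega$ from $B=(b_n)^\omega$, while $\pgot=\lio{\p_n}$ is a Lipschitz path from $A$ to $B$ staying at distance $\ge\epsilon$ from $M$, the desired contradiction. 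For $(5)\Rightarrow(1)$, a failure of (1) produces in some cone a path $\pgot$ from $A$ to $B$ on $\lio{\q}$ avoiding a cut point $M\in\lio{\q_{AB}}$ by $2\epsilon>0$; by Lemma~\ref{lemlim0} I may assume $\pgot$ is a $C$-bi-Lipschitz path, and by Lemma~\ref{lemlim1} write it as the $\omega$-limit of $C$-bi-Lipschitz paths $\pgot_n$ in $X$ with endpoints $x_n,y_n\in\q$, so applying (5) with $\epsilon'<\epsilon$ yields $\lio{\q_{x_ny_n}}\subset\onn_{\epsilon'}(\pgot)$ in the cone, placing $M$ within $\epsilon'<\epsilon$ of $\pgot$, a contradiction.

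The delicate step will be $(1)\Rightarrow(3)$. Assuming (3) fails for some constant $C$, I will choose sequences $(a_n,b_n,\p_n)$ with $\p_n$ of length $\le Cd_n$ missing the $D_n$-neighborhood of the middle third of $\q_{a_nb_n}$, where $D_n\to\infty$, and I will take $z_n$ in the middle third \emph{maximizing} the distance $\delta_n$ to $\p_n$, so the middle third is contained in $\onn_{\delta_n}(\p_n)$ and $\delta_n\ge D_n\to\infty$. Passing to a subsequence, if $\delta_n/d_n$ is bounded below then scaling by $d_n$ with basepoint $z_n$ gives a Lipschitz path of bounded length between $A$ and $B$ avoiding the cut point $Z=(z_n)^\omega\in\lio{\q_{AB}}$, a contradiction with (1) exactly as in $(1)\Rightarrow(5)$. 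The hard case is $\delta_n/d_n\to 0$: scaling by $\delta_n$ now sends $A$ and $B$ to infinity, but after a Lipschitz reparameterization of $\p_n$ around its point closest to $z_n$, the limit $\tilde\p_\omega$ is a $1$-Lipschitz bi-infinite path in the scaled cone at distance $\ge 1$ from $Z$, and the maximality of $\delta_n$ on the middle third, combined with the fact that the middle third has infinite scaled length around $z_n$ at this scale, forces $\lio{\q}\subset\onn_1(\tilde\p_\omega)$.

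To close this case, for any $R>2$ I will choose, on each side of $Z$ on $\lio{\q}$, a point $P_\pm$ at distance $R$ from $Z$ and pick $W_\pm\in\tilde\p_\omega$ within $1$ of $P_\pm$; the geodesic arcs of length $\le 1$ joining $P_\pm$ to $W_\pm$ then remain at distance $\ge R-1>1$ from $Z$, and the arc of $\tilde\p_\omega$ joining $W_-$ to $W_+$ is uniformly at distance $\ge 1$ from $Z$; concatenation yields a path in $\calc$ from $P_-$ to $P_+$ avoiding $Z$, contradicting the fact that $Z$ is a cut point of $\calc$ separating the two sides of $\lio{\q}$. The principal obstacle is precisely this case $\delta_n/d_n\to 0$, which was at the heart of the mistake in the original proof; the maximal choice of $z_n$ inside the middle third is essential in order to recover, at the scale $\delta_n$, the inclusion $\lio{\q}\subset\onn_1(\tilde\p_\omega)$ that makes the final path construction possible.
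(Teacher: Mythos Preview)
Your overall scheme matches the paper's except at one crucial link: the paper obtains $(3)$ from $(2)$ by citing \cite{T} (which in turn relies on the sublinear contraction characterization of \cite{A}), whereas you attempt a direct asymptotic-cone proof of $(1)\Rightarrow(3)$. Your arguments for $(1)\Leftrightarrow(2)$, $(4)\Rightarrow(1)$, $(3)\Rightarrow(4)$, and $(1)\Leftrightarrow(5)$ are fine and agree with the paper's. The direct $(1)\Rightarrow(3)$, however, has a genuine gap in the hard case $\delta_n/d_n\to 0$.

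First, your maximizer $z_n$ is taken over the \emph{closed} middle third $[\ell_n/3,2\ell_n/3]$ and may well sit at an endpoint, say $t_n=\ell_n/3$. Then, at scale $\delta_n$, the limit of the middle third is only a one-sided ray from $Z$, so your assertion that ``the middle third has infinite scaled length around $z_n$'' and hence $\lio{\q}\subset\onn_1(\tilde\p_\omega)$ is unjustified on the other side of $Z$; you then have no way to produce $P_-$ and $W_-$.

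Second, and more fundamentally, even granting the two-sided inclusion, the maximality of $\delta_n$ only gives $\dist(y_n,\p_n)\le\delta_n$ for $y_n$ in the middle third, hence $P_\pm\in\onn_1(\lio{\p_n})$. The nearby points $W_\pm=(w_n^\pm)^\omega$ lie in $\lio{\p_n}$, but there is no reason they lie in the \emph{image} of your $1$-Lipschitz limit $\tilde\p_\omega$: you know $\dist(w_n^\pm,w_n)=O(\delta_n)$ in $X$, not that the arc-length along $\p_n$ between $w_n$ and $w_n^\pm$ is $O(\delta_n)$. Since the rescaled length of $\p_n$ is $\le Cd_n/\delta_n\to\infty$, the set $\lio{\p_n}$ need not even be connected (think of a long thin ``U'' whose limit is two disjoint rays), so you cannot join $W_-$ to $W_+$ inside $\calc\setminus\{Z\}$ that way either. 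This is precisely the obstacle that broke the original version of $(5)$: at scale $\delta_n\ll d_n$ the path $\p_n$ has no rectifiable limit. The implication $(1)\to(2)$ escapes it only because there the $\p_k$ are quasi-geodesics, so arc-length and distance are comparable; for arbitrary paths as in $(3)$ this fails, which is exactly why the erratum routes $(3)$ through \cite{T} rather than through cones.
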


\begin{remarks}\label{r:1}
\begin{enumerate}
\item Properties (1) - (4) are as in Proposition 3.24, property (5) is modified. The initial version of (5) stated that, given an arbitrary $C\ge 1$, for every $a, b\in \q$,
and every path $\p$ with endpoints $a,b$ and of length $\le C\dist(a,b)$, the {sub-quasi-geodesic
$\q_{ab}$} would be contained in the $D$-neighborhood of $\p$, with $D$ a constant depending only on $C$. This property is however strictly stronger than the property of being a Morse quasi-geodesic. Indeed, an example of Morse geodesic that does not satisfy the property above, provided in \cite{T}, is an arbitrary bi-infinite geodesic $\q :{\mathbb{R}} \to {\mathbb H}^2$ in the hyperbolic plane ${\mathbb H}^2$: for an arbitrarily large integer $n$, the path $\p_n$ joining $\q (-n)$ to $\q (n)$ obtained as the concatenation of $\q$ restricted to the interval $[-n, -\log n]$, with half of the hyperbolic circle centered in $\q (0)$ and of radius $\log n$, and with $\q $ restricted to the interval $[ \log n , n ]$, has length at most $C n$, for some fixed constant $C$ independent of $n$, yet $\q (0)$ is between the endpoints of $\p_n$ on $\q$, and at distance $\log n$ of $\p_n$.

The mistake is in the proof of the implication $(1)\to (5)$ (see Proposition 3.24), where it is assumed that for a sequence of paths $\p_n$ connecting pairs of points $a_n,b_n$ on the bi-infinite quasi-geodesic $\q$, the $\omega$-limit of $\p_n$ in any asymptotic cone is a rectifiable path. This is not true in general: in the example above, for observation points $x_n$ coinciding with the midpoints of the half-circle, the $\omega$-limit of $\p_n$ in the asymptotic cone
$\Con^\omega(\HH^2, (x_n), (\log n))$ is not rectifiable.

If, on the other hand, one considers sequences of paths $\p_n$ of lengths $\ell_n \to \infty$ and their $\omega$-limits only in asymptotic cones with scaling sequence $\lambda_n = \ell_n$, then these $\omega$-limits, when non-empty, are rectifiable paths.

\medskip

\item The first paper to provide a correct proof of the equivalence $(2)\leftrightarrow (3)$ in Proposition \ref{prop3} is thus \cite{T}, since our initial proof of this implication relied on the wrong version of (5).
\end{enumerate}
\end{remarks}

Proposition \ref{prop3} contains all known characterizations of Morse quasi-geodesics, with one exception that we explain below, to complete the list. Moreover, this characterization plays a central part in the proof of $(2)\leftrightarrow (3)$ in \cite{T}.
We begin with some terminology.

\begin{definition} Let $\q$ be a quasi-geodesic in a metric space $X$, and let $\eta>0$. The \emph{$\eta$--nearest point projection of a point $x\in X$ on $\q$}, denoted by $\proj^\eta_\q(x)$, is the set of points $x'\in \q$ such that $\dist(x,x')\leq \dist(x,\q) + \eta$.
\end{definition}

\begin{definition} [Sublinear contraction \cite{A}] We say that a quasi-geodesic $\q$ in a geodesic metric space $X$ is \emph{uniformly sublinearly contracting} if there exists some constant $\eta >0$ such that for every sub-quasi-geodesic $\q'$ of $\q$, the projection $\proj^\eta_{\q'}$ is uniformly sublinearly contracting: for every $\epsilon>0$ there exists $D= D(\epsilon )$ (independent of the specific sub-quasi-geodesic $\q'$ of $\q$) such that for every $D'\geq D$ and every $x\in X$ with $\dist(x,\q')\geq 2D'$, the union of all nearest point projections $\proj^\eta_{\q'}(y)$ of points $y$ in the ball $\Ball(x,D')$ has diameter at most $\epsilon D'$.
\end{definition}

In what follows, in all arguments using uniform sublinearly contracting properties, we drop the parameter $\eta$ from the notation.

\begin{theorem}[Theorem 1.4, \cite{A}]\label{thm:A}
Let $X$ be a geodesic metric space, and let $\q$ be a quasi-geodesic in it. The following are equivalent:

\begin{enumerate}

\item $\q$ is Morse;

\item $\q$ is uniformly sublinearly contracting.

\end{enumerate}
\end{theorem}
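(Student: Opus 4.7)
The plan is to establish the two implications of Theorem~\ref{thm:A} separately, with the main work going into the direction from Morse to uniformly sublinearly contracting.

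\emph{Direction $(2)\Rightarrow(1)$: sublinearly contracting implies Morse.} Let $\pgot$ be any $(L,C)$-quasi-geodesic with endpoints $a,b\in\q$; the goal is to bound $R:=\sup_{t}\dist(\pgot(t),\q_{ab})$ by a constant depending only on $L$, $C$, and the sublinear contraction data. After replacing $\pgot$ by a Lipschitz quasi-geodesic at bounded Hausdorff distance, I would isolate a maximal subinterval $[s_1,s_2]$ on which $\pgot$ stays at distance $\ge R/2$ from $\q_{ab}$, cover $\pgot([s_1,s_2])$ by balls of radius $\asymp R$ centered at points of $\pgot$ spaced by arc-length $\asymp R/L$, and apply the uniform sublinear contraction of the projection onto the sub-quasi-geodesic $\q_{ab}$. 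Summing the resulting projection diameters over all balls in the cover gives a sublinear (in $R$) bound on the diameter of the projection of the far portion of $\pgot$. On the other hand, the endpoints of this far portion project to points of $\q_{ab}$ whose mutual distance is bounded below by a linear function of $s_2-s_1$, via the $(L,C)$-quasi-geodesic property of $\pgot$ together with the bi-Lipschitz equivalence of the induced and intrinsic metrics on the quasi-geodesic $\q_{ab}$. Since the contraction function is sublinear, for $R$ sufficiently large the sublinear upper bound and the linear lower bound become incompatible, forcing $R\le R_0(L,C)$.

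\emph{Direction $(1)\Rightarrow(2)$: Morse implies sublinearly contracting.} Assume $\q$ is Morse but that uniform sublinear contraction fails. Negating the definition yields some $\epsilon_0>0$ and, for each $n$, a sub-quasi-geodesic $\q_n'$ of $\q$, a scale $D_n\ge n$, a point $x_n\in X$ with $\dist(x_n,\q_n')\ge 2D_n$, and points $y_n,z_n\in\Ball(x_n,D_n)$ whose $\eta$-nearest-point projections $y_n',z_n'\in\q_n'$ satisfy $\dist(y_n',z_n')>\epsilon_0 D_n$. By choosing $D_n$ to be essentially the largest scale for which the contraction with constant $\epsilon_0$ fails at $x_n$ against $\q_n'$, one arranges $\dist(x_n,\q_n')\asymp D_n$. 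The concatenation $\p_n=[y_n',y_n]\cup[y_n,z_n]\cup[z_n,z_n']$ is then a path from $y_n'$ to $z_n'$ of length $O(D_n)=O(\dist(y_n',z_n'))$, so $\length(\p_n)\le C_0\dist(y_n',z_n')$ for a universal $C_0$. Any point $w_n$ in the middle third of $\q_{y_n'z_n'}$ lies at distance $\gtrsim D_n$ from $\p_n$: the two end-pieces $[y_n',y_n]$ and $[z_n,z_n']$ stay in an $O(D_n)$-neighborhood of $\{y_n',z_n'\}$, while the middle piece $[y_n,z_n]$ stays near $x_n$, at distance $\ge D_n$ from $\q_n'$ and hence from $w_n$. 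This violates property (3) of the corrected Proposition~\ref{err:prop3}, which $\q$ satisfies since it is Morse, and we conclude.

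\emph{Main obstacle.} The main technical point is the reduction to $\dist(x_n,\q_n')\asymp D_n$ in the second direction: the negation of sublinear contraction only produces $\dist(x_n,\q_n')\ge 2D_n$, with no a priori upper bound on $\dist(x_n,\q_n')/D_n$. Without this comparability the constructed path $\p_n$ may be much longer than $\dist(y_n',z_n')\ge\epsilon_0 D_n$, so the ratio $\length(\p_n)/\dist(y_n',z_n')$ need not be uniformly bounded and property (3) is not directly contradicted. One handles this either by choosing $D_n$ maximally inside the negation (so that contraction with constant $\epsilon_0/2$, say, does hold on twice the scale, forcing $\dist(x_n,\q_n')=O(D_n)$), or alternatively by rescaling the whole configuration by $\dist(x_n,\q_n')$ and passing to an asymptotic cone, where property (1) of Proposition~\ref{err:prop3} places the ultralimit of $\q$ in a transversal tree of a tree-graded structure, incompatibly with the limiting configuration extracted from the sequences $(x_n,y_n,z_n,y_n',z_n')$.
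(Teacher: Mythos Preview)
The paper does not prove Theorem~\ref{thm:A}: it is quoted verbatim as Theorem~1.4 of \cite{A} (Arzhantseva--Cashen--Gruber--Hume) and used only as a black box in the erratum, so there is no proof in this paper to compare your proposal against.

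That said, a brief comment on your sketch. The direction $(2)\Rightarrow(1)$ is the standard ball-covering argument and is fine. For $(1)\Rightarrow(2)$, the obstacle you flag is genuine, and neither of your two suggested fixes resolves it as stated. Your ``choose $D_n$ maximally'' idea shows that at scale $D_n+\delta$ the $\epsilon_0$-contraction holds for the \emph{same} center $x_n$, but this says nothing about $\dist(x_n,\q_n')$: the condition $\dist(x_n,\q_n')\ge 2D'$ in the definition is a \emph{lower} bound on the allowed scales, not an upper one, so maximizing $D_n$ only gives $D_n\le \dist(x_n,\q_n')/2$, not comparability. Your asymptotic-cone fix, rescaling by $\dist(x_n,\q_n')$, collapses $y_n',z_n'$ to a single point when $\dist(x_n,\q_n')/D_n\to\infty$, so no contradiction emerges in the cone. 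The argument in \cite{A} handles this by moving the center along a geodesic from $x_n$ toward $\q_n'$ and tracking how the projection diameter behaves, rather than by a one-step maximality or rescaling trick; you would need to supply a comparable reduction.
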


%%%%%%%%%%%%%%%%%%%%%%%%%%
%%%%%%%%%%%%%%%%%%%%%%%%%%
%%%%%%%%%%%%%%%%%%%%%%%%%%

\begin{remark}\label{r:gr}
If $X$ is $\delta$-hyperbolic then every bi-infinite geodesic $\mathfrak g$ of $X$ is uniformly sublinearly contracting, and an even stronger property holds: there exist constants $L=L(\delta)$ and $M=M(\delta )$ such that for every  $\mathfrak g$ geodesic (segment, infinite or bi-infinite), every $D>0$, and every $x\in X$ with $\dist(x,\mathfrak{g})>D+M$, the union of nearest point projections $\proj_{\mathfrak g}$ of points $y$ of the ball $\Ball(x,D)$ to $\mathfrak g$ has diameter at most $L$ ( \cite{Gromov:hyperbolic}, \cite[Proposition 2.1 in Chapter 10]{CDP}).

This clearly extends to $(\lambda ,\kappa )$-quasi-geodesics, modulo increasing the constants $M$ and $L$ with additive constants depending on $(L,C)$.
\end{remark}

\noindent \textit{Proof of Proposition \ref{prop3}.} The equivalence of the five properties follows from the implications $(2)\to (3)\to (4)\to (1)\to (2)$ and the equivalence $(1)\leftrightarrow (5)$.

For $(2)\to (3)$ we refer to \cite{T}.

$(3)\to (4)$ is obvious. The proof of $(4)\to (1)$ is correct.

$(1)\to (2)$. Suppose that there exists $\mu\ge 1, \nu\ge 0$ such that for every $k>1$ there exists a  $(\mu,\nu)$-quasi-geodesic $\p_k$ joining two points on $\q$ and there exists $x_k\in \p_k$ at distance $d_k>k$ from $\q$. We can assume that $d_k$ is the maximal distance from a point of $\p_k$ to $\q$. For any ultrafilter $\omega$, in the asymptotic cone $\Con^\omega(X, (x_k), (d_k))$, the $\omega$-limit $\q_\omega$ of $\q$ is a transversal geodesic, by (1), and the $\omega$-limit $\p_\omega$ of the sequence $(\p_k)$ is either a $\mu$-bi-Lipschitz path with endpoints on $\q_\omega$, or a $\mu$-bi-Lipschitz ray with origin on $\q_\omega$, or a  $\mu$-bi-Lipschitz bi-infinite path. In all three cases  $\p_\omega$ stays $1$-close to  $\q_\omega$, and has one point $x^\omega = (x_k)^\omega$ at distance $1$ from $\q_\omega$. In the latter two cases, we can obtain a simple path with endpoints on $\q_\omega$ by choosing a point $x$ on the ray far enough from the origin (respectively two points $x,y$ far enough from each other), joining them by a geodesic $[x,x']$ (respectively by two geodesics $[x,x']$ and $[y,y']$) to nearest points on $\q_\omega$, and by replacing $x$ (respectively $x,y$) with the farthest from them intersection point between geodesic and $\p_\omega$. We then get a contradiction as in the end of the proof of $(1)\to (5)$ of Proposition \ref{prop3}.

$(1)\to (5).$ Suppose there exist constants $C>1$ and $c>0$, and a sequence of paths $\p_k$ connecting pairs of points $a_k,b_k$ on $\q$ with $d_k=\dist(a_k,b_k)\to \infty$ such that the length of each $\p_k$ is at most $C\dist(a_k,b_k)$ and $\q_{a_kb_k}$ is not in the $cd_k$-neighborhood of $\p_k$. Let $x_k$ be a point on $\q_{a_kb_k}$ such that $\dist(x_k,\p_k )$ is within distance $1$ of the maximal possible value, and consider the asymptotic cone $\cc=\Con^\omega(X, (x_k), (d_k))$. Then the point $(x_k)$ in $\cc$ is at distance $\ge c$ from the $\omega$-limit $\p_\omega$ of the sequence $(\p_n )$. The $\omega$-limit $\p_\omega$ is a path of length at most $C$, in particular, it is rectifiable (see the end of Remark \ref{r:1}, (1)). Therefore the end of the proof of $(1)\to (5)$ of Proposition \ref{prop3} works and we get a contradiction.

$(5)\to (1)$. Suppose that $(5)$ is true, but that the $\omega$-limit $\q_\omega$ of $\q$ in some asymptotic cone $$\cc=\Con^\omega(X, (x_k), (d_k))$$ is not a transverse geodesic. Hence there exists a path $\p_\omega$ connecting two distinct points $a_\omega=(a_k), b_\omega=(b_k)$ on $\q_\omega$ and having no other common points with $\q_\omega$. By approximating $\p_\omega$ well enough with piecewise geodesics (where the geodesic pieces are ultralimits of geodesics), and by eventually replacing $a_\omega$ and $b_\omega$ with two points that are nearer to each other, we can assume that $\p_\omega$ is itself an $\omega$-limit of a sequence $(\p_k )$ of piecewise geodesics, with a uniformly bounded number of geodesic pieces, $\p_k$ with endpoints $a_k$ and $b_k$ on $\q$. Moreover, the hypothesis that $\p_\omega$ intersects $\q_\omega$ only in its distinct endpoints implies that there exists a point $x_\omega=(x_k)^\omega$ on $\q_\omega$ situated between $a_\omega$ and $b_\omega$, and with $c=\dist(x_\omega, \p_\omega)>0$. We have that the lengths of the paths $\p_k$ are at most $C\dist(a_k,b_k)$ $\omega$-almost surely, for some $C>1$, and that $\dist(x_k,\p_k)\ge \frac c2d_k$ $\omega$-almost surely. This contradicts Property (5).\hspace*{\fill}$\Box$

\medskip

As mentioned before, property (5) from Proposition 3.24 is used in the proofs of Theorems 4.4 and 4.9. In the proof of Theorem 4.9, Property (5) is used only in the following paragraph:
\begin{quote}
Since $\q$ is a $k$-piecewise hierarchy path, by property (T2) it is shadowed by a $k$-piecewise tight
geodesic $\proj(\q)$ in $\calgg$ of length $\leq K_1n$ (for some constant $K_1$) connecting $g^{-3n}\cdot
o$ and $g^{3n}\cdot o$. The fact that geodesics in a hyperbolic graph are Morse and part (5) of
Proposition 3.24 imply that the sub-arc $[g^{-3n}\cdot o , g^{3n}\cdot o]$ in $\g$ is contained
in the $D$-tubular neighborhood of $\proj(\q)$ for some constant $D$. In particular $[g^{-n}\cdot o ,
g^{n}\cdot o]$ has a sub-arc $\g'$ of length $\geq K_2n$ (for some constant $K_2$) contained in the
$D$-tubular neighborhood of one of the tight geodesic subpaths $\ft$ of $\proj (\q )$. Notice that the
length $|\ft|$ is $\geq K_2n-2D\ge K_3n$ for some constant $K_3$ (since $n\gg 1$).
\end{quote}
It is easy to see that here ``$D$-neighborhood" can be replaced by ``$o(n)$-neighborhood". Thus the new Property (5) suffices to prove Theorem 4.9.

The proof of Theorem 4.4 requires more modifications. Here is its formulation.

\begin{theorem}[See Theorem 4.4]\label{err:t32} Let $G$ be an infinite finitely generated group acting on an infinite hyperbolic  uniformly locally finite connected graph $X$.
Suppose that for some $\ell>0$ the stabilizer of any pair of points $x,y\in X$ with $\dist(x,y)\ge \ell$ is
finite of uniformly bounded size. Let $g$ be a loxodromic element of $G$. Then the sequence
$(g^n)_{n\in \ZZ}$ is a Morse quasi-geodesic in $G$. In particular, every asymptotic cone of $G$ has
cut-points.
\end{theorem}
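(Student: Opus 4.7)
\section{Proof proposal for Theorem \ref{err:t32}}

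My plan is to follow the overall structure of the original proof of Theorem 4.4 (which adapted Theorem 4.3 from the tree case), with the key modification forced by the corrected version of property (5) of Proposition \ref{err:prop3}. The criterion I shall verify is Property (3) of Proposition \ref{err:prop3}: for every $C\geq 1$ there exists $D\geq 0$ such that every path of length $\leq Cn$ in the Cayley graph of $G$ joining $g^{-3n}$ and $g^{3n}$ passes within distance $D$ of $g^{i}$ for some $i$ with $|i|\leq n$.

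First, by Lemma \ref{B1}, after replacing $g$ by a suitable nonzero power I may assume that $g$ stabilizes a bi-infinite geodesic $\pgot$ in $X$ and acts on $\pgot$ with translation length $\tau>0$. Fix a vertex $o$ on $\pgot$, set $\pi(h)=h\cdot o$, and let $\lambda=\max_{u\in U}\dist_{X}(o,u\cdot o)$ over a finite generating set $U$ of $G$. Given a path $\q$ of length $\leq Cn$ in $G$ from $g^{-3n}$ to $g^{3n}$, the image $\pi(\q)$ is a path in $X$ of length $\leq\lambda C n$ with endpoints on $\pgot$ at $X$-distance $6\tau n$. The original proof required finding a constant $D_{0}$ such that $\pi(\q)$ comes $D_{0}$-close to each $\pi(g^{i})$ with $|i|\leq n$; this was obtained by invoking the (now incorrect) old Property (5). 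In the corrected framework this direct conclusion is no longer automatic.

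To replace that step, I shall use the fact that $\pgot$ is a bi-infinite geodesic in the hyperbolic graph $X$, together with the strong bounded-projection property recalled in Remark \ref{r:gr}. Analyzing the nearest-point projection $\proj_{\pgot}(\pi(\q))$ along $\pgot$, I partition the vertices of $\pi(\q)$ into ``near'' vertices (within $M+\lambda$ of $\pgot$) and ``far'' vertices; the far vertices have the crucial feature that their projections barely drift between neighbors. The goal is to show that for each $i$ with $|i|\leq n$ there is a \emph{near} vertex whose projection lies within a uniform constant of $\pi(g^{i})$, hence a near vertex at uniformly bounded $X$-distance $D_{0}=D_{0}(\delta,\lambda)$ from $\pi(g^{i})$. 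Once this is established, the argument proceeds exactly as in the original Theorem 4.4: one defines $Y_{i}'=\pi^{-1}(\Ball(\pi(g^{i}),D_{0}+\lambda))$, observes that $\q$ visits each $Y_{i}'$ for $|i|\leq n$, applies the pigeonhole principle with $k=\lceil(\ell+2(D_{0}+\lambda))/\tau\rceil$ to extract a sub-path of length $\leq 4Ck$ joining $Y_{i}'$ to $Y_{i+k}'$, and then invokes Lemma 4.2 with $R=\max(|g|k,4Ck)=O(1)$ to conclude that the endpoint of this sub-path lies within $D=D(R)$ of $g^{i}$ in $G$.

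The main obstacle will be the near-coverage step sketched above: I need to rule out the scenario in which a long ``far'' excursion of $\pi(\q)$ advances the projection across a substantial portion of $[\pi(g^{-3n}),\pi(g^{3n})]$ without any accompanying near vertex close to the $\pi(g^{i})$'s. The fallback if this cannot be carried out uniformly in $C$ is to pass to the asymptotic cone formulation (Property (1) of Proposition \ref{err:prop3}): assume for contradiction that in some asymptotic cone $\cc=\Con^{\omega}(G,(o_{n}),(d_{n}))$ the $\omega$-limit $\q_{\omega}$ of $(g^{n})$ is not contained in a transversal tree, produce a path $\pgot_{\omega}$ in $\cc$ from some $A,B\in\q_{\omega}$ avoiding a point $M\in\q_{\omega}$ between them, and map everything by the Lipschitz orbit map $\pi_{\omega}$ to the asymptotic cone $\tilde\cc=\Con^{\omega}(X,(\pi(o_{n})),(d_{n}))$, which is an $\R$-tree because $X$ is hyperbolic. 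Since paths in a tree are forced through the unique geodesic, $\pi_{\omega}(\pgot_{\omega})$ covers $\pi_{\omega}(M)$, producing $p\in\pgot_{\omega}$ with $p\neq M$ but $\pi_{\omega}(p)=\pi_{\omega}(M)$. Representing $M=(g^{\mu_{n}})^{\omega}$ and $p=(p_{n})^{\omega}$, the element $h_{n}=p_{n}g^{-\mu_{n}}$ then displaces $g^{\mu_{n}}\cdot o$ by $o(d_{n})$ in $X$ while satisfying $|h_{n}|_{G}=\Theta(d_{n})$. The final, and hardest, step is to exploit the fact that $\pgot_{\omega}$ avoids an entire interval of $\q_{\omega}$ around $M$ (not just $M$ itself) to produce a family of collisions, and to combine them so as to exhibit an element of $G$ that approximately fixes two points in $X$ at distance $\geq\ell$, contradicting the acylindricity hypothesis via Lemma 4.2. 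Controlling how near vertices in the cone translate back into algebraic commensurability relations in $G$ is the genuinely delicate part of the proof.
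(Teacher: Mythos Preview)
Your primary plan has a genuine gap at precisely the step you flagged as the obstacle: the claim that \emph{for each} $i$ with $|i|\le n$ there is a near vertex of $\pi(\q)$ within a uniform constant of $\pi(g^{i})$ is false in general. A single long far excursion of $\pi(\q)$ can project onto an interval of $\pgot$ containing several of the points $\pi(g^{i})$ with no near vertex nearby; Remark~\ref{r:gr} bounds only the \emph{rate} of projection drift, not the total drift of one excursion. So your downstream single pigeonhole over the $Y_i'$ is never reached. Your asymptotic-cone fallback is too sketchy: the fact that $h_n$ moves $g^{\mu_n}\cdot o$ by $o(d_n)$ while $|h_n|_G=\Theta(d_n)$ does not by itself contradict acylindricity, and the promised ``family of collisions'' producing an element approximately fixing two points at distance $\ge\ell$ is not supplied.

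The paper's fix is quantitative rather than pointwise. One chooses a large $k$ and a threshold $D_0$ so that each far excursion $\h$ (length $\ge 2D_0$, outside the $4D_0$-neighborhood of $\pgot$) projects to a set of diameter $\le (c/D_0)|\h|$; summing, the union $Z$ of these projections has total length $\le (c/D_0)|\g'|\le n/k$. One then takes an arithmetic progression $P\subset[-n,n]$ of step $m\approx 20D_0$, and for each $i\in P$ lets $u_i$ be the length of the maximal sub-interval of $Z$ around $\pi(g^i)$; then $\sum u_i\le 2n/k$. A counting argument shows that for any $\alpha<1-100/k$ there are at least $\alpha|P|$ \emph{consecutive} pairs $(i,i+m)\in P$ with both $u_i,u_{i+m}\le D_0$, and for such pairs $\q$ must enter the $\pi$-preimages of the balls $\Ball(\pi(g^i),6D_0)$ and $\Ball(\pi(g^{i+m}),6D_0)$. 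A \emph{second} pigeonhole over these $\ge\alpha n/m$ pairs then yields one pair whose entry points are at $G$-distance $\le Cm/\alpha$, a bound independent of $n$; Lemma~4.2 together with local finiteness of $X$ finishes as in your outline. The idea you are missing is this double pigeonhole: first over indices to find \emph{many} good pairs (not all, but a positive fraction), then over the good pairs to find one with a short connecting sub-path in $G$.
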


\proof
In what follows $n$ is a large enough natural number.

As in the proof of Theorem 4.4,  we can assume that $g$ stabilizes a geodesic $\p$ in $X$ and acts on $\p$ with translation length $\tau>0$. Rescaling the metric in $X$ if necessry, we can assume that $\tau=1$.

Let us fix a point $o$ in $\p$.
Let $\lambda$ be the maximal distance between $o$ and $a\cdot o$ where $a$ is any of the generators in a finite generating set of $G$. Consider the map $\pi$ from $G$ to $X$ defined by $\pi(h)= h\cdot o$.

Take a path $\g$ from $g^{-3n}$ to $g^{3n}$ in the Cayley graph of $G$ such that the length of $\g$ is at most $Cn$ for some $C\ge 1$. We need to show (by Property (3) of Proposition \ref{prop3}) that $\g$ passes boundedly close to one of $g^i$ where $-n\le i\le n$.

Consider the image $\pi(\g)$ of $\g$ in $X$. By connecting consecutive points on $\pi(\g)$ with geodesics we turn it into a path in $X$ which we shall denote by $\g'$. This path connects two points on $\p$, $a_n=\pi(g^{-3n})$ and $b_n=\pi(g^{3n})$. The length of $\g'$ is at most $Cn\lambda$.

By Remark \ref{r:gr}, there exist constants $c, D_0$ such that for all $D>D_0$ if a path $\h$ in $X$ connecting $x$ and $x'$ is of length $\le \frac D2$ and the distance $\dist(x,\p)$ is greater than $D$, then the diameter of $\proj_\p(\h)$ is at most $c$.
Let $k$ be any integer greater than $200$. We can also assume that $\frac c{D_0}<\frac 1 {kC\lambda}$ and $D_0>2l$, $D_0>2\lambda$.

Take the $4D_0$-neighborhood $N_1$ of the axis $\p$. If $\g'\subset N_1$, then we are done. So suppose that $\g'\setminus N_1$ is not empty. Then $\g'\setminus N_1$ is a union of subpaths $\h_i$ connecting points $t_i,t_i'$ of $\g'$ such that $\dist(t_i, \p) = 4D_0=\dist(t_i',\p)$. Dividing each $\h_i$ into subpaths of lengths between $D_0$ and $2D_0$, we conclude that the diameter of $\proj_\p(\h_i)$ is at most $\frac {c}{D_0}|h_i|$ provided $|h_i|\ge 2D_0$.
Let $H$ be the set of those subpaths $\h_i$ whose lengths are at least $2D_0$. Then the projection of the union of the paths $\h\in H$ is covered by a union $Z$ of intervals of total length at most
\begin{equation}\label{e:1}\frac{c}{D_0}\sum_{\h\in H} |\h|\le \frac {c}{D_0} |\g'|\le \frac 1{k\lambda C} Cn\lambda=\frac{n}{k\tau}.\end{equation}

Now let $N_2$ be the $5D_0$-neighborhood of $\p$ and consider the set of maximal subpaths $\h_i'$ of $\g'\setminus N_2$. Note that each $\h_i'$ is inside some $\h_j$, moreover this $\h_j$ must have length at least $2D_0$, so it belongs to $H$.
Therefore the set $\proj_\p(\cup \h_i')$ is covered by $Z$.

Note that the length of the piece $p'$ of the axis $\p$ between $a_n=\pi(g^{-3n})$ and $b_n=\pi(g^{3n})$ is $6n+1$.
%Hence
%
%\begin{equation}\label{e:02} \frac {1}{30\tau C}\dist(a_k,b_k)< \frac 1{4C}n.\end{equation}

Let $m=\lfloor 20D_0\rfloor +1$. Consider the arithmetic progression $P={-n, -n+m, -n+2m,....}$ of numbers between $-n$ and $n$. The distance between any two points $\pi(g^j)$, $\pi(g^k)$, $k\ne j\in P$ is $m|k-j|\geq m$. The size of the set $P$ is $> \frac n m$.

For every $i\in P$ such that $\pi(g^i)\in Z$ let $\z_i$ be the maximal subgeodesic of $\p$ containing $\pi(g^i)$, not containing any other $\pi(g^j), j\in P$, and contained in $Z$. Note that paths $\z_i$ may overlap. So the sum of  lengths of all $\z_i$ is at most twice the measure of $Z$. If $i\in P$ is such that $\pi(g^i)\not\in Z$ then let $\z_i$ be just the point $\pi(g^i)$.
Let $u_i$ be the length of $\z_i$. Then by (\ref{e:1})

\begin{equation}\label{e:2} \sum u_i\le \frac {2n}{k}.\end{equation}

Take any positive $\alpha<1-\frac {100}{k}$ (note that $1-\frac {100}k>0$ since $k>200$). If there are fewer than $\alpha |P|$ consecutive pairs $i,i+m\in P$ such that $u_i,  u_{i+m}\le D_0$,
then there are at least $\frac 12 |P| (1-\alpha)$ numbers $u_i$ which are bigger than $D_0$, hence $\sum_{i\in P} u_i> \frac n{2m}(1-\alpha)D_0>\frac n{2\lfloor 20 D_0\rfloor} \frac {100}{k}D_0>\frac{2n}{k}$, a contradiction with (\ref{e:2}). Hence there are at least $\alpha \frac n m$ pairs of consecutive numbers $i,j=i+m\in P$   such that $u_i,u_j\le D_0$. Let $M$ be the set of these pairs of numbers and $(i, i+m)\in M$. For $s=i, i+m$ let $B_s=\Ball(\pi(g^s),6D_0)$,  be the ball in $X$ of radius $6D_0$ around $\pi(g^s)$.  Let $U_i, U_{i+m}$ be $\pi$-preimages of $B_i, B_{i+m}$ respectively.
The path $\g'$ must visit each of the balls $B_i, B_{i+m}$. Hence the path $\g$ must intersect both sets $U_i, U_{i+m}$ at points $w_i, w_{i+m}$ respectively.

Note that the distance between any point from $B_i$ to any point in $B_{i+m}$ is greater than $l$ since $D_0>2l$.

Since the sum of distances $\sum_{(i,j)\in M} \dist(w_i,w_j)$ does not exceed $Cn$ we have that at least one of the distances $\dist(w_i,w_j)$, $(i,j)\in M$, must be smaller than $R'=\frac {Cn}{\alpha \frac n{m}}=\frac {Cm}{\alpha}$ which does not depend on $n$. Let $R$ be the maximum of $R'$ and $m$, the distance between $g^i\cdot o$ and $g^{i+m}\cdot o$.

We need to show that $\dist(w_i, g^i)=|g^{-i}w_i|$ is bounded by a constant not depending on $n$. We have that $g^{-i}w_i\cdot o\in B=\Ball(o,6D_0)$ and there exists $v\in G$ of length at most $R$ such that $g^{-i}w_iv\cdot o\in B'=\Ball(g^m\cdot o, 6D_0)$.

Let $V$ be the set of all $h\in G$ such that $h\cdot o\in B$ and for some $v$, $|v|\le R$, $hv\cdot o\in B'$. Note that $V$ does not depend on $n$, so it is enough to show that $V$ is finite. 

Recall that in Lemma \ref{lem9} we defined the sets $V_{a,b}$, $a,b\in G\cdot o$, as $$V_{a,b}=\{h\in G\mid h\cdot o=a, \exists v\in G, |v|\le R, hv\cdot o=b\}$$ (see Lemma \ref{lem9}). The proof of Lemma \ref{lem9} applies for every metric space $X$ (not only a tree) with $l$-acylindrical action of a group $G$. It shows that if $\dist(a,b)>l$, then $V_{a,b}$ has uniformly bounded diameter (depending only on $R$).

If we denote $a_h=h\cdot o$ and $b_h=hv\cdot o$, then $h\in V_{a_h,b_h}$. Since $a_h\in B, b_h\in B'$, $\dist(a_h,b_h)>l$. Hence  $V_{a_h,b_h}$ is a finite set. The number of possible such pairs $(a_h,b_h)$ does not exceed the size of the direct product $B\times B'$ which is a finite set because $X$ is a locally finite graph. Hence $V$ is finite as  required.\endproof

\providecommand{\bysame}{\leavevmode\hbox to3em{\hrulefill}\thinspace}
\providecommand{\MR}{\relax\ifhmode\unskip\space\fi MR }
% \MRhref is called by the amsart/book/proc definition of \MR.
\providecommand{\MRhref}[2]{%
  \href{http://www.ams.org/mathscinet-getitem?mr=#1}{#2}
} \providecommand{\href}[2]{#2}

\bigskip

\begin{minipage}[t]{2in}
\noindent Cornelia Dru\c tu\\
Mathematical Institute\\
24-29 St Giles\\
Oxford OX1 3LB\\
United Kingdom.\\
drutu@maths.ox.ac.uk
\end{minipage}
\begin{minipage}[t]{2in}
\noindent Shahar Mozes\\
Institute of Mathematics\\
Hebrew University\\
Jerusalem, Israel\\
{mozes@math.huji.ac.il}
\end{minipage}
\begin{minipage}[t]{2in}
\noindent Mark V. Sapir\\
SC1522\\
Department of Mathematics\\
Vanderbilt University\\
Nashville, TN 37240, U.S.A.\\
m.sapir@vanderbilt.edu
\end{minipage}

\end{document}